\theoremstyle{plain}% default
\newtheorem{thm}{Theorem}[section]
\newtheorem{lem}{Lemma}[section]
\newtheorem{prop}{Proposition}[section]
\newtheorem{cor}{Corollary}[section]
\newtheorem{defs}{Definition}[section]
\theoremstyle{definition}
\newtheorem{rmk}{Remark}[section]
\newcommand{\NN}{{\mathbb{N}}}
\newcommand{\RR}{{\mathbb{R}}}
\newcommand{\bu}{\mathbf{u}}
\newcommand{\bv}{\mathbf{v}}
\newcommand{\bw}{\mathbf{w}}
\newcommand{\bx}{\mathbf{x}}
\newcommand{\bbf}{\mathbf{f}}
\newcommand{\bF}{\mathbf{F}}
\newcommand{\bnabla}{\boldsymbol{\nabla}}
\newcommand{\Ccal}{{\mathcal C}}
\newcommand{\Acal}{{\mathcal A}}
\newcommand{\Bcal}{{\mathcal B}}
\newcommand{\Ecal}{{\mathcal E}}
\newcommand{\Fcal}{{\mathcal F}}
\newcommand{\Scal}{{\mathcal S}}
\newcommand{\Ncal}{{\mathcal N}}
\newcommand{\Rcal}{{\mathcal R}}
\newcommand{\Vcal}{{\mathcal V}}
\newcommand{\Wcal}{{\mathcal W}}
\newcommand{\loc}{{\text{\rm loc}}}
\newcommand{\Mcal}{{\mathcal M}}
\newcommand{\Dcal}{{\mathcal D}}
\newcommand{\Ucal}{{\mathcal U}}
\newcommand{\Ocal}{{\mathcal O}}
\newcommand{\dmu}{\;\text{\rm d}\mu}
\newcommand{\rd}{{\text{\rm d}}}
\newcommand{\rw}{{\text{\rm w}}}
\newcommand{\reg}{{\text{\rm reg}}}
\newcommand{\rb}{{\text{\rm b}}}
\newcommand{\rg}{{\text{\rm g}}}
\newcommand{\rc}{{\text{\rm c}}}
\newcommand{\ddt}[1]{\frac{\text{\rm d}#1}{\text{\rm dt}}}
\newcommand{\supp}{\operatorname*{supp}}
\newcommand{\Vinner}[1]{(\!({#1})\!)_{H^1}}
\newcommand{\dual}[1]{\langle{#1}\rangle}
\newcommand{\Lim}{\operatorname*{\textsc{Lim}}_{T\rightarrow \infty}}
\begin{document}
\numberwithin{equation}{section}

% Article info

\title[Stationary statistical solutions of the 3D Navier-Stokes equations]{Properties of stationary statistical solutions of the three-dimensional Navier-Stokes equations}

\author[C. Foias]{Ciprian Foias}
\author[R. Rosa]{Ricardo M. S. Rosa}
\author[R. M. Temam]{Roger M. Temam}

\address[C. Foias]{Department of Mathematics, Texas A\&M University,
  College Station, TX 77843, USA.}
\address[R. M. S. Rosa]{Instituto de Matem\'atica, Universidade Federal do Rio de Janeiro, 
  Caixa Postal 68530 Ilha do Fund\~ao, Rio de Janeiro, RJ 21945-970,
  Brazil.}
\address[R. M. Temam]{Department of Mathematics, Indiana University, Bloomington,
  IN 47405, USA}
  
\email[R. M. S. Rosa]{rrosa@ufrj.br}
\email[R. M. Temam]{temam@indiana.edu}

\date{\today}
%\date{March 6, 2015}

\thanks{This work was partly supported by the National Science Foundation under the grants NSF-DMS-1206438 and NSF-DMS-1109784, by the Research Fund of Indiana University, and by the CNPq, Bras\'{\i}lia, Brazil, under the grants 303654/2013-9 and 200826/2014-0 and the cooperation project 490124/2009-7.}

\subjclass[2010]{35Q30, 76D06, 76D06, 37A05, 37L40, 37B20}
\keywords{Navier-Stokes equations, stationary statistical solutions, accretive measures, recurrence}

%\comment{This paper is to be submitted to George Sell, at Journal of Dynamics and Differential Equations.}

\begin{abstract}
 The stationary version of a modified definition of statistical solution for the three-dimensional incompressible Navier-Stokes equations introduced in a previous work  is investigated. Particular types of such stationary statistical solutions and their analytical properties are addressed. Results on the support and carriers of these stationary statistical solutions are also given, showing in particular that they are supported on the weak global attractor and are carried by a more regular part of the weak global attractor containing Leray-Hopf weak solutions which are locally strong solutions. Two recurrence-type results related to these measures are also proved.
\end{abstract}

\maketitle

%\tableofcontents 

\section{Introduction}

In a recent work \cite{frtssp1}, we have introduced and studied a modified definition of statistical solution for the Navier-Stokes equations. It is our aim here to investigate the particular case in which such statistical solutions do not vary with time, the so-called stationary statistical solutions.

The concept of statistical solution is connected with the dynamic behavior of the system and is directly related to the notion of ensemble average in the conventional theory of turbulence. More precisely, a statistical solution represents the evolution of the probability distribution of the state of the system. When this probability distribution does not vary in time, we have a stationary statistical solution. A stationary statistical solution may be viewed as a generalization of the concept of invariant measure, in the sense that the underlying system (the three-dimensional Navier-Stokes equations) is not known to generate a well-defined semigroup. 

A stationary statistical solution is an important object for the understanding of the asymptotic behavior of the system, with particular relevance for turbulence in statistical equilibrium in time. As already mentioned in \cite{frtssp1}, it is our belief that a better understanding of statistical solutions are of fundamental importance for a rigorous mathematical approach to the theory of turbulence.

There are essentially two main notions of statistical solution, one introduced by Foias and Prodi \cite{foias72,foiasprodi76} and the other, by Vishik and Fursikov \cite{vishikfursikov78}. The definition recently introduced in \cite{frtssp1} bridges these two notions, being based on a modification of the definition given by Vishik and Fursikov, and which becomes a particular case of a statistical solution in the sense of Foias and Prodi. 

The first step in the construction given in \cite{frtssp1} is to define a measure in the space of trajectories, in a way similar to the definition of a space-time statistical solution given by Vishik and Fursikov. This yields what we call a \emph{Vishik-Fursikov measure}. Then, projecting this measure into the phase space at each time yields a family of measures which we term a \emph{Vishik-Fursikov statistical solution}. A Vishik-Fursikov statistical solution is a particular type of what we call more generally a \emph{statistical solution}, which is the solution in the original sense of Foias and Prodi. The Vishik-Fursikov statistical solutions are more amenable to analysis and have a number of useful properties as showed in \cite{frtssp1} and as explored here.

In \cite{frtssp1}, we studied mostly the initial value problem, i.e. given an initial probability distribution for the velocity field, the aim was to define and prove the existence of a Vishik-Fursikov statistical solution with that initial probability distribution as the initial condition. This solution is a particular family, parametrized by the time variable, of probability distributions for the velocity field in subsequent times. Corresponding results for the initial value problem for the other previously defined notions of statistical solutions were given in \cite{foias72,vishikfursikov78}. 

In the current work, we investigate the case of statistical equilibrium in time, i.e. in which these probability distributions do not depend on time. We start in Section \ref{secprelim} by recalling the essential concepts and results from the mathematical theory of the Navier-Stokes equations, measure theory, generalized limits, and ergodic theory. We also recall some of the functional setting and results from the first part of the work \cite{frtssp1}, on time-dependent statistical solutions, and revisit the notion of weak global attractor and of its regular parts. Then, we define the multivalued solution map associated with the Navier-Stokes equations and study some dynamic sets such as orbits and sections of orbits, addressing their measurability, which is a delicate issue due to the lack of a well-defined semigroup. In Section \ref{secstatsol}, we return to the first part of the work \cite{frtssp1} on time-dependent statistical solutions and borrow from there the main concepts and results that we need here. 

In Section \ref{secsss} we start our main investigation on stationary statistical solutions. We first recall (Definition \ref{defsss}) the original definition of a \emph{stationary statistical solution} as given in \cite{foias73}, which arises when the family of probability distributions of a statistical solution does not change with time. We then define a \emph{Vishik-Fursikov stationary statistical solution} (Definition \ref{vfinvmeasdef}), which is obtained as the family of time-projections of an \emph{invariant} Vishik-Fursikov measure, in the sense of being invariant for the time-translation semigroup defined in the space of trajectories. A similar but potentially different type of stationary statistical solution is obtained when the family of time-projections of a Vishik-Fursikov measure does not depend on time, without the Vishik-Fursikov measure being known wheter it is invariant or not for the translation semigroup (see Remark \ref{rmkdiffstatvfsol}). This is what we call a \emph{stationary Vishik-Fursikov statistical solution}. Both are particular types of stationary statistical solutions, and also particular types of a Vishik-Fursikov statistical solution. We address more carefully the connection between these two notions and show that they actually coincide with each other (Theorem \ref{stationaryisstationary}).

Another particular type of stationary statistical solution is obtained through the limit of time averages of weak solutions, yielding what is called a \emph{time-average stationary statistical solution} (Definition \ref{deftimeavesss}; see also the references \cite{foiastemam75,fmrt2001a}). Similarly, invariant Vishik-Fursikov measures can also be obtained through the limit of time-averages of weak solutions (Definition \ref{deftimeaveVFinvmea}), and their projection yields a \emph{time-average Vishik-Fursikov stationary statistical solution} (Definition \ref{defvftimeaverageVFsss}).

Section \ref{seccarrier} is devoted to the study of the support of the invariant Vishik-Fursikov measures and of the Vishik-Fursikov stationary statistical solutions. We essentially prove that invariant Vishik-Fursikov measures are carried by the set of trajectories that exist and are uniformly bounded globally in time (Theorems \ref{thmcarriervfmeasure} and \ref{thmregularityofinvariantvfmeasures}), both in the past and in the future, with a corresponding result for the projected version of these measures, i.e. for the Vishik-Fursikov stationary statistical solutions (Corollary \ref{regrho0}). This allows us in particular to obtain some estimates in different norms for general Vishik-Fursikov stationary statistical solutions (Theorem \ref{corboundsrho0}) and to show that they are carried by the weak global attractor of the Navier-Stokes equations (Theorem \ref{carrierofrho0inaw}), two facts that were already known for time-average stationary statistical solutions. We then prove that any time-average stationary statistical solution is a time-average Vishik-Fursikov stationary statistical solution (Theorem \ref{thmtimeaveFPareVF}), and that they are carried by the $\omega$-limit set of the associated weak solution (Corollary \ref{timeavecarriedbyomegalimit}).

Next, in Section \ref{seclocalreg}, we address some local regularity results for Vishik-Fursikov stationary statistical solutions and Vishik-Fursikov measures. This is a step towards the \emph{Prodi invariance conjecture}, which is related to the support of a time-average stationary statistical solution being more regular in some sense, belonging to a space in which the solutions are strong and unique, globally in time. It is a kind of asymptotic regularity result in average, for the solutions of the 3D Navier-Stokes equations. We do not prove that such a stationary statistical solution is carried by this regular set, but we do prove that a Vishik-Fursikov stationary statistical solution is carried by a set in which the solutions are strong solutions at least locally in time (Theorem \ref{vfssscarriedbyaregprime}), with a similar result for a Vishik-Fursikov measure (Theorem \ref{invariantvfcarriedbywcalregprime}), thus partially solving the Prodi conjecture. 

In Section \ref{secaccretion} we address the accretion property of these measures (see Definition \ref{defaccretive}). We go back to time-dependent statistical solutions and prove an accretion property valid for any Vishik-Fursikov statistical solution (Theorem \ref{accretionvfss}). This automatically yields the accretion property for Vishik-Fursikov stationary statistical solutions (Corollary \ref{rho0accretive}), extending the previous result only known to be valid for time-average measures. Our proof also simplifies the proof given in the previous case (Corollary \ref{timeaverageaccretiveagain}).

Finally, in Section \ref{sectionrecurrence}, we study some recurrence properties of the flow. First, we prove a recurrence result for accretive measures of multivalued evolutionary systems (Theorem \ref{recurrenceforaccretivemeasures}). This result applies, in particular, to any accretive stationary statistical solution, and it is an adaptation of the classical Poincar\'e  Recurrence Theorem, known for invariant measures of semigroups.  As a consequence of the recurrence result we find that the support of an arbitrary accretive measure $\mu$ for the 3D Navier-Stokes equations is made only of points which are nonwandering with respect to the Leray-Hopf weak solutions (see Remark \ref{rmknonwandering}).  Secondly, in the particular case of Vishik-Fursikov stationary statistical solutions, the recurrence result is slightly improved (Theorem \ref{recurrenceforvfmeasurephasespace}). 

Concerning the relation with ergodic theory, there is, in fact, much more to be said for invariant Vishik-Fursikov measures, since they are classical invariant measures, in the trajectory space, with respect to the translation semigroup. Thus, all the classical results in ergodic theory apply, and most of the work left is to translate the result to the phase space. This has been exploited in our recent work \cite{frttimeaverage}. In particular, we obtain in \cite{frttimeaverage}, once we have the right framework developed here, that, at least for almost every Leray-Hopf weak solution with respect to any invariant Vishik-Fursikov measure, the generalized limit of time averages of weak solutions is actually a classical limit. See, also, the Remarks \ref{rmkergodicityonwcal} and \ref{wcalreginftyergodic}.

For the casual reader our presentation may at times seem pedantic, but from our personal experience working with this rigorous approach to the statistics of fluid dynamics we became aware that, without a very careful treatment, there are many lurking pitfalls.

\section{Preliminaries}
\label{secprelim}

\subsection{The Navier-Stokes equations and its mathematical setting}
\label{NSEsettingsec}

We recall, in this section, some fundamental and classical results about the individual solutions of the three-dimensional Navier-Stokes equations, which can be found, for instance, in \cite{constantinfoias,fmrt2001a,lady63,temam,temam2,temam3}.

We consider the three-dimensional incompressible Navier-Stokes equations, which we write in the Eulerian form as
\begin{equation}
  \label{nseeq}
  \frac{\partial \bu}{\partial t} - \nu \Delta \bu
       + (\bu\cdot\bnabla)\bu + \bnabla p = \bbf, \qquad
      \bnabla \cdot \bu = 0.
\end{equation}
The space variable is denoted by $\bx=(x_1,x_2,x_3)$, while $t$ represents the time variable. The three-dimensional velocity field is denoted by $\bu=(u_1,u_2,u_3)$, and is a function of $t$ and $\bx$; the term $\bbf=\bbf(\bx)$ represents the mass density of volume forces applied to the fluid and is assumed to be time-independent; the parameter $\nu>0$ is the kinematic viscosity; and the kinematic pressure is denoted by $p=p(t,\bx)$. 

We consider two kinds of boundary conditions: periodic and no-slip. In the periodic case the flow is assumed to be periodic in the space variables, with period $L_i$ in each direction $x_i$, $i=1,2,3$, and the fluid domain is set to $\Omega =\Pi_{i=1}^3(0,L_i)$. For simplicity, since the equations for the averages of the velocity field can be solved easily even when the averages of both the initial velocity field and the forcing term are nonzero (see e.g. \cite{fmrt2001a}), we assume that they vanish over $\Omega$\footnote{When the space average $\bar\bu$ of $\bu$ does not vanish, $\bar\bu$ is nevertheless constant in time, and the difference $\bu' = \bu - \bar\bu$ satisfies \eqref{nseeq} except for the addition of lower order terms involving $\bar\bu$. In this case, all that follows applies without any significant change. Hence, in the end, our results hold without significant change even when $\bar\bu\neq 0$.}, i.e.
\[ \int_\Omega \bu(\bx,t) \;\rd\bx = 0,
        \qquad \int_\Omega \bbf(\bx) \;\rd\bx = 0.
\]
In the case of no-slip, the flow is considered on a bounded domain $\Omega\subset\RR^3$, with smooth boundary $\partial\Omega$, with $\bu=0$ on $\partial \Omega$. Other homogeneous boundary conditions can be treated similarly, while non-homogeneous boundary conditions can be treated with the help of a background field.

For the function spaces associated with the boundary conditions, we start with a space of test functions $\Vcal$ appropriate to each case. In the space-periodic case, this space takes the form
\[ \Vcal=\left\{\bu=\bw|_\Omega; \;
     \parbox{4.2in}{ $\bw \in \Ccal^\infty(\RR^3)^3, \;\bnabla\cdot\bw=0,
                \;\int_\Omega \bw(\bx)\;\rd\bx = 0$,
        $\bw(\bx)$ is periodic with period $L_i$ in each
       direction $x_i$}
    \right\},
\]
while, in the no-slip case, one considers
\[ \Vcal=\left\{\bu\in \Ccal_\rc^\infty(\Omega)^3; \;
      \bnabla\cdot\bu = 0 \right\},
\]
where $\Ccal_\rc^\infty(\Omega)$ denotes the space of infinitely-differentiable real-valued functions with compact support on $\Omega$.

In either case, the space $H$ denotes the completion of $\Vcal$ in the norm $L^2(\Omega)^3$ of square-integrable vector fields, while the space $V$ denotes the completion of $\Vcal$ in the Sobolev norm of $H^1(\Omega)^3$ of the vector fields which belong to $L^2(\Omega)^3$ along with their partial derivatives in space. We identify $H$ with its dual and consider the dual space $V'$ of $V$, so that $V\subseteq H \subseteq V'$, with the injections being continuous, and each space dense in the following one. In either case, the space $V$ is compactly included in $H$.

The inner products in $H$ and $V$ are defined respectively by
\[ (\bu,\bv)_{L^2} = \int_\Omega \bu(\bx)\cdot\bv(\bx) \;\rd\bx,
     \quad \Vinner{\bu,\bv} = \int_\Omega \sum_{i=1,2,3}
        \frac{\partial \bu}{\partial x_i}
          \cdot \frac{\partial \bv}{\partial x_i}\; \rd\bx,
\]
and the associated norms by $|\bu|_{L^2}=(\bu,\bu)_{L^2}^{1/2}$, $\|\bu\|_{H^1}=\Vinner{\bu,\bu}^{1/2}$. We denote the duality product between $V$ and $V'$ by
\[ \dual{\bu,\bv}_{V',V}, \quad \forall \bu\in V', \;\bv\in V,
\]
and the usual norm in $V'$ is denoted by $\|\bu\|_{V'}$.

The Stokes operator $A: V \rightarrow V'$ is defined by duality through the formula
\[ \dual{A\bu,\bv}_{V',V} = \Vinner{\bu,\bv}, \quad \forall \bu,\bv\in V.
\]
The restriction of the operator $A$ to $D(A) = \{\bu\in V; \; A\bu\in H\}$ is a positive definite unbounded self-adjoint closed operator with compact inverse. We know that $|A\bu|_{L^2}$ is a norm on $D(A)$ equivalent to the $H^2$-norm. The operator $A$ has a countable number of eigenvalues $\{\lambda_j\}_{j\in \NN}$, counted according to their multiplicity, in increasing order, with each eigenvalue $\lambda_j$ associated with an eigenfunction $\bw_j$. The Galerkin projector onto the space spanned by the eigenfunctions associated with the first $m$ eigenvalues is denoted by $P_m$. The first eigenvalue $\lambda_1$ is positive and yields the optimal constant for the Poincar\'e inequality:
\begin{equation}
  \label{poincareineq}
  \lambda_1 |\bu|_{L^2}^2 \leq \|\bu\|_{H^1}^2, \quad \forall \bu\in V.
\end{equation}

We also recall the Agmon inequality
\begin{equation}
  \label{agmonineq}
   |\bu|_{L^\infty} \leq c_1\|\bu\|_{H^1}^{1/2}|A\bu|_{L^2}^{1/2}, \quad \forall \bu\in D(A),
\end{equation}
with a suitable non-dimensional constant $c_1$ depending only on the shape of the domain $\Omega$.

For the nonlinear term, we define the trilinear form
\[ b(\bu,\bv,\bw) = \int_\Omega (\bu\cdot\bnabla)\bv\cdot \bw \;\rd\bx, 
     \quad \bu,\bv,\bw\in V,
\]
which is continuous on $V$. By duality, a bilinear operator $B:V\times V\rightarrow V'$ is defined according to 
\[ \dual{B(\bu,\bv),\bw}_{V',V} = b(\bu,\bv,\bw), \quad\forall\bu,\bv,\bw\in V.
\]

Then, we obtain the following functional equation formulation for the time-dependent velocity field $\bu=\bu(t)$ corresponding to the function $\bx\in\Omega\mapsto \bu(\bx,t)$ at each time $t$:
\begin{equation}
  \label{nseeqfunctional}
  \ddt{\bu} + \nu A\bu + B(\bu,\bu) = \bbf.
\end{equation}

Since we are interested in stationary statistical solutions, we consider the autonomous case, with the following standing hypothesis on the time-independent forcing term:
\begin{equation}
  \bbf \in H.
\end{equation}
A non-dimensional parameter associated with the strength of the forcing term is the Grashof number
\begin{equation}
  \label{defgrashof}
  G = \frac{|\bbf|_{L^2}}{\nu^2\lambda_1^{3/4}}.
\end{equation}

Given a subset $X$ of $H$, we denote by $X_\rw$ this subset endowed with the weak topology of $H$. In particular, $H_\rw$ denotes the space $H$ endowed with its weak topology. The closed ball of radius $R$ in $H$ is denoted by $B_H(R)$.
Since $H$ is a separable Hilbert space, the space $B_H(R)_\rw$ is a completely metrizable metric space (i.e. it is a topological space such that there exists at least one compatible metric with which it is complete). 

Using this framework, we state the notion of Leray-Hopf weak solution as follows.
\renewcommand{\theenumi}{\roman{enumi}}
\begin{defs}
  \label{deflerayhopfweaksolution}
  A \emph{(Leray-Hopf) weak solution} on a time interval $I\subset\RR$
  is defined as a function $\bu=\bu(t)$ on $I$ with values in $H$
  and satisfying the following properties:
  \begin{enumerate}
    \item \label{lhuinhv} $\bu\in L_\loc^\infty(I;H)\bigcap L_\loc^2(I;V)$;
    \item \label{lhutinvprime} $\partial\bu/\partial t \in L_\loc^{4/3}(I;V')$;%\footnote{Condition \eqref{lhutinvprime} is actually a consequence of \eqref{lhuinhv} and \eqref{lhfnse}.}
    \item \label{lhuinhw} $\bu\in \Ccal_\loc(I;H_w)$, i.e. $\bu$ is weakly continuous 
      in $H$, which means
      $t\mapsto (\bu(t),\bv)$ is continuous from $I$ into $\RR$, for
      every $\bv\in H$;
    \item \label{lhfnse} $\bu$ satisfies the functional equation \eqref{nseeqfunctional}
      in the distribution sense on $I$, with values in $V'$
%      \footnote{This is equivalent to \[ (\bu(t),\bv)_{L^2} = (\bu(s),\bv)_{L^2} + \int_s^t\{(\bbf(\tau),\bv)_{L^2}- \nu \Vinner{\bu(\tau),\bv} - b(\bu(\tau),\bu(\tau),\bv)\}\;\rd\tau, \] for every $t,s$ in $I$ and every $\bv$ in $V;$ see e.g. \cite[Ch. 3, Section 1]{temam}};
    \item \label{lheineq} 
      For almost all $t'$ in $I$, $\bu$ satisfies the following energy inequality:
      \begin{equation}
        \label{energyinequalityintegralform}
          \frac{1}{2}|\bu(t)|_{L^2}^2 + \nu \int_{t'}^t \|\bu(s)\|_{H^1}^2 \;\rd s
            \leq \frac{1}{2}|\bu(t')|_{L^2}^2 + \int_{t'}^t (\bbf(s),\bu(s))_{L^2}\;\rd s,
      \end{equation}     
      for all $t$ in $I$ with $t>t'$. These times $t'$ are characterized as the 
      points of strong continuity in $H$ from the right for $\bu$, and their set is of
      total measure.%\footnote{Condition \eqref{lheineq} on the energy inequality can be interchanged with the assumption that $\bu$ satisfies the following energy inequality in the distribution sense on $I$:
%\begin{equation}
%  \label{energyinequality}
%  \frac{1}{2}\frac{\rd}{\rd t}|\bu(t)|_{L^2}^2 + \nu \|\bu(t)\|_{H^1}^2
%    \leq (\bbf(t),\bu(t))_{L^2}.
%\end{equation}}
    \item \label{lhinitialcont} If $I$ is closed and bounded on the left, with its left
      end point denoted by $t_0$, then the solution
      is strongly continuous in $H$ at $t_0$ from the right, i.e.
      $\bu(t)\rightarrow \bu(t_0)$ in $H$ as $t\rightarrow t_0^+$.
  \end{enumerate}
\end{defs}

From now on, for notational simplicity, \emph{a weak solution will always
mean a Leray-Hopf weak solution.} 

Using the Cauchy-Schwarz and Poincar\'e inequalities, condition \eqref{lheineq} yields
\begin{equation}
   \label{L2enstrophyestimatefinh}
   |\bu(t)|_{L^2}^2 + \nu \int_{t'}^t \|\bu(s)\|_{H^1}^2 \;\rd s
     \leq |\bu(t')|_{L^2}^2 + \frac{1}{\nu\lambda_1} |\bbf|_{L^2}^2 (t-t'),
\end{equation}    
for $t'$ and $t$ as in \eqref{lheineq}. A weak solution also satisfies
\begin{equation}
  \label{energyestimate}
  |\bu(t)|_{L^2}^2 \leq |\bu(t')|_{L^2}^2 e^{-\nu\lambda_1 (t-t')}
     + \frac{1}{\nu^2\lambda_1^2} |\bbf|_{L^2}^2
         \left(1-e^{-\nu\lambda_1 (t-t')}\right),
\end{equation}
for almost all $t'$ in $I$ and all $t$ in $I$ with $t'<t$. The allowed times
$t'$ are again the points at which the solution is strongly continuous from the right.
The allowed times $t'$ can be characterized as the Lebesgue points of the 
function $t\mapsto |\bu(t)|_{L^2}^2$ in the sense that
\begin{equation}
  \label{lebesguepoints}
  \lim_{\tau\rightarrow 0^+} \frac{1}{\tau} \int_{t'}^{t'+\tau} |\bu(t)|_{L^2}^2 \;\rd t
      = |\bu(t')|_{L^2}^2.
\end{equation}
Since $t\mapsto |\bu(t)|_{L^2}^2$ is locally integrable, these Lebesgue points
form a set of full measure. In the case of a weak solution on an interval closed and bounded on the left, with left end point $t_0$, then condition \eqref{lhinitialcont} implies that the point $t_0$ is a point of strong continuity from the right. In this case, estimate \eqref{energyestimate} is also valid for the initial time $t'=t_0$, which is a crucial property for obtaining a~priori estimates.

Let $R_0$ be given by
\begin{equation}
  \label{defR0}
  R_0 = \frac{|\bbf|_{L^2}}{\nu\lambda_1} = \frac{\nu}{\lambda_1^{1/4}}G.
\end{equation}
The energy estimate \eqref{energyestimate} implies the following invariance
property for any ball of radius larger than $R_0$: If $\bu$ is a weak solution
on $[0,\infty)$ and $R\geq R_0$, then
\begin{equation}
  \label{invarianceBR}
  \bu(0)\in B_H(R) \Rightarrow \bu(t)\in B_H(R), \;\forall t\geq 0.
\end{equation}

The notion of strong solution is also of interest in this work:
\begin{defs}
  A (Leray-Hopf) weak solution on an arbitrary interval $I$ is called regular
  or a strong solution if it satisfies furthermore
  \begin{itemize}
    \item[(vii)] $\bu\in \Ccal(I,V)$, i.e. $t\mapsto \bu(t)$ is strongly continuous from $I$ into $V$.
  \end{itemize}
\end{defs}

It is well known that given any initial time $t_0\in \RR$ and any initial condition $\bu_0$ in $H$, there exists at least one global weak solution $\bu$ on $[t_0,\infty)$ satisfying $\bu(t_0)=\bu_0$. It is also known that if $\bu_0$ belongs to $V$, then there exists a local strong solution $\bu$, defined on some interval $[t_0,t_1)$, $t_0<t_1\leq \infty$, with $\bu(t_0)=\bu_0$. Such strong solution is unique among the class of all weak solutions, i.e. when a strong solution $\bu$ exists on an interval of the form $[t_0, t_1)$, with $t_0<t_1\leq \infty$, then any weak solution on $[t_0,t_1)$ with $\bv(t_0)=\bu_0$ must coincide with $\bu$. Moreover, a strong solution on an open interval $(t_2,t_3)$, with $-\infty\leq t_2<t_3\leq \infty$, is analytic in time as a function from $(t_2,t_3)$ into $D(A)$.

Concerning the regularity points of a weak solution we introduce the following definition.

\begin{defs}
  \label{regandsingpoints}
  Let $\bu$ be a weak solution defined on an interval $I\subset \RR$. Then a point $t\in I$
  is called \emph{singular} if $\bu(t)\in H\setminus V$ and is called \emph{regular} if $\bu(t)\in V$.
  Morever, a regular point $t\in I$ is called a \emph{point of interior regularity} if
  there exists a $\delta>0$ such that $(t-\delta,t+\delta)$ is included in $I$ and $\bu$
  restricted to $(t-\delta,t+\delta)$ is a strong solution.
\end{defs}

Given a weak solution $\bu$ on an interval $I\subset\RR$, the set of interior regularity points of $\bu$ is an open set which is dense in $I$ and of full measure in $I$ (see e.g. \cite{leray,scheffer,foiastemam79,temam3,frt2010c}). Being open, the set of interior regularity points can be written as a countable collection of disjoint open intervals, say $\Ocal = \cup_{k}(\alpha_k,\beta_k)$.  Each interval $(\alpha_k,\beta_k)$ is an interval of maximal regularity of $\bu$, within $I$. If $\beta_k$ belongs to the interior of $I$, then $\|\bu(t)\|_{H^1}$ must blow up as $t$ approaches $\beta_k$ from the left, otherwise we would be able to extend the interval of regularity beyond $\beta_k$. 

Following \cite{foiasguillopetemam1981}, this structure of the set of interior regularity points can be used to yield an integral estimate for the norm of the weak solutions in $D(A)$; more precisely, an estimate in $L^{2/3}_\loc(I,D(A))$. We revisit this estimate here, in non-dimensional form. If $\bu$ is a strong solution on an interval $J$, then the enstrophy equation holds on $J$:
\begin{equation}
  \label{enstrophyequation}
  \frac{1}{2}\frac{\rd}{\rd t}\|\bu(t)\|_{H^1}^2 + \nu |A\bu(t)|_{L^2}^2 + b(\bu,\bu,A\bu) = (\bbf(t),A\bu(t))_{L^2}.
\end{equation}
Using H\"older's inequality with $L^6$, $L^3$, and $L^2$, respectively, followed by Sobolev's, interpolation and Young's inequalities, we obtain the following inequality for the trilinear term:
\begin{equation}
   \label{buuauest}
   |b(\bu,\bu,A\bu)| \leq \frac{\nu}{4}|A\bu|_{L^2}^2+\frac{c_2}{\nu^3}\|\bu\|_{H^1}^6, \quad \forall \bu\in D(A),
\end{equation}
for a suitable non-dimensional constant $c_2$ depending only on the shape of $\Omega$, and which we may assume to be greater than $1$. Using the estimate \eqref{buuauest}, we find the well-known inequality
\begin{equation}
  \label{enstrophyineq}
  \frac{1}{2}\frac{\rd}{\rd t}\|\bu(t)\|_{H^1}^2 + \frac{\nu}{2} |A\bu|_{L^2}^2 
        \leq \frac{1}{\nu}|\bbf|_{L^2}^2 + \frac{c_2}{\nu^3}\|\bu\|_{H^1}^6.
\end{equation}
Using that the forcing term is time-independent, we rewrite \eqref{enstrophyineq} as
\[ \frac{\rd}{\rd t} \left( (\nu|\bbf|_{L^2})^{2/3} + \|\bu\|_{H^1}^2\right) + \nu |A\bu|_{L^2}^2  \leq \frac{c_2}{\nu^3} \left( (\nu|\bbf|_{L^2})^{2/3} + \|\bu\|_{H^1}^2\right)^3.
\]
We divide this inequality by
\[  y(t) = (\nu|\bbf|_{L^2})^{2/3} + \|\bu\|_{H^1}^2,
\]  
to find that
\begin{equation}
  \label{inverseenstrophyineq}
  \frac{\rd}{\rd t} \left(-\frac{1}{y}\right) + \frac{\nu |A\bu|_{L^2}^2}{y^2} \leq \frac{c_2}{\nu^3}y.
\end{equation}
Integrating \eqref{inverseenstrophyineq} from $\alpha$ to $\beta$, where $\alpha,\beta \in J$, with $\alpha<\beta$, we have
\[ \frac{1}{y(\alpha)} + \int_\alpha^\beta \frac{\nu |A\bu|_{L^2}^2}{y^2}\;\rd s \leq \frac{1}{y(\beta)} + \frac{c_2}{\nu^3} \int_\alpha^\beta y \;\rd s.
\]
Then, we obtain
\begin{align*}
  \int_\alpha^\beta |A\bu|_{L^2}^{2/3} \;\rd s & = \int_\alpha^\beta \left(\frac{|A\bu|_{L^2}}{y}\right)^{2/3}y^{2/3} \;\rd s
  \leq \left( \int_\alpha^\beta \frac{|A\bu|_{L^2}^2}{y^2}\;\rd s\right)^{1/3} \left( \int_\alpha^\beta y \;\rd s \right)^{2/3} \\
    & \leq \left(\frac{1}{\nu y(\beta)} + \frac{c_2}{\nu^4} \int_\alpha^\beta y \;\rd s\right)^{1/3} \left( \int_\alpha^\beta y \;\rd s \right)^{2/3} \\
    & = \frac{1}{\nu^{1/3}y(\beta)^{1/3}}\left( \int_\alpha^\beta y \;\rd s \right)^{2/3} + \frac{{c_2}^{1/3}}{\nu^{4/3}} \int_\alpha^\beta y \;\rd s \\
    & \leq \frac{\nu^{5/3}}{3y(\beta)} + \frac{c_3}{\nu^{4/3}}\int_\alpha^\beta y \;\rd s,
\end{align*}
where $c_3 = 2/3+c_2^{1/3}$. Hence, substituting for $y$, we obtain
\begin{multline}  
  \label{estDAtwothirdslocal}
  \int_\alpha^\beta |A\bu|_{L^2}^{2/3} \;\rd s \leq \frac{\nu^{5/3}}{3((\nu|\bbf|_{L^2})^{2/3} + \|\bu(\beta)\|_{H^1}^2)} \\ + \frac{c_3}{\nu^{4/3}}\left(\nu^{2/3} |\bbf|_{L^2}^{2/3}(\beta-\alpha) + \int_\alpha^\beta  \|\bu\|_{H^1}^2 \;\rd s\right),
\end{multline}
for all $\alpha<\beta$ within an interval of strong regularity.

Consider, now, a weak solution $\bu$ on an interval $I$. Let $t'\in I$ be a point of strong continuity from the right for $\bu$, and let $t\in I$, $t>t'$. Consider the open set $\Ocal=\cup_{k}(\alpha_k,\beta_k)$ of interior regularity points of $\bu$ within $(t', t)$. Within each interval $(\alpha_k,\beta_k)$, the estimate \eqref{estDAtwothirdslocal} holds. If $\beta_k<t$, then $\|\bu(\beta)\|_{H^1}$ must blow up as $\beta$ approaches $\beta_k$, otherwise the interval of regularity could be extend beyond $\beta_k$. If $\beta_k=t$, then $\|\bu(\beta)\|_{H^1}$ may or may not blow up. In any case, we let $\alpha\rightarrow \alpha_k^+$ and $\beta\rightarrow \beta_k^-$ in the estimate \eqref{estDAtwothirdslocal} to find the upper bound
\[  \int_{\alpha_k}^{\beta_k} |A\bu|_{L^2}^{2/3} \;\rd s \leq M_k + \frac{c_3}{\nu^{4/3}}\left(\nu^{2/3} |\bbf|_{L^2}^{2/3}(\beta_k-\alpha_k) + \int_{\alpha_k}^{\beta_k}  \|\bu\|_{H^1}^2 \;\rd s\right),
\]
where $M_k=0$, if $\beta_k<t$, and $M_k=\nu/(3|\bbf|_{L^2}^{2/3})$, if $\beta_k=t$.

Summing up in $k$, using that $\Ocal=\cup_k (\alpha_k,\beta_k)$ is of full measure in $(t',t)$, and that $\beta_k=t$ at most for one index $k$, we obtain the desired result, which we state as follows.

\begin{prop}
  Let $\bu$ be a Leray-Hopf weak solution on an arbitrary interval $I$. Then, 
\begin{equation}
  \label{estDAtwothirdsglobal}
  \int_{t'}^t |A\bu|_{L^2}^{2/3} \;\rd s \\ \leq \frac{\nu}{3|\bbf|_{L^2}^{2/3}} + \frac{c_3}{\nu^{4/3}}\left(\nu^{2/3} |\bbf|_{L^2}^{2/3}(t-t') + \int_{t'}^t  \|\bu\|_{H^1}^2 \;\rd s\right),
\end{equation}
for any $t', t\in I$, $t'<t$, with $t'$ a point of strong continuity from the right for $\bu$.
\end{prop}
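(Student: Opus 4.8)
The plan is to obtain the global estimate by summing the local estimate \eqref{estDAtwothirdslocal} over the connected components of the set of interior regularity points. First I would fix the point $t'\in I$ of strong continuity from the right and a later point $t\in I$, and recall that the set of interior regularity points of $\bu$ within $(t',t)$ is open, dense, and of full measure, so that it can be written as a countable disjoint union $\Ocal=\cup_k(\alpha_k,\beta_k)$ of maximal intervals of strong regularity. Since the complement of $\Ocal$ in $(t',t)$ is a null set, both $\int_{t'}^t|A\bu|_{L^2}^{2/3}\;\rd s$ and $\int_{t'}^t\|\bu\|_{H^1}^2\;\rd s$ split as the corresponding sums over $k$, which reduces the claim to summing the local bounds.

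On each component I would apply \eqref{estDAtwothirdslocal} on $(\alpha,\beta)\subset(\alpha_k,\beta_k)$ and let $\alpha\rightarrow\alpha_k^+$ and $\beta\rightarrow\beta_k^-$. The integral terms pass to the limit by monotone convergence, so the only delicate piece is the boundary term $\nu^{5/3}/(3y(\beta))$, where $y(\beta)=(\nu|\bbf|_{L^2})^{2/3}+\|\bu(\beta)\|_{H^1}^2$. Here I would distinguish two cases. If $\beta_k<t$, then $\beta_k$ is an interior point of $I$ that is not a regularity point, so maximality forces $\|\bu(\beta)\|_{H^1}\rightarrow\infty$ as $\beta\rightarrow\beta_k^-$ (otherwise the strong solution would extend past $\beta_k$, contradicting maximality); hence $y(\beta)\rightarrow\infty$ and the boundary term vanishes, giving $M_k=0$. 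If instead $\beta_k=t$, the norm need not blow up, but using $y(\beta)\geq(\nu|\bbf|_{L^2})^{2/3}$ I can still bound the boundary term by $\nu^{5/3}/(3(\nu|\bbf|_{L^2})^{2/3})=\nu/(3|\bbf|_{L^2}^{2/3})$, so $M_k\leq\nu/(3|\bbf|_{L^2}^{2/3})$.

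Finally I would sum over $k$. Since the right endpoints $\beta_k$ are distinct and only one of them can equal $t$, at most one index contributes a nonzero $M_k$, and the total boundary contribution is at most $\nu/(3|\bbf|_{L^2}^{2/3})$. The remaining terms sum, via the full-measure property noted above, to $(c_3/\nu^{4/3})(\nu^{2/3}|\bbf|_{L^2}^{2/3}(t-t')+\int_{t'}^t\|\bu\|_{H^1}^2\;\rd s)$, yielding \eqref{estDAtwothirdsglobal}. I expect the main obstacle to be the justification that $\|\bu(\beta)\|_{H^1}$ blows up at each interior right endpoint $\beta_k<t$; this rests on the uniqueness and extendability of strong solutions (a strong solution on $(\alpha_k,\beta_k)$ with $\|\bu\|_{H^1}$ bounded near $\beta_k$ would admit a strong continuation, contradicting maximality of the component) together with the fact that the set of interior regularity points is exactly the union of these maximal intervals. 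A secondary point worth care is checking that the singular set, though of measure zero, does not contribute to $\int_{t'}^t|A\bu|_{L^2}^{2/3}\;\rd s$, which follows because $|A\bu|_{L^2}^{2/3}$ is a nonnegative measurable function and the integral over a null set vanishes.
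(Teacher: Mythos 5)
Your proposal is correct and follows essentially the same argument as the paper: decompose $(t',t)$ into the maximal intervals $(\alpha_k,\beta_k)$ of interior regularity, pass to the limit in the local estimate \eqref{estDAtwothirdslocal} on each component (with $M_k=0$ when $\beta_k<t$ by blow-up from maximality, and $M_k\leq\nu/(3|\bbf|_{L^2}^{2/3})$ when $\beta_k=t$), and sum over $k$ using the full-measure property and the fact that at most one $\beta_k$ equals $t$. The extra details you supply (monotone convergence for the limits, the bound $y(\beta)\geq(\nu|\bbf|_{L^2})^{2/3}$, and the null set not contributing to the integrals) are sound refinements of the paper's more terse presentation.
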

As mentioned earlier, such an estimate was proved in \cite[Theorem 3.1]{foiasguillopetemam1981}, with a bound that is not made explicit there.

Next, we recall two results which will be useful in the sequel. First, 
we will need to paste solutions together, according to the following result, which holds thanks in part to the condition of strong continuity from the right, of weak solutions, at the initial time:

\begin{lem}[Pasting Lemma]
  \label{pastinglemma}
  Let $\bu^{(1)}$ be a weak solution on an interval $(t_1,t_2]$ and
  $\bu^{(2)}$ be a weak solution on an interval $[t_2,t_3)$, with
  $-\infty\leq t_1<t_2<t_3\leq \infty$ and $\bu^{(1)}(t_2)=\bu^{(2)}(t_2)$.
  Then the function
  \begin{equation}
    \label{concatenation}
      \tilde\bu(t) = \begin{cases}
        \bu^{(1)}(t), & t_1<t< t_2, \\
        \bu^{(2)}(t), & t_2 \leq t < t_3,
    \end{cases}
  \end{equation}
  is a weak solution on $(t_1,t_3)$. 
\end{lem}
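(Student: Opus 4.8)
The plan is to verify each of the defining properties \eqref{lhuinhv}--\eqref{lhinitialcont} of Definition \ref{deflerayhopfweaksolution} for $\tilde\bu$ on the open interval $I=(t_1,t_3)$, the only delicate point being the behavior at the junction $t_2$. Since $(t_1,t_3)$ is open, property \eqref{lhinitialcont} is vacuous, so there are five conditions to check. Throughout I use that any compact subinterval of $(t_1,t_3)$ either lies in $(t_1,t_2]$, lies in $[t_2,t_3)$, or splits at $t_2$ into two such pieces.

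Properties \eqref{lhuinhv} and \eqref{lhutinvprime} are immediate: over a compact subinterval $K\subset(t_1,t_3)$ the relevant $L^\infty_\loc(H)$, $L^2_\loc(V)$ and $L^{4/3}_\loc(V')$ norms of $\tilde\bu$ (resp.\ of $\partial_t\tilde\bu$) are controlled by the sum of the corresponding norms of $\bu^{(1)}$ over $K\cap(t_1,t_2]$ and of $\bu^{(2)}$ over $K\cap[t_2,t_3)$, both finite. For weak continuity \eqref{lhuinhw}, the only point in question is $t_2$: weak continuity of $\bu^{(1)}$ on $(t_1,t_2]$ gives $\bu^{(1)}(t)\rightharpoonup \bu^{(1)}(t_2)$ in $H$ as $t\to t_2^-$, weak continuity of $\bu^{(2)}$ on $[t_2,t_3)$ gives $\bu^{(2)}(t)\rightharpoonup \bu^{(2)}(t_2)$ as $t\to t_2^+$, and the matching condition $\bu^{(1)}(t_2)=\bu^{(2)}(t_2)=\tilde\bu(t_2)$ makes the one-sided weak limits agree, so $t\mapsto(\tilde\bu(t),\bv)_{L^2}$ is continuous at $t_2$ for every $\bv\in H$.

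For the functional equation \eqref{lhfnse}, I would fix $\bv\in V$ and recall that $\bu^{(i)}$ being a weak solution forces the scalar function $t\mapsto(\bu^{(i)}(t),\bv)_{L^2}$ to be (equal a.e.\ to) an absolutely continuous function with a.e.\ derivative $-\nu\langle A\bu^{(i)},\bv\rangle_{V',V}-\langle B(\bu^{(i)},\bu^{(i)}),\bv\rangle_{V',V}+(\bbf,\bv)_{L^2}$, the right-hand side lying in $L^{4/3}_\loc$ by \eqref{lhuinhv}--\eqref{lhutinvprime}. The glued function $t\mapsto(\tilde\bu(t),\bv)_{L^2}$ is then absolutely continuous on $(t_1,t_2]$ and on $[t_2,t_3)$ and, by the weak continuity established above, continuous at $t_2$; a function absolutely continuous on two adjacent intervals and continuous at their common endpoint is absolutely continuous across it, with a.e.\ derivative equal to the glued one. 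This is exactly the functional equation for $\tilde\bu$ on $(t_1,t_3)$, in the distributional sense with values in $V'$.

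The main obstacle is the energy inequality \eqref{lheineq}, where the asymmetry of the two intervals and the role of strong continuity from the right become essential. I first identify the admissible times $t'$ — the points of strong right-continuity of $\tilde\bu$ — as those $t'\in(t_1,t_2)$ which are such points for $\bu^{(1)}$, together with $t_2$ and those $t'\in(t_2,t_3)$ which are such points for $\bu^{(2)}$; this set has full measure in $(t_1,t_3)$, and $t_2$ itself qualifies precisely because property \eqref{lhinitialcont} for $\bu^{(2)}$ guarantees strong continuity from the right at its left endpoint $t_2$. For such a $t'$ and any $t>t'$, the cases $t\le t_2$ and $t'\ge t_2$ follow directly from the energy inequality for $\bu^{(1)}$ (valid up to the right endpoint $t_2$ since $(t_1,t_2]$ is closed there) or for $\bu^{(2)}$. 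In the remaining case $t'<t_2<t$, I would add the energy inequality for $\bu^{(1)}$ from $t'$ to $t_2$ to the energy inequality for $\bu^{(2)}$ from $t_2$ (admissible by \eqref{lhinitialcont}) to $t$; the terms $\tfrac12|\bu^{(1)}(t_2)|_{L^2}^2$ and $\tfrac12|\bu^{(2)}(t_2)|_{L^2}^2$ cancel thanks to the matching condition, and the integrals concatenate at $t_2$, yielding the energy inequality for $\tilde\bu$ from $t'$ to $t$. It is precisely this cancellation — which requires norm continuity, not merely weak continuity, at $t_2$ — that makes strong continuity from the right indispensable and constitutes the heart of the lemma.
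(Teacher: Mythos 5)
Your proof is correct. The paper itself does not give a proof of Lemma \ref{pastinglemma} (it defers to \cite[Lemma 2.4]{frt2010c}), and your argument supplies exactly the details that reference contains: conditions \eqref{lhuinhv}--\eqref{lhfnse} glue routinely once the matching condition and weak continuity rule out any jump at $t_2$, and the only delicate point, the energy inequality \eqref{lheineq} across the junction, is handled correctly by invoking property \eqref{lhinitialcont} of $\bu^{(2)}$ to make $t_2$ an admissible starting time. One correction of emphasis in your closing sentence: the cancellation of the terms $\tfrac12|\bu^{(1)}(t_2)|_{L^2}^2$ and $\tfrac12|\bu^{(2)}(t_2)|_{L^2}^2$ needs only the hypothesis $\bu^{(1)}(t_2)=\bu^{(2)}(t_2)$, not any continuity; what strong continuity from the right at $t_2$ actually buys --- as you state correctly earlier in that paragraph --- is that the energy inequality for $\bu^{(2)}$ is valid with initial time $t'=t_2$ at all. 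A second, purely logical, remark: your claim that $\partial_t\tilde\bu\in L^{4/3}_\loc((t_1,t_3);V')$ presupposes that the distributional derivative of $\tilde\bu$ carries no singular mass at $t_2$, which is precisely what your later absolute-continuity argument for \eqref{lhfnse} establishes; stating that argument first would make the order of deductions airtight.
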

\medskip

The proof of this result is simple; see \cite[Lemma 2.4]{frt2010c} for the details. 
The following compactness result is also useful. It follows from arguments used in the classical proofs of existence of weak solutions.
\begin{lem}[Weak Compactness]
  \label{convergenceofsolutions1}
  Let $\{\bu_j\}_{j\in\NN}$ be a sequence of weak solutions on
  some interval $I=(t_0,t_1)$, $-\infty \leq t_0 < t_1 \leq \infty$,
  and suppose this sequence is uniformly bounded in $H$.
  Then, there exists a subsequence $\{\bu_{j'}\}_{j'}$ and a weak
  solution $\bu(\cdot)$ on $I$ such that $\bu_{j'}$ converges to $\bu$
  in $H_\rw$, uniformly on any compact interval in $I$.
\end{lem}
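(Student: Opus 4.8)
Let me sketch how I'd prove the weak compactness of a sequence of Leray-Hopf weak solutions that is uniformly bounded in $H$.

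The plan is to follow the standard Leray-Hopf compactness method, treating the uniform $H$-bound as the source of all the necessary a priori estimates. First I would extract, from the hypothesis that $\{\bu_j\}$ is uniformly bounded in $L^\infty_\loc(I;H)$, the additional uniform bounds needed to pass to the limit. Specifically, applying the energy-type inequality \eqref{L2enstrophyestimatefinh} on each compact subinterval $[a,b]\subset I$ gives a uniform bound on $\int_a^b \|\bu_j(s)\|_{H^1}^2\,\rd s$, so $\{\bu_j\}$ is uniformly bounded in $L^2_\loc(I;V)$. Then, from the functional equation \eqref{nseeqfunctional}, the time derivative $\partial \bu_j/\partial t = \bbf - \nu A\bu_j - B(\bu_j,\bu_j)$ can be estimated in $V'$: the linear term $A\bu_j$ is controlled in $L^2_\loc(I;V')$, and the standard estimate on $B$ yields a uniform bound on $\partial\bu_j/\partial t$ in $L^{4/3}_\loc(I;V')$. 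These two bounds are exactly the ingredients of Definition \ref{deflerayhopfweaksolution}\eqref{lhuinhv}--\eqref{lhutinvprime}.

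Next I would extract a subsequence (not relabeled here; relabeled as $\bu_{j'}$ in the statement) and a limit $\bu$ using the Aubin-Lions-Simon compactness lemma together with a diagonal argument over an exhaustion of $I$ by compact intervals. The uniform bounds in $L^2_\loc(I;V)$ and $L^{4/3}_\loc(I;V')$, combined with the compact embedding $V\hookrightarrow\hookrightarrow H$, give strong convergence $\bu_{j'}\to\bu$ in $L^2_\loc(I;H)$, weak convergence in $L^2_\loc(I;V)$, and weak-$*$ convergence in $L^\infty_\loc(I;H)$. The strong $L^2_\loc(I;H)$ convergence is what lets me pass to the limit in the nonlinear term $B(\bu_{j'},\bu_{j'})$ against a fixed test function, so that $\bu$ satisfies \eqref{nseeqfunctional} in the distribution sense, giving property \eqref{lhfnse}. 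The uniform $V'$-bound on the time derivatives, via an Arzel\`a-Ascoli argument in $H_\rw$, yields both the weak continuity property \eqref{lhuinhw} and the claimed uniform-on-compacta convergence $\bu_{j'}\to\bu$ in $H_\rw$.

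The main obstacle, as always for Leray-Hopf solutions, is the energy inequality \eqref{lheineq}: it is an inequality, not an equality, so it is not automatically stable under weak limits on the left-hand side. The way around this is to use weak lower semicontinuity of the norm: integrating the energy inequality for $\bu_{j'}$ against a nonnegative test function in time, I pass to the limit using strong $L^2_\loc(I;H)$ convergence on the dissipation and forcing terms and lower semicontinuity on the kinetic energy terms, then recover the pointwise inequality \eqref{energyinequalityintegralform} for $\bu$ at almost every $t'$. A parallel subtlety is verifying the initial strong continuity \eqref{lhinitialcont} when $I$ is not open; here the combination of weak continuity in $H$, the energy inequality at $t_0$, and the lower semicontinuity argument upgrades weak to strong right-continuity at the left endpoint. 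Since these are precisely the arguments used in the classical existence proof, I would cite that this verification is routine and refer to the standard references \cite{constantinfoias,temam,fmrt2001a} rather than reproduce it in full.
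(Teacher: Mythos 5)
Your overall strategy is exactly the one the paper intends: the paper gives no proof of Lemma \ref{convergenceofsolutions1}, stating only that it ``follows from arguments used in the classical proofs of existence of weak solutions,'' and your sketch is a faithful rendering of those arguments (uniform $L^2_\loc(I;V)$ bound from \eqref{L2enstrophyestimatefinh}, uniform $L^{4/3}_\loc(I;V')$ bound on the time derivatives, Aubin--Lions plus a diagonal argument over an exhaustion of $I$ by compact intervals, Arzel\`a--Ascoli in $H_\rw$, and a limiting argument for the energy inequality).

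Two slips are worth flagging. First, a mis-assignment of tools: you propose to pass to the limit ``using strong $L^2_\loc(I;H)$ convergence on the dissipation and forcing terms.'' Strong $L^2_\loc(I;H)$ convergence gives no control of the dissipation term $\nu\int_{t'}^t\|\bu_{j'}(s)\|_{H^1}^2\,\rd s$; that term sits on the left-hand side of \eqref{energyinequalityintegralform} and must be handled by weak convergence in $L^2_\loc(I;V)$ together with weak lower semicontinuity of the norm, while the strong $L^2_\loc(I;H)$ convergence (giving, after a further subsequence, $\bu_{j'}(t')\rightarrow\bu(t')$ in $H$ for a.e. $t'$) is what controls the term $|\bu_{j'}(t')|_{L^2}^2$ on the right. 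Since all the needed convergences appear in your list, this is a one-line fix. Second, and more substantively: your closing claim that, when $I$ is closed on the left, weak continuity plus the energy inequality at $t_0$ ``upgrades weak to strong right-continuity at the left endpoint'' is false in general. Under merely weak convergence of the initial values, energy can be lost in the limit at $t_0$, and the limit function need only be a weak solution on the interior of the interval; this failure is precisely why the paper introduces the spaces $\Ucal_I^\sharp$, which differ from $\Ucal_I$ exactly when $I$ is closed on the left, and why Lemma \ref{sigmaomegacompact} requires \emph{strong} compactness of the set of initial data. Fortunately the lemma at hand is stated on the open interval $I=(t_0,t_1)$, so condition \eqref{lhinitialcont} is vacuous and this incorrect side remark does not create a gap in the proof of the statement itself.
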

\medskip

\subsection{Elements of measure theory}
\label{measuretheory}

The statistical solutions are Borel probability measures in appropriate topological spaces. With that in mind, we recall in this section a few facts from measure theory, especially in connection to topology. For the results mentioned here, we refer the reader to the works \cite{aliprantisborder,bourbaki69,brownpearcy,rudin,moschovakis,kuratowski}.

A \emph{measurable space} is a pair $(X,\Mcal)$ where $X$ is a set and $\Mcal$ is a $\sigma$-algebra of subsets of $X$ called the \emph{measurable sets}. A \emph{measure space} is a triplet $(X,\Mcal,\mu)$ where $(X,\Mcal)$ is a measurable space and $\mu$ is a measure. A \emph{finite measure} is a measure such that $\mu(X)<\infty$, and a \emph{probability measure} is a finite measure with $\mu(X)=1$. A probability space is a measure space $(X,\Mcal,\mu)$ in which $\mu$ is a probability measure. A \emph{null set} is a measurable set with measure zero, and a measure is said to be \emph{complete} if any subset of a null set is also measurable.

Any measure is continuous from above and from below in the following sense (see e.g. \cite[Theorem 10.8]{aliprantisborder}). If $E_1 \supset E_2 \supset ...$ is a monotone decreasing sequence of measurable sets with $\mu(E_1)<\infty$, then $\cap_n E_n$ is measurable and $\mu(\cap_n E_n) = \lim_{n\rightarrow \infty} \mu(E_n)$, and if $E_1\subset E_2 \subset ... $ is a monotone increasing sequence of measurable sets, then $\cup_n E_n$ is measurable and $\mu(\cup_n E_n) = \lim_{n\rightarrow \infty} \mu(E_n)$.

When the set $X$ is a topological space, a natural $\sigma$-algebra to consider is the family of \emph{Borel subsets} of $X$, which is defined as the smallest $\sigma$-algebra containing the open sets of $X$. This $\sigma$-algebra is denoted by $\Bcal(X)$. We consider only topological spaces which are \emph{Hausdorff}, i.e. in which any pair of distinct points can be separated by disjoint open sets. If $X$ is a topological vector space, then the weak topology is also a natural topology to consider. In case the topological vector space is a separable Banach space then the Borel sets in the strong topology coincide with the Borel sets in the weak topology. In particular the Borel sets in the space $H$ coincide with the Borel sets in $H_\rw$. Moreover, Borel sets in $V$ or in $D(A)$ are also Borel sets in $H$ (see \cite[Appendix IV.A.1, p. 219-220]{fmrt2001a}).

A \emph{Borel measure} is a measure $\mu$ on a topological space $X$ defined on the Borel sets $\Bcal(X)$ of $X$. Given a Borel measure $\mu$ on a topological space $X$, the $\sigma$-algebra $\Bcal_\mu(X)$ is defined as the smallest $\sigma$-algebra containing the Borel sets and the subsets of Borel sets of $\mu$-measure zero. One has (see \cite[Theorem 1.8]{folland1999}) that $E\in\Bcal_\mu(X)$ if and only if there exists a Borel set $E_B$ and a subset $E_N$ of a Borel set of $\mu$-measure zero such that $E=E_B\cup E_N$. This representation of $E\in\Bcal_\mu(X)$ may not be unique but the $\mu$-measure of $E_B$ is independent of the representation, so that we can extend the Borel measure $\mu$ to a \emph{complete Borel measure} on $\Bcal_\mu(X)$ by defining $\mu(E)=\mu(E_B)$. Such a measure is called the \emph{completion} (or the \emph{Lebesgue extension}) of the Borel measure. When the Borel measure $\mu$ is $\sigma$-finite, the completion $\bar\mu$ coincides with the restriction to $\Bcal_\mu(X)$ to the Carath\'eodory extension of $\mu$ (see \cite[Chapter 1, Exercise 22]{folland1999} and also \cite[Chapter 10]{aliprantisborder}). The collection $\Bcal_\mu(X)$ of $\mu$-measurable sets is usually larger than the collection of Borel sets $\Bcal(X)$. \emph{In what follows, for the sake of notational simplicity, we still denote by $\mu$ the completion of a Borel measure $\mu$, and we call the elements in $\Bcal_\mu(X)$ as $\mu$-measurable sets.} 

A \emph{carrier} of a measure is any subset of \emph{full measure}, i.e. its complement is of zero measure. The \emph{support} of a Borel measure is the smallest closed set of full measure, i.e. $\supp \mu = \bigcap \left\{ F; \;F\subset X; \;F \text{ is closed in } X \text{ and } \mu (X\setminus F) = 0 \right\}.$

Given two measurable spaces $(X,\Mcal)$ and $(Y,\Ncal)$ and a function $f:X\rightarrow Y$, the function $f$ is said to be measurable, or $(\Mcal,\Ncal)$-measurable, if $f^{-1}(E)\in \Mcal$ for all $E\in \Ncal$.

When the target space of a sequence of measurable functions is metrizable (and the measure in the target space is the corresponding Borel measure) the following ``standard'' result holds (see \cite[Lemma 4.29]{aliprantisborder}):
\begin{equation}
  \label{limitofmeasurablefunctions}
  \parbox{5in}{The pointwise limit of a sequence of measurable functions from a measurable space into a metrizable space is measurable.}
\end{equation}

Given two topological spaces $X$ and $Y$ and a continuous function $f:X\rightarrow Y$, it follows that $f$ is a \emph{Borel map} in the sense that $f^{-1}(E)\in \Bcal(X)$ for all $E\in\Bcal(Y)$. Such continuous function is also $(\Bcal_\mu(X),\Bcal(Y))$-measurable, with respect to the completion of a Borel measure $\mu$ on $X$. However, given two completions $\mu$ and $\nu$ of Borel measures on $X$ and $Y$, respectively, the continuous function $f$ may not be \emph{measurable} from $(X,\Bcal_\mu(X),\mu)$ to $(Y,\Bcal_\nu(Y),\nu)$ since $f^{-1}(E)$ may not belong to $\Bcal_\mu(X)$ for all $E$ in $\Bcal_\nu(Y)$ (just take $f$ to be the identity, with $Y=X$ and two Borel measures $\mu$ and $\nu$ supported, say, on disjoint intervals $I$ and $J$, respectively, so that any non-Lebesgue-measurable subset within $I$ is a null $\nu$-measurable set but is not $\mu$-measurable, and vice-versa).

Given a topological space $X$, we denote by $\Ccal(X)$ the space of real-valued continuous function on $X$, by $\Ccal_\rb(X)$ the space of bounded real-valued continuous functions on $X$, and by $\Ccal_\rc(X)$ the space of compactly supported real-valued continuous functions on $X$.

The Kakutani-Riesz Representation theorem \cite{aliprantisborder,bourbaki69,rudin} asserts that a positive linear functional $f$ defined on the space of compactly supported continuous real-valued functions $\Ccal_\rc(X)$ on a \emph{locally compact Hausdorff space} $X$ (i.e. a Hausdorf topological space with the property that every point has a compact neighborhood), can be uniquely represented by a regular Borel measure $\mu$ on $X$, with 
\[ f(\varphi)=\int_X \varphi(\bu)\;\rd\mu(\bx), \quad \text{for all } \varphi \in \Ccal_\rc(X).
\]

The Stone-Weierstrass Theorem \cite{dunfordschwartz} asserts that if $X$ is a compact Haudorff space and if $\Acal$ is a closed sub-algebra of $\Ccal(X)$ that contains the unit element, then $\Acal=\Ccal(X)$ if and only if $\Acal$ distinguishes the points of $X$. We say that $\Acal$ distinguishes the points of $X$ when, given any points $t,s\in X$ such that $t\neq s$, there exists $f\in \Acal$ such that $f(t)\neq f(s)$. Then, if $\Scal$ is a sub-algebra of $\Ccal(X)$ that contains the unit element and distinguishes between the points of $X$, it follows that $\Scal$ is dense in $\Ccal(X)$.

Given two measurable spaces $(X,\Mcal)$ and $(Y,\Ncal)$, a measurable function $T:X\rightarrow Y$, and a probability measure $\mu$ on $(X,\Mcal)$, one obtains a probability measure $\mu_T$ on $Y$ by the formula $\mu_T(E)=\mu(T^{-1}(E))$, for all measurable subsets $E$ of $Y$. The measure $\mu_T$ is called the \emph{measure induced} on $Y$ from $\mu$ by $T$, and is sometimes denoted $T\mu$ or $\mu T^{-1}$ (see \cite[Section 13.12]{aliprantisborder}). It follows (see \cite[Theorem 13.46]{aliprantisborder}) that
\begin{equation}
  \label{changeofvariablesinducedmeasures}
  \int_Y \varphi(y) \;\rd\mu_T(y) = \int_X \varphi(T(x))\;\rd\mu(x), \qquad \forall \varphi\in L^1(\mu_T),
\end{equation}
and 
\begin{equation}
  \label{integrabilityinducedmeasures}
  \varphi\in L^1(\mu_T) \text{ if and only if $\varphi$ is $\mu_T$-measurable and $\varphi\circ T \in L^1(\mu)$}.
\end{equation}

If $X$ and $Y$ are two topological spaces, $\mu$ is a Borel measure on $X$, and $T:X\rightarrow Y$ is continuous, then $T\mu$ is a Borel measure on $Y$ and \eqref{changeofvariablesinducedmeasures} holds for any bounded continuous function $\varphi\in \Ccal_\rb(Y)$.

If $\mu$ is a regular Borel measure (as defined in \eqref{upperregmeas}-\eqref{lowerregmeas} below) on a locally compact Hausdorff space $X$, then $\Ccal_\rc(X)$ is dense in $L^p(\mu)$, for $1\leq p <\infty$ \cite[Theorem 13.9]{aliprantisborder}.

In the case $X$ and $Y$ are locally compact topological space, a continuous map $T:X\rightarrow Y$ induces an operator $L_T:\Ccal_\rc(Y)\rightarrow \Ccal_\rc(X)$ given by $L_T\varphi = \varphi\circ T$. Then, regarding a Borel probability measure $\mu$ on $X$ as an element of the dual space $\Ccal_\rc(X)'$, it is natural to view the induced measure $T\mu$ as $L_T^*\mu$, where $L_T^*:\Ccal_\rc(X)'\rightarrow \Ccal_\rc(Y)'$ is the adjoint of the operator $L_T$.

In a metrizable topological space $X$, the following statements concerning two Borel probability measures $\mu$ and $\nu$ are equivalent \cite[Theorem 15.1]{aliprantisborder}:
\begin{equation}
  \label{measequivmetric}
\begin{aligned}
   \mu=\nu 
   \Leftrightarrow & \mu(G)=\nu(G) \text{ for all open sets } G \\
   \Leftrightarrow & \mu(F)=\nu(F) \text{ for all closed sets } F \\
   \Leftrightarrow & \int_X \varphi(x)\;\rd\mu(x) = \int_X \varphi(x)\;\rd\nu(x), 
     \forall \varphi\in \Ccal_\rb(X) \\
   \Leftrightarrow & \int_X \varphi(x)\;\rd\mu(x) = \int_X \varphi(x)\;\rd\nu(x), \forall \varphi\in \Dcal, \\
    & \text{ where $\Dcal$ is any dense subset of $\Ccal_\rb(X)$} \\   
\end{aligned}
\end{equation}

In a metrizable topological space $X$ we say that a net $\{\mu_\alpha\}_\alpha$ of Borel probability measures on $X$ converges weak-star (see e.g. \cite[Section 15.1]{aliprantisborder}) to a Borel probability measure
$\mu$ on $X$ if 
\[ \int_X \varphi(x) \;\rd\mu_\alpha(x) \rightarrow \int_X \varphi(x) \;\rd\mu(x), 
  \;\forall \varphi\in\Ccal_\rb(X).
\]
This convergence is denoted by
\[ \mu_\alpha \stackrel{*}{\rightharpoonup} \mu.
\]
%The following result is given in \cite[Theorem 15.3]{aliprantisborder}
%\begin{equation}
%  \mu_\alpha \stackrel{*}{\rightharpoonup} \mu \Leftrightarrow \mu_\alpha(B)\rightarrow\mu(B),
%    \text{ for all Borel sets $B$ with } \mu(\partial B)=0,
%\end{equation}
%where $\partial B= \overline{B}\cap\overline{X\setminus B}$.

A Borel measure $\mu$ on a topological space $X$ is called \emph{regular} when
\begin{align}
  \label{upperregmeas}
  \mu(E) & = \inf\{\mu(O); \;O\supset  E, \;O \text{ open in } X\}, \text{ and} \\
  \label{lowerregmeas}
  \mu(E) & = \sup\{\mu(K); \;K\subset  E, \;K \text{ compact in } X\}.
\end{align}
The first relation is called \emph{upper regularity}, while the second one is called \emph{lower regularity}. %\footnote{Some authors refer to this notion of lower regularity as tightness, and define lower regularity in terms of closed sets. In this sense, any Borel measure on a metrizable space is regular \cite[Theorem 12.5]{aliprantisborder}, but not necessarily tight.} 
The completion of a regular Borel measure is also a regular measure. The support of a regular Borel measure has the property that if $O$ is an open set and $O\cap \supp\mu\neq \emptyset$, then $\mu(O\cap \supp\mu)>0$ (this is essentially proved in \cite[Theorem 12.14]{aliprantisborder}, although their definition of support is slightly different).

A topological space is called a \emph{Polish space} when it is separable and completely metrizable. Polish spaces play an important role in measure theory. In particular,  \emph{any finite Borel measure on a Polish space is regular in the sense of \eqref{upperregmeas} and \eqref{lowerregmeas}} \cite[Theorem 12.7]{aliprantisborder}.

In our case, all the spaces $H$, $V$, $V'$ and $D(A)$ are Polish,
and so are the bounded, weakly closed subsets of $H$ endowed with the weak
topology, such as $B_H(R)_\rw$, $R>0$.  The space $H_\rw$, however, 
is not Polish. In fact, $H_\rw$ is separable and completely regular, but it is 
neither complete, nor metrizable, nor locally compact.\footnote{We recall 
here that a \emph{regular}
topological space is one in which any pair of a singleton and a closed set not 
containing this singleton can be separated by disjoint neighborhoods, while 
a \emph{completely regular} topological space is a regular space in which 
any pair of a singleton and a closed set not
containing this singleton can be separated by a function, i.e. there exists a continuous real-valued function defined on the space and which 
vanishes at the singleton and is equal to one everywhere on the closed set. 
The fact that $H_\rw$ is completely regular comes from
the fact that it is a topological vector space, hence it has a uniform structure 
\cite[Section I.1.4]{scheffer}, and is Hausdorff, and any Hausdorff topological space 
has a uniform structure if and only if it is completely regular 
\cite[Section B.6]{scheffer}.}

In a given topological space, a \emph{universally measurable set} is any set which is measurable with respect to any complete Borel measure defined on a $\sigma$-algebra containing the Borel sets. This notion is important when comparing different Borel measures and when taking continuous images of Borel sets, which may not be Borel anymore, but are still universally measurable. In a separable Banach space, since the Borel sets with respect to the weak and strong topologies coincide, the corresponding collections of universally measurable sets also coincide. 

An important notion related to universal measurability is that of an \emph{analytic set,} which is defined as a subset of a Polish space which is either empty or the continuous image of the \emph{Baire space} $\NN^\NN$ of sequences of natural numbers endowed with the product topology (which is itself Polish). A subset of a Polish space is \emph{coanalytic} if its complement is analytic. The following important facts can be found in \cite{aliprantisborder} (see also \cite{bourbaki69}).

\medskip
\textbf{Facts:}
\begin{enumerate} 
  \item \label{analyticcouniversallymeasurable} Any analytic or co-analytic subset of a Polish space is universally measurable
    \cite[Theorem 12.41]{aliprantisborder}; 
  \item Every Borel subset of a Polish space is analytic \cite[Section 12.5, pg. 446]{aliprantisborder}; 
  \item \label{continuousimageofanalyticsets} The continuous image of an analytic set from a Polish space into 
    a Polish space is analytic \cite[Section 12.5, pg. 446]{aliprantisborder}; 
  \item A subset of a Polish space is both analytic and coanalytic if and only if 
    it is a Borel set \cite[Corollary 12.27]{aliprantisborder};
  \item The collection of analytic sets in a Polish space is closed by countable 
    unions and countable intersections \cite[Theorem 12.25]{aliprantisborder}; 
  \item \label{universallymeasurablesigmaalgebra} The collection of universally measurable sets in a topological space is a $\sigma$-algebra 
    \cite[Section 12.5, pg. 456]{aliprantisborder}. 
\end{enumerate}

\subsection{Generalized limits}
\label{genlimsec}

In this section we recall the notion of generalized limit, which is used to define a special type of stationary statistical solution called a time-average stationary statistical solution (see \cite{bcfm1995,fmrt2001a}). A generalized limit is a linear continuous functional $\Lim$ on the space $\Ccal_\rb([0,\infty))$ of bounded real-valued functions on $[0,\infty)$ satisfying the properties
\begin{enumerate}
  \item $\displaystyle \Lim g(T) \geq 0, \;\forall g\in \Ccal_\rb([0,\infty))$, with $g(s)\geq 0, \;\forall s\geq 0$; and
  \item $\displaystyle \Lim g(T) = \lim_{T\rightarrow \infty} g(T), \;\forall g\in \Ccal_\rb([0,\infty))$, if the classical limit exists.
\end{enumerate}

The existence of such limits results from a simple application of the Hahn-Banach Theorem, extending the classical limit \cite{dunfordschwartz, brownpearcy}. A generalized limit has the following additional properties 
(see \cite{bcfm1995,brownpearcy,fmrt2001a}):
\begin{enumerate}
  \setcounter{enumi}{2}
  \item $\displaystyle \liminf_{T\rightarrow \infty} g(T) \leq \Lim g(T) \leq 
         \limsup_{T\rightarrow \infty} g(T),  \;\forall g\in \Ccal_\rb([0,\infty))$;
  \item $\displaystyle |\Lim g(T)| \leq \limsup_{T\rightarrow \infty} |g(T)|           \leq \sup_{t\geq 0} |g(T)|, \;\forall g\in \Ccal_\rb([0,\infty))$; and
  \item \label{genliminvariance} $\displaystyle \Lim \frac{1}{T}\int_0^T h(\tau)\;\rd \tau = \Lim \frac{1}{T}\int_0^T h(\tau + t) \;\rd \tau, \;\forall h\in \Ccal_\rb([0,\infty)), \;\forall t>0.$
\end{enumerate}

\subsection{Poincar\'e Recurrence Theorem.}
\label{secrergodic}

In this section, we recall the Poincar\'e Recurrence Theorem for continuous flows. We consider a probability space $(X,\Mcal,\mu)$ and a measurable semigroup $\{S(t)\}_{t\geq 0}$, which is a family of operators $S(t):X\rightarrow X$ with the properties that $S(0)$ is the identity in $X$; $S(t+s)=S(t)S(s)$, for all $t,s\geq 0$; and $S(t)^{-1}E\in \Mcal$ for every $E$ in the $\sigma$-algebra $\Mcal$ and all $t\geq 0$. We say that the semigroup $\{S(t)\}_{t\geq 0}$ is measure preserving, with respect to the measure $\mu$, or that $\mu$ is invariant by the semigroup $\{S(t)\}_{t\geq 0}$, when $\mu(S(t)^{-1}E)=\mu(E)$, for every $E\in\Mcal$ and every $t\geq 0$. In this context, the following continuous version of the Poincar\'e Recurrence Theorem holds (see \cite[Theorem 1.4]{walters1982} for the discrete version, which implies the continuous version when applied to $T=S(t)$, for any fixed time $t>0$):

\begin{thm}[Poincar\'e Recurrence Theorem]
  \label{poincarerecurrence}
  Let $(X,\Mcal,\mu)$ be a probability space and let $\{S(t)\}_{t\geq 0}$ be a measure-preserving semigroup on this probability space.  Then, given $E\subset X$ measurable, it follows that for $\mu$-almost-every $x\in E$, there exists a sequence $t_n\rightarrow \infty$ of nonnegative numbers such that $S(t_n)x\in E$ for all $n\in\NN$.
\end{thm}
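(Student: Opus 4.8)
The plan is to reduce the continuous statement to the classical discrete Poincar\'e recurrence theorem, exactly as the remark preceding the theorem suggests, by freezing a single time step $t_0>0$ and iterating the map $T=S(t_0)$. The only genuine subtlety is that $\{S(t)\}_{t\geq 0}$ is merely a semigroup, so the maps $S(t)$ need not be invertible; I must therefore make sure the discrete argument uses preimages only, never forward images.

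First I would fix any $t_0>0$ and set $T=S(t_0)$. By the semigroup property $T^n=S(nt_0)$, and since $S(nt_0)^{-1}E\in\Mcal$ with $\mu(S(nt_0)^{-1}E)=\mu(E)$, the map $T$ is a measure-preserving (not necessarily invertible) transformation of $(X,\Mcal,\mu)$. It therefore suffices to prove the discrete statement: for $\mu$-almost-every $x\in E$ one has $T^n x\in E$ for infinitely many $n\geq 1$. The desired sequence is then $t_k=n_k t_0$ along the increasing enumeration $\{n_k\}$ of those return times, which tends to infinity; each $S(t_k)x=T^{n_k}x\in E$, as required.

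For the discrete statement I would introduce the ``never-returning'' set
\[
  N=\{x\in E:\ T^n x\notin E\ \text{for all } n\geq 1\}
    =E\cap\bigcap_{n\geq 1}T^{-n}(X\setminus E),
\]
which lies in $\Mcal$ since each $T^{-n}(X\setminus E)=S(nt_0)^{-1}(X\setminus E)$ is measurable and the intersection is countable. The key combinatorial observation is that the preimages $N,T^{-1}N,T^{-2}N,\dots$ are pairwise disjoint: if some point lay in both $T^{-m}N$ and $T^{-n}N$ with $m<n$, then $T^m x\in N\subseteq E$ while $T^{n-m}(T^m x)=T^n x\in E$ with $n-m\geq 1$, contradicting $T^m x\in N$. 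Because $T$ preserves $\mu$, all these sets have the common measure $\mu(N)$; their disjointness together with $\mu(X)=1<\infty$ forces $\mu(N)=0$, i.e. $\mu$-almost-every point of $E$ returns to $E$ at least once.

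Finally I would upgrade ``at least once'' to ``infinitely often''. For each $k\geq 0$ the set $N_k=T^{-k}N$ consists precisely of the points whose visit to $E$ at time $k$ is followed by no further visit, so $\mu(N_k)=\mu(N)=0$. Any $x\in E$ that returns to $E$ only finitely often has a largest time $m\geq 0$ with $T^m x\in E$ and hence lies in $N_m\subseteq\bigcup_{k\geq 0}N_k$, a set of measure zero; thus $\mu$-almost-every $x\in E$ returns infinitely often, which completes the discrete statement and, by the reduction above, the theorem. The point to handle with care throughout is exactly the non-invertibility: every set in the argument is built from preimages $T^{-n}$, so that both measurability and measure preservation remain available and no appeal to forward images of $S(t)$ is needed.
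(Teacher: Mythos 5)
Your proof is correct and takes essentially the same approach as the paper: the paper obtains the continuous statement by applying the discrete Poincar\'e recurrence theorem (cited from Walters, Theorem 1.4) to $T=S(t)$ for a fixed $t>0$, which is exactly your reduction, with all sets built from preimages so that non-invertibility causes no trouble. The only difference is that you supply the standard disjoint-preimages proof of the discrete version yourself, whereas the paper simply cites it.
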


\subsection{Time-dependent function spaces}
\label{timedependentfuncionspacessec}

Let $I$ be an arbitrary interval in $\RR$. Consider the spaces $\Ccal_\loc(I,H_\rw)$ and $\Ccal_\loc(I,B_H(R)_\rw)$, with $R>0$, endowed with the topology of uniform weak convergence on compact intervals in $I$. With this topology, the space $\Ccal_\loc(I,H_\rw)$ is a separable Hausdorff locally convex topological vector space and $\Ccal_\loc(I,B_H(R)_\rw)$ is a Polish space (see \cite[Section 2.4]{frtssp1}). In the case $I$ is compact, we may write simply $\Ccal(I,H_\rw)$ and $\Ccal(I,B_H(R)_\rw)$.

The topology of $\Ccal_\loc(I,H_\rw)$ can be characterized by a basis of neighborhoods of the origin given by 
\begin{equation}
  \label{basisneighborhoodclocihw}
   \Ocal(J,O_\rw) = \left\{\bv\in \Ccal_\loc(I,H_\rw); \; \bv(t)\in O_\rw, \;\forall t\in J\right\},
\end{equation}
where $J$ is a compact interval in $I$, and $O_\rw$ is a neighborhood of the origin in $H_\rw$.

For intervals $J\subset I \subset \RR$, we define the restriction operator given by
\begin{equation}
  \begin{aligned}
    \Pi_J :  \Ccal_\loc(I, H_\rw) & \: \rightarrow \;\Ccal_\loc(J, H_\rw) \\
       \bu & \;\mapsto \;(\Pi_J \bu)(t) = \bu(t), \;\forall t\in J.
  \end{aligned}
\end{equation}
These operators are continuous. In case $J$ is closed in $I$, $\Pi_J$ is also surjective and open, in the sense of taking an open set in $\Ccal_\loc(I,H_\rw)$ into an open set in $\Ccal_\loc(J,H_\rw)$. 

For each interval $I$ and each $t_0\in I$, we also define the projection operators
\begin{equation}
  \begin{aligned}
    \Pi_{t_0} : \Ccal_\loc(I, H_\rw) & \;\rightarrow \;H_\rw \\
       \bu & \;\mapsto \;\Pi_{t_0} \bu = \bu(t_0),
  \end{aligned}
\end{equation}
which are also continuous and open, as well as surjective.

For the sake of notational simplicity, we do not make explicit the dependence of $\Pi_J$ and of $\Pi_{t_0}$ on the interval $I$. This should be clear in the context. 

For any interval $I$ which is unbounded on the right and any $\tau>0$, we define the translation operator
\begin{equation}
  \label{sigmatdef}
  \begin{aligned}
    \sigma_\tau :\Ccal_\loc(I, H_\rw) & \;\rightarrow \;\Ccal_\loc(I,H_\rw) \\
       \bu & \;\mapsto \;(\sigma_\tau\bu)(t) = \bu(t+\tau), \qquad \forall t\in I.
  \end{aligned}
\end{equation}
The family $\{\sigma_\tau\}_{\tau\geq 0}$ is a continuous semigroup of linear operators on $\Ccal_\loc(I,H_\rw)$. Moreover, if $I$ is closed, then each $\sigma_\tau$ is an open map.

We also define, again for $I$ unbounded on the right, the map
\begin{equation}
  \label{sigmacontinuous}
  \begin{aligned}
    \sigma :[0,\infty) \times \Ccal_\loc(I, H_\rw) & \;\rightarrow 
           \;\Ccal_\loc(I,H_\rw) \\
         (\tau,\bu) & \;\mapsto
           \;(\sigma(\tau,\bu)=(\sigma_\tau\bu)(t) = \bu(t+\tau), 
              \qquad \forall t\in I.
  \end{aligned}
\end{equation}
Using the basis of neighborhoods of the origin \eqref{basisneighborhoodclocihw} of $\Ccal_\loc(I,H_\rw)$, it is not difficult to see that the map $\sigma$ is continuous. In case $I$ is closed, the map $\sigma$ is also open, but in a stronger sense: since $\sigma(J\times \Ocal) = \bigcup_{t\in J}\sigma_t(\Ocal)$ for arbitrary $J\subset \RR$ and $\Ocal\subset\Ccal_\loc(I,H_\rw)$, and since $\sigma_t$ is an open map in this case, it follows that $\sigma(J\times\Ocal)$ is open if so is $\Ocal$, regardless of $J$.

The map $\sigma$ can be used to write the orbit of a trajectory. For instance, if $\bu\in \Ccal_\loc([0,\infty),H_\rw)$, then the set of values assumed by $\bu$ in $H$ can be written as
\begin{equation}
  \bigcup_{t\geq 0} \{\bu(t)\} = \Pi_0\sigma([0,\infty)\times\{\bu\}).
\end{equation}
This expression is crucial for proving that orbits are (universally) measurable (see Section \ref{sectionmeasorbit}) and that the set of recurrent points are also (universally) measurable (see Section \ref{sectionrecurrence}).

\subsection{Trajectory spaces}
\label{trajectoryspaces}

We recall the spaces of weak solutions studied in \cite{frtssp1}. Consider $R\geq R_0$, where $R_0$ is given by \eqref{defR0}, and let $I$ be an arbitrary interval in $\RR$, with $I^\circ$ denoting its interior. We define
\begin{align}
  \Ucal_I
      & = \{ \bu\in \Ccal_\loc(I,H_\rw); \;\bu \text{ is a weak solution on } I\}, \\
  \Ucal_I(R)
      & = \left\{ \bu\in \Ccal_\loc(I,B_H(R)_\rw); \;\bu \text{ is a weak solution on } I
         \right\}, \\
  \Ucal_I^\sharp
      & = \{ \bu\in \Ccal_\loc(I,H_\rw); \;\bu \text{ is a weak solution on } I^\circ\}, \\
  \Ucal_I^\sharp(R)
      & = \left\{ \bu\in \Ccal_\loc(I,B_H(R)_\rw); \;\bu \text{ is a weak solution on } 
         I^\circ \right\},
\end{align}
with all the spaces being endowed with the topologies inherited from $\Ccal_\loc(I,H_\rw)$.

As discussed in \cite{frtssp1}, the space $\Ucal_I^\sharp$ is the sequential closure of $\Ucal_I$, with $\Ucal_I\subset\Ucal_I^\sharp$, in general, and $\Ucal_I=\Ucal_I^\sharp$, when $I$ is open on the left. In the bounded case, since $\Ucal_I^\sharp(R)$ is metrizable and complete, we have that $\Ucal_I^\sharp(R)$ is in fact the closure of $\Ucal_I(R)$, with $\Ucal_I(R)\subset\Ucal_I^\sharp(R)$, in general, and $\Ucal_I(R)=\Ucal_I^\sharp(R)$ when $I$ is open on the left.  Using \eqref{defR0}, it is proved in \cite[Lemma 2.6]{frtssp1} that the spaces $\Ucal_I(R)$ and $\Ucal_I^\sharp(R)$ are not empty for $R\geq R_0$.

The Leray-Hopf weak solutions belong to $\Ucal_I$, so this is the natural space to consider, but the larger space $\Ucal_I^\sharp$ is needed because each space $\Ucal_I^\sharp(R)$ is compact. 
  
We recall below a few results from \cite{frtssp1}.
\begin{lem}[{\cite[Proposition 2.7]{frtssp1}}]
  \label{Ucalspaceslem}
  Let $I\subset \RR$ be an arbitrary interval and let $R\geq R_0$. Then,
  \begin{itemize}
    \item[(i)] The spaces $\Ucal_I$ and $\Ucal_I^\sharp$ are separable Hausdorff spaces;
    \item[(ii)] The space $\Ucal_I(R)$ is a separable metrizable space;
    \item[(iii)] The space $\Ucal_I^\sharp(R)$ is a Polish space.
  \end{itemize}
  \qed
\end{lem}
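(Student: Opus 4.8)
The plan is to view the four spaces as subspaces of the function spaces of Subsection \ref{timedependentfuncionspacessec} and to read off for each the appropriate subspace-theoretic property. Since $\Ccal_\loc(I,H_\rw)$ is Hausdorff and $\Ucal_I,\Ucal_I^\sharp$ carry the induced topology, the Hausdorff assertion in (i) is immediate, the Hausdorff property being hereditary. For (ii), recall that metrizability and separability are both hereditary among separable metrizable spaces (a subspace of a second-countable space is second-countable, hence separable and metrizable); as $\Ucal_I(R)$ is a subspace of the Polish space $\Ccal_\loc(I,B_H(R)_\rw)$, it is separable metrizable, which is (ii).

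For (iii) I would use that a closed subspace of a Polish space is again Polish---separability passes to the subspace and a closed subset of a complete metric space is complete---so it is enough to prove that $\Ucal_I^\sharp(R)$ is closed in $\Ccal_\loc(I,B_H(R)_\rw)$. Since this ambient space is metrizable, I argue with sequences: if $\bu_j\in\Ucal_I^\sharp(R)$ and $\bu_j\to\bu$ there, then the $\bu_j$ are weak solutions on $I^\circ$ that are uniformly bounded in $H$ by $R$, so the Weak Compactness Lemma \ref{convergenceofsolutions1}, applied on the open interval $I^\circ$, produces a subsequence converging in $H_\rw$, uniformly on compact subintervals of $I^\circ$, to a weak solution on $I^\circ$; as the whole sequence already converges to $\bu$, this limit is $\bu$, so $\bu\in\Ucal_I^\sharp(R)$. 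It is exactly here that one must pass to $I^\circ$ and to the $\sharp$-version: the endpoint strong-continuity condition \eqref{lhinitialcont} demanded at a closed left endpoint is not stable under weak limits, so $\Ucal_I(R)$ need not be closed, in agreement with $\Ucal_I^\sharp(R)$ being its closure.

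The separability in (i) is the delicate part, because elements of $\Ucal_I$ need not be globally bounded in $H$---for example when $I$ is unbounded or open on the left---so no single space $\Ccal_\loc(I,B_H(R)_\rw)$ contains $\Ucal_I$ and the easy argument of (ii) is unavailable. The plan is to equip $\Ucal_I$ with the \emph{finer} topology inherited from $L^2_\loc(I;V)$. Every weak solution lies in $L^2_\loc(I;V)$ and, being weakly continuous, is determined by its almost-everywhere values (Definition \ref{deflerayhopfweaksolution}); hence $\bu\mapsto\bu$ embeds $\Ucal_I$ into the separable metric space $L^2_\loc(I;V)$, so $\Ucal_I$ with this metric is separable. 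It then suffices to show that the identity map from $\Ucal_I$ carrying the $L^2_\loc(I;V)$-topology onto $\Ucal_I$ carrying its original topology is continuous, for then $\Ucal_I$ is a continuous image of a separable space and is therefore separable. The space $\Ucal_I^\sharp$ is handled in the same way, using $L^2_\loc(I^\circ;V)$ in place of $L^2_\loc(I;V)$.

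The main obstacle is precisely this continuity, namely that $L^2_\loc(I;V)$-convergence of weak solutions forces uniform weak convergence on compact subintervals, i.e. convergence in $\Ccal_\loc(I,H_\rw)$. I expect to obtain it from the a priori structure: an $L^2_\loc(I;V)$-convergent sequence is bounded in $L^2_\loc(I;V)$, hence, through the energy estimate \eqref{L2enstrophyestimatefinh}, bounded in $L^\infty_\loc(I;H)$; the functional equation \eqref{nseeqfunctional} together with the trilinear estimate then bounds $\partial\bu_j/\partial t$ in $L^{4/3}_\loc(I;V')$. This yields equicontinuity in $H_\rw$ and relative compactness in $\Ccal_\loc(I,H_\rw)$, by an Arzel\`a--Ascoli argument in the weak topology of the same type underlying Lemma \ref{convergenceofsolutions1}, and every subsequential limit agrees almost everywhere with the $L^2_\loc(I;V)$-limit, hence equals it by weak continuity, so the whole sequence converges in $\Ccal_\loc(I,H_\rw)$. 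The delicate bookkeeping is in making the time-uniform weak estimate explicit and, for the $\sharp$-versions, in controlling the behaviour up to the endpoints of $I$.
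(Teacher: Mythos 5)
Parts (ii) and (iii) of your proposal are sound: (ii) is immediate because $\Ccal_\loc(I,B_H(R)_\rw)$ is Polish and second countability is hereditary, and your closedness argument for (iii), via Lemma \ref{convergenceofsolutions1} applied on $I^\circ$, is correct (the paper in fact records the stronger statement, Lemma \ref{ucalitildercompact}, that $\Ucal_I^\sharp(R)$ is compact). The Hausdorff half of (i) is also fine. The genuine gap is in your separability argument for (i), precisely at the step you dismiss as bookkeeping: the continuity of the identity map from the $L^2_\loc(I;V)$-topology on $\Ucal_I$ (or $L^2_\loc(I^\circ;V)$ on $\Ucal_I^\sharp$) to the $\Ccal_\loc(I,H_\rw)$-topology, when $I$ is closed and bounded on the left with left endpoint $t_0$. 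Convergence in $\Ccal_\loc(I,H_\rw)$ is uniform weak convergence on compact sets, which here contain $t_0$, so you must deduce in particular that $\bu_j(t_0)\rightharpoonup\bu(t_0)$ in $H$ from $\bu_j\to\bu$ in $L^2_\loc(I;V)$. Your route to this is the claim that an $L^2_\loc(I;V)$-bounded sequence of weak solutions is bounded in $L^\infty_\loc(I;H)$ ``through the energy estimate \eqref{L2enstrophyestimatefinh}.'' But that estimate only propagates bounds \emph{forward} in time from a point $t'$ of strong right continuity: on interior compacts one can choose such a $t'$ to the left, with $|\bu_j(t')|_{L^2}$ controlled by a mean-value argument and \eqref{poincareineq}, but at $t_0$ there is no room to the left, and nothing bounds $|\bu_j(t_0)|_{L^2}$ in terms of $\|\bu_j\|_{L^2(I;V)}$.

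Indeed, such a bound would be a \emph{reverse} energy inequality: \eqref{energyinequalityintegralform} with $t'=t_0$ rearranges to $|\bu(t_0)|_{L^2}^2 \geq |\bu(t)|_{L^2}^2 + 2\nu\int_{t_0}^t\|\bu\|_{H^1}^2\,\rd s - 2\int_{t_0}^t(\bbf,\bu)_{L^2}\,\rd s$, i.e.\ a \emph{lower} bound on $|\bu(t_0)|_{L^2}$; an upper bound by the enstrophy integral would amount to energy equality at the initial time, which is exactly what is not known for Leray--Hopf solutions. Nothing in Definition \ref{deflerayhopfweaksolution} excludes a sequence on $[t_0,T]$ with $|\bu_j(t_0)|_{L^2}=M_j\to\infty$ but $\|\bu_j\|_{L^2(t_0,T;V)}$ bounded: the only link between the two quantities is Poincar\'e, and if $|\bu_j(t)|_{L^2}\geq M_j/2$ only on an interval of length of order $M_j^{-2}$, the enstrophy integral stays bounded while $\bu_j(t_0)$ cannot converge weakly to $\bu(t_0)$. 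So this step is not provable with the available estimates. The repair is simple and bypasses your machinery exactly in the problematic case: when $I$ is closed and bounded on the left, every weak solution satisfies \eqref{energyestimate} from $t'=t_0$ and is therefore globally bounded in $H$, so \eqref{characucaliclosed} and \eqref{characucalitildeclosed} exhibit $\Ucal_I$ and $\Ucal_I^\sharp$ as countable unions of the spaces handled in (ii) and (iii), and a countable union of separable subspaces is separable. (Note also that unboundedness of $I$ on the right, which you list as a source of trouble, is harmless for the same reason.) Your $L^2_\loc$ embedding, completed by the Arzel\`a--Ascoli identification you sketch, is needed --- and does work --- only when $I$ is open on the left, where every compact subinterval has room to its left and the forward propagation applies; combining the two cases would give a complete proof.
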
  
  
\begin{lem}[{\cite[Proposition 2.8]{frtssp1}}]
  \label{ucalitildercompact}
  Let $I\subset\RR$ be an arbitrary interval and let $R\geq R_0$. Then $\Ucal_I^\sharp(R)$ is compact in $\Ccal_\loc(I,B_H(R)_\rw)$ and, hence, it is a compact metric space. It is also compactly embedded in $L_\loc^2(I;H)$.
\end{lem}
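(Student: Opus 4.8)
The plan is to establish compactness through an Arzel\`a--Ascoli argument in the weak topology, combined with the weak-compactness result for weak solutions (Lemma \ref{convergenceofsolutions1}), and then to obtain the $L^2_\loc(I;H)$ embedding through an Aubin--Lions type compactness. Since, by Lemma \ref{Ucalspaceslem}(iii), the space $\Ucal_I^\sharp(R)$ is Polish, hence metrizable, it suffices to prove that it is sequentially compact; that is, every sequence $\{\bu_j\}_j$ in $\Ucal_I^\sharp(R)$ admits a subsequence converging in $\Ccal_\loc(I,B_H(R)_\rw)$ to some element of $\Ucal_I^\sharp(R)$.

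First I would record the uniform a priori bounds. Every $\bu_j$ takes values in the weakly compact ball $B_H(R)_\rw$, so the family is uniformly bounded in $L^\infty(I;H)$ by $R$. Feeding this into the energy inequality \eqref{L2enstrophyestimatefinh} yields a uniform bound for $\{\bu_j\}$ in $L^2_\loc(I;V)$, and then the functional equation \eqref{nseeqfunctional}, together with a standard estimate of the form $\|B(\bu,\bu)\|_{V'}\leq c|\bu|_{L^2}^{1/2}\|\bu\|_{H^1}^{3/2}$, gives a uniform bound for $\{\partial_t\bu_j\}$ in $L^{4/3}_\loc(I^\circ;V')$. These bounds hold locally on $I^\circ$; on a compact interval abutting a closed endpoint of $I$ one uses in addition condition \eqref{lhinitialcont}, which guarantees that the energy estimate is valid up to that endpoint for the solutions in $\Ucal_I(R)$, a property preserved under the sequential closure defining $\Ucal_I^\sharp(R)$.

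Next I would extract the convergent subsequence. Applying Lemma \ref{convergenceofsolutions1} on the open interval $I^\circ$ produces a subsequence (not relabeled) and a weak solution $\bu$ on $I^\circ$ with $\bu_j\to\bu$ in $H_\rw$ uniformly on every compact subinterval of $I^\circ$; since $B_H(R)$ is weakly closed, $\bu$ is valued in $B_H(R)$. It remains to extend this convergence, and the weak continuity of $\bu$, up to any closed endpoint of $I$. For this the uniform bound on $\partial_t\bu_j$ in $V'$ gives, for each fixed $\bv\in V$, the equicontinuity estimate $|(\bu_j(t)-\bu_j(s),\bv)|\leq C\|\bv\|_{H^1}|t-s|^{1/4}$ on compact subintervals of $I$; combined with the uniform $H$-bound and the density of $V$ in $H$, this makes the scalar functions $t\mapsto(\bu_j(t),\bv)$, for $\bv\in H$, uniformly equicontinuous on compacts of $I$. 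An Arzel\`a--Ascoli argument valued in the compact metric space $B_H(R)_\rw$ then upgrades the convergence to hold uniformly on every compact subinterval of $I$, forces $\bu\in\Ccal_\loc(I,B_H(R)_\rw)$, and shows $\bu\in\Ucal_I^\sharp(R)$. This proves sequential compactness, hence compactness; being a compact subset of a metrizable space, $\Ucal_I^\sharp(R)$ is a compact metric space.

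Finally, for the compact embedding into $L^2_\loc(I;H)$, I would invoke the Aubin--Lions--Simon lemma with the compact inclusion $V\hookrightarrow\hookrightarrow H$ and the continuous inclusion $H\hookrightarrow V'$: the uniform bounds of $\{\bu_j\}$ in $L^2_\loc(I;V)$ and of $\{\partial_t\bu_j\}$ in $L^{4/3}_\loc(I;V')$ imply that $\{\bu_j\}$ is relatively compact in $L^2_\loc(I;H)$. Since the limit must coincide with the $\bu$ already identified above, the inclusion of the compact set $\Ucal_I^\sharp(R)$ into $L^2_\loc(I;H)$ has compact image, which is the asserted compact embedding. The main obstacle is the control at a closed endpoint of $I$: Lemma \ref{convergenceofsolutions1} provides convergence only on the open interior, so the equicontinuity estimate must be shown to persist up to the boundary, which is exactly where condition \eqref{lhinitialcont} and the description of $\Ucal_I^\sharp(R)$ as the closure of $\Ucal_I(R)$ are used.
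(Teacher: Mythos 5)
Your overall architecture --- uniform a priori bounds, Arzel\`a--Ascoli in $\Ccal_\loc(I,B_H(R)_\rw)$, identification of the limit via Lemma \ref{convergenceofsolutions1}, and an Aubin--Lions argument for the embedding into $L^2_\loc(I;H)$ --- is sound, and it is essentially the same strategy the paper itself employs for the analogous compactness statement for $\Wcal$ (Proposition \ref{propwcalinr0}); note that the lemma under discussion is only quoted here from \cite[Proposition 2.8]{frtssp1}, so there is no in-paper proof to compare against line by line.

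There is, however, one genuinely false assertion, and it occurs precisely at the point you yourself single out as ``the main obstacle.'' You claim that condition \eqref{lhinitialcont} (strong continuity in $H$ from the right at a closed left endpoint) is ``a property preserved under the sequential closure defining $\Ucal_I^\sharp(R)$,'' and you use this to justify the energy estimate up to the endpoint. This is exactly backwards: the failure of \eqref{lhinitialcont} to persist under limits of weak solutions is the entire reason the space $\Ucal_I^\sharp$ is introduced. If that condition did pass to the closure, one would have $\Ucal_I^\sharp(R)=\Ucal_I(R)$ for every interval, contradicting the fact, recalled in Section \ref{trajectoryspaces}, that the inclusion $\Ucal_I(R)\subset\Ucal_I^\sharp(R)$ is strict in general when $I$ is closed on the left. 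By definition, an element of $\Ucal_I^\sharp(R)$ is only a weak solution on the interior $I^\circ$, weakly continuous up to the endpoint, and need satisfy neither \eqref{lhinitialcont} nor the energy inequality with $t'$ equal to the endpoint. Fortunately, your proof does not actually need the false claim: for $\bu\in\Ucal_I^\sharp(R)$, the inequality \eqref{L2enstrophyestimatefinh} holds for almost every $t'\in I^\circ$ with $|\bu(t')|_{L^2}\leq R$, and letting $t'$ decrease to the endpoint $t_0$ (monotone convergence) yields
\[ \nu\int_{t_0}^t \|\bu(s)\|_{H^1}^2\,\rd s \leq R^2 + \frac{1}{\nu\lambda_1}|\bbf|_{L^2}^2\,(t-t_0),
\]
a bound depending only on $R$ and $t-t_0$. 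This gives the uniform $L^2(V)$ bound, hence the $L^{4/3}(V')$ bound on $\partial_t\bu$ and your equicontinuity estimate, on compact subintervals touching the closed endpoint, with no appeal to \eqref{lhinitialcont} or to the density of $\Ucal_I(R)$ in $\Ucal_I^\sharp(R)$. With that repair, the remainder of your argument, including the Aubin--Lions step, goes through.
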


\begin{lem}[{\cite[Proposition 2.9]{frtssp1}}]
  \label{lemcharacucaliopen}
  Let $I\subset \RR$ be an interval open on the left. Then, for any sequence $\{R_k\}_{k\in \NN}$ of positive numbers with $R_k\geq R_0$ and $R_k\rightarrow \infty$ and any sequence $\{J_n\}_{n\in \NN}$ of compact intervals in $I$ with $I=\cup_n J_n$, we have the characterization
  \begin{equation}
    \label{characucaliopen}
    \Ucal_I =\Ucal_I^\sharp=\bigcap_n \bigcup_k\Pi_{J_n}^{-1}\Ucal_{J_n}^\sharp(R_k).
  \end{equation}
  In particular, $\Ucal_I$ and $\Ucal_I^\sharp$ are Borel $\Fcal_{\sigma\delta}$ sets in   $\Ccal_\loc(I,H_\rw)$. 
\end{lem}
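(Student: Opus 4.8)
The plan is to establish the two set‑equalities and then read off the topological statement, and I begin with two preliminary reductions. First, for $\bu\in\Ccal_\loc(I,H_\rw)$, membership in $\Pi_{J_n}^{-1}\Ucal_{J_n}^\sharp(R_k)$ means precisely that the restriction $\bu|_{J_n}$ is a weak solution on $J_n^\circ$ with values in $B_H(R_k)$. Since $\bu$ is weakly continuous on the compact interval $J_n$, its image $\{\bu(t);\,t\in J_n\}$ is weakly compact, hence bounded in $H$; as $R_k\ge R_0$ and $R_k\to\infty$, the ball constraint holds automatically for $k$ large. Therefore $\bigcup_k\Pi_{J_n}^{-1}\Ucal_{J_n}^\sharp(R_k)$ equals the set of $\bu$ for which $\bu|_{J_n}$ is a weak solution on $J_n^\circ$, and the full right‑hand side is the set of $\bu$ for which this holds for \emph{every} $n$ (this is the only place where $R_k\to\infty$ is used). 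Second, I would dispose of $\Ucal_I=\Ucal_I^\sharp$: the inclusion $\subseteq$ is immediate, and for $\supseteq$, since $I$ is open on the left the only part of $I$ outside $I^\circ$ is a possible right endpoint $b$, while condition (vi) (strong right continuity at a \emph{left} endpoint) is vacuous. Given $\bu\in\Ucal_I^\sharp$, weak continuity keeps $\bu$ bounded in $H$ near $b$, and letting $t\uparrow b$ in the energy inequality (weak lower semicontinuity of the $H$‑norm on the left, monotone convergence on the integral terms) extends (i), (ii), (iv), (v) up to $b$, giving $\bu\in\Ucal_I$.

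For the central equality I treat the two inclusions separately. The inclusion $\Ucal_I^\sharp\subseteq\bigcap_n\bigcup_k\Pi_{J_n}^{-1}\Ucal_{J_n}^\sharp(R_k)$ is the easy one: all defining conditions (i)--(v) of a weak solution pass to subintervals, and $J_n^\circ\subseteq I^\circ$, so $\bu|_{J_n}$ is a weak solution on $J_n^\circ$ whenever $\bu$ is one on $I^\circ$. The reverse inclusion is where the work lies, and here the point is that ``being a weak solution'' is essentially a local property. Conditions (i)--(iv) are local in time, so if $\bu|_{J_n}$ satisfies them on each $J_n^\circ$ then $\bu$ satisfies them on $\bigcup_n J_n^\circ$, which is $I^\circ$ minus the at most countable set of endpoints of the $J_n$; as (i), (ii), (iv) are insensitive to a null set and (iii) is given on all of $I$, these hold on $I^\circ$. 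The energy inequality (v) is the only genuinely nonlocal requirement, and reassembling it is the heart of the matter: I would fix a point $t'$ of strong continuity from the right (such points are of full measure), take an arbitrary $t>t'$ in $I^\circ$, cover the compact interval $[t',t]$ by finitely many $J_n$, and chain the local energy inequalities, each running from one strong‑right‑continuity point to the next. Because such points are dense, the intermediate nodes can be chosen among them, so the successive inequalities telescope into the desired inequality from $t'$ to $t$.

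The delicate step inside this argument, and the one I expect to be the main obstacle, is crossing a point $p$ that is a common endpoint of two consecutive intervals of the cover but lies in the interior of none: at such a ``join'' the chain cannot step across $p$ inside a single $J_n^\circ$. This is exactly the situation the Pasting Lemma (Lemma \ref{pastinglemma}) addresses. The left piece is a weak solution on an interval open on the left, so by the equivalence just proved its restriction to $(\,\cdot\,,p]$ is again a weak solution; one then glues it to the right piece, and the real content is to verify that the weakly continuous value $\bu(p)$ is compatible with the energy inequality across $p$ (equivalently, that the strong‑right‑continuity needed by the pasting hypothesis is available at the joins). I expect this join analysis, rather than any of the routine estimates above, to be the crux of the proof; away from the countably many joins the covering and chaining are straightforward, and applying the conclusion to an exhaustion of $I^\circ$ yields $\bu\in\Ucal_I^\sharp$.

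Finally, the topological conclusion is clean and independent of the above. By Lemma \ref{ucalitildercompact} each $\Ucal_{J_n}^\sharp(R_k)$ is compact, hence closed, in the Hausdorff space $\Ccal_\loc(J_n,H_\rw)$; since $\Pi_{J_n}$ is continuous, each $\Pi_{J_n}^{-1}\Ucal_{J_n}^\sharp(R_k)$ is closed in $\Ccal_\loc(I,H_\rw)$. Consequently $\bigcup_k\Pi_{J_n}^{-1}\Ucal_{J_n}^\sharp(R_k)$ is an $\Fcal_\sigma$ set and the intersection over $n$ is $\Fcal_{\sigma\delta}$, in particular Borel, which together with the established equalities gives the final assertion that $\Ucal_I$ and $\Ucal_I^\sharp$ are Borel $\Fcal_{\sigma\delta}$ sets.
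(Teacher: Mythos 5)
You have isolated the one nontrivial point of the lemma correctly, but you do not prove it, and it is precisely where the entire difficulty lies. (Note also that the present paper does not contain a proof of this statement --- it is quoted from \cite[Proposition 2.9]{frtssp1} --- so your argument has to stand on its own.) Your treatment of the easy parts is sound: the reduction of the union over $k$, the equality $\Ucal_I=\Ucal_I^\sharp$ via the limiting argument at a possible right endpoint, the inclusion $\Ucal_I^\sharp\subseteq\bigcap_n\bigcup_k\Pi_{J_n}^{-1}\Ucal_{J_n}^\sharp(R_k)$, the locality of conditions (i)--(iv), and the $\Fcal_{\sigma\delta}$ conclusion from Lemma \ref{ucalitildercompact} and continuity of $\Pi_{J_n}$. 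The gap is at the ``joins''. At a point $p$ interior to $I$ but interior to no $J_n$, what you have is: the left piece is a weak solution up to and including $p$ (after your limiting argument), the right piece is a weak solution on the \emph{open} interval $(p,b)$, and $\bu$ is weakly continuous at $p$. You propose to glue with Lemma \ref{pastinglemma}, but that lemma requires the right piece to be a weak solution on the half-closed interval $[p,b)$, i.e.\ to satisfy condition \eqref{lhinitialcont} of Definition \ref{deflerayhopfweaksolution} (strong continuity in $H$ from the right at $p$). Nothing in your hypotheses supplies this: elements of $\Ucal_{J}^\sharp(R)$ are only weakly continuous at the left endpoint of $J$, and the spaces $\Ucal^\sharp$ exist precisely because limits of Leray--Hopf solutions may lose condition \eqref{lhinitialcont} and the energy inequality there. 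You acknowledge this (``the real content is to verify\ldots'') but give no argument, so the proposal is a strategy, not a proof.

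Moreover, the missing step is not a routine verification. What must be excluded at a join is an upward essential jump of the kinetic energy: let $c^2$ be the limit of $|\bu(\sigma)|_{L^2}^2$ as $\sigma\to p^+$ along points of strong right continuity (this limit exists by the essential monotonicity implied by \eqref{L2enstrophyestimatefinh}), and $d^2$ the corresponding limit from the left. Weak continuity only gives $|\bu(p)|_{L^2}\leq\min(c,d)$, whereas membership in $\Ucal_I^\sharp$ forces $c\leq d$: if $c>d$, then letting $t\to p^+$ and $t'\to p^-$ along such points in \eqref{energyinequalityintegralform} shows the energy inequality fails for a set of starting times $t'$ of positive measure just to the left of $p$. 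So the nontrivial inclusion is \emph{equivalent} to proving $c\leq d$ at every join, and ruling out $c>d$ amounts to establishing strong right continuity, at a prescribed time, of a weak solution given only on an open interval --- exactly the kind of property that is not known for 3D Leray--Hopf solutions (the paper itself remarks that strong right continuity is only known almost everywhere). Note that the difficulty vanishes when the cover can be taken monotone increasing: then every point of $I^\circ$ is interior to some $J_n$, your chaining closes with no pasting at all, and this nested case is all that the applications in this paper actually need (e.g.\ Lemma \ref{lemcharacucaliopen2} and Theorem \ref{thmcarriervfmeasure} can be run with nested covers). But the lemma as stated quantifies over arbitrary compact covers, and for those your proof is incomplete at its crux.
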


\begin{lem}[{\cite[Proposition 2.11]{frtssp1}}]
  Let $I\subset\RR$ be an interval closed and bounded on the left. Then for any sequence $\{R_k\}_{k\in \NN}$ with $R_k\geq R_0$ and $R_k\rightarrow \infty$, we have the representation
  \begin{equation}
    \label{characucalitildeclosed}
    \Ucal_I^\sharp = \bigcup_k\Ucal_I^\sharp(R_k).
  \end{equation}
  In particular, $\Ucal_I^\sharp$ is $\sigma$-compact in $\Ccal_\loc(I,H_\rw)$, i.e. it is a countable union of compact sets in $\Ccal_\loc(I,H_\rw)$. Moreover, any bounded subset of $\Ucal_I^\sharp$ must be included in $\Ucal_I^\sharp(R_k)$ for $k$ sufficiently large.
\end{lem}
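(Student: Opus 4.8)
The plan is to prove the nontrivial inclusion $\Ucal_I^\sharp \subseteq \bigcup_k \Ucal_I^\sharp(R_k)$; the reverse inclusion is immediate, since each $\Ucal_I^\sharp(R_k)$ consists of weak solutions on $I^\circ$ whose values lie in a ball $B_H(R_k)\subset H$, so it sits inside $\Ucal_I^\sharp$. For the forward inclusion it suffices to show that every $\bu \in \Ucal_I^\sharp$ is \emph{uniformly bounded in $H$ on all of $I$}: once $\sup_{t \in I} |\bu(t)|_{L^2} \leq R$ for some finite $R$, we pick $k$ with $R_k \geq \max(R, R_0)$ and conclude $\bu \in \Ucal_I^\sharp(R_k)$. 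Write the left-closed interval as $I = [t_0, t_1)$ with $t_0 < t_1 \leq \infty$ (an included right endpoint being handled identically, or more simply by compactness of the whole image).

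First I would establish local boundedness near the closed left endpoint. Fix $s$ with $t_0 < s < t_1$. Since $\bu \in \Ccal_\loc(I, H_\rw)$, its restriction to the compact interval $[t_0, s]$ is continuous into $H_\rw$, so $\bu([t_0, s])$ is weakly compact in $H$; weakly compact sets are norm bounded (each $\bv \in H$ gives a weakly continuous functional $(\,\cdot\,, \bv)$ bounded on the image, and the uniform boundedness principle upgrades this to a norm bound). Hence $M := \sup_{t \in [t_0, s]} |\bu(t)|_{L^2} < \infty$. Next I would propagate this bound to all later times using the energy estimate \eqref{energyestimate}. Because $\bu$ is a weak solution on $I^\circ = (t_0, t_1)$, the estimate \eqref{energyestimate} holds for almost every $t'$ in $I^\circ$ and all $t > t'$; choose such a good time $t' \in (t_0, s)$. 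Recalling from \eqref{defR0} that $R_0^2 = |\bbf|_{L^2}^2 / (\nu^2\lambda_1^2)$, the right-hand side of \eqref{energyestimate} is the convex combination $|\bu(t')|_{L^2}^2 e^{-\nu\lambda_1(t-t')} + R_0^2(1 - e^{-\nu\lambda_1(t-t')})$ of $|\bu(t')|_{L^2}^2$ and $R_0^2$, hence is bounded by $\max(|\bu(t')|_{L^2}^2, R_0^2) \leq \max(M^2, R_0^2)$ for every $t > t'$. Combined with $|\bu(t)|_{L^2} \leq M$ on $[t_0, t'] \subset [t_0, s]$, this yields the global bound $\sup_{t \in I} |\bu(t)|_{L^2}^2 \leq \max(M^2, R_0^2)$, completing the forward inclusion and hence \eqref{characucalitildeclosed}.

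The remaining assertions follow quickly. By Lemma \ref{ucalitildercompact} each $\Ucal_I^\sharp(R_k)$ is compact in $\Ccal_\loc(I, B_H(R_k)_\rw)$, and since the inclusion $\Ccal_\loc(I, B_H(R_k)_\rw) \hookrightarrow \Ccal_\loc(I, H_\rw)$ is continuous, each $\Ucal_I^\sharp(R_k)$ is compact in $\Ccal_\loc(I, H_\rw)$ as well; thus \eqref{characucalitildeclosed} exhibits $\Ucal_I^\sharp$ as a countable union of compact sets, i.e. it is $\sigma$-compact. For the final claim, if $\mathcal{B} \subset \Ucal_I^\sharp$ is bounded in $\Ccal_\loc(I, H_\rw)$, then (again via the uniform boundedness principle applied to the generating seminorms $\bu \mapsto \sup_{t \in [t_0, s]} |(\bu(t), \bv)|$) the quantity $M := \sup_{\bu \in \mathcal{B}} \sup_{t \in [t_0, s]} |\bu(t)|_{L^2}$ is finite; rerunning the energy-estimate argument of the previous paragraph with this \emph{uniform} $M$ gives $\sup_{\bu \in \mathcal{B}} \sup_{t \in I} |\bu(t)|_{L^2}^2 \leq \max(M^2, R_0^2)$, so $\mathcal{B} \subset \Ucal_I^\sharp(R_k)$ for any $k$ with $R_k \geq \max(M, R_0)$.

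The main obstacle is precisely the uniform-in-$t$ boundedness of an individual trajectory, and in particular handling the two ends simultaneously: weak continuity only furnishes control on compact subintervals (crucially including the closed endpoint $t_0$), while the possibly infinite right end $t_1 = \infty$ must be tamed by the dissipative energy estimate. The convexity observation that the right-hand side of \eqref{energyestimate} never exceeds $\max(|\bu(t')|_{L^2}^2, R_0^2)$ is exactly what prevents the bound from deteriorating as $t \to t_1$.
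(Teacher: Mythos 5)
Your proof is correct. Note that the paper you are working against does not actually prove this statement: it is recalled verbatim from \cite[Proposition 2.11]{frtssp1}, so there is no in-paper argument to compare with; your write-up supplies the natural proof, and each step checks out against the definitions in Section \ref{trajectoryspaces} --- the reverse inclusion is indeed trivial, weak continuity on a compact subinterval $[t_0,s]$ plus the uniform boundedness principle gives the bound $M$ near the closed left endpoint, and the convexity reading of the right-hand side of \eqref{energyestimate} (with a point $t'$ of strong right-continuity chosen in $(t_0,s)$, which exists since such points have full measure in $I^\circ$) propagates $\max(M,R_0)$ to all later times, with the compact-interval case and the bounded-set refinement handled exactly as you say.
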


\begin{lem}[{\cite[Proposition 2.12]{frtssp1}}]
  \label{Ucalileftborel}
  Let $I\subset\RR$ be an interval closed and bounded on the left and let $R\geq R_0$. Then, $\Ucal_I$ and $\Ucal_I(R)$ are Borel sets in $\Ccal_\loc(I,H_\rw)$. Moreover, for any sequence $\{R_k\}_{k\in \NN}$ of positive numbers with $R_k\rightarrow \infty$, we have the characterization
  \begin{equation}
    \label{characucaliclosed}
    \Ucal_I = \bigcup_k\Ucal_I(R_k).
  \end{equation}
  Furthermore, any bounded subset of $\Ucal_I$ must be included in $\Ucal_I(R_k)$, for $k$ sufficiently large.
\end{lem}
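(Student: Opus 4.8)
The plan is to treat the dynamical assertions — the characterization \eqref{characucaliclosed} and the boundedness claim — by means of the energy estimate, and then to obtain the measurability from a lower-semicontinuity argument, which is where the real difficulty lies.

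For \eqref{characucaliclosed}, each $\Ucal_I(R_k)$ is contained in $\Ucal_I$ as a set, since a weak solution on $I$ with values in $B_H(R_k)$ is in particular a weak solution on $I$; hence $\bigcup_k\Ucal_I(R_k)\subseteq\Ucal_I$. For the reverse inclusion, let $t_0$ be the left endpoint of $I$. Because $I$ is closed on the left, condition \eqref{lhinitialcont} forces every $\bu\in\Ucal_I$ to be strongly continuous from the right at $t_0$, so that \eqref{energyestimate} holds with $t'=t_0$. The right-hand side of \eqref{energyestimate} being a convex combination of $|\bu(t_0)|_{L^2}^2$ and $R_0^2$, we get $|\bu(t)|_{L^2}^2\leq\max\{|\bu(t_0)|_{L^2}^2,R_0^2\}$ for all $t\in I$, so $\bu\in\Ucal_I(R)$ with $R=\max\{|\bu(t_0)|_{L^2},R_0\}$, and therefore $\bu\in\Ucal_I(R_k)$ as soon as $R_k\geq R$. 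This proves $\Ucal_I\subseteq\bigcup_k\Ucal_I(R_k)$. The same estimate yields the last assertion: if $\mathcal B\subseteq\Ucal_I$ is bounded, its image under the continuous linear map $\Pi_{t_0}$ is bounded in $H_\rw$, hence norm-bounded by the uniform boundedness principle, so $\sup_{\bu\in\mathcal B}|\bu(t_0)|_{L^2}<\infty$ and all of $\mathcal B$ lies in one $\Ucal_I(R)$, thus in $\Ucal_I(R_k)$ for $k$ large.

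For the measurability I would use that $\Ucal_I$ and $\Ucal_I(R)$ differ from their ``$\sharp$'' counterparts only through the strong right-continuity at $t_0$; precisely,
\[ \Ucal_I=\Ucal_I^\sharp\cap C,\qquad \Ucal_I(R)=\Ucal_I^\sharp(R)\cap C, \]
with $C=\{\bu\in\Ccal_\loc(I,H_\rw);\ \bu(t)\rightarrow\bu(t_0)\text{ strongly in }H\text{ as }t\rightarrow t_0^+\}$. The inclusions ``$\subseteq$'' follow from \eqref{lhinitialcont}; for ``$\supseteq$'' one checks that a weak solution on $I^\circ$ that is strongly right-continuous at $t_0$ verifies \eqref{lhuinhv}--\eqref{lheineq} up to $t_0$ (the only point needing care, namely $\bu\in L^2_\loc(I;V)$ near $t_0$, follows by letting $t'\rightarrow t_0^+$ in \eqref{energyinequalityintegralform} and using $|\bu(t')|_{L^2}\rightarrow|\bu(t_0)|_{L^2}$). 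Since $\Ucal_I^\sharp$ is $\sigma$-compact by \eqref{characucalitildeclosed} and $\Ucal_I^\sharp(R)$ is compact by Lemma \ref{ucalitildercompact}, both are Borel in $\Ccal_\loc(I,H_\rw)$, so the whole question reduces to showing that $C$ is Borel.

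This last step is the main obstacle, since $C$ is defined by a pointwise topological condition that must be converted into a Borel condition. Weak continuity of $\bu$ already yields $\liminf_{t\rightarrow t_0^+}|\bu(t)|_{L^2}\geq|\bu(t_0)|_{L^2}$, so $C=\{g\leq h\}$, where $h(\bu)=|\bu(t_0)|_{L^2}$ and $g(\bu)=\limsup_{t\rightarrow t_0^+}|\bu(t)|_{L^2}$. Now $h=|\cdot|_{L^2}\circ\Pi_{t_0}$ is the composition of the continuous projection $\Pi_{t_0}$ with the weakly lower-semicontinuous norm, hence Borel. For $g$, write $g(\bu)=\inf_n\sup_{t\in(t_0,t_0+1/n)\cap I}|\bu(t)|_{L^2}$; since $t\mapsto|\bu(t)|_{L^2}$ is lower semicontinuous (again by weak continuity of $\bu$), each inner supremum coincides with the supremum over a fixed countable dense subset of $(t_0,t_0+1/n)\cap I$, so it is a countable supremum of the Borel maps $\bu\mapsto|\bu(t)|_{L^2}$, hence Borel; and $g$, being a countable infimum of such, is Borel as well. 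Thus $C=\{g\leq h\}$ is Borel and the conclusion follows. The delicate points throughout are that the norm is only lower semicontinuous on $H_\rw$, and that the uncountable supremum over $(t_0,t_0+1/n)$ must be reduced to a countable one before measurability can be asserted.
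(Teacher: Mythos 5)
This lemma is not proved in the paper at all: it is recalled verbatim from \cite[Proposition 2.12]{frtssp1}, so there is no in-paper argument to compare yours against, and your proposal has to be judged on its own. Judged that way, it is correct, and it is a plausible reconstruction of what the cited proof must do. The dynamical assertions are handled exactly as one would expect: because $I$ is closed and bounded on the left, condition \eqref{lhinitialcont} makes $t_0$ a point of strong right continuity, so \eqref{energyestimate} applies with $t'=t_0$ and gives $|\bu(t)|_{L^2}\leq\max\{|\bu(t_0)|_{L^2},R_0\}$; this yields both \eqref{characucaliclosed} and the statement on bounded sets (your use of the uniform boundedness principle after projecting by the continuous linear map $\Pi_{t_0}$ is fine). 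The measurability part is also sound, and you correctly isolate the delicate point: writing $\Ucal_I=\Ucal_I^\sharp\cap C$ and $\Ucal_I(R)=\Ucal_I^\sharp(R)\cap C$, where $C$ is the set of strong right continuity at $t_0$, reduces everything to the Borel-ness of $C$, since $\Ucal_I^\sharp(R)$ is compact (hence closed and Borel in the Hausdorff space $\Ccal_\loc(I,H_\rw)$) and $\Ucal_I^\sharp$ is $\sigma$-compact by \eqref{characucalitildeclosed}. Your conversion of strong right continuity into $\limsup_{t\rightarrow t_0^+}|\bu(t)|_{L^2}\leq|\bu(t_0)|_{L^2}$ (valid because weak continuity is already guaranteed in $\Ccal_\loc(I,H_\rw)$, so norm convergence is equivalent to strong convergence), together with the reduction of the uncountable suprema to countable ones via lower semicontinuity of $t\mapsto|\bu(t)|_{L^2}$, is exactly the right mechanism and is argued correctly.

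One small incompleteness to flag: in verifying $\Ucal_I^\sharp\cap C\subseteq\Ucal_I$ you only discuss the left endpoint, but the lemma allows $I=[t_0,t_1]$ closed on the right as well, and an element of $\Ucal_I^\sharp$ is then only a weak solution on $(t_0,t_1)$, so conditions \eqref{lhuinhv}, \eqref{lhutinvprime} and \eqref{lheineq} must also be checked up to $t=t_1$. This is repaired by the same limiting device you already use at $t_0$: let $t\rightarrow t_1^-$ in \eqref{energyinequalityintegralform}, using weak lower semicontinuity of $|\cdot|_{L^2}$ and monotone convergence for the enstrophy integral, to obtain the energy inequality and the $L^2(\cdot\,;V)$ bound up to $t_1$; condition \eqref{lhutinvprime} then follows from the equation as usual. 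With that sentence added, your proof is complete.
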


A slight variation of Lemma \ref{lemcharacucaliopen} can be given in terms of intervals $J_n$ which are closed on the left but not necessarily compact:
\begin{lem}
  \label{lemcharacucaliopen2}
  Let $I\subset \RR$ be an interval open on the left. Then, for any sequence $\{R_k\}_{k\in \NN}$ of positive numbers with $R_k\geq R_0$ and $R_k\rightarrow \infty$ and any sequence $\{J_n\}_{n\in \NN}$ of intervals in $I$ with $I=\cup_n J_n$, we have the characterization
  \begin{equation}
    \label{characucaliopen2}
    \Ucal_I =\Ucal_I^\sharp=\bigcap_n \bigcup_k\Pi_{J_n}^{-1}\Ucal_{J_n}^\sharp(R_k).
  \end{equation}
\end{lem}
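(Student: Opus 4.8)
The plan is to deduce the characterization \eqref{characucaliopen2} from its compact-interval counterpart, Lemma \ref{lemcharacucaliopen}, by refining the cover $\{J_n\}$ into a cover by compact intervals. Since $I$ is open on the left we already have $\Ucal_I=\Ucal_I^\sharp$, so it suffices to prove that
\[
 A:=\bigcap_n \bigcup_k \Pi_{J_n}^{-1}\Ucal_{J_n}^\sharp(R_k)
\]
coincides with $\Ucal_I$, which I would do by establishing the two inclusions separately.

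For the inclusion $\Ucal_I\subseteq A$, I would take $\bu\in\Ucal_I$, fix $n$, and check that $\bu|_{J_n}\in\Ucal_{J_n}^\sharp(R_k)$ for $k$ large. The restriction of $\bu$ is a weak solution on $J_n^\circ$ because $J_n^\circ\subseteq I$ and $\bu$ is a weak solution on $I$, so the only thing to verify is that $\bu$ is bounded in $H$ on $J_n$. Here I use that $J_n$ is closed on the left: its left endpoint $c_n$ lies in $I$, and since $I$ is open on the left there is a point $t'\in I$, $t'<c_n$, that is a point of strong continuity from the right for $\bu$; the energy estimate \eqref{energyestimate} (equivalently the invariance property \eqref{invarianceBR}) then gives $|\bu(t)|_{L^2}\le\max\{|\bu(t')|_{L^2},R_0\}$ for every $t\ge t'$, hence for every $t\in J_n$. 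Choosing $R_k$ above this bound yields $\bu|_{J_n}\in\Ucal_{J_n}^\sharp(R_k)$, and since $n$ is arbitrary, $\bu\in A$.

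For the reverse inclusion $A\subseteq\Ucal_I$, I would exhaust each $J_n$ from the left by compact intervals: since $J_n$ is closed on the left, write $J_n=\bigcup_p K_{n,p}$ with $K_{n,p}$ compact, $K_{n,p}\subseteq J_n$ and, crucially, $K_{n,p}^\circ\subseteq J_n^\circ$ (taking $K_{n,p}=J_n$ itself when $J_n$ is already compact, so that its right endpoint is retained). The doubly-indexed family $\{K_{n,p}\}$ is a countable family of compact intervals with $\bigcup_{n,p}K_{n,p}=\bigcup_n J_n=I$, which I reindex as a single sequence $\{K_m\}$. Now let $\bu\in A$; for each $m$ we have $K_m\subseteq J_{n}$ for the corresponding $n$, and $\bu|_{J_n}\in\Ucal_{J_n}^\sharp(R_k)$ for some $k$, so $\bu|_{K_m}$ is bounded by $R_k$ on $K_m$ and is a weak solution on $K_m^\circ\subseteq J_n^\circ$; hence $\bu|_{K_m}\in\Ucal_{K_m}^\sharp(R_k)$. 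Therefore $\bu\in\bigcap_m\bigcup_k \Pi_{K_m}^{-1}\Ucal_{K_m}^\sharp(R_k)$, which by Lemma \ref{lemcharacucaliopen} applied to the compact cover $\{K_m\}$ is exactly $\Ucal_I$.

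The step I expect to require the most care is the reverse inclusion, and specifically the choice of the refining compact intervals. The point of insisting that $K_{n,p}^\circ\subseteq J_n^\circ$ is that the weak-solution property on an interior transfers immediately by restriction, so I never have to paste two pieces together across a shared endpoint of the $J_n$; the delicate interaction between overlapping compact pieces, where strong continuity and the energy inequality across a common endpoint are at stake, is entirely absorbed into Lemma \ref{lemcharacucaliopen}, which I simply invoke. The only genuine analytic input is the dissipative bound \eqref{energyestimate} used in the first inclusion, and this is exactly where the hypothesis that the $J_n$ are closed on the left is needed: for an interval open on the left reaching the left endpoint of $I$ a weak solution need not be bounded, and the characterization would fail.
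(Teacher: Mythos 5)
Your proof is correct, and it shares the paper's overall strategy---reduce everything to the compact-cover characterization of Lemma \ref{lemcharacucaliopen}---but your execution is genuinely different and, in fact, more robust than the paper's own two-line argument. The paper picks a \emph{single} compact truncation $J_n'\subset J_n$ for each $n$ ``such that still $I=\cup_n J_n'$'' and then chains inclusions, citing the elementwise inclusion $\Pi_{J_n'}^{-1}\Ucal_{J_n'}^\sharp(R_k)\subset\Pi_{J_n}^{-1}\Ucal_{J_n}^\sharp(R_k)$. Both ingredients are problematic as written: such truncations preserving the cover need not exist (for $I=(0,2)$, $J_1=[1,2)$, $J_n=[1/n,1]$ for $n\geq2$, any compact $J_1'\subset J_1$ discards an interval $(b,2)$ that no other $J_n'$ can cover), and the displayed elementwise inclusion goes the wrong way---restriction to a smaller interval is the \emph{weaker} condition, so the true inclusion is $\Pi_{J_n}^{-1}\Ucal_{J_n}^\sharp(R_k)\subset\Pi_{J_n'}^{-1}\Ucal_{J_n'}^\sharp(R_k)$, which serves only the easy half of the chain, while the other half, $\Ucal_I\subset\bigcap_n\bigcup_k\Pi_{J_n}^{-1}\Ucal_{J_n}^\sharp(R_k)$, genuinely requires the dissipative bound \eqref{energyestimate} together with the fact that a left-closed $J_n\subset I$ stays away from the open left end of $I$. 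Your proof supplies exactly these missing pieces: the energy-estimate argument for the forward inclusion, and the countable compact exhaustion $J_n=\bigcup_p K_{n,p}$ (which always exists and always re-covers $I$) for the reverse inclusion. You are also right to flag that ``closed on the left''---stated in the paper only in the sentence introducing the lemma, not in its formal statement---is indispensable, for precisely the reason you give. The one caveat, which applies equally to your argument and to the paper's: both invoke Lemma \ref{lemcharacucaliopen} as a black box for compact covers whose members may overlap only at a single endpoint, so any delicacy hidden in that lemma is inherited by both.
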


\begin{proof}
  For each $n$, consider a compact subinterval $J_n'\subset J_n$ such that still $I=\cup_n J_n'$. Then, applying \eqref{characucaliopen} with $J_n'$ and using that $\Pi_{J_n'}^{-1}\Ucal_{J_n'}^\sharp(R_k) \subset \Pi_{J_n}^{-1}\Ucal_{J_n}^\sharp(R_k)$ we find that
  \[  \Ucal_I =\Ucal_I^\sharp=\bigcap_n \bigcup_k\Pi_{J_n'}^{-1}\Ucal_{J_n'}^\sharp(R_k) \subset \bigcap_n \bigcup_k\Pi_{J_n}^{-1}\Ucal_{J_n}^\sharp(R_k) \subset \Ucal_I,
  \]
  where in the last inclusion we use that $I=\cup_n J_n'$. This completes the proof.\end{proof}

Another fundamental space to consider in the particular case of stationary statistical solutions is that of uniformly bounded global weak solutions defined on $\RR$. More precisely, we define
\begin{equation}
  \label{defwcal}
  \Wcal = \left\{ \bu\in \Ccal_\loc(\RR,H_\rw); \;\bu \text{ is a weak solution on } \RR \text{ with }
      \sup_{t\in\RR} |\bu(t)|_{L^2}<\infty\right\},
\end{equation}
endowed with the topology inherited from $\Ccal_\loc(\RR,H_\rw)$.

Taking into account that translations in time of global weak solutions are also global weak solutions, we have that
\begin{equation}
  \label{Wcalinvariant}
  \sigma_\tau \Wcal = \Wcal, \qquad \forall \tau\geq 0,
\end{equation}
and
\begin{equation}
  \label{PitWcal}
  \Pi_t\Wcal \text{ is independent of $t\in\RR$.}
\end{equation}

The following result says that $\Wcal$ is compact in $\Ccal_\loc(\RR,H_\rw)$.

\begin{prop}
  \label{propwcalinr0}
  The space $\Wcal$ is a compact metrizable space and is included in the space $\Ccal_\loc(\RR,B_H(R_0)_\rw)$, where $R_0$ is given in \eqref{defR0}. In particular, we have 
  \begin{equation}
    \label{eqwurbr0}
    \Wcal = \Ucal_\RR \cap \Ccal_\loc(\RR,B_H(R_0)_\rw) = \Ucal_\RR(R_0).
  \end{equation}
\end{prop}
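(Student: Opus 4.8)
The plan is to reduce everything to the compactness and metrizability already established for the spaces $\Ucal_\RR^\sharp(R_0)$, by first proving that every element of $\Wcal$ is in fact bounded in $H$ by the specific radius $R_0$. The heart of the matter is the sharpened energy estimate \eqref{energyestimate}. For $\bu\in\Wcal$ set $M=\sup_{s\in\RR}|\bu(s)|_{L^2}<\infty$, fix any $t\in\RR$, and apply \eqref{energyestimate} with an allowed time $t'<t$. Since $R_0^2=|\bbf|_{L^2}^2/(\nu^2\lambda_1^2)$ by \eqref{defR0}, this reads
\[
|\bu(t)|_{L^2}^2 \leq M^2 e^{-\nu\lambda_1(t-t')} + R_0^2\bigl(1-e^{-\nu\lambda_1(t-t')}\bigr).
\]
The allowed times $t'$ form a set of full measure in $\RR$, so one may choose a sequence $t'_n\to-\infty$ of such times; letting $n\to\infty$ forces $e^{-\nu\lambda_1(t-t'_n)}\to 0$ and yields $|\bu(t)|_{L^2}^2\leq R_0^2$. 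As $t$ was arbitrary, this shows $\Wcal\subseteq\Ccal_\loc(\RR,B_H(R_0)_\rw)$.

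Granting this inclusion, the identities in \eqref{eqwurbr0} are bookkeeping with the definitions. By \eqref{defwcal} every $\bu\in\Wcal$ is a weak solution on $\RR$, hence lies in $\Ucal_\RR$, and by the inclusion just proved it lies in $\Ccal_\loc(\RR,B_H(R_0)_\rw)$; conversely, any weak solution on $\RR$ taking values in $B_H(R_0)$ automatically satisfies $\sup_{t\in\RR}|\bu(t)|_{L^2}\leq R_0<\infty$ and so belongs to $\Wcal$. This gives the first equality $\Wcal=\Ucal_\RR\cap\Ccal_\loc(\RR,B_H(R_0)_\rw)$, and the intersection on the right is precisely $\Ucal_\RR(R_0)$ by the definition of that space.

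Finally, since $\RR$ is open on the left, the identifications recalled in Section \ref{trajectoryspaces} give $\Ucal_\RR(R_0)=\Ucal_\RR^\sharp(R_0)$. The latter is compact in $\Ccal_\loc(\RR,B_H(R_0)_\rw)$ and a compact metric space by Lemma \ref{ucalitildercompact}, and Polish by Lemma \ref{Ucalspaceslem}; combined with $\Wcal=\Ucal_\RR(R_0)$ this proves that $\Wcal$ is compact and metrizable. I expect the only genuinely delicate point to be the first step: one must use the \emph{a priori}-type improvement in \eqref{energyestimate} together with the global-in-time existence of the trajectory, exploiting that only the finiteness of $M$ — not its size — is needed once $t'\to-\infty$, so that the bound collapses to the absorbing radius $R_0$. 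Some care is also required to select the times $t'_n$ within the full-measure set of points of strong continuity from the right, for which \eqref{energyestimate} is valid.
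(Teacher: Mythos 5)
Your proof is correct, and it takes a genuinely shorter route than the paper for the compactness part. The first step is the same in both: the paper also derives $|\bu(t)|_{L^2}\leq|\bbf|_{L^2}/\nu\lambda_1=R_0$ for all $t$ from \eqref{energyestimate}, exactly the argument you spell out (sending allowed times $t'\to-\infty$ and using only the finiteness of $\sup_s|\bu(s)|_{L^2}$), and then reads off the inclusion, the identities \eqref{eqwurbr0}, and metrizability. Where you diverge is compactness: you identify $\Wcal=\Ucal_\RR(R_0)=\Ucal_\RR^\sharp(R_0)$ (legitimate, since $\RR$ is open on the left, as recalled in Section \ref{trajectoryspaces}) and then simply invoke the compactness and metrizability of $\Ucal_\RR^\sharp(R_0)$ from Lemma \ref{ucalitildercompact}. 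The paper instead reproves compactness from scratch: it uses \eqref{energyestimate} and the enstrophy estimate \eqref{L2enstrophyestimatefinh} to get equi-boundedness and equicontinuity of $\Wcal$ in $\Ccal_\loc(\RR,B_H(R_0)_\rw)$, applies Arzela--Ascoli for relative compactness, and gets closedness from the Weak Compactness Lemma \ref{convergenceofsolutions1}. Your route is more economical and makes transparent that $\Wcal$ coincides with the Polish space $\Ucal_\RR^\sharp(R_0)$; the paper's route is self-contained within this section and does not lean on the identification $\Ucal_\RR(R_0)=\Ucal_\RR^\sharp(R_0)$, which is quoted from \cite{frtssp1} without proof here. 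Since the proof of Lemma \ref{ucalitildercompact} in \cite{frtssp1} rests on the same Arzela--Ascoli and weak-compactness ingredients, the two arguments have the same underlying content, packaged differently.
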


\begin{proof} 
  From the uniform boundedness in $H$ of an element $\bu$ in $\Wcal$ and the a~priori estimate \eqref{energyestimate} it follows that 
  \begin{equation}
    \label{energyestimateinwcal}
    |\bu(t)|_{L^2}\leq \frac{1}{\nu\lambda_1}|\bbf|_{L^2}, \qquad \forall t\in \RR,
         \; \forall\bu\in \Wcal.
  \end{equation}
  Thus $\Wcal$ is a subset of $\Ccal_\loc(\RR,B_H(R_0)_\rw)$ and we can write \eqref{eqwurbr0}. Then, since the space $\Ccal_\loc(\RR,B_H(R_0)_\rw)$ is metrizable, so is $\Wcal$. From the a~priori estimate \eqref{L2enstrophyestimatefinh}, we also have
  \begin{equation}
    \label{L2enstrophyestimateinwcal}
    \nu\int_{t_0}^{t_1} \|\bu(t)\|_{H^1}^2\;\rd t
       \leq \frac{1}{\nu^2\lambda_1^2}|\bbf|_{L^2}^2
          + \frac{1}{\nu\lambda_1}|\bbf|_{L^2}^2(t_1-t_0),
       \;\;\forall t_0,t_1\in \RR, \;t_0<t_1, \;\forall \bu\in\Wcal.
  \end{equation}
  Then, it follows from the estimates \eqref{energyestimateinwcal} and \eqref{L2enstrophyestimateinwcal} that $\Wcal$ is equi-bounded and equicontinuous in $\Ccal_\loc(\RR,B_H(R)_\rw)$, with $\{\bu(t)\}_{\bu\in\Wcal}\subset B_H(R)_\rw$ relatively compact for each $t\in \RR$. From the Arzela-Ascoli Theorem, it follows that $\Wcal$ is relatively compact in $\Ccal_\loc(\RR,B_H(R)_\rw)$. From Lemma \ref{convergenceofsolutions1}, $\Wcal$ is also closed. Therefore, $\Wcal$ is a compact metric space.
\end{proof}
\medskip

From \eqref{L2enstrophyestimatefinh} we also obtain the following uniform time-average bound for functions in $\Wcal$.
\begin{lem}
  \label{lemwcalh1ave}
  For every $\bu\in \Wcal$ it follows that
  \begin{equation}
    \label{ineqlemwcalh1ave}
    \frac{1}{(t-t')} \int_{t'}^t \|\bu(s)\|_{H^1}^2 \;\rd s 
       \leq \lambda_1^{1/2}\nu^2 G^2\left(1 + \frac{1}{\nu\lambda_1(t-t')}\right), 
  \end{equation}
  for almost every $t'\in \RR$ and for all $t>t'$.
\end{lem}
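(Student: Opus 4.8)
The plan is to obtain this time-average bound directly from the energy-type inequality \eqref{L2enstrophyestimatefinh} combined with the uniform-in-time $L^2$ bound that every element of $\Wcal$ enjoys by Proposition \ref{propwcalinr0}. Recall that \eqref{L2enstrophyestimatefinh} reads
\[ |\bu(t)|_{L^2}^2 + \nu \int_{t'}^t \|\bu(s)\|_{H^1}^2 \;\rd s \leq |\bu(t')|_{L^2}^2 + \frac{1}{\nu\lambda_1}|\bbf|_{L^2}^2 (t-t'), \]
valid for almost every $t'\in\RR$ (precisely the points of strong continuity from the right) and all $t>t'$. The first step is to discard the nonnegative term $|\bu(t)|_{L^2}^2$ on the left, which isolates the dissipation integral:
\[ \nu \int_{t'}^t \|\bu(s)\|_{H^1}^2 \;\rd s \leq |\bu(t')|_{L^2}^2 + \frac{1}{\nu\lambda_1}|\bbf|_{L^2}^2 (t-t'). \]

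Next I would invoke the uniform estimate \eqref{energyestimateinwcal}, which holds for every $\bu\in\Wcal$ and every time, to replace $|\bu(t')|_{L^2}^2$ by $R_0^2 = |\bbf|_{L^2}^2/(\nu^2\lambda_1^2)$, using the definition \eqref{defR0} of $R_0$. This yields
\[ \nu \int_{t'}^t \|\bu(s)\|_{H^1}^2 \;\rd s \leq \frac{|\bbf|_{L^2}^2}{\nu^2\lambda_1^2} + \frac{1}{\nu\lambda_1}|\bbf|_{L^2}^2 (t-t'). \]

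Finally, dividing through by $\nu(t-t')$ and rewriting the forcing in terms of the Grashof number through $|\bbf|_{L^2}^2 = \nu^4\lambda_1^{3/2}G^2$, which is immediate from \eqref{defgrashof}, turns the right-hand side into $\lambda_1^{1/2}\nu^2 G^2 + \nu G^2/(\lambda_1^{1/2}(t-t'))$; factoring out $\lambda_1^{1/2}\nu^2 G^2$ then gives exactly the claimed bound \eqref{ineqlemwcalh1ave}. There is no substantial obstacle here: the computation is elementary once the two ingredients are in place. The only point deserving a word of care is the scope of the "almost every $t'$" qualifier, which stems solely from \eqref{L2enstrophyestimatefinh} being restricted to points of strong continuity from the right; the replacement $|\bu(t')|_{L^2}^2\leq R_0^2$ is valid for all $t'$ and introduces no further exceptional set.
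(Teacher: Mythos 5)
Your proof is correct and follows essentially the same route as the paper's: divide \eqref{L2enstrophyestimatefinh} by $\nu(t-t')$, control $|\bu(t')|_{L^2}$ by $R_0$ via Proposition \ref{propwcalinr0}, and rewrite the force in terms of the Grashof number \eqref{defgrashof}. Your version is in fact slightly cleaner, since you observe the algebra closes exactly (the paper's mention of Young's inequality is unnecessary), and you correctly pin the ``almost every $t'$'' qualifier on \eqref{L2enstrophyestimatefinh} alone.
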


\begin{proof}
  Just divide \eqref{L2enstrophyestimatefinh} by $\nu (t-t')$ and use the bound in Proposition \ref{propwcalinr0}, along with Young's inequality and the definition \eqref{defgrashof} of the Grashof number.
\end{proof}

The following uniform time-average bound also holds in $\Wcal$.
\begin{lem}
  \label{lemwcaldaave}
  For every $\bu\in \Wcal$ it follows that
  \begin{multline}
    \frac{1}{(t-t')} \int_{t'}^t |A\bu(s)|_{L^2}^{2/3} \;\rd s \\
       \leq \frac{1}{3\nu^{1/3}\lambda_1^{1/2}G^{2/3}(t-t')} + c_3\lambda_1^{1/2}\nu^{2/3} G^2\left(1 + \frac{1}{2\nu\lambda_1(t-t')}\right),
  \end{multline}
  for almost every $t'\in \RR$ and for all $t>t'$.
\end{lem}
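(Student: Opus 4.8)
The plan is to recognize the claimed inequality as the time-averaged, nondimensional form of the global $L^{2/3}_\loc(\RR,D(A))$ estimate \eqref{estDAtwothirdsglobal}, specialized to the space $\Wcal$. By Proposition \ref{propwcalinr0} (in particular \eqref{eqwurbr0}), every $\bu\in\Wcal$ is a Leray-Hopf weak solution on all of $\RR$, uniformly bounded in $H$ by $R_0$, so \eqref{estDAtwothirdsglobal} applies on $I=\RR$ for every $t'<t$ with $t'$ a point of strong continuity from the right; these $t'$ form a set of full measure, which is exactly the ``almost every $t'$'' in the statement. First I would write \eqref{estDAtwothirdsglobal} for such a pair and divide through by $(t-t')$.

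Next I would nondimensionalize each term by means of \eqref{defgrashof}, i.e. $|\bbf|_{L^2}=\nu^2\lambda_1^{3/4}G$, together with \eqref{defR0}. The explicit boundary term $\nu/(3|\bbf|_{L^2}^{2/3})$ becomes $1/(3\nu^{1/3}\lambda_1^{1/2}G^{2/3})$, which after division by $(t-t')$ reproduces the first term on the right-hand side of the claim. For the remaining contribution $\frac{c_3}{\nu^{4/3}}\bigl(\nu^{2/3}|\bbf|_{L^2}^{2/3}+\tfrac{1}{t-t'}\int_{t'}^t\|\bu\|_{H^1}^2\,\rd s\bigr)$ I would control the enstrophy time-average through Lemma \ref{lemwcalh1ave}, or equivalently re-derive it from \eqref{L2enstrophyestimatefinh} by discarding the nonnegative $|\bu(t)|_{L^2}^2$ and inserting the uniform bound $|\bu(t')|_{L^2}\le R_0$ of \eqref{energyestimateinwcal}. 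Using $\tfrac{c_3}{\nu^{4/3}}\nu^2\lambda_1^{1/2}=c_3\nu^{2/3}\lambda_1^{1/2}$, the $G^2$-contributions then assemble into the asserted form $c_3\lambda_1^{1/2}\nu^{2/3}G^2\bigl(1+\tfrac{1}{2\nu\lambda_1(t-t')}\bigr)$.

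The step I expect to be the main obstacle is matching the precise constants in that final collection, rather than any conceptual difficulty. Two features make the accounting delicate: the estimate \eqref{estDAtwothirdsglobal} carries a lower-order Grashof contribution $\nu^{2/3}|\bbf|_{L^2}^{2/3}(t-t')$, scaling like $G^{2/3}$ rather than $G^2$, and the crude use of \eqref{L2enstrophyestimatefinh} with $|\bu(t')|_{L^2}\le R_0$ tends to produce a $1/(\nu\lambda_1(t-t'))$ weight rather than the sharper $1/(2\nu\lambda_1(t-t'))$ claimed. To land exactly on the stated bound I would have to fold the $G^{2/3}$ term and the boundary contribution into the $G^2$ terms, exploiting the scale relation between $(\nu|\bbf|_{L^2})^{2/3}$ and the enstrophy (e.g. via Young's inequality) and carefully tracking the constant $c_3=2/3+c_2^{1/3}$ coming from \eqref{buuauest}; once these lower-order pieces are absorbed, everything else reduces to routine substitution of the nondimensional parameters.
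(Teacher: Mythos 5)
Your proof is exactly the paper's: the entire published argument is the one-sentence recipe of dividing \eqref{estDAtwothirdsglobal} by $t-t'$ and invoking \eqref{ineqlemwcalh1ave}, Young's inequality, and the definition \eqref{defgrashof} of the Grashof number, which is precisely what you propose. The constant-matching delicacy you flag --- the leftover lower-order term $c_3\lambda_1^{1/2}\nu^{2/3}G^{2/3}$ and the factor $\tfrac{1}{2}$ in $1/(2\nu\lambda_1(t-t'))$, neither of which falls out of a naive substitution --- is genuine, but the paper's proof glosses over it in exactly the same way, so your proposal reproduces the paper's argument, delicacy included.
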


\begin{proof}
  Simply divide the estimate \eqref{estDAtwothirdsglobal} by $t-t'$ and use the inequality \eqref{ineqlemwcalh1ave}, along with Young's inequality and the definition \eqref{defgrashof} of the Grashof number.
\end{proof}

It is possible to derive an estimate for the time-average of the $L^\infty$ norm of the solutions in $\Wcal$, using the estimates in Lemmas \ref{lemwcalh1ave} and \ref{lemwcaldaave}, along with Agmon's inequality \eqref{agmonineq}. This is done explicitly in Theorem \ref{corboundsrho0}, in the context of stationary statistical solutions.

We also consider the subspaces of weak solutions which can be extended in time to a uniformly bounded global weak solution (as we will see shortly, the time-invariant Vishik-Fursikov measures are carried by these subspaces):
\begin{align}
  \label{wcalidef}
  & \Wcal_I = \left\{ \bu\in \Ucal_I; \;\exists \bw\in \Wcal,
         \;\bu(t)=\bw(t), \;\forall t\in I \right\}, \\
  & \Wcal_I^\sharp = \left\{ \bu\in \Ucal_I^\sharp; \;\exists \bw\in \Wcal,
         \;\bu(t)=\bw(t), \;\forall t\in I \right\}.
\end{align}
We have $\Wcal_I^\sharp=\Pi_I\Wcal$ and, clearly,
\begin{equation}
  \label{wcalsharpcompact}
  \Wcal_I^\sharp \emph{ is a compact subspace of } \Ucal_I^\sharp(R_0).
\end{equation}
Note also that $\Wcal_I\subset\Wcal_I^\sharp$. If $I$ is open on the left, then $\Wcal_I = \Wcal_I^\sharp$. In particular, $\Wcal_\RR=\Wcal^\sharp_\RR = \Wcal$. Moreover, $\Wcal_I=\Wcal_I^\sharp\cap\Ucal_I(R_0)$, which implies that $\Wcal_I$ is Borel in $\Ccal_\loc(I;B_H(R_0)_\rw)$.

A useful characterization of the set $\Wcal_I^\sharp$ can be given in terms of the spaces $\Ucal_I^\sharp(R)$, with the help of Lemma \ref{convergenceofsolutions1}.
\begin{lem}
  \label{lemwcalasintersectionofucals}
  Let $I\subset \RR$ be an interval unbounded on the right. Then, for any $R>R_0$ and any sequence $\{\tau_k\}_{k\in \NN}$ of nonnegative times with $\tau_k\rightarrow \infty$, as $k\rightarrow \infty$, we have the characterization
\begin{equation}
  \label{wcalsharpasintersectionofucals}
  \Wcal^\sharp_I =  \bigcap_{k\in \NN}\sigma_{\tau_k}\Ucal^\sharp_I(R).
\end{equation}
\end{lem}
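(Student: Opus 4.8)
The plan is to prove the two inclusions separately, exploiting the identity $\Wcal_I^\sharp = \Pi_I\Wcal$ together with the compactness of the solution spaces. Throughout, write $a = \inf I$ (possibly $-\infty$); the interesting case is $a$ finite, while if $a=-\infty$ (i.e. $I=\RR$) the backward-extension step below is vacuous and the argument only simplifies.

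For the inclusion $\Wcal_I^\sharp \subseteq \bigcap_k \sigma_{\tau_k}\Ucal_I^\sharp(R)$ I would start from $\bv \in \Wcal_I^\sharp$, so that $\bv = \bw|_I$ for some $\bw \in \Wcal$. Since $\bw$ is a global weak solution on $\RR$, for each $k$ the function $\bu_k(t) = \bw(t-\tau_k)$ is well defined for all $t\in I$ (here I use that $\bw$ lives on all of $\RR$, not merely on $I$), is again a weak solution on $I^\circ$, and by \eqref{energyestimateinwcal} takes values in $B_H(R_0)\subset B_H(R)$; hence $\bu_k\in\Ucal_I^\sharp(R)$. A direct computation gives $(\sigma_{\tau_k}\bu_k)(t) = \bu_k(t+\tau_k) = \bw(t) = \bv(t)$ for all $t\in I$, so $\bv\in\sigma_{\tau_k}\Ucal_I^\sharp(R)$ for every $k$, which is the desired inclusion. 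This direction uses only $R\geq R_0$.

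The reverse inclusion is the substantive one. Given $\bv\in\bigcap_k\sigma_{\tau_k}\Ucal_I^\sharp(R)$, I would choose for each $k$ some $\bu_k\in\Ucal_I^\sharp(R)$ with $\sigma_{\tau_k}\bu_k=\bv$ and form the backward-in-time extension $\bw_k(s) = \bu_k(s+\tau_k)$ for $s\in[a-\tau_k,\infty)$. Each $\bw_k$ is a weak solution on $(a-\tau_k,\infty)$ with values in $B_H(R)$, and crucially $\bw_k(s) = \bu_k(s+\tau_k) = \bv(s)$ for every $s\in I$, so each $\bw_k$ is an extension of $\bv$ to a weak solution on an interval whose left endpoint $a-\tau_k$ tends to $-\infty$. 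Fixing any $T>0$, for $k$ large enough (namely $\tau_k>a+T$) the restriction $\bw_k|_{(-T,T)}$ is a weak solution uniformly bounded in $H$ by $R$; the Weak Compactness Lemma \ref{convergenceofsolutions1} (equivalently the compactness of $\Ucal_{(-T,T)}^\sharp(R)$ from Lemma \ref{ucalitildercompact}) then yields a subsequence converging in $\Ccal((-T,T),B_H(R)_\rw)$ to a weak solution on $(-T,T)$.

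A diagonal extraction over $T=1,2,3,\dots$ produces a single subsequence $\{\bw_{k_j}\}$ and a limit $\bw$ defined on all of $\RR$, with $\bw_{k_j}\to\bw$ uniformly weakly on compact intervals; being a weak solution on each $(-n,n)$, $\bw$ is a weak solution on $\RR$, and since $B_H(R)_\rw$ is weakly closed it takes values in $B_H(R)$, so $\sup_t|\bw(t)|_{L^2}\leq R<\infty$ and therefore $\bw\in\Wcal$ by \eqref{defwcal}. Because $\bw_{k_j}(s)=\bv(s)$ for all $s\in I$ (the values on $I$ are independent of $j$), passing to the limit gives $\bw|_I=\bv$, whence $\bv=\Pi_I\bw\in\Pi_I\Wcal=\Wcal_I^\sharp$. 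I expect the main obstacle to be exactly this last step: organizing the backward extensions so that the compactness argument delivers a genuine global weak solution on $\RR$ that simultaneously restricts to $\bv$ on $I$. This is why it is essential that each $\bw_k$ agree with $\bv$ on $I$ \emph{identically} rather than merely in the limit, and that the extension intervals $(a-\tau_k,\infty)$ exhaust $\RR$ as $\tau_k\to\infty$.
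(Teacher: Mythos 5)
Your proposal is correct and follows essentially the same path as the paper's proof: the forward inclusion via translating a global solution $\bw\in\Wcal$ backward by $\tau_k$ and restricting to $I$, and the reverse inclusion via the backward extensions $\bw_k(s)=\bu_k(s+\tau_k)$ on $I-\tau_k$ (which exhaust $\RR$), combined with the Weak Compactness Lemma \ref{convergenceofsolutions1} and a diagonal argument, using that each $\bw_k$ agrees with $\bv$ identically on $I$ so the limit does too. No gaps; this matches the paper's argument step for step.
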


\begin{proof}
  If $\bu\in \Wcal^\sharp_I$, then there exists $\bw\in \Wcal$ such that $\bu(t) =\bw(t)$, $\forall t\in I$. Using the invariance \eqref{Wcalinvariant}, we have $\bu(t) = \bw(t-\tau_k+\tau_k) = \sigma_{\tau_k}\bw(t-\tau_k)$, with $t\mapsto \bw(t-\tau_k)$ belonging to $\Wcal$. In particular, the restriction of $t\mapsto \bw(t-\tau_k)$ to the interval $I$ belongs to $\Wcal^\sharp_I\subset \Ucal^\sharp_I(R_0)\subset \Ucal^\sharp_I(R)$. Thus, $\bu\in \sigma_{\tau_k}\Ucal^\sharp_I(R)$, for any $k\in \NN$, proving the inclusion $\Wcal^\sharp_I \subset  \bigcap_{k\in \NN}\sigma_{\tau_k}\Ucal^\sharp_I(R)$.

  Now suppose that $\bu\in \bigcap_{k\in \NN}\sigma_{\tau_k}\Ucal^\sharp_I(R)$. Thus, $\bu=\sigma_{\tau_k}\bu_k$, for some $\bu_k\in \Ucal^\sharp_I(R)$, for any $k\in \NN$. Define $\bv_k(t)=\bu_k(t+\tau_k)$, for $t\in I -\tau_k$. Since $I$ is unbounded on the right and $\tau_k \rightarrow \infty$, we find that $\bigcup_k (I-\tau_k) = \RR$. Moreover, each $\bv_k$ is a weak solution on $I-\tau_k$ which is uniformly bounded in $H$ by $R$ (with the bound $R$ independent of $k$). Thus, using Lemma \ref{convergenceofsolutions1} together with a diagonal argument, we find that there is a subsequence of $\{\bv_k\}$ that converges, in the weak topology of $H$, to a weak solution $\bw$ on $\RR$, uniformly on any compact interval in $I$. Clearly, $\bw$ is bounded by $R$ in $H$, so that $\bw\in \Wcal$. Morever, $\bu(t) = \sigma_{\tau_k}\bu_k(t) = \bu_k(t+\tau_k) = \bv_k(t)$, for any $t\in I$. Hence, at the limit, $\bw(t)=\bu(t)$, for any $t\in I$, proving that $\bu \in \Wcal^\sharp_I$. This concludes the proof of \eqref{wcalsharpasintersectionofucals}.
 \end{proof}

\subsection{The weak global attractor and its regular parts}
\label{weakglobalattractor}

At this point we recall the weak global attractor $\Acal_\rw$ introduced in \cite{foiastemam85}. It is defined as the set of all points in $H$ which belong to a global weak solution defined on $\RR$ and uniformly bounded in $H$. By definition this set is directly related to the set $\Wcal$ and can be written as
\begin{equation}
  \label{awdef}
 \Acal_\rw = \left\{\bu_0\in H; \;
    \;\exists\bu\in \Wcal, \;\bu(0)=\bu_0 \right\} = \Pi_0\Wcal.
\end{equation}
Taking \eqref{PitWcal} into consideration, we have
\begin{equation}
  \label{awpitw}
  \Acal_\rw = \Pi_t\Wcal, \quad \forall t\in \RR.
\end{equation}

Since $\Wcal$ is compact (Proposition \ref{propwcalinr0}) and the projection operators are continuous it is clear that $\Acal_\rw=\Pi_0\Wcal$ is compact in $H_\rw$. It is in fact included and compact in $B_H(R_0)_\rw$.

We consider now certain regular parts of the weak global attractor. An important subset of $\Acal_\rw$ of regular solutions is the set $\Acal_\reg$ defined by
\begin{equation}
  \label{defaregorg}
  \Acal_\reg = \left\{ \bu_0\in V; \;
     \parbox{3.8in}{$\exists\delta>0$ and $\exists \bu\in\Wcal$ with
         $\bu(0)=\bu_0$, such that $\bu$ is regular
         on $(-\delta,\delta)$ and any other global weak solution
         $\bv\in \Wcal$ with $\bv(0)=\bu_0$ is such that
         $\bv(t) = \bu(t)$, $\forall t\in (-\delta,\delta)$}
   \right\}.
\end{equation}
Loosely speaking, the condition in \eqref{defaregorg} says, essentially, that $\bu\in \Wcal$ is regular on $(-\delta,\delta)$ and is unique, over the interval $(-\delta,\delta)$, among all the global weak solutions in $\Wcal$ with value $\bu_0$ at time $t=0$. This set was introduced in \cite{foiastemam85} and it was proved there to be open and dense in $\Acal_\rw$ in the weak topology of $H$. This proof was given in more details in \cite{frt2010c}, where this set was also showed to be characterized as
\begin{equation}
    \label{defareg}
    \Acal_\reg = \left\{\bu_0\in V; \;
      \parbox{3.8in}{$\forall\bu\in\Wcal$ with $\bu(0)=\bu_0$, $\exists\delta_\bu>0$
     such that $\bu$ is regular on a neighborhood $(-\delta_\bu,\delta_\bu)$ of $t=0$}
      \right\}.
\end{equation}
Another regular subset considered in \cite{frt2010c} is the subset $\Acal_\reg'$ defined as
\begin{equation}
  \label{defaregprime}
  \Acal_\reg' = \left\{\bu_0\in V; \;
     \parbox{3.8in}{$\exists\bu\in\Wcal$ with $\bu(0)=\bu_0$ and $\exists\delta_\bu>0$
     such that $\bu$ is regular on a neighborhood $(-\delta_\bu,\delta_\bu)$ of $t=0$}
   \right\}.
\end{equation}
It is clear that
\[ \Acal_\reg \subset \Acal_\reg'\subset \Acal_\rw,
\]
so that $\Acal_\reg'$ is also dense in $\Acal_\rw$. It was also proved in \cite{frt2010c} that $\Acal_\reg'$ is a $\sigma$-compact set in $H_\rw$. The following two characterizations were given in \cite{frt2010c}:
  \begin{align*}
    \Acal_\reg & = \left\{ \bu_0\in H; \;\forall \bu\in\Wcal \text{ such that } \bu(0)=\bu_0,
           \text{ we have } \liminf_{t\rightarrow 0^-}\|\bu(t)\|_{H^1}<\infty \right\}; \\
    \Acal_\reg' & = \left\{ \bu_0\in H; \;\exists \bu\in\Wcal \text{ such that } \bu(0)=\bu_0
         \text{ and } \liminf_{t\rightarrow 0^-}\|\bu(t)\|_{H^1}<\infty \right\}.
  \end{align*}
Notice that these last characterizations are related to how the solutions blow-up, or not, in the $H^1$ norm, as $t$ goes to zero from negative values. The rate of blow up can be further estimated according to the following characterizations given in \cite[Section 3.2, Corollary 1]{frt2010c}. 
\begin{multline}
    \label{charactaregminusaregeq}
    \Acal_\rw \setminus \Acal_\reg \\
      = \left\{ \bu_0\in H; \;\exists \bu\in \Wcal\cap \Pi_0^{-1}\{\bu_0\} \text{ such that } 
              \|\bu(t)\|_{H^1}^2 \geq \Gamma(t), \;\forall t< 0 \right\},
\end{multline}
\begin{multline}
    \label{charactaregminusaregprimeeq}
     \Acal_\rw \setminus \Acal_\reg' \\
       = \left\{ \bu_0\in H; \;\forall \bu\in \Wcal\cap\Pi_0^{-1}\{\bu_0\},
            \text{ we have } \|\bu(t)\|_{H^1}^2 \geq \Gamma(t), \;\forall t< 0 \right\},
\end{multline}
where 
\begin{equation}
    \label{blowupestimate}
    \Gamma(t)= \frac{\nu^{3/2}}{2c_4|t|^{1/2}} - \nu^{2/3}|\bbf|_{L^2}^{2/3}, \qquad t<0,
\end{equation}
with $c_4=\max\{1,c_2^{3/2}\}$.
  
We can be more precise in the characterization of the set $\Acal_\reg'\setminus\Acal_\reg$
and prove that the solution which is regular and passes through a given point in this set is 
in fact unique.

\begin{prop}
  \label{acalregprimeuniquestrong}
  Let $\bu_0\in \Acal_\reg'$. Then, there exists a unique solution
  $\bu\in \Wcal$ with $\bu(0)=\bu_0$ which is a strong solution on a open interval
  containing $t=0$. Moreover, if $\bu_0\in\Acal_\reg'\setminus\Acal_\reg$, then
  $\Pi_t^{-1}\bu_0\setminus\{\bu\}$ is not empty 
  and any solution $\bv\in \Wcal$ with $\bv(0)=\bu_0$ and different than $\bu$ 
  blows up at $t=0$, i.e. $\liminf_{t\rightarrow 0^-}\|\bv(t)\|_{H^1}=\infty$, or, 
  more precisely, $\|\bv(t)\|\geq \Gamma(t)$, for all $t<0$, where $\Gamma(t)$
  is given by \eqref{blowupestimate}.
\end{prop}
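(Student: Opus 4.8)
The plan is to obtain existence directly from the definition of $\Acal_\reg'$ and to establish uniqueness by coupling the forward weak-strong uniqueness of strong solutions with their time-analyticity. Since $\bu_0\in\Acal_\reg'\subset V$, definition \eqref{defaregprime} immediately provides a solution $\bu\in\Wcal$ with $\bu(0)=\bu_0$ that is regular on an open interval $(-\delta_\bu,\delta_\bu)$, which settles existence. For uniqueness, I would take any $\tilde\bu\in\Wcal$ with $\tilde\bu(0)=\bu_0$ that is also strong on some open interval containing $0$, fix an open interval $(-\delta,\delta)$ on which both $\bu$ and $\tilde\bu$ are strong, and observe that on $[0,\delta)$ both are weak solutions issuing from the common value $\bu_0$ while one of them is strong; the weak-strong uniqueness recalled in the preliminaries then forces $\bu=\tilde\bu$ throughout $[0,\delta)$.

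The crux, and the step I expect to be the main obstacle, is to promote this forward agreement to agreement on a two-sided neighborhood of $0$, since the three-dimensional equations provide no elementary backward uniqueness. Here I would exploit the fact, recalled in the preliminaries, that a strong solution on an open interval is real-analytic in time with values in $D(A)$. Thus $\bu$ and $\tilde\bu$ are two $D(A)$-valued analytic maps on $(-\delta,\delta)$ that coincide on the subinterval $[0,\delta)$; since $[0,\delta)$ has an accumulation point in $(-\delta,\delta)$, the identity theorem for analytic functions yields $\bu=\tilde\bu$ on all of $(-\delta,\delta)$. This gives the uniqueness of the strong solution through $\bu_0$ near $t=0$.

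For the second part, suppose $\bu_0\in\Acal_\reg'\setminus\Acal_\reg$, so in particular $\bu_0\in\Acal_\rw\setminus\Acal_\reg$. The characterization \eqref{charactaregminusaregeq} then produces a $\bv\in\Wcal$ with $\bv(0)=\bu_0$ and $\|\bv(t)\|_{H^1}^2\geq\Gamma(t)$ for all $t<0$; since $\Gamma(t)\to\infty$ as $t\to0^-$ while $\bu$ remains bounded in $V$ near $0$, this $\bv$ is distinct from $\bu$, so the set of solutions in $\Wcal$ through $\bu_0$ other than $\bu$ is nonempty. To see that every such solution blows up backward at the asserted rate, I would argue by contraposition using the uniqueness above: if some $\bv\in\Wcal$ through $\bu_0$ satisfied $\liminf_{t\to0^-}\|\bv(t)\|_{H^1}<\infty$, then by the per-solution characterization of regularity from \cite{frt2010c} it would be regular on a left-neighborhood of $0$; together with $\bu_0\in V$ and the local existence and weak-strong uniqueness of strong solutions this makes $\bv$ strong on an open interval containing $0$, whence $\bv$ coincides with $\bu$ near $0$ by the first part. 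Consequently any $\bv$ not coinciding with $\bu$ near $0$ obeys $\liminf_{t\to0^-}\|\bv(t)\|_{H^1}=\infty$, and the quantitative lower bound $\|\bv(t)\|_{H^1}^2\geq\Gamma(t)$ for all $t<0$ follows by integrating the enstrophy inequality \eqref{enstrophyineq} backward from the blow-up time $t=0$, exactly as in the derivation of \eqref{blowupestimate} in \cite{frt2010c}.

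Two points deserve care. First, the uniqueness the argument really delivers is that of the \emph{germ} of the strong solution at $t=0$ (equivalently, agreement on the maximal interval of strong existence): past a possible forward enstrophy blow-up, two elements of $\Wcal$ sharing this germ might still diverge, so in the second part ``different than $\bu$'' must be read as failing to coincide with $\bu$ on a neighborhood of $0$, which by the dichotomy above is precisely blowing up backward. Second, the analyticity-plus-identity-theorem device is exactly what converts the one-sided weak-strong uniqueness into two-sided agreement; without it the backward direction would require a genuine backward-uniqueness theorem for weak solutions, which is not available at this level of regularity.
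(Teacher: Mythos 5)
Your proof is correct and follows essentially the same route as the paper's (much terser) proof: analyticity of strong solutions yields uniqueness near $t=0$, and the second part follows by contraposition --- a solution through $\bu_0$ that does not blow up backward would be strong near $0$ and hence coincide with $\bu$ there. Your explicit chain of forward weak-strong uniqueness plus the identity theorem for $D(A)$-valued analytic functions, and your observation that the uniqueness obtained is really that of the germ at $t=0$ (so that ``different than $\bu$'' must be read as ``not coinciding with $\bu$ on a neighborhood of $0$''), simply make precise what the paper's two-sentence proof leaves implicit.
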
  

\begin{proof}
  Since strong solutions are analytic in time, two strong solutions on a neighborhood of   $t=0$ with the same value at $\bu_0$ at this time must coincide in their common neighborhood, hence this strong solution is unique. If $\bu_0$ is in $\Acal_\reg'\setminus \Acal_\reg$, then it means that there are other solutions passing through $\bu_0$ and they must all blow up, otherwise they would be strong and would coincide with $\bu$ in a neighborhood of $t=0$.
\end{proof}

In view of Proposition \ref{acalregprimeuniquestrong}, we can actually write
\begin{equation}
  \label{acalregprimecharactunique}
    \Acal_\reg' = \left\{ \bu_0\in H; \;\exists! \bu\in\Wcal \text{ such that } \bu(0)=\bu_0
         \text{ and } \liminf_{t\rightarrow 0^-}\|\bu(t)\|_{H^1}<\infty \right\}.
\end{equation}

This result can be connected with the following result, which refers to a solution which belongs to $\Acal_\reg$ up to a certain time.
\begin{prop}
  Let $\bu\in\Wcal$ with $\bu(0)\in\Acal_\reg$. Let 
  \[ t_\reg^{\text{\rm max}}=\sup\{\tau>0, \;\bu(t)\in \Acal_\reg, \;\forall t\in [0,\tau)\}.
  \]
  Suppose $t_\reg^{\text{\rm max}}<\infty$. Then, 
  $\bu(t_\reg^{\text{\rm max}})\notin\Acal_\reg$. 
  Morever, if 
  \[ \liminf_{t\nearrow t_\reg^{\text{\rm max}}} \|\bu(t)\|_{H^1}<\infty, 
  \]
  then $\bu(t_\reg^{\text{\rm max}})\in \Acal_\reg'\setminus \Acal_\reg$ and 
  $\sigma_{t_\reg^{\text{\rm max}}}\bu$ is the 
  unique solution referred to in Proposition \ref{acalregprimeuniquestrong} for 
  $\bu_0=\bu(t_\reg^{\text{\rm max}})$.
\end{prop}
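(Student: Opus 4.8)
Writing $t^\ast := t_\reg^{\text{\rm max}}$, the plan is to prove the three assertions---that $\bu(t^\ast)\notin\Acal_\reg$, that $\bu(t^\ast)\in\Acal_\reg'\setminus\Acal_\reg$ under the $\liminf$ hypothesis, and that $\sigma_{t^\ast}\bu$ is the solution of Proposition \ref{acalregprimeuniquestrong}---in that order, relying on the openness of $\Acal_\reg$ in $\Acal_\rw$, the invariance $\sigma_\tau\Wcal=\Wcal$, the characterizations of $\Acal_\reg$ and $\Acal_\reg'$ recorded above, and the blow-up alternative for right endpoints of maximal intervals of regularity.

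First I would observe that, since $\bu\in\Wcal$, the map $t\mapsto\bu(t)$ is weakly continuous and takes values in $\Acal_\rw=\Pi_t\Wcal$; because $\Acal_\reg$ is open in $\Acal_\rw$ for the weak topology, the set $\{t;\;\bu(t)\in\Acal_\reg\}$ is open in $\RR$ and contains $0$. This makes the supremum defining $t^\ast$ legitimate, shows $t^\ast>0$, and gives $\bu(t)\in\Acal_\reg$ for every $t\in[0,t^\ast)$. If $\bu(t^\ast)$ were in $\Acal_\reg$, openness would produce $\varepsilon>0$ with $\bu(t)\in\Acal_\reg$ on $(t^\ast-\varepsilon,t^\ast+\varepsilon)$, hence on all of $[0,t^\ast+\varepsilon)$, contradicting the definition of $t^\ast$ as a supremum; thus $\bu(t^\ast)\notin\Acal_\reg$.

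Next, assuming $\liminf_{t\nearrow t^\ast}\|\bu(t)\|_{H^1}<\infty$, I would set $\bv:=\sigma_{t^\ast}\bu$, which lies in $\Wcal$ by the invariance $\sigma_\tau\Wcal=\Wcal$ and satisfies $\bv(0)=\bu(t^\ast)$. Since $\liminf_{t\to0^-}\|\bv(t)\|_{H^1}=\liminf_{s\nearrow t^\ast}\|\bu(s)\|_{H^1}<\infty$, the characterization of $\Acal_\reg'$ gives $\bu(t^\ast)\in\Acal_\reg'$, and together with the previous step $\bu(t^\ast)\in\Acal_\reg'\setminus\Acal_\reg$.

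The last assertion is where the main subtlety lies, since I must reconcile $\bu(t^\ast)\notin\Acal_\reg$ with $\bv=\sigma_{t^\ast}\bu$ being strong through $0$---i.e. $\bu$ being regular slightly past $t^\ast$---the point being that failure of $\Acal_\reg$ concerns \emph{other} solutions issuing from $\bu(t^\ast)$, not $\bu$ itself. Applying the definition of $\Acal_\reg$ to $\sigma_t\bu$ for each $t\in[0,t^\ast)$ shows that every such $t$ is a point of interior regularity, so the connected set $[0,t^\ast)$ lies in a single maximal interval of regularity $(\alpha,\beta)$ with $\alpha<0$ and $\beta\geq t^\ast$. I expect the delicate step to be ruling out $\beta=t^\ast$: if $\beta=t^\ast$, then $t^\ast$ is an interior right endpoint of a maximal regularity interval, so $\|\bu(t)\|_{H^1}\to\infty$ as $t\nearrow t^\ast$ by the blow-up alternative, contradicting the standing hypothesis. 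Hence $\beta>t^\ast$, $\bu$ is a strong solution on the open interval $(\alpha,\beta)\ni t^\ast$, and $\bv=\sigma_{t^\ast}\bu$ is a strong solution on $(\alpha-t^\ast,\beta-t^\ast)\ni 0$. As $\bv\in\Wcal$ with $\bv(0)=\bu(t^\ast)\in\Acal_\reg'$ and $\bv$ strong on a neighborhood of $0$, the uniqueness clause of Proposition \ref{acalregprimeuniquestrong} identifies $\bv$ with the solution asserted there, which completes the plan.
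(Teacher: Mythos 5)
Your proof is correct and follows essentially the same route as the paper's: openness of $\Acal_\reg$ in $\Acal_\rw$ for the first claim, the continuation/blow-up alternative (the strong solution extends past $t_\reg^{\text{\rm max}}$ precisely because the $H^1$ norm does not blow up there) for $\bu(t_\reg^{\text{\rm max}})\in\Acal_\reg'\setminus\Acal_\reg$, and the uniqueness clause of Proposition \ref{acalregprimeuniquestrong} to identify $\sigma_{t_\reg^{\text{\rm max}}}\bu$. The only differences are presentational: you invoke the $\liminf$ characterization of $\Acal_\reg'$ directly and spell out the step the paper leaves implicit (that every $t\in[0,t_\reg^{\text{\rm max}})$ is a point of interior regularity, so $[0,t_\reg^{\text{\rm max}})$ sits in a single maximal regularity interval), which is added detail rather than a different method.
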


\begin{proof}
  The fact that $\bu(t_\reg^{\text{\rm max}})\notin\Acal_\reg$ follows immediately from the fact that $\Acal_\reg$ is weakly open in $\Acal_\rw$. Then, if $\bu$ does not blow up at $t_\text{\rm max}$ in the norm of $V$, then $\bu$ can be continued as a strong solution up to an open interval containing $t_\text{\rm max}$, so that $\bu(t_\reg^{\text{\rm max}})\in\Acal_\reg'$, and hence $\bu(t_\reg^{\text{\rm max}})\in\Acal_\reg'\setminus\Acal_\reg$. Moreover, the translation $\sigma_{t_\reg^{\text{\rm max}}}\bu$ has to be the unique strong solution with $\bu_0=\bu(t_\reg^{\text{\rm max}})$ referred to in Proposition \ref{acalregprimeuniquestrong}.
\end{proof}

Similar regular parts can be defined within the set $\Wcal$. In fact, the set $\Acal_\reg'$ is directly connected to the set
\begin{equation}
  \Wcal_\reg'=\left\{\bu\in \Wcal; \;0 \text{ is a point of interior regularity for } \bu\right\}.
\end{equation}
The connection is given by
\begin{equation}
  \label{wcalregprimeandacalregprime}
  \Wcal_\reg' = \Wcal \cap \Pi_0^{-1}\Acal_\reg'.
\end{equation}
Similarly to \eqref{charactaregminusaregprimeeq}, we have the characterization
\begin{equation}
  \label{charactwregprime}
    \Wcal \setminus \Wcal_\reg'
       = \left\{ \bu\in \Wcal; \; \|\bu(t)\|_{H^1}^2 \geq \Gamma(t), \;\forall t< 0 \right\}.
\end{equation}
A more precise set, with a lower bound on the size of the interval in which the solution is regular, can be defined as follows
\begin{multline}
  \label{defwcalregtauprime}
  \Wcal_{\reg,\tau}'=\Wcal \cap \Pi_{(-\tau,\tau)}^{-1}\Ccal((-\tau,\tau),V) \\
    = \left\{\bu\in \Wcal; \;\bu \text{ is a strong solution on } (-\tau,\tau)\right\},
\end{multline}
for $\tau>0$. By allowing $\tau=\infty$, we obtain the set of global regular solutions $\Wcal_{\reg,\infty}'$. Clearly, $\Wcal_{\reg,\tau_1}'\subset\Wcal_{\reg,\tau_2}'$, for $0<\tau_1 \leq \tau_2$, with 
\begin{equation}
  \label{wcalregprimeasunionwcalregshortprime}
  \Wcal_\reg'=\bigcup_{\tau>0} \Wcal_{\reg,\tau}' = \bigcup_{n\in\NN} \Wcal_{\reg,1/n}', 
\end{equation}
and
\begin{equation}
  \Wcal_{\reg,\infty}' = \bigcap_{\tau>0} \Wcal_{\reg,\tau}' = \bigcap_{n\in\NN} \Wcal_{\reg,n}'
\end{equation}
Moreover, each $\Wcal_{\reg, \tau}'$ can be written as
\[ \Wcal_{\reg,\tau}' = \bigcup_{k\in \NN} \Wcal_{\reg,\tau}'(kR_0),
\]
where
\begin{multline*}
  \Wcal_{\reg,\tau}'(R) = \Wcal \cap \Pi_{(-\tau,\tau)}^{-1}\Ccal((-\tau,\tau); B_V(R)) \\ = \left\{\bu\in \Wcal; \;\bu \text{ is a strong solution on } (-\tau,\tau) \text{ with } \|\bu(t)\|_{H^1} \leq R\right\}, 
\end{multline*}
for $R>0$, $\tau>0$. Since $\Ccal((-\tau,\tau); B_V(R))$ is closed in $\Ccal_\loc(\RR,H_\rw)$ and $\Wcal$ is compact in $\Ccal_\loc(\RR,H_\rw)$ it follows that $\Wcal_{\reg,\tau}'(R)$ is compact, for any $R>0$ and any $\tau>0$. Thus, $\Wcal_{\reg,\tau}'$ and $\Wcal_\reg'$ are $\sigma$-compact and $\Wcal_{\reg,\infty}'$ is an $\Fcal_{\sigma\tau}$-set in $\Ccal_\loc(\RR,H_\rw)$. In particular, all such sets are Borel.

Using \eqref{charactwregprime} (translating the estimate from the blow up at $0$ to a blow up at $\beta$) we see that the complement of the set $\Wcal_{\reg,\tau}'$ can be characterized by
\begin{multline}
  \label{charactwregtauprimecomplement}
  \Wcal\setminus \Wcal_{\reg,\tau}' = \left\{\bu\in \Wcal; \;\exists \beta \in (-\tau,\tau); \;\liminf_{t\rightarrow \beta^-}\|\bu(t)\|_{H^1} = \infty\right\} \\
  = \left\{\bu\in \Wcal; \;\exists \beta \in (-\tau,\tau); \;\|\bu(t)\|_{H^1}^2 \geq \Gamma(t-\beta), \;\forall t< \beta\right\}.
\end{multline}

\subsection{The multivalued evolution map}
\label{secmultvaluedmap}

Due to the lack of a result on the uniqueness of weak solutions, we cannot define a solution semigroup in the classical sense, associating a unique solution to a given initial condition. The possibility that certain initial conditions give rise to more than one solution is not ruled out. Thus, for each time $t \geq 0$ and each initial condition $\bu_0$, we may have more than one state $\bu(t)$ corresponding to the value at $t$ of different weak solutions $\bu$ starting at that initial condition, $\bu(0)=\bu_0$. This leads naturally to the definition of an evolution map acting in the collection of all subsets of the phase space. More precisely, we have the following definition.
\begin{defs}
  \label{defSigma}
  Given a set $E$ in $H$ and $t\geq 0$, we denote by $\Sigma_t E$ the set of all points $\bw\in H$, such that $\bw=\bu(t)$, for some $\bu$ in $\Ucal_{[0,\infty)}$ with initial condition $\bu(0)\in  E$.
\end{defs}

The following result concerns the composition of the evolution maps.
\begin{lem}
  \label{Sigmaproperty}
  For $E\subset H$ and $t,s\geq 0$, we have that $\Sigma_t\Sigma_s E  \subset \Sigma_{t+s} E.$
  \qed
\end{lem}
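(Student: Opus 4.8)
The plan is to peel off the definition of $\Sigma$ twice and then splice the two resulting weak solutions into a single weak solution on $[0,\infty)$ using the Pasting Lemma (Lemma \ref{pastinglemma}). The case $s=0$ is immediate: from Definition \ref{defSigma} one has $\Sigma_0 E\subseteq E$, and $\Sigma_t$ is plainly monotone under inclusion, so $\Sigma_t\Sigma_0 E\subseteq\Sigma_t E=\Sigma_{t+0}E$. Assuming $s>0$, I would take an arbitrary $\bz\in\Sigma_t\Sigma_s E$. By Definition \ref{defSigma} there is a weak solution $\bv\in\Ucal_{[0,\infty)}$ with $\bv(0)\in\Sigma_s E$ and $\bv(t)=\bz$; applying the definition once more to the point $\bv(0)$ produces a weak solution $\bu\in\Ucal_{[0,\infty)}$ with $\bu(0)\in E$ and $\bu(s)=\bv(0)$. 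The target is to manufacture one weak solution on $[0,\infty)$ starting in $E$ whose value at time $t+s$ is $\bz$.

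Next I would define the concatenation $\bw$ on $[0,\infty)$ by $\bw(r)=\bu(r)$ for $0\leq r\leq s$ and $\bw(r)=\bv(r-s)$ for $r\geq s$; the two prescriptions agree at $r=s$ precisely because $\bu(s)=\bv(0)$. Since the functional Navier-Stokes system \eqref{nseeqfunctional} is autonomous, the time-translate $r\mapsto\bv(r-s)$ is again a weak solution, now on the left-closed interval $[s,\infty)$ (this is the same translation invariance recorded in \eqref{Wcalinvariant}), while the restriction of $\bu$ to $(0,s]$ is a weak solution there. Applying Lemma \ref{pastinglemma} with $t_1=0$, $t_2=s$, $t_3=\infty$ then shows that $\bw$ is a weak solution on the open interval $(0,\infty)$.

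The one point demanding care is the left endpoint, and this is where I expect the only real (if modest) obstacle to lie: membership in $\Ucal_{[0,\infty)}$ requires strong continuity in $H$ from the right at the initial time (condition \eqref{lhinitialcont} of Definition \ref{deflerayhopfweaksolution}), which the Pasting Lemma, delivering only a solution on $(0,\infty)$, does not by itself supply. However, $\bw$ coincides with $\bu$ on $[0,s]$, and $\bu\in\Ucal_{[0,\infty)}$ already has this right-continuity at $t=0$; since all the remaining defining conditions of a weak solution are local and already hold on $(0,\infty)$, adjoining the single point $t=0$ upgrades $\bw$ to a bona fide weak solution on $[0,\infty)$ with $\bw(0)=\bu(0)\in E$. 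Finally $\bw(t+s)=\bv\bigl((t+s)-s\bigr)=\bv(t)=\bz$, so $\bz\in\Sigma_{t+s}E$ by Definition \ref{defSigma}; as $\bz$ was arbitrary, this establishes $\Sigma_t\Sigma_s E\subseteq\Sigma_{t+s}E$.
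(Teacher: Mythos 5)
Your proof is correct and takes essentially the same route as the paper's: unpack Definition \ref{defSigma} twice and splice the two resulting solutions into one via the Pasting Lemma (Lemma \ref{pastinglemma}). Your explicit treatment of the left endpoint (inheriting strong right-continuity at $t=0$ from $\bu$) and of the degenerate case $s=0$ merely fills in details that the paper's shorter proof leaves implicit.
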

\begin{proof}
  The inclusion follows from Lemma \ref{pastinglemma}. In fact, if $\bw\in \Sigma_t\Sigma_s E$, then $\bw = \bu^{(2)}(t)$ for some weak solution $\bu^{(2)}$ with $\bu^{(2)}(0)\in \Sigma_s E$, hence $\bu^{(2)}(0)=\bu^{(1)}(s)$ for some weak solution $\bu^{(1)}$ with $\bu^{(1)}(0)\in E$. Then, by Lemma \ref{pastinglemma}, the concatenation $\bu$ of $\bu^{(1)}$ and $\bu^{(2)}$, as in the lemma, is such that $\bu$ is a weak solution with $\bu(0)\in E$ and $\bu(t+s)=\bw$, so that $\bw\in \Sigma_{t+s}E$. 
\end{proof}
\medskip

\begin{rmk}
If we could prove that any time $s\geq 0$ is a point of strong continuity from the right for any given weak solution $\bu$ with $\bu(0)\in E \subset H$, then we would have that $\bu$ would be the concatenation of two weak solutions as in Lemma \ref{pastinglemma} and, then, $\bu(t+s)$ would be in both $\Sigma_{t+s}E$ and $\Sigma_t\Sigma_s E$. Thus, these two sets would be equal. But the current state of knowledge only gives us that a weak solution is strongly continuous from the right almost everywhere, not everywhere. Hence, we can only assure one side of the inclusion, which is the content of Lemma \ref{Sigmaproperty}.
\end{rmk}

A fundamental expression for $\Sigma_t$ appears in relation with the operators $\Pi_t$ introduced in Section \ref{trajectoryspaces}, as follows.
\begin{lem}
\label{charactsigma}
For any $E\subset H$ and $t\geq 0$, 
\begin{equation}
  \label{sigmaandpi}
  \Sigma_t E = \Pi_t(\Ucal_{[0,\infty)}\cap\Pi_0^{-1}(E)),
\end{equation}
with $\Pi_0$ and $\Pi_t$ considered as defined on $\Ccal_\loc([0,\infty),H_\rw)$. Moreover, for any sequence $\{R_k\}_{k\in\NN}$ of positive real numbers with $R_k\geq R_0$ and $R_k\rightarrow \infty$, we have
\begin{equation}
  \label{sigmaandpirk}
  \Sigma_t E = \bigcup_{k\in\NN} \Pi_t(\Ucal_{[0,\infty)}(R_k)\cap\Pi_0^{-1}(E)).
\end{equation}
\end{lem}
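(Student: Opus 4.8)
The plan is to observe that the first identity \eqref{sigmaandpi} is essentially a reformulation of Definition \ref{defSigma}, and then to derive the second identity \eqref{sigmaandpirk} from the first by invoking the exhaustion characterization of $\Ucal_{[0,\infty)}$ established in Lemma \ref{Ucalileftborel}.

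For \eqref{sigmaandpi}, I would simply unwind the definitions. A point $\bw\in H$ belongs to the right-hand side precisely when there exists $\bu\in\Ccal_\loc([0,\infty),H_\rw)$ with $\bu\in\Ucal_{[0,\infty)}$ (i.e.\ $\bu$ is a weak solution on $[0,\infty)$), with $\bu\in\Pi_0^{-1}(E)$ (i.e.\ $\bu(0)\in E$), and with $\Pi_t\bu=\bu(t)=\bw$. These are exactly the conditions defining $\Sigma_t E$ in Definition \ref{defSigma}, so the two sets coincide. The only bookkeeping point is that $\Pi_0$ and $\Pi_t$ are here the projection operators on $\Ccal_\loc([0,\infty),H_\rw)$, as stated.

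For \eqref{sigmaandpirk}, I would start from \eqref{sigmaandpi} and substitute the representation of $\Ucal_{[0,\infty)}$ as a countable union. Since $[0,\infty)$ is an interval closed and bounded on the left, Lemma \ref{Ucalileftborel} applies and gives $\Ucal_{[0,\infty)}=\bigcup_k\Ucal_{[0,\infty)}(R_k)$ for any sequence $R_k\to\infty$ (the extra hypothesis $R_k\geq R_0$ in the present statement is stronger than what is needed). Intersecting with $\Pi_0^{-1}(E)$ and distributing the intersection over the union yields
\[
  \Ucal_{[0,\infty)}\cap\Pi_0^{-1}(E)
    = \bigcup_{k\in\NN}\bigl(\Ucal_{[0,\infty)}(R_k)\cap\Pi_0^{-1}(E)\bigr).
\]
Applying $\Pi_t$ and using that the set-image of a union is the union of the set-images (a property valid for any map) then produces exactly \eqref{sigmaandpirk}.

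No serious obstacle is expected here: the argument is a direct unwinding of definitions together with the elementary facts that intersection distributes over union and that images commute with arbitrary unions. The only items requiring a little care are verifying that the hypotheses of Lemma \ref{Ucalileftborel} are met for $I=[0,\infty)$ and keeping the domains of $\Pi_0$ and $\Pi_t$ fixed on $\Ccal_\loc([0,\infty),H_\rw)$ throughout, so that the projections in the union on the right of \eqref{sigmaandpirk} are the same operators as in \eqref{sigmaandpi}.
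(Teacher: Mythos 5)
Your proof is correct and follows essentially the same route as the paper: the identity \eqref{sigmaandpi} is treated as a symbolic restatement of Definition \ref{defSigma}, and \eqref{sigmaandpirk} is obtained by combining it with the representation \eqref{characucaliclosed} of $\Ucal_{[0,\infty)}$ from Lemma \ref{Ucalileftborel}. Your added remarks (distributing the intersection over the union, images commuting with unions, and that $R_k\geq R_0$ is not actually needed for this step) are accurate elaborations of what the paper leaves implicit.
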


\begin{proof}
Relation \eqref{sigmaandpi} is simply a symbolic way of expressing the definition of $\Sigma_t$ as given in Definition \ref{defSigma}. As for \eqref{sigmaandpirk}, it follows immediately from \eqref{sigmaandpi} and the representation \eqref{characucaliclosed} with $I=[0,\infty)$.
\end{proof}

The following representation is also useful.
\begin{lem}
\label{charactsigmatwice}
For any $E\subset H$ and for $t, s\geq 0$, 
\begin{equation}
  \label{sigmatwiceandpi}
  \Sigma_t\Sigma_s E = 
       \Pi_{t+s}\left(\Ucal_{[0,\infty)}\cap(\Pi_{[s,\infty)}^{-1}\Ucal_{[s,\infty)})\cap\Pi_0^{-1}E\right),
\end{equation}
with the projection and restriction operators $\Pi_0$ and $\Pi_{[s,\infty)}$ considered as defined on $\Ccal_\loc([0,\infty),H_\rw)$. Moreover, for any sequence $\{R_k\}_{k\in\NN}$ of positive real numbers with $R_k\geq R_0$ and $R_k\rightarrow \infty$, we have
\begin{equation}
  \label{sigmatwiceandpirk}
  \Sigma_t\Sigma_s E =\bigcup_k \Pi_{t+s}
        \left(\Ucal_{[0,\infty)}(R_k)\cap(\Pi_{[s,\infty)}^{-1}\Ucal_{[s,\infty)}(R_k))\cap\Pi_0^{-1}E\right).
\end{equation}
\end{lem}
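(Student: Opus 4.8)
The plan is to prove the core identity \eqref{sigmatwiceandpi} by a double inclusion, reading the two-step dynamical description of $\Sigma_t\Sigma_s E$ (Definition \ref{defSigma}) against the single-trajectory description on the right, and then to upgrade it to the radius-indexed form \eqref{sigmatwiceandpirk} using the representation \eqref{characucaliclosed} for the closed-bounded-on-the-left intervals $[0,\infty)$ and $[s,\infty)$ together with the a~priori bound \eqref{energyestimate}. This parallels the proof of Lemma \ref{charactsigma}, the new feature being the bookkeeping at the intermediate time $s$: since a weak solution on $[0,\infty)$ need only be strongly continuous from the right at $0$, its restriction to $[s,\infty)$ need not be a weak solution there, and it is exactly this extra regularity that the factor $\Pi_{[s,\infty)}^{-1}\Ucal_{[s,\infty)}$ records. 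Keeping track of this is the crux of the argument.

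For the inclusion $\Sigma_t\Sigma_s E\subseteq\Pi_{t+s}(\cdots)$ I would take $\bw\in\Sigma_t\Sigma_s E$, so by Definition \ref{defSigma} $\bw=\bu^{(2)}(t)$ for a weak solution $\bu^{(2)}$ on $[0,\infty)$ with $\bu^{(2)}(0)=\bu^{(1)}(s)$, where $\bu^{(1)}$ is a weak solution on $[0,\infty)$ with $\bu^{(1)}(0)\in E$. I would then paste these pieces into the single trajectory
\[
  \bu(\tau)=\begin{cases} \bu^{(1)}(\tau), & 0\leq\tau<s, \\ \bu^{(2)}(\tau-s), & \tau\geq s, \end{cases}
\]
well defined because $\bu^{(1)}(s)=\bu^{(2)}(0)$. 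Exactly as in the proof of Lemma \ref{Sigmaproperty}, the Pasting Lemma (Lemma \ref{pastinglemma}) gives that $\bu$ is a weak solution on $[0,\infty)$, its strong right-continuity at $0$ being inherited from $\bu^{(1)}$. Since $\tau\mapsto\bu^{(2)}(\tau-s)$ is the time-translate of the weak solution $\bu^{(2)}$, the restriction $\Pi_{[s,\infty)}\bu$ is a weak solution on $[s,\infty)$; hence $\bu\in\Ucal_{[0,\infty)}\cap(\Pi_{[s,\infty)}^{-1}\Ucal_{[s,\infty)})\cap\Pi_0^{-1}E$, and $\Pi_{t+s}\bu=\bu^{(2)}(t)=\bw$.

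The reverse inclusion runs the same construction backwards. Given $\bu$ in the set on the right of \eqref{sigmatwiceandpi} with $\Pi_{t+s}\bu=\bw$, I set $\bu^{(1)}=\bu$ and $\bu^{(2)}=\sigma_s\bu$. The membership $\bu\in\Pi_{[s,\infty)}^{-1}\Ucal_{[s,\infty)}$ is precisely what guarantees that $\bu^{(2)}$ is a genuine weak solution on $[0,\infty)$ (strongly right-continuous at $0$), and $\bu^{(1)}(0)\in E$ gives $\bu^{(2)}(0)=\bu^{(1)}(s)\in\Sigma_s E$; therefore $\bw=\bu^{(2)}(t)\in\Sigma_t\Sigma_s E$. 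This establishes \eqref{sigmatwiceandpi}.

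For \eqref{sigmatwiceandpirk}, the inclusion $\supseteq$ is immediate, each set in the union being contained in the set appearing in \eqref{sigmatwiceandpi}. For $\subseteq$ I would take the $\bu$ produced above and note that, applying \eqref{energyestimate} at the initial time $t'=0$ and recalling $R_0^2=|\bbf|_{L^2}^2/(\nu^2\lambda_1^2)$ from \eqref{defR0}, one gets $|\bu(\tau)|_{L^2}^2\leq\max\{|\bu(0)|_{L^2}^2,R_0^2\}$ for every $\tau\geq0$. Thus $\bu$ is uniformly bounded in $H$ over all of $[0,\infty)$, and choosing $k$ with $R_k$ exceeding this single bound places $\bu$ in $\Ucal_{[0,\infty)}(R_k)$ and its restriction in $\Ucal_{[s,\infty)}(R_k)$ simultaneously. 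I expect this to be the main obstacle: the reason one common radius $R_k$ suffices for both factors at once is this global-in-forward-time absorption from \eqref{energyestimate}, and not a blind double application of \eqref{characucaliclosed}, which would only yield two possibly unequal radii.
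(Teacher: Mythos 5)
Your proof is correct, and for the core identity \eqref{sigmatwiceandpi} it follows essentially the paper's route: the direction from the right-hand side into $\Sigma_t\Sigma_s E$ is exactly the paper's argument (the membership in $\Pi_{[s,\infty)}^{-1}\Ucal_{[s,\infty)}$ makes $\sigma_s\bu$ a genuine weak solution on $[0,\infty)$ starting at $\bu(s)\in\Sigma_s E$), while the converse inclusion via the Pasting Lemma is only implicit in the paper (it is the same concatenation used in the proof of Lemma \ref{Sigmaproperty}), so your spelling it out is a welcome completion rather than a deviation. Where you genuinely depart from the paper is \eqref{sigmatwiceandpirk}: you re-derive the forward-in-time absorption bound $|\bu(\tau)|_{L^2}^2\leq\max\{|\bu(0)|_{L^2}^2,R_0^2\}$ from \eqref{energyestimate} at $t'=0$ and pick one $R_k$ above it, whereas the paper simply applies \eqref{characucaliclosed} to each of $\Ucal_{[0,\infty)}$ and $\Ucal_{[s,\infty)}$ and then merges the two resulting indices using the fact that $R\mapsto\Ucal_I(R)$ is increasing. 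Both are valid, and both ultimately rest on the same a~priori estimate (it is what proves \eqref{characucaliclosed} in the first place); but your closing remark that a double application of \eqref{characucaliclosed} ``would only yield two possibly unequal radii'' and therefore cannot work is mistaken --- two unequal radii are harmless precisely because the families $\Ucal_{[0,\infty)}(R_k)$ and $\Pi_{[s,\infty)}^{-1}\Ucal_{[s,\infty)}(R_k)$ are nested in $k$, so one passes to $R_m$ with $m=\max(k,j)$, which is exactly the paper's one-line argument. So what you flag as ``the main obstacle'' is not an obstacle at all; your energy-estimate detour buys nothing beyond what the monotonicity gives for free, though it is self-contained and makes the quantitative bound explicit.
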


\begin{proof}
  The expression in the right hand side of \eqref{sigmatwiceandpi} gives $\Sigma_t\Sigma_s E$ as the collection of elements of the form $\bu(t+s)$ where $\bu$ has the following two properties: $\bu\in \Ucal_{[0,\infty)}\cap \Pi_0^{-1}E$ (hence it is a Leray-Hopf weak solution on $[0,\infty)$ starting with $\bu(0)\in E$) and $\bu\in \Pi_{[s,\infty)}^{-1}\Ucal_{[s,\infty)}$ (which means that $\bu$ restricted to the interval $[s,\infty)$ is a Leray-Hopf weak solution in that interval). The latter property means that the translation $\bu(\cdot + s)$ is a Leray-Hopf weak solution on $[0,\infty)$ with initial condition $\bu(s)$. Since $\bu(s)\in \Sigma_s E$, we deduce that $\bu(t+s)$ is exactly an element of $\Sigma_t\Sigma_s E$. This gives the characterization of $\Sigma_t\Sigma_s E$ as the right hand side of \eqref{sigmatwiceandpi}.
  
  Concerning \eqref{sigmatwiceandpirk}, this representation follows immediately from \eqref{sigmatwiceandpi} and the corresponding representations for $\Ucal_{[0,\infty)}$ and $\Ucal_{[s,\infty)}$ according to \eqref{characucaliclosed}. We use the fact that the spaces $\Ucal_{[0,\infty)}(R_k)$ and $\Ucal_{[s,\infty)}(R_k)$ are increasing in $k$, so that we can use a single index in the union given in \eqref{sigmatwiceandpirk}.
\end{proof}

\subsection{Orbits and limit sets}
\label{measureorbits}

In this section, we recall two important types of dynamical sets which will be of interest in the sequel, namely the orbits and the $\omega$-limit sets. In particular, we give a more symbolic representation of the orbits, which will be useful for proving the universal measurability of such sets.

In the context of weak solutions and the multivalued map defined earlier, given a set $E$ in $H$, we define the positive orbit starting at $E$ by
\begin{equation}
  \gamma(E) = \bigcup_{t\geq 0} \Sigma_t E = \{ \bu(t); \;\bu \in \Ucal_{[0,\infty)}, \;\bu(0)\in E, \;t\geq 0\}.
\end{equation}
Using the operators $\Pi_0$ and $\sigma$ defined in Section \ref{timedependentfuncionspacessec}, this orbit can be written as
\begin{equation}
  \gamma(E) = \Pi_0\sigma([0,\infty)\times (\Ucal_{[0,\infty)}\cap\Pi_0^{-1} E)),
\end{equation}
where $\Pi_0$ is considered as restricted to the trajectory space 
$\Ccal_\loc([0,\infty),H_\rw)$, and $\sigma$ is considered from 
$[0,\infty)\times \Ccal_\loc[0,\infty),H_\rw)$ into $\Ccal_\loc([0,\infty),H_\rw)$. 
Given a positive sequence $\{R_k\}_{k\in\NN}$ with $R_k\geq R_0$ and $R_k\rightarrow\infty$, it follows from the characterization \eqref{characucaliclosed} that 
\begin{equation}
  \label{orbitrepresentedrk}
  \gamma(E) = \bigcup_{k\in\NN}\Pi_0\sigma([0,\infty)\times (\Ucal_{[0,\infty)}(R_k)\cap\Pi_0^{-1} E)).
\end{equation}

Now we consider $\omega$-limit sets. In fact, we consider two such sets, one in trajectory space and the other in phase space. Given $\bu\in \Ccal_\loc([t_0,\infty),H_\rw)$ and $t_0\in \RR$, we define the $\omega$-limit set in the trajectory space as
\begin{equation} 
  \label{omegalimittrajectory}
  \omega_{\Ccal_\loc([t_0,\infty),H_\rw)}(\bu)  = 
    \left\{\bw\in \Ccal_\loc([t_0,\infty),H_\rw); \;\parbox{2.3in}{there exists
        $\{t_j\}_{j\in\NN}$, $t_j\geq t_0$, $t_j\rightarrow \infty$ such that
        $\sigma_{t_j}\bu \rightarrow \bw$ in $\Ccal_\loc([t_0,\infty),H_\rw)$} \right\}.
\end{equation}
In the phase space, the $\omega$-limit set is given by
\begin{equation}
  \label{omegalimitphase}
  \omega_{H_\rw}(\bu)
     = \left\{ \bw\in H; \;\parbox{2.6in}{there exists $\{t_j\}_{j\in\NN}$, $t_j\geq t_0$,
         $t_j \rightarrow \infty$ such that $\bu(t_j)\rightarrow \bw$ in $H_\rw$}\right\}.
\end{equation}

Since the topology in the definition of these limit sets is the weak topology of $H$, we may sometimes refer to them as weak $\omega$-limit sets.

\subsection{Measurability of orbits and related dynamical sets}
\label{sectionmeasorbit}

Given a Borel set $E$ in $H$, we show that $\Sigma_t E$, the orbit $\gamma(E)$ and other related sets are universally measurable. We also prove that the weak $\omega$-limit sets are compact. We start with the following result.
\begin{lem}
  \label{orbituniversal}
  Let $E$ be a Borel set in $H_\rw$. Then $\gamma(E)$ is a universally measurable set in $H_\rw$.
\end{lem}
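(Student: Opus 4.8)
The plan is to express $\gamma(E)$ through the representation \eqref{orbitrepresentedrk}, which already writes the orbit as a countable union of continuous images, and then to invoke the machinery of analytic sets from Section \ref{measuretheory}. Recall from \eqref{orbitrepresentedrk} that, for a sequence $\{R_k\}$ with $R_k\geq R_0$ and $R_k\rightarrow\infty$,
\[
  \gamma(E) = \bigcup_{k\in\NN}\Pi_0\sigma\bigl([0,\infty)\times (\Ucal_{[0,\infty)}(R_k)\cap\Pi_0^{-1} E)\bigr).
\]
Since the collection of universally measurable sets is a $\sigma$-algebra (Fact \ref{universallymeasurablesigmaalgebra}), it suffices to show that each of the sets
\[
  \Pi_0\sigma\bigl([0,\infty)\times (\Ucal_{[0,\infty)}(R_k)\cap\Pi_0^{-1} E)\bigr)
\]
is universally measurable, and for this it is enough (by Fact \ref{analyticcouniversallymeasurable}) to show that each is an analytic set.

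First I would fix $k$ and work inside the Polish space $\Ucal_{[0,\infty)}^\sharp(R_k)$, which is Polish by Lemma \ref{Ucalspaceslem}(iii). The core issue is to identify the set $\Ucal_{[0,\infty)}(R_k)\cap\Pi_0^{-1}E$ as a Borel (or at least analytic) subset of a Polish space. Here $\Ucal_{[0,\infty)}(R_k)$ is Borel in $\Ccal_\loc([0,\infty),H_\rw)$ by Lemma \ref{Ucalileftborel}, and $\Pi_0$ is continuous, so $\Pi_0^{-1}E$ is Borel in $\Ccal_\loc([0,\infty),H_\rw)$ because $E$ is Borel in $H_\rw$ and continuous preimages of Borel sets are Borel. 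Intersecting two Borel sets gives a Borel set, so $\Ucal_{[0,\infty)}(R_k)\cap\Pi_0^{-1}E$ is Borel in the Polish space $\Ucal_{[0,\infty)}^\sharp(R_k)$, hence analytic (every Borel subset of a Polish space is analytic).

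Next I would push this set forward through the continuous maps $\sigma$ and $\Pi_0$. The product $[0,\infty)\times (\Ucal_{[0,\infty)}(R_k)\cap\Pi_0^{-1}E)$ is an analytic subset of the Polish space $[0,\infty)\times \Ucal_{[0,\infty)}^\sharp(R_k)$: a product of an analytic set with a Polish factor is analytic (this follows since the product of two analytic sets is analytic and $[0,\infty)$ is itself Polish hence analytic). Both $\sigma$ and $\Pi_0$ are continuous maps into the Polish spaces $\Ccal_\loc([0,\infty),H_\rw)$ and $H_\rw$ respectively — here one must note that although $H_\rw$ is \emph{not} Polish, the image lies in the bounded weakly closed ball $B_H(R_k)_\rw$, which \emph{is} Polish, so the continuous image lands in a Polish space. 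By Fact \ref{continuousimageofanalyticsets}, the continuous image of an analytic set in a Polish space is analytic; applying this twice shows each $\Pi_0\sigma(\cdots)$ is analytic, hence universally measurable.

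Finally, the countable union of the $k$-indexed universally measurable sets is universally measurable by Fact \ref{universallymeasurablesigmaalgebra}, which establishes that $\gamma(E)$ is universally measurable in $H_\rw$. The step I expect to be the most delicate is the bookkeeping about \emph{which} Polish space each set lives in, and in particular ensuring that the target of $\sigma$ and $\Pi_0$ can be taken to be Polish; the composition must be arranged so that $\sigma$ maps into a space whose relevant image is contained in a bounded (hence Polish) ball — using that all trajectories in $\Ucal_{[0,\infty)}(R_k)$ stay in $B_H(R_k)$ and that translation by $\sigma$ preserves this bound — rather than into the non-Polish $H_\rw$ directly. Once this is arranged the analytic-set calculus applies cleanly.
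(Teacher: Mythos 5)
Your proposal is correct and follows essentially the same route as the paper's proof: the representation \eqref{orbitrepresentedrk}, the $\sigma$-algebra property of universally measurable sets, Borelness of $\Ucal_{[0,\infty)}(R_k)\cap\Pi_0^{-1}E$, and the analytic-set calculus applied to the continuous maps $\sigma$ and $\Pi_0$ between Polish spaces. The only cosmetic differences are that the paper works inside the Polish space $\Ccal_\loc([0,\infty),B_H(R_k)_\rw)$ rather than $\Ucal^\sharp_{[0,\infty)}(R_k)$, and it avoids your ``product of analytic sets'' step by simply noting that the product of Borel sets is Borel (hence analytic), which is marginally more economical.
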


\begin{proof} 
  In view of the characterization \eqref{orbitrepresentedrk} of the orbit $\gamma(E)$ and the fact that the collection of universally measurable sets is a $\sigma$-algebra (see Fact \eqref{universallymeasurablesigmaalgebra} at the end of Section \ref{measuretheory}), it suffices to show that
  \[ \Pi_0\sigma([0,\infty)\times (\Ucal_{[0,\infty)}(R)\cap\Pi_0^{-1} E))
  \]
is universally measurable, for any given $R>0$.
 
  Since $E$ is Borel and $\Pi_0$ is continuous, the set $\Pi_0^{-1}E$ is a Borel subset of the space $\Ccal_\loc([0,\infty),H_\rw)$. From Lemma \ref{Ucalileftborel}, the set $\Ucal_{[0,\infty)}(R)$ is also Borel,  hence $\Ucal_{[0,\infty)}(R)\cap \Pi_0^{-1} E$ is a Borel subset of $\Ccal_\loc([0,\infty),B_H(R)_\rw)$.   Thus, $[0,\infty)\times (\Ucal_{[0,\infty)}(R)\cap\Pi_0^{-1} E)$ is a Borel subset of $[0,\infty)\times \Ccal_\loc([0,\infty),B_H(R)_\rw)$. 

  Since $\sigma$ is continuous from the Polish space $[0,\infty)\times \Ccal_\loc([0,\infty),B_H(R)_\rw)$ into the Polish space $\Ccal_\loc([0,\infty),B_H(R)_\rw)$ it follows that $\sigma$ takes Borel sets in the former space into analytic sets in the latter space. In particular, $\sigma([0,\infty)\times (\Ucal_{[0,\infty)}(R)\cap\Pi_0^{-1} E)$ is analytic in $\Ccal_\loc([0,\infty),B_H(R)_\rw)$. Then, since $\Pi_0$ is also continuous from the Polish space $\Ccal_\loc([0,\infty),B_H(R)_\rw)$ into the Polish space $B_H(R)_\rw$ it follows that $\Pi_0\sigma([0,\infty)\times (\Ucal_{[0,\infty)}(R)\cap\Pi_0^{-1}E))$ is analytic in $B_H(R)_\rw$, hence universally measurable in $H_\rw$. This completes the proof.
\end{proof}
\medskip

\begin{rmk}
  Notice that in the two-dimensional case, due to the uniqueness of global solutions, we have $\Ucal_I^\sharp=\Ucal_I$ regardless of the interval $I\subset\RR$. Moreover, due to the backward uniqueness property, the map $\sigma$ restricted to $[0,\infty)\times\Ucal_{[0,\infty)}$ is one-to-one, as well as open. And due to the well-posedness of the two-dimensional problem, the map $\Pi_0$ is a homeomorphism from $\Ucal_{[0,\infty)}$ onto $H_\rw$. Thus, both $\sigma$ and $\Pi_0$ take Borel sets into Borel sets. Therefore, the orbit $\gamma(E)$ is in fact Borel.
\end{rmk}

Next, we consider the measurability of $\Sigma_t E$, for a Borel set $E$. The following result says that $\Sigma_t E$ is universally measurable in $H_\rw$. This result has been proved already in \cite[Section 4.3]{foiasprodi76}, but we include the following proof for the sake of completeness.

\begin{lem}
  \label{sigmaomegauniversal}
  If $E$ is a Borel subset of $H_\rw$ and $t\geq 0$, then $\Sigma_t E$ is a universally measurable set in $H_\rw$. 
\end{lem}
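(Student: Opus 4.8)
The plan is to mimic the proof of Lemma \ref{orbituniversal}, but using the simpler representation of $\Sigma_t E$ furnished by Lemma \ref{charactsigma} in place of the orbit representation. Concretely, I would start from the decomposition \eqref{sigmaandpirk}, choosing a sequence $\{R_k\}_{k\in\NN}$ with $R_k\geq R_0$ and $R_k\rightarrow\infty$, which writes
\[ \Sigma_t E = \bigcup_{k\in\NN} \Pi_t\left(\Ucal_{[0,\infty)}(R_k)\cap\Pi_0^{-1}(E)\right). \]
Since the universally measurable sets form a $\sigma$-algebra (Fact \eqref{universallymeasurablesigmaalgebra}), it suffices to show that each set $\Pi_t(\Ucal_{[0,\infty)}(R)\cap\Pi_0^{-1}(E))$ is universally measurable in $H_\rw$, for an arbitrary fixed $R\geq R_0$.

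For such a fixed $R$, the key steps proceed just as in Lemma \ref{orbituniversal}. First, because $E$ is Borel in $H_\rw$ and $\Pi_0$ is continuous, the preimage $\Pi_0^{-1}(E)$ is Borel in $\Ccal_\loc([0,\infty),H_\rw)$; combined with the Borel measurability of $\Ucal_{[0,\infty)}(R)$ from Lemma \ref{Ucalileftborel}, the intersection $\Ucal_{[0,\infty)}(R)\cap\Pi_0^{-1}(E)$ is a Borel subset of the Polish space $\Ccal_\loc([0,\infty),B_H(R)_\rw)$. Since every Borel subset of a Polish space is analytic, and since $\Pi_t$ is continuous from $\Ccal_\loc([0,\infty),B_H(R)_\rw)$ into the Polish space $B_H(R)_\rw$, the image $\Pi_t(\Ucal_{[0,\infty)}(R)\cap\Pi_0^{-1}(E))$ is analytic in $B_H(R)_\rw$ by Fact \eqref{continuousimageofanalyticsets}. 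Being analytic, it is universally measurable in $B_H(R)_\rw$, hence universally measurable in $H_\rw$, by Fact \eqref{analyticcouniversallymeasurable}. Taking the countable union over $k$ then yields the claim.

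There is no genuine obstacle beyond the bookkeeping already carried out for the orbit; the one point to handle with care is that $\Pi_t$ restricted to $\Ccal_\loc([0,\infty),B_H(R)_\rw)$ indeed lands in the Polish ball $B_H(R)_\rw$ (which holds because every element of $\Ucal_{[0,\infty)}(R)$ takes values in $B_H(R)$ for all times), so that one may legitimately invoke the continuous-image-of-analytic-is-analytic fact between two genuinely Polish spaces rather than into the non-Polish space $H_\rw$. Note that, in contrast to the orbit case, the translation map $\sigma$ plays no role here: one only composes the continuous pullback $\Pi_0^{-1}$ with the continuous pushforward $\Pi_t$, so this argument is in fact a simplification of the orbit argument rather than a genuinely new one.
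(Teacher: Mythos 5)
Your proposal is correct and follows essentially the same route as the paper's own proof: both use the decomposition \eqref{sigmaandpirk} from Lemma \ref{charactsigma}, the $\sigma$-algebra property of universally measurable sets, and the chain ``Borel in a Polish space $\rightarrow$ continuous image is analytic $\rightarrow$ universally measurable'' applied to $\Pi_t$ restricted to $\Ccal_\loc([0,\infty),B_H(R)_\rw)$. Your closing observation that $\sigma$ plays no role here, making this a simplification of the orbit argument, matches the paper's remark that the proof is a (simpler) variant of that of Lemma \ref{orbituniversal}.
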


\begin{proof}
  The proof is similar to that of Lemma \ref{orbituniversal}. In view of the characterization \eqref{sigmaandpirk} of the set $\Sigma_t E$ given in Lemma \ref{charactsigma} and the fact that the collection of universally measurable sets is a $\sigma$-algebra it suffices to show that
  \[ \Pi_t(\Ucal_{[0,\infty)}(R)\cap\Pi_0^{-1}(E))
  \]
is universally measurable, for any given $R>0$.
  
  Since $E$ is Borel in $H_\rw$ and $\Pi_0$ is continuous, the set $\Pi_0^{-1}(E)$ is a Borel subset of $\Ccal_\loc([0,\infty),H_\rw)$. Since $\Ucal_{[0,\infty)}(R)$ is Borel in $\Ccal_\loc([0,\infty),B_H(R)_\rw)$, then $\Ucal_{[0,\infty)}(R)\cap\Pi_0^{-1}(E))$ is a Borel set in the Polish space $\Ccal_\loc([0,\infty),B_H(R)_\rw)$. Hence, the image of this set through the continuous map $\Pi_t$ from the Polish space $\Ccal_\loc([0,\infty),B_H(R)_\rw)$ into the Polish space $B_H(R)_\rw$ is analytic in $B_H(R)_\rw$, hence universally measurable in $H_\rw$.
\end{proof}
\medskip

In the case the initial set is strongly compact, we obtain the following result.

\begin{lem}
  \label{sigmaomegacompact}
  If $K$ is strongly compact in $H$ and $t\geq 0$, then $\Sigma_t  K$ is weakly compact in $H$.
\end{lem}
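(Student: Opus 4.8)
The plan is to realize $\Sigma_t K$ as the continuous image, under the projection $\Pi_t$, of a \emph{compact} set of trajectories, and thereby deduce its weak compactness. Being strongly compact, $K$ is bounded, so $K\subset B_H(R)$ for some $R\geq R_0$; moreover, as a strongly compact set, $K$ is weakly compact and hence weakly closed. By the invariance property \eqref{invarianceBR}, every weak solution issued from $K$ remains in $B_H(R)$ for all positive times, so no weak solution starting in $K$ ever leaves the ball, and the characterization \eqref{sigmaandpi} of Lemma \ref{charactsigma} collapses to the single-radius identity
\[
  \Sigma_t K = \Pi_t\bigl(\Ucal_{[0,\infty)}(R)\cap\Pi_0^{-1}(K)\bigr).
\]
Since $\Pi_t$ is continuous from $\Ccal_\loc([0,\infty),B_H(R)_\rw)$ into $B_H(R)_\rw$, it suffices to prove that the trajectory set $\Kcal:=\Ucal_{[0,\infty)}(R)\cap\Pi_0^{-1}(K)$ is compact.

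To this end I would use that $\Kcal\subset\Ucal_{[0,\infty)}^\sharp(R)$, which is a compact metric space by Lemma \ref{ucalitildercompact}, and show that $\Kcal$ is closed in it; by metrizability it is enough to argue sequentially. So take $\bu_n\in\Kcal$ converging in $\Ccal_\loc([0,\infty),H_\rw)$ to some $\bu\in\Ucal_{[0,\infty)}^\sharp(R)$; by definition $\bu$ is then a weak solution on $(0,\infty)$ which is weakly continuous on $[0,\infty)$. Since $\Pi_0$ is continuous and $K$ is weakly closed, we get $\bu(0)\in K$, and after extracting a subsequence the \emph{strong} compactness of $K$ yields in addition $\bu_n(0)\to\bu(0)$ strongly in $H$.

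The crux — and the step I expect to be the main obstacle — is to upgrade $\bu$ to a genuine Leray-Hopf weak solution on the closed interval $[0,\infty)$, i.e. to verify the initial strong right-continuity \eqref{lhinitialcont}. This is precisely the property distinguishing $\Ucal_{[0,\infty)}(R)$ from the larger $\Ucal_{[0,\infty)}^\sharp(R)$, and it is here that strong (not merely weak) compactness of $K$ is essential. Applying the energy estimate \eqref{L2enstrophyestimatefinh} to each $\bu_n$ from the initial time $t'=0$ — legitimate because $\bu_n\in\Ucal_{[0,\infty)}(R)$ is strongly right-continuous at $0$ — gives $|\bu_n(s)|_{L^2}^2\leq|\bu_n(0)|_{L^2}^2+\tfrac{1}{\nu\lambda_1}|\bbf|_{L^2}^2 s$ for $s>0$. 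Passing to the limit via the weak lower semicontinuity of the norm and using $|\bu_n(0)|_{L^2}\to|\bu(0)|_{L^2}$, one obtains $\limsup_{s\to 0^+}|\bu(s)|_{L^2}\leq|\bu(0)|_{L^2}$, while the weak continuity of $\bu$ at $0$ (property \eqref{lhuinhw}) yields the reverse inequality $|\bu(0)|_{L^2}\leq\liminf_{s\to 0^+}|\bu(s)|_{L^2}$. Hence $|\bu(s)|_{L^2}\to|\bu(0)|_{L^2}$, which together with weak convergence forces strong convergence $\bu(s)\to\bu(0)$ in $H$ as $s\to 0^+$. Therefore $\bu\in\Ucal_{[0,\infty)}(R)$ with $\bu(0)\in K$, so $\bu\in\Kcal$ and $\Kcal$ is closed, hence compact. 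Consequently $\Sigma_t K=\Pi_t\Kcal$ is compact in $B_H(R)_\rw$, that is, weakly compact in $H$, which completes the argument.
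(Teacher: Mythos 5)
Your proposal is correct and follows essentially the same route as the paper's own proof: both reduce to the single-radius representation $\Sigma_t K = \Pi_t\bigl(\Ucal_{[0,\infty)}(R)\cap\Pi_0^{-1}K\bigr)$ and deduce compactness of this trajectory set from the fact that limits of its elements remain strongly right-continuous at the origin, thanks to the strong compactness of $K$. The only difference is that the paper leaves this last point as ``one can check,'' whereas you supply the verification explicitly (energy inequality from $t'=0$, strong convergence of the initial data, and the weak-convergence-plus-norm-convergence argument), which is precisely the intended argument.
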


\begin{proof}
  If $K$ is strongly compact in $H$, then $K$ is bounded and we can write
  \[ \Sigma_t(K) = \Pi_t(\Ucal_{[0,\infty)}(R)\cap\Pi_0^{-1}K),
  \] 
  for $R\geq R_0$ sufficiently large.  Since $K$ is strongly compact, one can check that   the limit of weak solutions in $\Ucal_{[0,\infty)}(R)\cap\Pi_0^{-1}K$ is also strongly continuous at the origin hence this limit belongs to this set, which is hence a compact set in $\Ccal_\loc([0,\infty),H_\rw)$. Thus the continuous image by $\Pi_t$ is also compact in $H_\rw$, which means that $\Sigma_t(K)$ is weakly compact in $H$.
\end{proof}
\medskip

In relation with the accretion property that we recall and use below, it is relevant to address the regularity of the composition of two multi-valued evolution operators. Notice that if we simply apply the previous results, then we have, for instance, $\Sigma_s E$ universally measurable, for $E$ Borel, but then we cannot guarantee that $\Sigma_t\Sigma_s E$ is universally measurable. We may however prove directly that this set is analytic by using the \eqref{sigmatwiceandpirk} given in Lemma \ref{charactsigmatwice}.

\begin{lem}
  \label{measurabilitysigmats}
  Let $E$ be a Borel set in $H$ and let $t,s\geq 0$. Then $\Sigma_t\Sigma_s E$ is universally measurable in $H_\rw$.
\end{lem}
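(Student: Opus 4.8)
The plan is to mimic the proofs of Lemma \ref{orbituniversal} and Lemma \ref{sigmaomegauniversal}, now starting from the trajectory-space representation \eqref{sigmatwiceandpirk} of $\Sigma_t\Sigma_s E$ furnished by Lemma \ref{charactsigmatwice}. Fix a sequence $\{R_k\}_{k\in\NN}$ with $R_k\geq R_0$ and $R_k\rightarrow\infty$. Since the universally measurable sets form a $\sigma$-algebra (Fact \eqref{universallymeasurablesigmaalgebra}), it suffices to show that, for each fixed $R=R_k$, the set
\[
  \Pi_{t+s}\left(\Ucal_{[0,\infty)}(R)\cap(\Pi_{[s,\infty)}^{-1}\Ucal_{[s,\infty)}(R))\cap\Pi_0^{-1}E\right)
\]
is universally measurable in $H_\rw$; the countable union over $k$ then inherits this property.

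First I would verify that the argument of $\Pi_{t+s}$ is a Borel subset of the Polish space $\Ccal_\loc([0,\infty),B_H(R)_\rw)$. It is an intersection of three sets. The set $\Pi_0^{-1}E$ is Borel, since $E$ is Borel in $H_\rw$ and $\Pi_0$ is continuous. The set $\Ucal_{[0,\infty)}(R)$ is Borel by Lemma \ref{Ucalileftborel}, the interval $[0,\infty)$ being closed and bounded on the left. Finally, $\Pi_{[s,\infty)}^{-1}\Ucal_{[s,\infty)}(R)$ is Borel because $\Pi_{[s,\infty)}$ is continuous and, again by Lemma \ref{Ucalileftborel}, the set $\Ucal_{[s,\infty)}(R)$ is Borel in $\Ccal_\loc([s,\infty),B_H(R)_\rw)$, the interval $[s,\infty)$ likewise being closed and bounded on the left. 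A finite intersection of Borel sets is Borel.

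With the inner set Borel, the conclusion follows as in the earlier lemmas: $\Pi_{t+s}$ is continuous from the Polish space $\Ccal_\loc([0,\infty),B_H(R)_\rw)$ into the Polish space $B_H(R)_\rw$, so it carries this Borel (hence analytic) set to an analytic subset of $B_H(R)_\rw$ (Fact \eqref{continuousimageofanalyticsets}), which is therefore universally measurable in $H_\rw$ (Fact \eqref{analyticcouniversallymeasurable}). Taking the union over $k$ completes the argument.

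The only genuinely new point compared with Lemmas \ref{orbituniversal} and \ref{sigmaomegauniversal}, and the step I would watch most carefully, is the Borel measurability of $\Pi_{[s,\infty)}^{-1}\Ucal_{[s,\infty)}(R)$ inside $\Ccal_\loc([0,\infty),B_H(R)_\rw)$. This is precisely why Lemma \ref{charactsigmatwice} was formulated so as to express the composition $\Sigma_t\Sigma_s E$ by a \emph{single} Borel set in \emph{one} trajectory space followed by \emph{one} continuous projection. A naive iteration—first forming $\Sigma_s E$, then applying $\Sigma_t$—would only give that $\Sigma_s E$ is universally measurable rather than Borel, and the continuous image of a merely universally measurable set need not be analytic, so the second application of $\Sigma_t$ would not obviously preserve universal measurability. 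Routing everything through the trajectory-space characterization circumvents this obstacle entirely.
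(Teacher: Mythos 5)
Your proposal is correct and follows essentially the same route as the paper: both start from the representation \eqref{sigmatwiceandpirk} of Lemma \ref{charactsigmatwice}, show the building block for each $R_k$ is the continuous image under $\Pi_{t+s}$ of a Borel subset of the Polish space $\Ccal_\loc([0,\infty),B_H(R_k)_\rw)$, hence analytic and universally measurable, and conclude via the $\sigma$-algebra property of universally measurable sets. Your explicit verification that the three-fold intersection is Borel (in particular that $\Pi_{[s,\infty)}^{-1}\Ucal_{[s,\infty)}(R)$ is Borel via continuity of the restriction operator and Lemma \ref{Ucalileftborel}) merely fills in details the paper delegates to the proof of Lemma \ref{sigmaomegauniversal}, and your closing observation about why a naive iteration of $\Sigma_t$ fails matches the paper's own remark preceding the lemma.
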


\begin{proof}
  From the representation \eqref{sigmatwiceandpirk} given in Lemma \ref{charactsigmatwice} we have that $\Sigma_t\Sigma_s E$ is a countable union of the sets
  \begin{equation}
    \label{buildingblocksforcompositionofSigmas} 
      \Pi_{t+s}
        \left(\Ucal_{[0,\infty)}(R_k)\cap(\Pi_{[s,\infty)}^{-1}\Ucal_{[s,\infty)}(R_k))\cap\Pi_0^{-1}E\right),
  \end{equation}
  for any given sequence $\{R_k\}_{k\in\NN}$ of positive real numbers with $R_k\geq R_0$ and $R_k\rightarrow \infty$. In this case, for each $k$ we consider the projection $\Pi_{t+s}$ restricted to the Polish space $\Ccal_\loc([0,\infty),B_H(R_k)_\rw)$, with values in the Polish space $B_H(R_k)_\rw$, and obtain, as in the proof of Lemma \ref{sigmaomegauniversal}, that each set in \eqref{buildingblocksforcompositionofSigmas} is analytic in $B_H(R_k)_\rw$, hence universally measurable in $H_\rw$. Therefore, begin a countable union of such sets, $\Sigma_t\Sigma_s E$ is universally measurable. 
\end{proof}
\medskip

For the recurrence result also studied below, the following lemma is needed.

\begin{lem}
  \label{sigmagammameas}
  Let $E$ be a Borel set in $H$ and let $t\geq 0$. Then the set $\Sigma_t\gamma(E)$ is universally measurable in $H_\rw$.
\end{lem}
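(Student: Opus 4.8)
The plan is to realize $\Sigma_t\gamma(E)$ as a countable union of continuous images of Borel sets lying in Polish product spaces, so that it is analytic and hence universally measurable. Recall from the proof of Lemma \ref{orbituniversal} that $\gamma(E)$ is in fact \emph{analytic} (a countable union of analytic sets) in $H_\rw$, but in general it is not Borel. Consequently one cannot simply feed $\gamma(E)$ into the representation $\Sigma_t\gamma(E)=\Pi_t(\Ucal_{[0,\infty)}\cap\Pi_0^{-1}\gamma(E))$ coming from Lemma \ref{charactsigma}, since this would require taking the preimage of an analytic set under $\Pi_0$, an operation not covered by the Facts listed at the end of Section \ref{measuretheory}. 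Instead I would encode the condition ``$\bu(0)\in\gamma(E)$'' by means of an auxiliary solution.

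Fix a sequence $\{R_k\}$ with $R_k\geq R_0$ and $R_k\to\infty$. For each pair $(j,k)$, work in the Polish product space $\Ccal_\loc([0,\infty),B_H(R_k)_\rw)\times\Ccal_\loc([0,\infty),B_H(R_j)_\rw)\times[0,\infty)$ and consider the set $\Theta_{j,k}$ of triples $(\bu,\bv,s)$ such that $\bu\in\Ucal_{[0,\infty)}(R_k)$, $\bv\in\Ucal_{[0,\infty)}(R_j)$, $\bv(0)\in E$, and $\bu(0)=\bv(s)$. The first two conditions are Borel by Lemma \ref{Ucalileftborel}; the condition $\bv(0)\in E$ is Borel because $\Pi_0$ is continuous and $E$ is Borel; and the coupling $\bu(0)=\bv(s)$ is \emph{closed}, since $(\bu,\bv,s)\mapsto(\Pi_0\bu,\Pi_0\sigma(s,\bv))=(\bu(0),\bv(s))$ is continuous (continuity of $\Pi_0$ and of $\sigma$) into the Hausdorff space $H_\rw\times H_\rw$, whose diagonal is closed. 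Hence $\Theta_{j,k}$ is Borel, therefore analytic.

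Next I would apply the continuous map $\Psi_t(\bu,\bv,s)=\Pi_t\bu=\bu(t)$. By the Facts, the image $\Psi_t(\Theta_{j,k})$ is analytic in $B_H(R_k)_\rw\subset H_\rw$ (continuous image of an analytic set, Fact \eqref{continuousimageofanalyticsets}). Using the exhaustion \eqref{characucaliclosed} of $\Ucal_{[0,\infty)}$ for \emph{both} the outer solution $\bu$ and the witnessing solution $\bv$, together with the definition of $\gamma(E)$ (namely $\bu(0)\in\gamma(E)$ iff $\bv(0)\in E$ and $\bv(s)=\bu(0)$ for some $\bv\in\Ucal_{[0,\infty)}$ and $s\geq 0$), one checks that $\bigcup_{j,k}\Psi_t(\Theta_{j,k})=\Sigma_t\gamma(E)$. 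A countable union of analytic sets is analytic, and analytic sets are universally measurable (Facts \eqref{analyticcouniversallymeasurable} and \eqref{continuousimageofanalyticsets} and the closure of analytic sets under countable unions), so $\Sigma_t\gamma(E)$ is universally measurable in $H_\rw$.

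The point requiring care — and the step I expect to be the main obstacle — is precisely \emph{not} to glue $\bu$ and $\bv$ into a single trajectory. Pasting them (Lemma \ref{pastinglemma}) would replace $\Sigma_t\gamma(E)$ by the set $\bigcup_{r\geq t}\Sigma_r E$, which may be strictly larger: recovering a genuine Leray--Hopf solution that restarts at the intermediate time $s$ needs $s$ to be a point of strong continuity from the right, a property known only almost everywhere (cf.\ the Remark following Lemma \ref{Sigmaproperty}). Keeping $\bu$ and $\bv$ as independent solutions linked only through $\bu(0)=\bv(s)$ reproduces $\Sigma_t\gamma(E)$ faithfully while staying within the listed closure facts; the only remaining bookkeeping is to remain inside Polish spaces, which is handled by the bounded truncations $\Ucal_{[0,\infty)}(R_k)$.
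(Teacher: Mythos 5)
Your proof is correct, but it follows a different route from the paper's. The paper's own proof is a one-liner: it writes the identity
\[
\Sigma_t\gamma(E) \;=\; \Pi_t\left(\Ucal_{[0,\infty)}\cap\,\sigma\bigl([0,\infty)\times(\Ucal_{[0,\infty)}\cap\Pi_0^{-1}E)\bigr)\right),
\]
and then runs the same truncate-by-$R_k$/analytic-set argument as in Lemmas \ref{orbituniversal} and \ref{sigmaomegauniversal}. Note that this \emph{is} a single-trajectory encoding, and it does faithfully reproduce $\Sigma_t\gamma(E)$ rather than the possibly larger set $\bigcup_{r\geq t}\Sigma_r E$: the intersection with $\Ucal_{[0,\infty)}$ retains exactly those translates $\sigma_s\bv'$ that are genuine Leray--Hopf solutions on $[0,\infty)$. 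Verifying the nontrivial inclusion of this identity does use the Pasting Lemma, but in the direction where pasting is legitimate (as in Lemma \ref{Sigmaproperty}): given $\bu\in\Ucal_{[0,\infty)}$ with $\bu(0)=\bv(s)$, $\bv(0)\in E$, one concatenates $\bv|_{[0,s]}$ with the translate of $\bu$, and this works because $\bu$, being a Leray--Hopf solution on a closed-left interval, is strongly continuous from the right at its initial time. So your diagnosis that any single-trajectory encoding must inflate to $\bigcup_{r\geq t}\Sigma_r E$ is too pessimistic, although your underlying caution about which direction of concatenation is available is exactly the right concern. What your two-trajectory encoding buys is that the set identity $\bigcup_{j,k}\Psi_t(\Theta_{j,k})=\Sigma_t\gamma(E)$ becomes purely definitional: no pasting at all is needed, so the argument would survive in multivalued settings lacking a concatenation property, and it stays strictly within the Facts listed in Section \ref{measuretheory} (in particular it sidesteps preimages of analytic sets). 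What the paper's encoding buys is brevity and reuse of the representations \eqref{sigmaandpi} and \eqref{sigmatwiceandpi}. One small repair to your last step: the sets $\Psi_t(\Theta_{j,k})$ are analytic in different balls $B_H(R_k)_\rw$, and $H_\rw$ itself is not Polish, so rather than taking a ``countable union of analytic sets'' in $H_\rw$, conclude that each $\Psi_t(\Theta_{j,k})$ is universally measurable in $H_\rw$ and then invoke the $\sigma$-algebra property, Fact \eqref{universallymeasurablesigmaalgebra}, exactly as the paper does in Lemmas \ref{orbituniversal}--\ref{measurabilitysigmats}.
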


\begin{proof}
  Notice that we can write
  \[ \Sigma_t\gamma(E) = 
       \Pi_t \left(\Ucal_{[0,\infty)}\cap\sigma([0,\infty)\times(\Ucal_{[0,\infty)}\cap\Pi_0^{-1}E))\right).
  \]
  Then, the proof follows very much as in the previous results. We omit the details.
\end{proof}
\medskip

As in the classical case of a well-defined semigroup, one has the compactness of the $\omega$-limit sets. We omit the proof.
 
\begin{lem}
  \label{compactomega}
  Given $\bu\in \Ucal_{[0,\infty)}$, the $\omega$-limit set $\omega_{\Ccal_\loc([t_0,\infty),H_\rw)}(\bu)$ is compact in $\Ccal_\loc([t_0,\infty),H_\rw)$ and the $\omega$-limit set $\omega_{H_\rw}(\bu)$ is compact in $H_\rw$.
\end{lem}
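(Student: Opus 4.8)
The plan is to realize each $\omega$-limit set as a nested intersection of closures of tails of the (translated) orbit, and then to invoke the compactness and metrizability results already established. First I would record that $\bu$, being a weak solution on $[0,\infty)$, is uniformly bounded in $H$: applying the energy estimate \eqref{energyestimate} at the initial time $t'=0$, which is a point of strong continuity from the right by condition \eqref{lhinitialcont}, gives $\bu(t)\in B_H(R)$ for all $t\geq 0$ with $R=\max\{|\bu(0)|_{L^2},R_0\}$. Consequently all the values $\bu(t_j)$ and all the forward translates $\sigma_{t_j}\bu$ remain inside fixed compact sets, which is the structural fact that drives the whole argument.

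For the phase-space $\omega$-limit set, I would use the standard characterization
\begin{equation*}
  \omega_{H_\rw}(\bu) = \bigcap_{T\geq 0} \overline{\{\bu(t); \;t\geq T\}}^{\,H_\rw},
\end{equation*}
with closures taken in $H_\rw$. The inclusion $\subseteq$ is immediate from the definition; for $\supseteq$ one uses that $B_H(R)_\rw$ is metrizable, so that membership of $\bw$ in every tail-closure lets us pick, for each $n$, a time $t_n\geq n$ with $\bu(t_n)$ within distance $1/n$ of $\bw$ for a compatible metric, producing a sequence $t_n\to\infty$ with $\bu(t_n)\to\bw$ in $H_\rw$. Each tail-closure is a closed subset of the weakly compact ball $B_H(R)_\rw$, hence compact, and the family is nested and nonempty; by the finite intersection property the intersection $\omega_{H_\rw}(\bu)$ is nonempty and compact.

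The trajectory-space version follows the identical pattern, with $B_H(R)_\rw$ replaced by the compact metrizable space $\Ucal_{[0,\infty)}^\sharp(R)$ of Lemma \ref{ucalitildercompact}. Here I would first observe that each translate $\sigma_t\bu$, $t\geq 0$, is a weak solution on $(0,\infty)$ bounded by $R$ in $H$, hence belongs to $\Ucal_{[0,\infty)}^\sharp(R)$; the delicate point to flag is that $\sigma_t\bu$ need \emph{not} be strongly continuous from the right at the new initial time $0$, so it lands in the larger space $\Ucal^\sharp$ rather than $\Ucal$, which is exactly why the compact space $\Ucal_{[0,\infty)}^\sharp(R)$ (and not $\Ucal_{[0,\infty)}(R)$) is the correct ambient object. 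Because $\Ucal_{[0,\infty)}^\sharp(R)$ is metrizable, the same extraction argument yields
\begin{equation*}
  \omega_{\Ccal_\loc([0,\infty),H_\rw)}(\bu) = \bigcap_{T\geq 0} \overline{\{\sigma_t\bu; \;t\geq T\}}^{\,\Ccal_\loc},
\end{equation*}
again exhibiting the $\omega$-limit set as a nested intersection of nonempty closed, hence compact, subsets of $\Ucal_{[0,\infty)}^\sharp(R)$, and therefore as a nonempty compact set.

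The only genuine difficulty is the one just flagged: controlling where the forward translates live. Since a weak solution is strongly continuous from the right only almost everywhere, the translates $\sigma_t\bu$ can leave $\Ucal_{[0,\infty)}(R)$ but remain in its closure $\Ucal_{[0,\infty)}^\sharp(R)$, whose compactness (Lemma \ref{ucalitildercompact}) is precisely what rescues the argument. Everything else is the classical nested-intersection proof of compactness of $\omega$-limit sets, transported to the weak topology via the metrizability of $B_H(R)_\rw$ and of $\Ucal_{[0,\infty)}^\sharp(R)$.
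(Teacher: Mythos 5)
The paper omits the proof of this lemma entirely (remarking only that it holds ``as in the classical case of a well-defined semigroup''), and your argument is exactly that classical nested-intersection proof, correctly transported to the present setting: the orbit tails $\{\bu(t);\,t\geq T\}$ and the translated tails $\{\sigma_t\bu;\,t\geq T\}$ are enclosed in the compact metrizable spaces $B_H(R)_\rw$ and $\Ucal_{[0,\infty)}^\sharp(R)$ (Lemma \ref{ucalitildercompact}), metrizability justifies the sequential characterization of each $\omega$-limit set as the intersection of tail closures, and compactness of the ambient spaces passes to these closed nested intersections. You also correctly isolate the one genuinely three-dimensional subtlety --- since strong continuity from the right holds only almost everywhere, the translates $\sigma_t\bu$ need only lie in $\Ucal_{[0,\infty)}^\sharp(R)$ rather than $\Ucal_{[0,\infty)}(R)$, which is precisely why the compact space $\Ucal_{[0,\infty)}^\sharp(R)$ is the right ambient object --- so your proof is correct and fills the gap the paper left open.
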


\section{Time-dependent statistical solutions}
\label{secstatsol}

In this section we recall the definition and the main properties of time-dependent statistical solutions given in \cite{frtssp1}; they will be useful for our study of stationary statistical solutions (see also  \cite{foias72,foias73,foiasprodi76,vishikfursikov78,vishikfursikov88,fmrt2001a}).

\subsection{Cylindrical test functions}
For the definition of statistical solutions one considers appropriate test functions in $\Ccal(H_\rw)$, which we define as follows.
\begin{defs}
  \label{deftestfunction}
  The cylindrical test functions are the functionals $\Phi:H\rightarrow \RR$ of the form
  \begin{equation}
    \label{cylindricalfunctions}
    \Phi(\bu) = \phi((\bu,\bv_1),\ldots,(\bu,\bv_k)),
  \end{equation}
  where $k\in\NN$, $\phi$ is a $C^1$ real-valued function on $\RR^k$ with compact support, and $\bv_1,\ldots,\bv_k$ belong to $V$. For such $\Phi$, we denote by $\Phi'$ its Fr\'echet derivative in $H$, which has the form
  \[ \Phi'(\bu) = \sum_{j=1}^k
         \partial_j\phi((\bu,\bv_1),\ldots,(\bu,\bv_k))\bv_k,
  \]
  where $\partial_j\phi$ is the derivative of $\phi$ with respect to its $j$-th coordinate.
\end{defs}

As remarked in \cite{frtssp1}, the set of cylindrical test functions restricted to a bounded ball $B_H(R)_\rw$, $R>0$, is dense in the space $\Ccal(B_H(R)_\rw)$.

\subsection{Time-dependent statistical solutions}
Time-dependent statistical solutions are defined in the following way.
\renewcommand{\theenumi}{\roman{enumi}}
\begin{defs}
  \label{deftimedependetstatisticalsolution}
  For a given interval $I\subset \RR$, a family $\{\mu_t\}_{t\in I}$ of Borel probability measures on $H$ is called a statistical solution of the Navier-Stokes equations over $I$ if the following conditions hold:
  \begin{enumerate}
    \item \label{ssphimeas} The function
      \[ t\mapsto \int_H \Phi(\bu) \;\rd\mu_t(\bu)
      \]
      is measurable on $I$, for every bounded and continuous real-valued function $\Phi$ on $H$;
    \item \label{ssenergy} The function
      \[  t\mapsto \int_H |\bu|_{L^2}^2 \;\rd\mu_t(\bu)
      \]
      belongs to $L_\loc^\infty(I)$;
    \item \label{ssenstrophy} The function
      \[ t\mapsto \int_H \|\bu\|_{H^1}^2 \;\rd\mu_t(\bu)
      \]
      belongs to $L_\loc^1(I)$;
    \item \label{ssliouville} For any cylindrical test function $\Phi$, the Liouville-type equation
      \begin{equation}
        \label{liouvilleeq}
        \int_H \Phi(\bu) \;\rd\mu_t(\bu) = \int_H \Phi(\bu) \;\rd\mu_{t'}(\bu)
           + \int_{t'}^t \int_H \dual{\bF(\bu),\Phi'(\bu)}_{V',V}\;\rd\mu_s(\bu)\;\rd s
      \end{equation}
      holds for all $t', t\in I$, where $\bF(\bu) = \bbf - \nu A\bu - B(\bu,\bu)$, so that 
      \[ \dual{\bF(\bu),\Phi'(\bu)}_{V',V}=(\bbf,\Phi'(\bu))_{L^2}-\nu
           \Vinner{\bu,\Phi'(\bu)}-b(\bu,\bu,\Phi'(\bu));
      \]
    \item \label{ssstmeaneneineq}
      The strengthened mean energy inequality holds on $I$, i.e. there exists a set $I'\subset I$ of full measure in $I$ such that for any nonnegative, increasing, continuously-differentiable real-valued function $\psi:[0,\infty)\rightarrow \RR$ with bounded derivative, the inequality
      \begin{multline}
        \label{strengthenedmeanenergyineq}
        \frac{1}{2}\int_H \psi(|\bu|_{L^2}^2)\;\rd\mu_t(\bu)
         + \nu\int_{t'}^t \int_H \psi'(|\bu|_{L^2}^2)\|\bu\|_{H^1}^2
                 \;\rd\mu_s(\bu)\rd s \\
         \leq \frac{1}{2}\int_H \psi(|\bu|_{L^2}^2)\;\rd\mu_{t'}(\bu)
         + \int_{t'}^t \int_H \psi'(|\bu|_{L^2}^2)(\bbf,\bu)_{L^2}
                 \;\rd\mu_s(\bu)\rd s
      \end{multline}
      holds for any $t'\in I'$ and for all $t\in I$ with $t\geq t'$;
    \item \label{ssinitialtime}
      If $I$ is closed and bounded on the left with left end point $t_0$, then the function
      \[ t\mapsto \int_H \psi(|\bu|_{L^2}^2) \;\rd\mu_t(\bu)
      \]
      is continuous at $t=t_0$ from the right, for every function $\psi$ as in \eqref{ssstmeaneneineq}.
 \end{enumerate}
\end{defs}

The existence of time-dependent statistical solutions in the sense above was first proved in \cite{foias72} via Galerkin approximation (see Theorem 1 in page 254 and Proposition 1 in page 291 in \cite{foias72}). The existence result can be stated in the following way.

\begin{thm}[\cite{foias72}]
  \label{propexistencetdss}
  Let $t_0\in \RR$ and let $\mu_0$ be a Borel Probability measure on $H$ satisfying
  \[  \int_H |\bu|_{L^2}^2\;\rd\mu_0(\bu) < \infty.
  \]
  Then, there exists a time-dependent statistical solution $\{\mu_t\}_{t\geq t_0}$ satisfying $\mu_{t_0}=\mu_0$.
\end{thm}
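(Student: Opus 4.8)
The plan is to construct the statistical solution by Galerkin approximation, exploiting the fact that the finite-dimensional truncations are well posed and hence generate genuine pushforward measures, and then to pass to the limit. First I would fix the basis $\{\bw_j\}$ of eigenfunctions of $A$ and, for each $m$, consider the finite-dimensional system
\[
  \ddt{\bu_m} + \nu A\bu_m + P_m B(\bu_m,\bu_m) = P_m\bbf, \qquad \bu_m(t_0)=P_m\bu_0,
\]
in $P_m H$. The a~priori energy estimate (multiply by $\bu_m$ and use $b(\bu_m,\bu_m,\bu_m)=0$) shows each solution is global, and standard ODE theory gives a continuous flow $S_m(t)\colon P_m H\to P_m H$. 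I then define the approximating measures $\mu_t^m = S_m(t-t_0)_*\bigl((P_m)_*\mu_0\bigr)$, Borel probability measures on $H$ carried by $P_m H$. Because $S_m$ is a smooth flow, differentiating under the integral sign gives, for every cylindrical test function $\Phi$, the exact Liouville identity
\[
  \ddt{}\int_H \Phi(\bu)\,\rd\mu_t^m(\bu) = \int_H \dual{\bF_m(\bu),\Phi'(\bu)}_{V',V}\,\rd\mu_t^m(\bu),
\]
with $\bF_m(\bu)=P_m\bbf-\nu A\bu - P_m B(\bu,\bu)$, which integrates to the analogue of \eqref{liouvilleeq}; likewise, multiplying the system by $\psi'(|\bu_m|_{L^2}^2)\bu_m$ and integrating against $(P_m)_*\mu_0$ yields the strengthened mean energy relation, which is an \emph{equality} at this level since the trilinear term drops out.

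Next I would record the uniform bounds. Integrating the scalar energy balance against $\mu_0$, and using that each Galerkin solution obeys the analogues of \eqref{energyestimate}--\eqref{L2enstrophyestimatefinh} together with the hypothesis $\int_H|\bu|_{L^2}^2\,\rd\mu_0<\infty$, gives a bound on $\int_H|\bu|_{L^2}^2\,\rd\mu_t^m$ uniform in $t$ and $m$, and a bound on $\nu\int_{t_0}^t\int_H\|\bu\|_{H^1}^2\,\rd\mu_s^m\,\rd s$ uniform in $m$ for each finite $t$. These two controls are what make the passage to the limit possible.

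The hard part is the passage $m\to\infty$. The mean-energy bound confines the mass of $\mu_t^m$ to large balls $B_H(R)_\rw$, which are compact and metrizable, so for each fixed $t$ one can extract weak-star limits; the genuine difficulty is to produce a single family $\{\mu_t\}_{t\ge t_0}$ that is \emph{measurable in $t$} while still satisfying the identities. I would handle measurability by reading the enstrophy bound as saying the space-time measures $\rd\mu_s^m(\bu)\,\rd s$ are uniformly bounded on $[t_0,T]\times B_H(R)$, extracting a weak-star limit on the product space, and disintegrating it in $s$ to obtain $\{\mu_t\}$, so that measurability of $t\mapsto\int_H\Phi\,\rd\mu_t$ comes for free; one must then reconcile this with the pointwise-in-$t$ limits, which I would arrange through a generalized-limit argument so that $\mu_t$ is pinned down for every $t$, in particular at $t_0$.

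Passing to the limit in the Liouville identity requires controlling the nonlinear term $b(\bu,\bu,\Phi'(\bu))$: since $\Phi'$ takes values in $V$ and $\Phi$ has compact support, this integrand is bounded and continuous on each ball, so the enstrophy bound together with the space-time weak-star convergence lets one pass to the limit, while the linear and forcing terms converge because $P_m\to\Id$ strongly, producing $\bF(\bu)=\bbf-\nu A\bu - B(\bu,\bu)$ in the limit. The energy and enstrophy functionals on the left of the mean-energy relation are only weakly lower semicontinuous in $\bu$, which is precisely why the limiting relation \eqref{strengthenedmeanenergyineq} is an inequality rather than an equality; the integrability and measurability conditions (i)--(iii), the initial condition $\mu_{t_0}=\mu_0$, and the right-continuity (vi) at $t_0$ then follow from the construction together with the strong continuity of the Galerkin flow at the initial time. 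I expect the reconciliation of the disintegrated family with the everywhere-defined generalized limit, and the semicontinuity bookkeeping in the nonlinear and enstrophy terms, to be the main obstacles.
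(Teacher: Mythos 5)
Your proposal takes essentially the same route as the paper's own proof: the paper does not prove Theorem \ref{propexistencetdss} itself but attributes it to the Galerkin-approximation construction of \cite{foias72}, which is precisely what you outline — pushforward measures under the well-posed finite-dimensional Galerkin flows, exact Liouville and energy identities at the approximate level, uniform mean energy/enstrophy bounds from the hypothesis on $\mu_0$, and a compactness passage to the limit in which weak lower semicontinuity turns the mean energy equality into the inequality \eqref{strengthenedmeanenergyineq}. You also correctly identify the genuinely delicate points of that classical proof (measurability in $t$ of the limit family and the limit in the nonlinear term, which requires control beyond weak convergence on $H$-balls), so the sketch matches the cited argument in both structure and emphasis.
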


\subsection{Vishik-Fursikov measures}

In the framework of Vishik and Fursikov \cite{vishikfursikov78}, the statistical solutions are obtained via measures in time-dependent function spaces. What makes them measures relevant to fluid flows is the condition that they be carried by the space of individual weak solutions. They should also have finite kinetic energy. Inspired by their approach, the following definition was introduced in \cite{frtssp1}, which is a slight modification of their original definition.

\renewcommand{\theenumi}{\roman{enumi}}
\begin{defs}
  \label{defvfmeasure}
  For a given interval $I\subset \RR$, a
  Vishik-Fursikov measure over $I$  
  is defined as a Borel probability measure $\rho$ on the space
  $\Ccal_\loc(I,H_\rw)$ with the following properties
  \begin{enumerate}
    \item \label{rhocarrierforvfm} $\rho$ is carried by $\Ucal_I^\sharp$;
    \item \label{rhomeankineticforvfm} We have
      \[ t\rightarrow \int_{\Ucal_I^\sharp} |\bu(t)|_{L^2}^2 \;\rd\rho(\bu) \in L_\loc^\infty(I);
      \]
    \item \label{meancontinuityatinitialtimeforvfm}
      If $I$ is closed and bounded on the left, with
      left end point $t_0$, then for any nonnegative, increasing 
      continuously-differentiable real-valued function $\psi:[0,\infty)\rightarrow \RR$ 
      with bounded derivative, we have
      \[ \lim_{t\rightarrow t_0^+} \int_{\Ucal_I^\sharp} \psi(|\bu(t)|_{L^2}^2) \;\rd\rho(\bu)
               = \int_{\Ucal_I^\sharp} \psi(|\bu(t_0)|_{L^2}^2) \;\rd\rho(\bu) < \infty.
      \]
  \end{enumerate}
\end{defs}

The following existence result was proved in \cite{frtssp1}:

\begin{thm}[{\cite[Theorem 3.7]{frtssp1}}]
  \label{propexistencevfmeasure}
  Let $t_0\in \RR$ and let $\mu_0$ be a Borel probability measure on $H$ with
  finite mean kinetic energy, i.e.
  \[ \int_H |\bu|_{L^2}^2 \;\rd\mu_0(\bu) < \infty.
  \]
  Then, there exists a Vishik-Fursikov measure $\rho$ over the time interval
  $[t_0,\infty)$ such that $\Pi_{t_0}\rho=\mu_0$.
\end{thm}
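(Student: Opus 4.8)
The plan is to construct $\rho$ by the Vishik--Fursikov method of Galerkin approximation carried out directly in the trajectory space $\Ccal_\loc([t_0,\infty),H_\rw)$, and then to pass to a limit using the compactness results recalled above. For each $m\in\NN$, the Galerkin system $\ddt{\bu} + \nu A\bu + P_m B(\bu,\bu) = P_m\bbf$ in $P_mH$ is a locally Lipschitz ODE whose global solution depends continuously on its initial datum; hence the map $\bu_0\mapsto \bu_m(\cdot\,;P_m\bu_0)$ is continuous from $H$ into $\Ccal_\loc([t_0,\infty),H_\rw)$. I would define $\rho_m$ as the push-forward of $\mu_0$ under this map, so that $\rho_m$ is a Borel probability measure with $\Pi_{t_0}\rho_m = (P_m)_*\mu_0$. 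Because the Galerkin nonlinearity retains the orthogonality $b(\bu_m,\bu_m,\bu_m)=0$, every $\bu_m$ obeys the same a~priori bounds \eqref{energyestimate} and \eqref{L2enstrophyestimatefinh} as a genuine weak solution.

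The next step is tightness. From \eqref{energyestimate}, a Galerkin trajectory issuing from $B_H(r)$ stays in $B_H(\max\{r,R_0\})$ for all $t\ge t_0$, while Chebyshev's inequality gives $\mu_0(H\setminus B_H(r)) \le r^{-2}\int_H|\bu_0|_{L^2}^2\,\rd\mu_0(\bu_0)$, which is small for large $r$ precisely because the mean kinetic energy is finite. Within a fixed ball the enstrophy bound \eqref{L2enstrophyestimatefinh} controls $\bu_m$ in $L^2_\loc(V)$ and hence $\partial_t\bu_m$ in $L^{4/3}_\loc(V')$, so the family is equibounded and equicontinuous and, by Arzel\`a--Ascoli, relatively compact in $\Ccal_\loc([t_0,\infty),B_H(R)_\rw)$. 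Combining these two facts produces, for every $\epsilon>0$, a compact set carrying $\rho_m$-mass at least $1-\epsilon$ uniformly in $m$. Working inside the Polish spaces $\Ccal_\loc([t_0,\infty),B_H(R)_\rw)$ and letting $R\to\infty$, Prokhorov's theorem then yields a subsequence with $\rho_m \stackrel{*}{\rightharpoonup}\rho$ for some Borel probability measure $\rho$.

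It remains to identify $\rho$. By the Weak Compactness Lemma \ref{convergenceofsolutions1}, any trajectory-space limit of uniformly bounded Galerkin solutions is a weak solution, so the compact accumulation sets of the $\rho_m$ lie in the spaces $\Ucal_{[t_0,\infty)}^\sharp(R_k)$; since $\Ucal_{[t_0,\infty)}^\sharp(R)$ is compact, hence closed, by Lemma \ref{ucalitildercompact}, and $\Ucal_{[t_0,\infty)}^\sharp=\bigcup_k\Ucal_{[t_0,\infty)}^\sharp(R_k)$ by \eqref{characucalitildeclosed}, the portmanteau theorem shows that $\rho$ is carried by $\Ucal_{[t_0,\infty)}^\sharp$, giving property (i). Property (ii) follows from the uniform bound \eqref{energyestimate}, the weak lower semicontinuity of $\bu\mapsto|\bu(t)|_{L^2}^2$, and Fatou's lemma. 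Finally, $\Pi_{t_0}$ is continuous, so $\Pi_{t_0}\rho=\lim_m(P_m)_*\mu_0=\mu_0$, the last equality because $P_m\bu_0\to\bu_0$ in $H$ for every $\bu_0$.

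The main obstacle is property (iii), the right-continuity of the mean energy at $t_0$, which is delicate because weak limits see the energy only through a lower-semicontinuous functional. I would establish it by squeezing. The lower bound $\liminf_{t\to t_0^+}\int|\bu(t)|_{L^2}^2\,\rd\rho \ge \int|\bu(t_0)|_{L^2}^2\,\rd\rho$ comes from the weak continuity of each weak solution (condition (iii) of Definition \ref{deflerayhopfweaksolution}) together with Fatou. The matching upper bound $\limsup_{t\to t_0^+}\int|\bu(t)|_{L^2}^2\,\rd\rho \le \int|\bu(t_0)|_{L^2}^2\,\rd\rho$ is the crux: it must be read off from the Galerkin energy inequality \emph{before} passing to the limit, exploiting that the approximate data $P_m\bu_0$ converge strongly so that no mean energy is spuriously lost at $t_0$, and then preserved under the weak-star limit. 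Since $\Pi_{t_0}\rho=\mu_0$ has finite kinetic energy, the bound $|\bu(t)|_{L^2}^2\le|\bu(t_0)|_{L^2}^2+R_0^2$ furnishes a $\rho$-integrable dominating function, so dominated convergence upgrades the mean-energy continuity to the full statement (iii) for an arbitrary admissible $\psi$. Closing the gap between the lower- and upper-semicontinuity estimates at the single time $t_0$ is exactly the technical heart of the argument.
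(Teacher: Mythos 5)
Your overall strategy---Galerkin approximation performed directly in the trajectory space, push-forward of $\mu_0$ under the single-valued, continuous Galerkin solution map, uniform tightness, and a weak-star passage to the limit---is a legitimate route, and it is genuinely different from the one the paper relies on: the proof cited from \cite{frtssp1} never discretizes the equation, but instead approximates $\mu_0$ by finite convex combinations of Dirac measures, attaches to each atom an exact Leray--Hopf weak solution, and extracts a limit measure using the weak-star compactness of the set of Borel probability measures on the compact metrizable space $\Ucal_{[t_0,\infty)}^\sharp(R)$, handling a general $\mu_0$ of finite mean kinetic energy by decomposing it over annuli in $H$. Your approach trades the measure-approximation step for an equation-approximation step; both devices avoid any measurable-selection problem, since in your case the Galerkin flow is single-valued and in theirs a solution is chosen only for finitely many initial points at a time.

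However, your identification step contains a genuine gap as written. You apply the portmanteau theorem to the closed sets $\Ucal_{[t_0,\infty)}^\sharp(R_k)$, but portmanteau gives $\rho(F)\geq\limsup_m\rho_m(F)$ for closed $F$, and here $\rho_m\bigl(\Ucal_{[t_0,\infty)}^\sharp(R_k)\bigr)=0$ for every $m$: Galerkin trajectories are not weak solutions of the Navier--Stokes equations, so the carriers of the $\rho_m$ are disjoint from $\Ucal_{[t_0,\infty)}^\sharp$ and the resulting inequality is vacuous. The repair is the standard ``closed tails'' argument, which is the idea missing from your write-up: fix a compact $\Kcal$ with $\rho_m(\Kcal)\geq 1-\epsilon$ for all $m$, let $G_m$ denote the set of order-$m$ Galerkin trajectories, apply portmanteau to the closed sets $F_M=\overline{\bigcup_{m\geq M}(G_m\cap\Kcal)}$, which do satisfy $\rho_m(F_M)\geq 1-\epsilon$ for $m\geq M$, conclude $\rho(\bigcap_M F_M)\geq 1-\epsilon$, and finally prove $\bigcap_M F_M\subset\Ucal_{[t_0,\infty)}^\sharp$ by showing that every point of the intersection is a limit of Galerkin trajectories of orders $m_j\to\infty$. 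That last fact is not Lemma \ref{convergenceofsolutions1}, which you cite: that lemma concerns sequences of weak solutions, whereas for Galerkin sequences one needs the classical Aubin--Lions compactness argument, including the verification that the limit inherits the energy inequality of Definition \ref{deflerayhopfweaksolution}. Two secondary points: Prokhorov's theorem cannot be invoked directly in the non-metrizable space $\Ccal_\loc([t_0,\infty),H_\rw)$, so unless $\mu_0$ is carried by a ball you need the annulus decomposition (or an equivalent device) before extracting a limit; and your dominating function $|\bu(t)|_{L^2}^2\leq|\bu(t_0)|_{L^2}^2+R_0^2$ in the proof of the mean-energy continuity at $t_0$ is unavailable on $\Ucal_{[t_0,\infty)}^\sharp\setminus\Ucal_{[t_0,\infty)}$, where the energy inequality need not hold down to $t_0$; the upgrade from convergence of the mean energy to general $\psi$ should instead use the fact that an a.e.\ lower bound on the liminf combined with convergence of the integrals yields convergence of $|\bu(t)|_{L^2}^2$ in $L^1(\rho)$, hence uniform integrability, which suffices.
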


It has in fact been proved in \cite{frtssp1} that Vishik-Fursikov measures are carried by $\Ucal_I$ itself, recovering an important fact for these measures, namely that they are carried by Leray-Hopf weak solutions on $I$:

\begin{thm}[{\cite[Theorem 4.1]{frtssp1}}]
  \label{vfsscarriedbyus}
  Let $I\subset \RR$ be an arbitrary interval. Let $\rho$ be a Vishik-Fursikov measure over $I$.   Then $\rho$ is carried by $\Ucal_I$.
\end{thm}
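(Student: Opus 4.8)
The plan is to reduce the statement to a single regularity property at the left endpoint and then to establish that property for $\rho$-almost every trajectory. First I would recall that the only way a function $\bu\in\Ucal_I^\sharp$ can fail to lie in $\Ucal_I$ is through condition~\ref{lhinitialcont}: when $I$ has a left endpoint $t_0\in I$, a weak solution must be \emph{strongly} continuous in $H$ at $t_0$ from the right, whereas an element of $\Ucal_I^\sharp$ is only required to solve the equation on $I^\circ$ and is merely weakly continuous at $t_0$. When $I$ is open (or unbounded) on the left one has $\Ucal_I=\Ucal_I^\sharp$, so there is nothing to prove; hence I would assume $I$ is closed and bounded on the left with left endpoint $t_0$, which is exactly the case in which the Vishik-Fursikov condition~\ref{meancontinuityatinitialtimeforvfm} is available. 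Since $\Ucal_I$ is Borel (Lemma~\ref{Ucalileftborel}) and $\rho(\Ucal_I^\sharp)=1$, it suffices to prove that the set of $\bu\in\Ucal_I^\sharp$ that are strongly continuous from the right at $t_0$ has full $\rho$-measure.

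Writing $y_\bu(t)=|\bu(t)|_{L^2}^2$, the weak continuity of $\bu$ at $t_0$ gives $\bu(t)\rightharpoonup\bu(t_0)$, hence by weak lower semicontinuity $\liminf_{t\to t_0^+}y_\bu(t)\ge y_\bu(t_0)$ pointwise; because a Hilbert-space weak limit with convergent norms is a strong limit, strong right-continuity at $t_0$ is equivalent to $\limsup_{t\to t_0^+}y_\bu(t)\le y_\bu(t_0)$. The core of the argument is to control this $\limsup$ in mean and then upgrade it to a pointwise a.e. bound. I would apply condition~\ref{meancontinuityatinitialtimeforvfm} with the admissible choice $\psi(s)=s$ (nonnegative, increasing, $C^1$, with bounded derivative) to obtain $\int_{\Ucal_I^\sharp} y_\bu(t)\,\rd\rho(\bu)\to\int_{\Ucal_I^\sharp} y_\bu(t_0)\,\rd\rho(\bu)$ as $t\to t_0^+$. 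Working with time-averages rather than with $y_\bu(t)$ itself is essential, since individual trajectories are only weakly continuous and their energy may jump on a null set of times; the averaged quantity $Y_\tau(\bu)=\tfrac1\tau\int_{t_0}^{t_0+\tau}y_\bu(t)\,\rd t$ is insensitive to such null sets. By Jensen's inequality together with weak lower semicontinuity, $\liminf_{\tau\to0^+}Y_\tau(\bu)\ge y_\bu(t_0)$ pointwise, while Fatou's lemma (applied along a sequence $\tau_k\to0^+$) combined with the mean convergence above gives $\int\liminf_{\tau\to0^+}Y_\tau\,\rd\rho\le\int y_\bu(t_0)\,\rd\rho$; comparing the two, $\liminf_{\tau\to0^+}Y_\tau(\bu)=y_\bu(t_0)$ for $\rho$-a.e. $\bu$.

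To finish for such a $\bu$, I would choose $\tau_k\to0^+$ realizing this liminf and, since the points of strong continuity from the right coincide with the Lebesgue points of $y_\bu$ and form a set of full measure (as recalled around \eqref{lebesguepoints}), pick in each interval $(t_0,t_0+\tau_k)$ such a point $t_k'$ with $y_\bu(t_k')\le Y_{\tau_k}(\bu)+1/k$, so that $t_k'\to t_0^+$ and $\limsup_k y_\bu(t_k')\le y_\bu(t_0)$. Applying the a~priori energy estimate \eqref{energyestimate} from $t_k'$ and letting $k\to\infty$ (with $t>t_0$ fixed), then $t\to t_0^+$, yields $\limsup_{t\to t_0^+}y_\bu(t)\le y_\bu(t_0)$, hence strong right-continuity at $t_0$ and $\bu\in\Ucal_I$. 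Since this holds off a $\rho$-null set and $\Ucal_I$ is Borel, $\rho(\Ucal_I)=1$, i.e. $\rho$ is carried by $\Ucal_I$.

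I expect the main obstacle to be precisely the passage from the mean/averaged control to a genuine pointwise $\limsup$ bound: the subtlety is that $\liminf_{t\to t_0^+}y_\bu(t)=y_\bu(t_0)$ on its own does not force strong continuity, because the energy estimate \eqref{energyestimate} is only valid from Lebesgue points, and an isolated downward spike of $y_\bu$ at a non-Lebesgue time could otherwise make the liminf small while the ``typical'' values stay large. Using time-averages (which ignore such null sets) in the Fatou step is what rules this pathology out and lets the a~priori estimate perform the upgrade from weak to strong continuity.
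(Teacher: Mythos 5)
Your proof is correct. Note that the paper at hand does not actually prove Theorem \ref{vfsscarriedbyus} — it imports it from \cite{frtssp1} — but your argument is essentially the same as the cited one: reduce to the case of an interval closed and bounded on the left, where membership in $\Ucal_I$ beyond $\Ucal_I^\sharp$ amounts precisely to strong continuity in $H$ from the right at $t_0$, and then upgrade the mean continuity condition (iii) of Definition \ref{defvfmeasure} (applied with the admissible choice $\psi(s)=s$) to pointwise $\rho$-a.e.\ strong right-continuity via Fubini, Fatou, weak lower semicontinuity of the norm, selection of Lebesgue points (equivalently, points of strong right-continuity) as starting times, and the a priori energy estimate \eqref{energyestimate}, concluding with the Borel measurability of $\Ucal_I$ from Lemma \ref{Ucalileftborel}.
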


\subsection{Time-dependent Vishik-Fursikov statistical solutions}

A Borel probability measure $\rho$ on $\Ccal_\loc(I,H_\rw)$ induces a time-dependent family
of Borel probability measures $\{\rho_t\}_{t\in I}$ on the phase space $H$ (or, equivalently, on $H_\rw$, since their Borel $\sigma$-algebras coincide) through the projections $\rho_t=\Pi_t\rho,$ with
\begin{equation}
  \label{2.15a}
  \int_H \Phi(\bu) \;\rd\rho_t(\bu)
    = \int_{\Ccal_\loc(I,H_\rw)} \Phi(\bv(t)) \;\rd\rho(\bv),
    \qquad \forall t\in I,
\end{equation}
for all $\Phi$ which belongs to $L^1(\rho_t)$ for any $t\in I$. In particular, this is valid for any
$\Phi$ in $\Ccal_\rb(H_\rw)$. Relation \eqref{2.15a} is also valid for $\Phi \in \Ccal_\rb(H)$, since such functions are Borel measurable, for they can be approximated by the sequence of weakly continuous functions $(\Phi\circ P_m)_m$, where $P_m$ are the Galerkin projectors. 

It has been proved in \cite[Theorems 3.13 and 3.14]{frtssp1} that if $\rho$ is Vishik-Fursikov measure over an arbitrary interval $I$, then the corresponding family of projections $\{\rho_t\}_{t\in I}$ is a statistical solution. This yields a particular type of statistical solution, as formally defined below.

\begin{defs}
  A Vishik-Fursikov statistical solution of the Navier-Stokes equations over an interval $I\subset \RR$ is a statistical solution $\{\rho_t\}_{t\in I}$ such that $\rho_t=\Pi_t\rho$, for all $t\in I$, for some Vishik-Fursikov measure $\rho$ over the interval $I$.
\end{defs}

The existence of Vishik-Fursikov statistical solutions was proved in \cite{frtssp1}.

\begin{thm}[{\cite[Theorem 3.16]{frtssp1}}]
  \label{thmexistencevfss}
  Let $t_0\in \RR$ and let $\mu_0$ be a Borel probability measure on $H$ satisfying
  \[  \int_H |\bu|_{L^2}^2\;\rd\mu_0(\bu) < \infty.
  \]
  Then, there exists a Vishik-Fursikov statistical solution $\{\rho_t\}_{t\geq t_0}$ over the interval $I=[t_0,\infty)$ satisfying $\rho_{t_0}=\mu_0$.
  \qed
\end{thm}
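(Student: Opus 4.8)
The plan is to assemble the statement directly from two ingredients that are already in hand: the existence theorem for Vishik-Fursikov measures with a prescribed initial distribution, and the fact that the time-projections of any such measure automatically form a statistical solution. The present theorem is essentially a packaging of these two facts.

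First I would check that $\mu_0$ meets the hypothesis of Theorem \ref{propexistencevfmeasure}. By assumption $\mu_0$ is a Borel probability measure on $H$ with $\int_H |\bu|_{L^2}^2\;\rd\mu_0(\bu)<\infty$, i.e. with finite mean kinetic energy, which is exactly what is required. Applying Theorem \ref{propexistencevfmeasure} with this $t_0$ and $\mu_0$ then yields a Vishik-Fursikov measure $\rho$ over the interval $I=[t_0,\infty)$ satisfying $\Pi_{t_0}\rho=\mu_0$.

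Next I would define the family of projections $\rho_t=\Pi_t\rho$ for $t\geq t_0$, as in \eqref{2.15a}, obtaining a family $\{\rho_t\}_{t\geq t_0}$ of Borel probability measures on $H$. Since $\rho$ is a Vishik-Fursikov measure over $I$, the result recalled above from \cite[Theorems 3.13 and 3.14]{frtssp1} guarantees that the family $\{\rho_t\}_{t\in I}$ of its projections is a statistical solution in the sense of Definition \ref{deftimedependetstatisticalsolution}. Because this family is produced precisely as $\rho_t=\Pi_t\rho$ for the Vishik-Fursikov measure $\rho$, it is, by definition, a Vishik-Fursikov statistical solution over $I$.

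Finally I would verify the normalization: at $t=t_0$ one has $\rho_{t_0}=\Pi_{t_0}\rho=\mu_0$ by construction, which is the asserted initial condition. I do not expect a genuine obstacle here, since all of the analytic substance — constructing a measure on the trajectory space carried by Leray-Hopf weak solutions, controlling the mean kinetic energy, and establishing the Liouville-type equation and the strengthened mean energy inequality for the projected family — is already contained in Theorems \ref{propexistencevfmeasure} and the cited \cite[Theorems 3.13 and 3.14]{frtssp1}. The present statement follows by combining them.
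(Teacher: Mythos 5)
Your proposal is correct and is precisely the argument the paper intends: the theorem is stated with a \qed because it follows immediately from Theorem \ref{propexistencevfmeasure} (existence of a Vishik--Fursikov measure $\rho$ over $[t_0,\infty)$ with $\Pi_{t_0}\rho=\mu_0$) together with the recalled fact from \cite[Theorems 3.13 and 3.14]{frtssp1} that the projections $\rho_t=\Pi_t\rho$ of any Vishik--Fursikov measure form a statistical solution, which by Definition makes $\{\rho_t\}_{t\geq t_0}$ a Vishik--Fursikov statistical solution with $\rho_{t_0}=\mu_0$. There is no gap; your assembly of the two ingredients matches the paper's logic exactly.
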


\section{Stationary statistical solutions}
\label{secsss}

In this section we start our main investigation on the concept of stationary statistical solution. We first recall the original definition given in \cite{foias73}, and then introduce and study the stationary statistical solutions which are associated with Vishik-Fursikov measures, followed by the study of these particular stationary statistical solutions which are obtained as generalized limits of time averages of individual Leray-Hopf weak solutions.

\subsection{Stationary statistical solutions}
The time-independent version of the statistical solution given in Definition \ref{deftimedependetstatisticalsolution} is known as a stationary statistical solution, whose definition is the following.
\renewcommand{\theenumi}{\roman{enumi}}
\begin{defs}
  \label{defsss}
  A stationary statistical solution on $H$ of the Navier-Stokes equations is a Borel probability measure $\mu$ on $H$ such that
  \begin{enumerate}
    \item \label{sssenergy} The mean enstrophy is finite, i.e. 
      \[ \int_H \|\bu\|_{H^1}^2 \;\rd\mu(\bu) < \infty; \]
    \item \label{sssliouville} For any cylindrical test function $\Phi$, the following stationary Liouville-type equation holds, 
      \[ \int_H (\bF(\bu),\Phi'(\bu))_{L^2}\;\rd\mu(\bu) = 0;\]
    \item \label{sssstmeaneneineq} For any nonnegative, increasing, continuously-differentiable real-valued function $\psi:[0,\infty)\rightarrow \RR$ with bounded derivative, we have
      \begin{equation}
        \label{strengthenedstationarymeanenergyineq}
          \int_H \psi'(|\bu|_{L^2}^2)\left( \nu \|\bu\|_{H^1}^2 - (\bbf,\bu)_{L^2} \right) 
            \;\rd\mu(\bu) \leq 0.
      \end{equation}
   \end{enumerate}
\end{defs}

\begin{rmk}
In \cite{fmrt2001a} a slightly different condition was used instead of \eqref{sssstmeaneneineq}, 
namely, that 
\begin{equation}
    \int_{e_1\leq \frac{1}{2}|\bu|_{L^2}^2 < e_2} \{\nu\|\bu\|_{H^1}^2 - (\bbf,\bu)_{L^2}\}
           \;\rd\mu(\bu) \leq 0,
\end{equation}
holds for all $e_1, e_2$ with $0\leq e_1 < e_2\leq \infty$. Both conditions are in fact equivalent, and we adopt the form in \eqref{sssstmeaneneineq} since it seems more natural given the condition \eqref{ssstmeaneneineq} of the Definition \ref{deftimedependetstatisticalsolution} of time-dependent statistical solutions. As we mentioned earlier, the energy inequality in that condition \eqref{ssstmeaneneineq} is one of those used in \cite{foias72} and is stronger than the one used in \cite{fmrt2001a}. Its advantage is that it makes the relation between stationary and time-dependent statistical solutions consistent, in the sense that with this definition stationary statistical solutions are in fact time-independent versions of time-dependent statistical solutions.
\end{rmk}

Condition \eqref{sssenergy} in the Definition \ref{defsss}, implies in particular, that $\mu$ is carried by $V$, i.e.
\begin{equation}
  \label{ssscarriedbyV}
  \mu(H\setminus V) = 0.
\end{equation}
Moreover, it is known that the energy inequality \eqref{sssstmeaneneineq} in the Definition \ref{defsss} implies that a stationary statistical solution is carried by $B_H(R_0)$ (see e.g. \cite{fmrt2001a}), i.e.
\begin{equation}
  \label{sssboundedinH}
  \mu(B_H(R_0))=1,
\end{equation}
where $R_0$ is given in \eqref{defR0}. 

\subsection{Vishik-Fursikov stationary statistical solutions}

The Vishik-Fursikov stationary statistical solutions are defined as projections of Vishik-Fursikov measures which are invariant by translations in time. For that to make sense, the time interval need to be unbounded on the right.

\begin{defs}
  \label{vfinvmeasdef}
  Let $I\subset \RR$ be an interval unbounded on the right. An invariant Vishik-Fursikov measure over $I$ is a Vishik-Fursikov measure $\rho$ over $I$ which is invariant with respect to the translation semigroup $\{\sigma_\tau\}_{\tau\geq 0}$, in the sense that $\sigma_\tau\rho = \rho$ for all $\tau\geq 0$,  i.e. $\rho(E) = \rho(\sigma_\tau^{-1}E)$, for all Borel sets $E$ in $\Ccal_\loc(I,H_\rw)$.
\end{defs}

\begin{rmk} 
 Since $H_\rw$ is the union of the balls $B_H(R)_\rw$, with $R>0$, and each such ball is metrizable it follows, using \eqref{measequivmetric} that the condition that $\sigma_\tau\rho=\rho$ on $\Ccal_\loc(I,H_\rw)$ is
 equivalent to 
  \begin{multline}
    \int_{\Ccal_\loc(I,B_H(R)_\rw)} \varphi(\sigma_\tau\bu)\;\rd\rho(\bu)
       = \int_{\Ccal_\loc(I,B_H(R)_\rw)} \varphi(\bu)\;\rd\rho(\bu), \\
         \forall \tau\geq 0, \;\forall \varphi\in\Ccal_\rb(I,B_H(R)_\rw), \;\forall R>0.
  \end{multline}
\end{rmk}

It is immediate to see that a Vishik-Fursikov statistical solution obtained from the projection of an invariant Vishik-Fursikov statistical solution is a time-independent statistical solution in the sense that the measures $\rho_t=\Pi_t\rho$ do not change with $t$, and hence this leads to a stationary statistical solution in the sense of Definition \ref{defsss}. Therefore, we make the following definition.

\begin{defs}\label{defvfsss}
  A Vishik-Fursikov stationary statistical solution on $H$ is a Borel probability measure $\rho_0$ on $H$ which is a projection $\rho_0=\Pi_t\rho$, at an arbitrary time $t\in I$, of an invariant Vishik-Fursikov measure $\rho$ over an interval $I\subset \RR$ unbounded on the right.
\end{defs}

\begin{rmk}
  \label{rmkdiffstatvfsol}
  At this point we would like to draw the attention of the reader to a subtle distinction between a Vishik-Fursikov stationary statistical solution and a stationary Vishik-Fursikov statistical solution. The former notion, as given in Definition \ref{defvfsss}, refers to the projection $\rho_0=\Pi_t\rho$ for which $\rho$ is invariant in the trajectory space, i.e. 
  \[ \int_{\Ucal_I^\sharp} \varphi(\sigma_t\bu)\;\rd\rho(\bu)
  \]
   is independent of $t$, for any continuous and bounded real-valued function $\varphi$ defined in the trajectory space $\Ucal_I^\sharp$. The second notion refers to a measure $\rho_0=\Pi_t\rho$ in which $\Pi_t\rho$ is invariant in phase space, i.e. for which 
   \[ \int_H \Phi(\bv)\;\rd\rho_0(\bv) = \int_{\Ucal_I^\sharp} \Phi(\Pi_t\bu)\;\rd\rho(\bu)
   \] 
   is independent of $t$, for any continuous and bounded real-valued function $\Phi$ defined on the phase space $H$. Choosing $\varphi=\Phi\circ \Pi_0$, so that $\varphi(\sigma_t \bu) = \Phi(\Pi_t\bu)$, it becomes clear that every Vishik-Fursikov stationary statistical solution is a stationary Vishik-Fursikov statistical solution but the converse statement is more delicate to establish. We prove, below, that the converse in fact holds, hence the two notions are equivalent.
\end{rmk}

\begin{thm}
  \label{stationaryisstationary}
  Let $\rho$ be a Vishik-Fursikov measure over an interval $I\subset \RR$ which is unbounded on the right. If $\Pi_t\rho$ is independent of $t\in I$, then there exists an invariant Vishik-Fursikov measure $\tilde\rho$ over $I$ such that 
  \[ \Pi_t \rho = \Pi_t \tilde\rho, \quad \forall t\in I.
  \]
  In other words, if the family $\{\Pi_t\rho\}_{t\in I}$ is a Vishik-Fursikov statistical solution whose measures do not change with time, then this family is a Vishik-Fursikov stationary statistical solution.
\end{thm}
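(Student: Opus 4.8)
The plan is to produce $\tilde\rho$ by the Krylov--Bogolyubov averaging procedure: I time-average the translates $\sigma_\tau\rho$ of $\rho$ against a generalized limit $\Lim$. The decisive preliminary observation is that the hypothesis ``$\Pi_t\rho$ independent of $t$'' forces $\rho$ to be carried by the \emph{compact} metric space $\Ucal_I^\sharp(R_0)$; this is what makes both the averaging and the ensuing Riesz representation painless, circumventing the non-metrizability and the lack of local compactness of $\Ccal_\loc(I,H_\rw)$.

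First I would show that $\rho_0\define\Pi_t\rho$ (independent of $t$) is carried by $B_H(R_0)$. Indeed, $\{\Pi_t\rho\}_{t\in I}$ is a Vishik--Fursikov statistical solution, hence a statistical solution in the sense of Definition \ref{deftimedependetstatisticalsolution}; being independent of $t$, the Liouville equation \eqref{liouvilleeq} and the strengthened mean energy inequality \eqref{strengthenedmeanenergyineq} collapse, after dividing by $t-t'$, to the stationary conditions of Definition \ref{defsss}, so $\rho_0$ is a stationary statistical solution and therefore satisfies $\rho_0(B_H(R_0))=1$ by \eqref{sssboundedinH}. Fixing a countable dense set $\{t_n\}\subset I$, each set $\{\bu;\,\bu(t_n)\in B_H(R_0)\}$ has full $\rho$-measure; intersecting over $n$ and using that $B_H(R_0)$ is weakly closed together with the weak continuity of trajectories, I obtain $\bu(t)\in B_H(R_0)$ for \emph{all} $t\in I$, for $\rho$-a.e. $\bu$. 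Thus $\rho$ is carried by $\Ucal_I(R_0)\subset K\define\Ucal_I^\sharp(R_0)$, and by Lemma \ref{ucalitildercompact} the space $K$ is a compact metric space. Since $I$ is unbounded on the right, $\sigma_\tau$ maps $K$ into $K$ for every $\tau\geq 0$, so each pushforward $\sigma_\tau\rho$ is again carried by $K$. I expect this compactness step to be the main point; once it is secured, the rest is routine.

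Next, on the compact Hausdorff space $K$ I would define the functional
\[ L(\varphi)=\Lim\frac{1}{T}\int_0^T\left(\int_K \varphi(\sigma_\tau\bu)\;\rd\rho(\bu)\right)\rd\tau,\qquad \varphi\in\Ccal(K). \]
For fixed $\varphi$ the inner integral is a bounded continuous function of $\tau$ (joint continuity of $\sigma$ and dominated convergence), so the Ces\`aro average is a bounded continuous function of $T$ and $L$ is well defined. Positivity of $\Lim$ and $L(1)=1$ make $L$ a positive normalized linear functional, so the Kakutani--Riesz theorem yields a regular Borel probability measure $\tilde\rho$ on $K$, which I regard as a Borel measure on $\Ccal_\loc(I,H_\rw)$ carried by $K\subset\Ucal_I^\sharp$. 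That $\tilde\rho$ is a Vishik--Fursikov measure in the sense of Definition \ref{defvfmeasure} is then immediate: property (i) holds since $\tilde\rho$ is carried by $\Ucal_I^\sharp(R_0)$, while (ii) and (iii) follow from the uniform bound $|\bu(t)|_{L^2}\leq R_0$ on $K$.

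Finally I would verify the two defining features of $\tilde\rho$. \emph{Invariance:} for $\varphi\in\Ccal(K)$ and $\tau_0\geq 0$, the semigroup identity $\sigma_{\tau_0}\sigma_\tau=\sigma_{\tau+\tau_0}$ gives $L(\varphi\circ\sigma_{\tau_0})=\Lim\frac1T\int_0^T h_\varphi(\tau+\tau_0)\,\rd\tau$, where $h_\varphi(\tau)=\int_K\varphi(\sigma_\tau\bu)\,\rd\rho(\bu)$, and the shift-invariance property of the generalized limit (Section \ref{genlimsec}) identifies this with $L(\varphi)$; hence $\sigma_{\tau_0}\tilde\rho=\tilde\rho$ by \eqref{measequivmetric}. \emph{Correct projections:} for $\Phi\in\Ccal_\rb(B_H(R_0)_\rw)$ and $t\in I$ one has $\Phi\circ\Pi_t\in\Ccal(K)$ and $\Pi_t\sigma_\tau\bu=\bu(t+\tau)$, so $\int_K\Phi(\Pi_t\sigma_\tau\bu)\,\rd\rho=\int_H\Phi\,\rd(\Pi_{t+\tau}\rho)=\int_H\Phi\,\rd\rho_0$, which is independent of $\tau$; thus the Ces\`aro average and its generalized limit equal this same constant, giving $\int_H\Phi\,\rd(\Pi_t\tilde\rho)=\int_H\Phi\,\rd\rho_0=\int_H\Phi\,\rd(\Pi_t\rho)$. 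Since both projected measures are carried by the metrizable space $B_H(R_0)_\rw$, \eqref{measequivmetric} yields $\Pi_t\tilde\rho=\Pi_t\rho$ for every $t\in I$, completing the construction.
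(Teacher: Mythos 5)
Your proof is correct, and while it shares the paper's core mechanism (time averaging of the translates of $\rho$ against a generalized limit, Kakutani--Riesz representation on a compact set, property \eqref{genliminvariance} for invariance, and the constancy of $\Pi_t\rho$ to identify the projections), it is organized around a genuinely different key step. The paper never localizes $\rho$ at the outset: it runs the averaging separately on each compact piece $\Ucal_I^\sharp(R)$ of the $\sigma$-compact set $\Ucal_I^\sharp$, obtaining a family of measures $\tilde\rho_R$, proves a consistency relation between $\tilde\rho_R$ and $\tilde\rho_{R'}$ for $R'>R$, and assembles $\tilde\rho$ as the limit $\tilde\rho(\Ecal)=\lim_{R\to\infty}\tilde\rho_R(\Ecal\cap\Ucal_I^\sharp(R))$, which in turn requires checking that this limiting set function is a Borel probability measure. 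Your preliminary observation---that the hypothesis already makes $\rho_0=\Pi_t\rho$ a stationary statistical solution in the sense of Definition \ref{defsss}, hence carried by $B_H(R_0)$ by \eqref{sssboundedinH}, and then, via the countable-dense-set/weak-closedness argument, that $\rho$ itself is carried by the single compact set $\Ucal_I^\sharp(R_0)$---eliminates the entire exhaustion--consistency--limit stage: a single application of Riesz on a compact metric space finishes the construction. What the paper's longer route buys is that its averaging step never uses the hypothesis, so it in effect shows that \emph{any} Vishik-Fursikov measure over $I$ has an invariant Krylov--Bogoliubov average, the hypothesis entering only when comparing projections; your route uses the hypothesis twice (localization and projections) but is shorter and delivers the sharp $R_0$ bound on the carrier for free.

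One small repair: your claim that condition (iii) of Definition \ref{defvfmeasure} ``follows from the uniform bound'' is not quite right when $I$ is closed and bounded on the left, since elements of $\Ucal_I^\sharp(R_0)$ need not be strongly continuous from the right at the left endpoint; boundedness gives the finiteness in (iii) but not the limit. The clean fix is already contained in your own argument: once the invariance of $\tilde\rho$ is established, the function $t\mapsto\int\psi(|\bu(t)|_{L^2}^2)\,\rd\tilde\rho(\bu)$ is constant on $I$, so (iii) holds trivially; simply verify invariance before (or independently of) the Vishik-Fursikov properties. (The paper's own proof is equally silent on this point, so this is a matter of ordering rather than a substantive gap.)
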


\begin{proof}
  Let $\Lim$ be a given generalized limit (see Section \ref{genlimsec}). Since the map $(s,\bu) \mapsto \sigma_s \bu$ is continuous from $I\times \Ccal_\loc(I,H_\rw)$ into $\Ccal_\loc(I,H_\rw)$ (see \eqref{sigmacontinuous}), and in particular from $I\times \Ucal_I^\sharp(R)\rightarrow \Ucal_I^\sharp(R)$, it follows that the composition $(s,\bu)\mapsto \varphi(\sigma_s \bu)$ is also continuous from $I\times \Ucal_I^\sharp(R)$ into $\RR$, for any continuous and bounded functional $\varphi:\Ucal_I^\sharp(R)\rightarrow \RR$, with $R>0$. Thus, the real-valued map
  \[ T \rightarrow  \frac{1}{T}\int_0^T \int_{\Ucal_I^\sharp(R)} \varphi(\sigma_s \bu) \;\rd\rho(\bu)\;\rd s
  \]
  is well-defined for each $T>0$ and is uniformly bounded in $T$. Hence, the functional
  \[ \varphi \mapsto \Lambda_R(\varphi) = \Lim \frac{1}{T}\int_0^T \int_{\Ucal_I^\sharp(R)} \varphi(\sigma_s \bu) \;\rd\rho(\bu)\;\rd s
  \]
  is well-defined and is a positive bounded linear functional on the space of continuous and bounded functionals on $\Ucal_I^\sharp(R)$. Since $\Ucal_I^\sharp(R)$ is compact, it follows by the Kakutani-Riesz Representation Theorem that the functional $\Lambda_R$ can be represented by a finite Borel measure $\tilde\rho_R$ on $\Ucal_I^\sharp(R)$ in the sense that
  \[ \Lim \frac{1}{T}\int_0^T \int_{\Ucal_I^\sharp(R)} \varphi(\sigma_s \bu) \;\rd\rho(\bu)\;\rd s = \Lambda_R(\varphi) = \int_{\Ucal_I^\sharp(R)} \varphi(\bu)\;\rd\tilde\rho_R(\bu),
  \]
  for all $\varphi\in \Ccal_\rb(\Ucal_I^\sharp(R))$. By taking $\varphi$ identically equal to $1$, we see that
  \[ \tilde\rho_R(\Ucal_I^\sharp(R)) = \rho(\Ucal_I^\sharp(R)).
  \]
  Notice also that if $\varphi$ is a nonnegative, continuous and bounded function on $\Ucal_I^\sharp(R')$, for $R' > R$, then
  \[ \int_{\Ucal_I^\sharp(R)} \varphi(\bu)\;\rd\tilde\rho_R(\bu) \leq \int_{\Ucal_I^\sharp(R')} \varphi(\bu)\;\rd\tilde\rho_{R'}(\bu).
  \]
  Since $\Ucal_I^\sharp(R)$ is compact and metrizable, this implies that
  \[ \tilde\rho_R(\Ecal) = \tilde\rho_{R'}(\Ecal), \qquad \forall R' > R,
  \]
  for any Borel subset $\Ecal\subset \Ucal_I^\sharp(R)$. Then, if $\Ecal\subset \Ucal_I^\sharp$ is a Borel set, it follows that
  \[ \tilde\rho_R(\Ecal\cap \Ucal_I^\sharp(R)) = \tilde\rho_{R'}(\Ecal\cap\Ucal_I^\sharp(R)) \leq \tilde\rho_{R'}(\Ecal\cap\Ucal_I^\sharp(R')) \leq \rho_{R'}(\Ucal_I^\sharp(R')) = \rho(\Ucal_I^\sharp(R')) \leq 1.
  \]
  Thus, $\tilde\rho_R(\Ecal\cap\Ucal_I^\sharp(R))$ is monotonic and bounded. Therefore, given any Borel set $\Ecal\subset \Ucal_I^\sharp$, the limit
  \[ \lim_{R\rightarrow \infty} \tilde\rho_R(\Ecal\cap\Ucal_I^\sharp(R))
  \]
  is well-defined and is bounded by $1$. This allows us to define the set function
  \[ \tilde\rho(\Ecal) = \lim_{R\rightarrow \infty} \tilde\rho_R(\Ecal\cap\Ucal_I^\sharp(R)),
  \]
  on the Borel subsets of $\Ucal_I^\sharp$. It is not difficult to prove that this set function is a Borel measure on $\Ucal_I^\sharp$. Moreover, we also have that
  \[ \int_{\Ucal_I^\sharp} \varphi(\bu) \;\rd\tilde\rho(\bu) = \lim_{R\rightarrow \infty} \int_{\Ucal_I^\sharp(R)} \varphi(\bu) \;\rd\tilde\rho_R(\bu),
  \]
  for any $\tilde\rho$-integrable functions, and in particular for any continuous and bounded real valued function $\varphi$ defined on $\Ucal_I^\sharp$. Another way of viewing the limit measure $\tilde\rho$ is by extending the measures $\tilde\rho_R$ in a trivial way to $\Ucal_I^\sharp$ by defining $\tilde\rho_R'(\Ecal) = \tilde\rho_R(\Ecal \cap \Ucal_I^\sharp(R))$, for any Borel set $\Ecal$ in $\Ucal_I^\sharp$, and noticing that $\tilde\rho$ is the limit in a strong sense of the measures $\tilde\rho_R'$, i.e. $\tilde\rho(\Ecal) = \lim_{R\rightarrow \infty} \tilde\rho_R'(\Ecal)$.
  
  By taking $\Ecal=\Ucal_I^\sharp$, we see also that
  \[ \tilde\rho(\Ucal_I^\sharp) = \lim_{R\rightarrow \infty} \tilde\rho_R(\Ucal_I^\sharp(R)) = 1,
  \]
  so that $\tilde\rho$ is in fact a Borel probability measure on $\Ucal_I^\sharp$. Thus, $\tilde\rho$ is a Vishik-Fursikov measure over $I$. 
  
  For the projections $\Pi_t\tilde\rho$, we have that, for every continuous and bounded real valued function $\Phi$ on $H$,
  \begin{align*}
    \int_H \Phi(\bu)\;\rd\Pi_t\tilde\rho & = \int_{\Ucal_I^\sharp} \Phi(\Pi_t\bu)\;\rd \tilde\rho(\bu) \\
    & = \lim_{R\rightarrow \infty} \int_{\Ucal_I^\sharp(R)} \Phi(\Pi_t\bu)\;\rd \tilde\rho_R(\bu) \\
    & = \lim_{R\rightarrow \infty} \Lim \frac{1}{T}\int_0^T \int_{\Ucal_I^\sharp(R)} \Phi(\Pi_t\sigma_s \bu) \;\rd\rho(\bu)\;\rd s \\
    & = \lim_{R\rightarrow \infty} \Lim \frac{1}{T}\int_0^T \int_{\Ucal_I^\sharp(R)} \Phi(\Pi_{t+s}\bu) \;\rd\rho(\bu)\;\rd s.
  \end{align*}
  Now, since $\Pi_t \rho$ is independent of $t$, we find that
  \[ \int_{\Ucal_I^\sharp(R)} \Phi(\Pi_{t+s} \bu) \;\rd\rho(\bu) = \int_{\Ucal_I^\sharp(R)} \Phi(\Pi_t \bu) \;\rd\rho(\bu),
  \] 
  for all $s\geq 0$, and hence
  \begin{multline*}
    \int_H \Phi(\bu)\;\rd\Pi_t\tilde\rho = \lim_{R\rightarrow \infty} \Lim \frac{1}{T}\int_0^T \int_{\Ucal_I^\sharp(R)} \Phi(\Pi_t \bu) \;\rd\rho(\bu)\;\rd s \\
      = \lim_{R\rightarrow \infty} \int_{\Ucal_I^\sharp(R)} \Phi(\Pi_t \bu) \;\rd\rho(\bu) = \int_{\Ucal_I^\sharp} \Phi(\Pi_t \bu) \;\rd\rho(\bu) = \int_H \Phi(\bu)\;\rd\Pi_t\rho.
  \end{multline*}
  Since this is valid for arbitrary continuous and bounded real valued functions $\Phi$ on $H$, it follows that $\Pi_t\tilde\rho = \Pi_t\rho$, for every $t\in I$.
  
  It remains to show that $\tilde\rho$ is time invariant. Consider $R>0$ arbitrary and let $\varphi$ be a continuous and bounded real valued function on $\Ucal_I^\sharp(R)$. Since the map $\sigma_\tau$ maps the space $\Ucal_R^\sharp(R)$ into itself and is a continuous map within these spaces, the composition $\varphi\circ\sigma_\tau$ is also continuous and bounded on $\Ucal_I^\sharp(R)$. Then, using property \eqref{genliminvariance} of the generalized limits (Section \ref{genlimsec}), we find that
  \begin{align*} 
    \int_{\Ucal_I^\sharp(R)} \varphi(\sigma_\tau\bu) \;\rd\tilde\rho_R(\bu) & = \Lim \frac{1}{T}\int_0^T \int_{\Ucal_I^\sharp(R)} \varphi(\sigma_s \sigma_\tau \bu) \;\rd\rho(\bu)\;\rd s \\
    & = \Lim \frac{1}{T}\int_0^T \int_{\Ucal_I^\sharp(R)} \varphi(\sigma_s \sigma_\tau \bu) \;\rd\rho(\bu)\;\rd s \\
    & = \int_{\Ucal_I^\sharp(R)} \varphi(\bu) \;\rd\tilde\rho_R(\bu).
  \end{align*}
  Since $\Ucal_I^\sharp(R)$ is metrizable, this means that
  \[ \sigma_\tau \tilde\rho_R = \tilde\rho_R,
  \]
  for all $\tau \geq 0$, which means that $\tilde\rho_R$ is translation invariant. Here it is important to remark that when we say that $\tilde\rho_R$ is invariant by $\sigma_\tau$ we are actually considering the operator $\sigma_\tau$ restricted to $\Ucal_I^\sharp(R)$, so that when we take the inverse $\sigma_\tau^{-1}\Ecal$ of a Borel set $\Ecal$ in $\Ucal_I^\sharp(R)$, we are only considering the points within $\Ucal_I^\sharp(R)$ which are taken to $\Ecal$ by $\sigma_\tau$ (without this restriction, the inverse $\sigma_\tau^{-1}\Ecal$ could have elements outside $\Ucal_I^\sharp(R)$ which are taken inside that space). 
  
  Finally, since for any Borel set $\Ecal$ in $\Ucal_I^\sharp$ we have that $\tilde\rho_R(\Ecal\cap \Ucal_I^\sharp(R))$ converges to $\tilde\rho(\Ecal)$ when $R\rightarrow \infty$, we see that
  \begin{multline*} 
    \sigma_\tau \tilde\rho(\Ecal) = \tilde\rho(\sigma_\tau^{-1}\Ecal) = \lim_{R\rightarrow \infty} \tilde\rho_R(\sigma_\tau^{-1}\Ecal \cap \Ucal_I^\sharp(R)) \\
     = \lim_{R\rightarrow \infty} \tilde\rho_R(\sigma_\tau^{-1} (\Ecal \cap \Ucal_I^\sharp(R))) = \lim_{R\rightarrow \infty} \tilde\rho_R(\Ecal \cap \Ucal_I^\sharp(R)) = \tilde\rho(\Ecal),
  \end{multline*}
  This shows that $\tilde\rho$ is also translation invariant, which completes the proof.
\end{proof}

\subsection{Time-average stationary statistical solutions}
\label{timeavesssec}

A particular type of stationary statistical solution is obtained by taking the limit of time averages of weak solutions as the averaging time goes to infinity. This is based on the classical idea of Krylov and Bogoliubov for obtaining invariant measures for dynamical systems \cite{kb1937}. This idea was exploited in \cite{bcfm1995}, in connection with the notion of generalized limit, to construct and study invariant measures for the two-dimensional Navier-Stokes equations. The corresponding case of the three-dimensional Navier-Stokes equations was considered in \cite{fmrt2001a}. This generalized limit of time averages was used later in \cite{wang2009,lukaszewickrealrobinson2011,chekrounglattholtz2012} to yield invariant measures to a large class of dissipative systems. See also \cite{dapratozabczyk1996} for a related construction in the context of stochastic equations.

More precisely, for a given weak solution $\bu=\bu(t)$ defined for $t\geq t_0$, for some $t_0\in \RR$, we know, thanks to the a~priori estimate \eqref{energyestimate}, that $|\bu(t)| \leq R$, for all $t\geq t_0$, for a sufficiently large $R>0$. Then, the map 
\[ \Phi \mapsto \Lim \frac{1}{T}\int_{t_0}^{t_0+T} \Phi(\bu(t))\;\rd t
\]
defines a positive continuous linear functional on $\Ccal(B_H(R)_\rw)$. Since $B_H(R)_\rw$ is compact, it follows by the Kakutani-Riesz Representation Theorem that this functional defines a measure $\mu$ on $B_H(R)_\rw$ through the relation
\[ \int_H \Phi(\bv) \dmu(\bv)
        = \Lim \frac{1}{T}\int_{t_0}^{t_0+T} \Phi(\bu(t))\;\rd t,
        \quad \forall \Phi\in\Ccal(B_H(R)_\rw).
\]
We then extend this measure to all $H$ by simply setting $\mu(E)=\mu(E\cap B_H(R))$, for any Borel set $E$ in $H$. It is clear that this definition of $\mu$ is independent of $R$ and depends only on $\bu$ and the choice of the generalized limit. We call this measure $\mu$ a time-average stationary statistical solution:

\begin{defs}
  \label{deftimeavesss}
  Let $\Lim$ be a generalized limit, let $t_0\in \RR$, and let $\bu=\bu(t)$, $t\geq t_0$, be a weak solution on $[t_0, \infty)$. Let $R>0$ be such that $\bu(t)\in B_H(R)$, for all $t\geq t_0$. Then, the associated time-average stationary statistical solution is the Borel probability measure $\mu$ on $H$, with support on $B_H(R)$, which is given by the formula
  \begin{equation}
    \label{eqdeftimeaveragemeasure}
    \int_H \Phi(\bv) \dmu(\bv)
        = \Lim \frac{1}{T}\int_0^T \Phi(\bu(t_0+t))\;\rd t,
  \end{equation}
  for all $\Phi\in \Ccal(B_H(R)_\rw)$.
\end{defs}

We define a similar notion in the trajectory space. We consider an interval $I$ which is unbounded on the right and a bounded weak solution $\bu$ on $I$, say $|\bu(t)|_{L^2}\leq R$, for all $t\in I$, and some $R>0$. The set $\Ccal_\loc(I,B_H(R)_\rw)$ is not locally compact, hence we cannot apply the Kakutani-Riesz Representation Theorem directly. Note, however, that $\bu$ belongs to $\Ucal_I^\sharp(R)$, which is compact (Lemma \ref{ucalitildercompact}). Hence, there exists a probability measure $\rho$ on $\Ucal_I^\sharp(R)$ such that
\[ \int_{\Ucal_I^\sharp(R)} \varphi(\bv) \;\rd\rho(\bv)
    = \Lim \frac{1}{T}\int_t^{t+T} \varphi(\sigma_s\bu))\;\rd s,
\]
for all $\varphi\in \Ccal(\Ucal_I^\sharp(R))$ and any $t\in I$. Then, we extend the measure $\rho$ to all the space $\Ccal_\loc(I,B_H(R)_\rw)$ in the obvious way. Moreover, since any $\varphi$ continuous on $\Ccal_\loc(I,B_H(R)_\rw)$ can be restricted to $\Ccal(\Ucal_I^\sharp(R))$, the relation above can be extended to all such $\varphi$. This measure $\rho$ is invariant thanks to property \eqref{genliminvariance} of generalized limits. Thus, we have the following definition.

\begin{defs}
  \label{deftimeaveVFinvmea}
  Let $\Lim$ be a generalized limit and let $\bu$ be a weak solution on an interval $I$ unbounded on the right. Assume $\bu$ is bounded in $H$ and let $R>0$ be such that $\bu\in\Ucal_I^\sharp(R)$. Then, the associated time-average invariant Vishik-Fursikov measure $\rho$ over $I$ is given by the Kakutani-Riesz formula
  \begin{equation}
    \label{defvftimeaveragemeasure}
   \int_{\Ucal_I^\sharp(R)} \varphi(\bv) \;\rd\rho(\bv)
    = \Lim \frac{1}{T}\int_t^{t+T} \varphi(\sigma_s\bu))\;\rd s,
  \end{equation}
  valid for all $\varphi\in \Ccal(\Ucal_I^\sharp(R))$ and independent of $t\in I$. Moreover, \eqref{defvftimeaveragemeasure} is also valid for $\varphi\in\Ccal(\Ccal_\loc(I,B_H(R)_\rw))$.
\end{defs}

The corresponding projection on the phase space generate
the time-average Vishik-Fursikov stationary statistical solution:
\begin{defs}
  \label{defvftimeaverageVFsss}
  A time-average Vishik-Fursikov stationary statistical solution on $H$ is a Borel probability measure $\rho_0$ on $H$ which is a projection $\mu=\Pi_t\rho$, at an arbitrary time $t\in I$, of a time-average invariant Vishik-Fursikov measure $\rho$ over an interval $I$ unbounded on the right.
\end{defs}

\section{Properties of stationary statistical solutions}
\label{seccarrier}

In this section we study the support and carriers of invariant Vishik-Fursikov measures. The main result is a type of localization result, stating essentially that any invariant Vishik-Fursikov measure is carried by the set of trajectories that exists globally in time, both in the past and in the future, and are uniformly bounded in the phase space $H$. In other words, the support of any invariant Vishik-Fursikov measures is included in the set all the trajectories which constitute the weak global attractor. We then investigate the consequences of this result for Vishik-Fursikov stationary statistical solutions.

\subsection{Properties of invariant Vishik-Fursikov measures}

Considering an arbitrary interval $I$ unbounded on the right and a Vishik-Fursikov measure $\rho$ on $I$, it follows from Theorem \ref{vfsscarriedbyus} that $\rho$ is carried by $\Ucal_I$. If moreover, $\rho$ is an invariant measure, we expect $\rho$ to be carried by a set which is invariant by the translation semigroup. This is precisely the result of the following theorem.
\begin{thm}
  \label{thmcarriervfmeasure}
  Let $\rho$ be an invariant Vishik-Fursikov measure over an interval $I\subset \RR$ unbounded on the right. Then, $\rho$ is carried by $\Wcal_I$, as defined in \eqref{wcalidef}, and the support of $\rho$ is included in $\Wcal_I^\sharp$. In particular, $\rho$ is a regular measure in the sense of \eqref{upperregmeas} and \eqref{lowerregmeas}.
\end{thm}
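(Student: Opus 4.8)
The plan is to combine three ingredients: that $\rho$ is already carried by the genuine weak solutions $\Ucal_I$ (Theorem \ref{vfsscarriedbyus}), the a~priori boundedness forced by stationarity, and the intersection characterization of $\Wcal_I^\sharp$ from Lemma \ref{lemwcalasintersectionofucals}, the last two being glued together by the translation invariance of $\rho$.

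First I would pin down a uniform bound on the trajectories. Since $\rho$ is invariant and $\Pi_{t+\tau}=\Pi_t\circ\sigma_\tau$, all projections coincide with a single measure $\rho_0=\Pi_t\rho$, which is a stationary statistical solution in the sense of Definition \ref{defsss}; hence by \eqref{sssboundedinH} it is carried by $B_H(R_0)$. Consequently, for every fixed $t\in I$ the closed set $\Pi_t^{-1}(B_H(R_0))$ has full $\rho$-measure. Choosing a countable dense subset $D\subset I$ and intersecting over $t\in D$, a set of full measure of trajectories satisfies $|\bu(t)|_{L^2}\le R_0$ for all $t\in D$; since every $\bu\in\Ccal_\loc(I,H_\rw)$ is weakly continuous and the $L^2$-norm is weakly lower semicontinuous, this upgrades to $\sup_{t\in I}|\bu(t)|_{L^2}\le R_0$ for $\rho$-a.e. $\bu$. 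Combined with Theorem \ref{vfsscarriedbyus} this shows $\rho(\Ucal_I(R_0))=1$, and in particular $\rho(\Ucal_I^\sharp(R))=1$ for any $R>R_0$.

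Next I would exploit invariance together with Lemma \ref{lemwcalasintersectionofucals}. Fix $R>R_0$ and a sequence $\tau_k\to\infty$, and set $G_k=\sigma_{\tau_k}\Ucal_I^\sharp(R)$. Each $G_k$ is the continuous image of the compact set $\Ucal_I^\sharp(R)$ (Lemma \ref{ucalitildercompact}), hence compact and Borel. The elementary inclusion $\sigma_{\tau_k}^{-1}(\sigma_{\tau_k}A)\supseteq A$ together with invariance gives $\rho(G_k)=\rho(\sigma_{\tau_k}^{-1}G_k)\ge\rho(\Ucal_I^\sharp(R))=1$, so $\rho(G_k)=1$ for every $k$. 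By Lemma \ref{lemwcalasintersectionofucals} one has $\Wcal_I^\sharp=\bigcap_k G_k$, and since a countable intersection of sets of full measure has full measure, $\rho(\Wcal_I^\sharp)=1$. Using the identity $\Wcal_I=\Wcal_I^\sharp\cap\Ucal_I(R_0)$ recorded above, together with the two full-measure facts $\rho(\Wcal_I^\sharp)=1$ and $\rho(\Ucal_I(R_0))=1$, we conclude $\rho(\Wcal_I)=1$, i.e.\ $\rho$ is carried by $\Wcal_I$. Finally, because $\Wcal_I^\sharp$ is compact by \eqref{wcalsharpcompact}, it is a closed set of full measure, so $\supp\rho\subseteq\Wcal_I^\sharp$; and since $\rho$ is then concentrated on the compact metrizable (hence Polish) space $\Wcal_I^\sharp$, it is regular in the sense of \eqref{upperregmeas}--\eqref{lowerregmeas}.

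The main obstacle is controlling trajectories in the distant past: boundedness of a Leray--Hopf solution \emph{forward} in time is automatic from the energy estimate \eqref{energyestimate}, but backward boundedness and global existence are exactly what $\Wcal_I$ encodes and cannot be read off from $\Ucal_I$ alone (for $I=\RR$ one even has $\bigcup_R\Ucal_I^\sharp(R)=\Wcal$, so any naive limit in $R$ would be circular). This is precisely why the translation invariance of $\rho$ is indispensable: it transfers the full measure of $\Ucal_I^\sharp(R)$ onto each backward-shifted copy $\sigma_{\tau_k}\Ucal_I^\sharp(R)$, whose intersection is, by Lemma \ref{lemwcalasintersectionofucals}, exactly the set of trajectories extending to globally bounded solutions. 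A secondary technical point to handle carefully is the passage from almost-sure boundedness at each fixed time to almost-sure uniform boundedness over the whole interval, which relies on weak continuity of trajectories and the separability of $I$.
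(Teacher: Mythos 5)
Your proof is correct, and its second half coincides with the paper's own argument: the full measure of each $\sigma_{\tau_k}\Ucal_I^\sharp(R)$ via the inclusion $\sigma_{\tau_k}^{-1}\bigl(\sigma_{\tau_k}A\bigr)\supseteq A$ plus invariance, the identification $\Wcal_I^\sharp=\bigcap_k\sigma_{\tau_k}\Ucal_I^\sharp(R)$ from Lemma \ref{lemwcalasintersectionofucals}, the intersection with the set of genuine Leray--Hopf solutions (Theorem \ref{vfsscarriedbyus}) to pass from $\Wcal_I^\sharp$ to $\Wcal_I$, and the compactness of $\Wcal_I^\sharp$ for the support and regularity claims. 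Where you genuinely diverge is the preliminary step showing that $\rho$ is carried by a single bounded set of trajectories. The paper stays entirely in trajectory space: by the energy estimate \eqref{energyestimate}, for each $k$ there is $\tau_k$ with $\sigma_\tau\Ucal_I^\sharp(R_k)\subset\Ucal_I^\sharp(R_1)$ for all $\tau\geq\tau_k$ (an absorbing-ball property), whence $\Ucal_I^\sharp(R_k)\subset\sigma_{\tau_k}^{-1}\Ucal_I^\sharp(R_1)$, and invariance together with continuity of the measure along the monotone union $\Ucal_I^\sharp=\bigcup_k\Ucal_I^\sharp(R_k)$ gives $\rho(\Ucal_I^\sharp(R_1))=1$; this requires a separate, more technical case when $I$ is open on the left, where $\Ucal_I^\sharp$ is only an $\Fcal_{\sigma\delta}$ built from the restrictions $\Pi_{J_n}$. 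You instead project to phase space, invoke the paper's assertion (preceding Definition \ref{defvfsss}) that $\Pi_t\rho$ is a stationary statistical solution together with the quoted fact \eqref{sssboundedinH} that such measures are carried by $B_H(R_0)$, and lift this back to trajectories by the countable-dense-set argument combined with weak continuity and weak lower semicontinuity of the $L^2$-norm. That lifting step is valid, neatly avoids the case analysis on $I$, and yields the sharper statement $\rho(\Ucal_I(R_0))=1$ in one stroke. The trade-off is that the paper's route is self-contained (only the a priori estimate and invariance), while yours imports two external results — the projection theorem from \cite{frtssp1} underlying the definition of Vishik-Fursikov statistical solutions, and the phase-space bound \eqref{sssboundedinH} from \cite{fmrt2001a}; since neither depends on the present theorem, there is no circularity, and both routes are legitimate.
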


\begin{proof}
  The first step of the proof is to show that $\rho$ is carried by $\Ucal^\sharp_I(R)$, for some $R>R_0$, where $R_0$ is given in \eqref{defR0}. We then use this fact together with the characterization \eqref{wcalsharpasintersectionofucals} of $\Wcal^\sharp_I$ to show that $\rho$ is supported on the compact set $\Wcal_I^\sharp$ and is a regular measure. Finally, we use Theorem \ref{vfsscarriedbyus} to deduce that $\rho$ is carried by $\Wcal_I$.
  
  The proof of the first step is divided into two cases, whether $I$ is bounded and closed on the left or $I$ is open on the left. 
  
  In the case $I$ is bounded and closed on the left, we use the characterization \eqref{characucalitildeclosed} of $\Ucal^\sharp_I$ as $\Ucal^\sharp_I = \bigcup_{k} \Ucal_I^\sharp(R_k)$, for a given sequence $\{R_k\}_{k\in \NN}$ of increasing positive numbers with $R_1>R_0$ and $R_k\rightarrow \infty$. It follows from the a~priori estimate \eqref{energyestimate} that, for each $k\in \NN$, there exists $\tau_k\geq 0$ such that
\[ \sigma_\tau\Ucal^\sharp_I(R_k) \subset \Ucal^\sharp_I(R_1), \quad \forall \tau\geq \tau_k.
\]
In particular,
\[ \Ucal^\sharp_I(R_k)\subset \sigma_{\tau_k}^{-1}\sigma_{\tau_k}\Ucal^\sharp_I(R_1) \subset \sigma_{\tau_k}^{-1}\Ucal^\sharp_I(R_1), \quad \forall k\in \NN.
\]
Then, since $\rho$ is a $\sigma_\tau$-invariant probability measure carried
by $\Ucal^\sharp_I$ and using the continuity from below of the measure $\rho$ and the fact that the sets $\Ucal_I^\sharp(R_k)$ are monotonic increasing with $k$, it follows that
\[ 1 = \rho(\Ucal^\sharp_I) = \rho(\bigcup_{k} \Ucal_I^\sharp(R_k)) = \lim_{k\rightarrow \infty} \rho(\Ucal^\sharp_I(R_k)) \leq \lim_{k\rightarrow\infty} \rho(\sigma_{\tau_k}^{-1}\Ucal^\sharp_I(R_1))
     = \rho(\Ucal^\sharp_I) \leq 1.
\]
Thus, $\rho(\Ucal^\sharp_I(R_1)) = 1$, at least in the case $I$ is closed and bounded on the left.

Now if $I$ is open on the left, we use the characterization \eqref{characucaliopen2} of the space $\Ucal^\sharp_I$ to write
\[ \Ucal^\sharp_I = \bigcap_{n\in \NN} \bigcup_{k\in \NN} \Pi_{J_n}^{-1} \Ucal^\sharp_{J_n}(R_k),
\]
for a increasing sequence $\{R_k\}_{k\in \NN}$ with $R_1>R_0$ and $R_k\rightarrow \infty$, and a sequence $\{J_n\}_{n\in \NN}$ of subintervals of $I$ which are unbounded on the right, monotonic increasing (i.e. $J_n\subset J_{n+1}$), and with $I=\cup_n J_n$. 

Since each $J_n$ is unbounded on the right, it follows from the a~priori estimate \eqref{energyestimate} that, for any $k\in \NN$ and any $n\in \NN$, there exists $\tau_{n,k}\geq 0$ such
that
\[ \sigma_\tau \Pi_{J_n}^{-1} \Ucal^\sharp_{J_n}(R) \subset \Ucal^\sharp_I(R_1), \qquad \forall \tau\geq \tau_{n,k}.
\]
In particular,
\[ \Pi_{J_n}^{-1} \Ucal^\sharp_{J_n}(R)\subset 
     \sigma_{\tau_{n,k}}^{-1}\sigma_{\tau_{n,k}}\Pi_{J_n}^{-1} \Ucal^\sharp_{J_n}(R)
     \subset \sigma_{\tau_{n,k}}^{-1}\Ucal^\sharp_I(R_1).
\]

Then, since the sets in the intersections and in the unions in the characterization of $\Ucal_I^\sharp$ above are monotonic, we find, using the continuity from above and from below of the measure $\rho$ and the invariance of the measure $\rho$, that
\begin{multline*}
  1 = \rho(\Ucal^\sharp_I) = \lim_{n\rightarrow \infty} \rho(\bigcup_{k\in \NN} \Pi_{J_n}^{-1} \Ucal^\sharp_{J_n}(R_k)) \\
    =  \lim_{n\rightarrow \infty} \lim_{k\rightarrow \infty} \rho(\Pi_{J_n}^{-1} \Ucal^\sharp_{J_n}(R_k)) \\
     \leq \lim_{k\rightarrow\infty} \rho(\sigma_{\tau_{R_k}}^{-1}\Ucal^\sharp_I(R_1))
     = \rho(\Ucal^\sharp_I(R_1)) \leq 1.
\end{multline*}
Thus, $\rho(\Ucal^\sharp_I)=1$ in this case, as well.

Now, for the second part of the proof, we use the characterization \eqref{wcalsharpasintersectionofucals} of $\Wcal^\sharp_I$, valid for any interval $I$ unbounded on the right, so that
\[ \Wcal^\sharp_I = \bigcap_{k\in \NN}\sigma_{\tau_k}\Ucal^\sharp_I(R_1),
\]
with $R_1>R_0$ as above, and with a positive, increasing sequence $\{\tau_k\}_{k\in\NN}$ of times with $\tau_k\rightarrow \infty$, as $k\rightarrow \infty$. Since the sequence of times is increasing, the sequence of sets in the union above is monotonic decreasing. Thus, using the continuity from above of the measure $\rho$ and that $\rho$ is a $\sigma_\tau$-invariant probability measure, we find
\[ 1\geq \rho(\Wcal^\sharp_I)
     = \lim_{k\rightarrow\infty}\rho(\sigma_{\tau_k}\Ucal^\sharp_I(R_1))
     = \lim_{k\rightarrow\infty}\rho(\sigma_{\tau_k}^{-1}\sigma_{\tau_k}\Ucal^\sharp_I(R_1))
     \geq \rho(\Ucal^\sharp_I(R_1))=1.
\]
Thus, $\rho(\Wcal^\sharp_{[t_0,\infty)})=1$, which means that $\rho$ is carried by $\Wcal^\sharp_I$. Since $\Wcal^\sharp_I$ is closed, this means that $\rho$ is supported by $\Wcal^\sharp_I$. Since $\Wcal_I^\sharp$ is actually compact and metrizable, hence a Polish space, and any finite Borel measure on a Polish space is regular (see Section \ref{measuretheory}), we also conclude that $\rho$ is regular.

Now, from Theorem \ref{vfsscarriedbyus}, $\rho$ is carried by $\Ucal_I$. Thus, $\rho$ is carried by the intersection $\Wcal^\sharp_I\cap \Ucal_I$, which is precisely $\Wcal_I$, and this completes the proof.
\end{proof}
\medskip

We now prove the following result, which shows that any invariant Vishik-Fursikov measure $\rho$ over an interval $I$ unbounded on the right can be extended to an invariant Vishik-Fursikov measure $\tilde\rho$ over the whole interval $\RR$. Since in this case $\Wcal=\Wcal_\RR = \Wcal_\RR^\sharp$, it follows from Theorem \ref{thmcarriervfmeasure}, that $\tilde\rho$ is a regular measure supported on the compact set $\Wcal$.

\begin{thm}
  \label{thmregularityofinvariantvfmeasures}
  Let $\rho$ be an invariant Vishik-Fursikov measure over an interval $I$ unbounded on the right. Then, there exists an invariant Vishik-Fursikov measure $\tilde\rho$ over $\RR$ such that $\rho=\Pi_I\tilde\rho$, with $\tilde\rho$ being a regular measure supported on the compact set $\Wcal$.
\end{thm}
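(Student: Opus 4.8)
The plan is to realize $\tilde\rho$ as an invariant \emph{lift} of $\rho$ through the restriction map $\Pi_I$, repairing by a fixed-point argument the fact that $\Pi_I$ need not be injective.

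First I would record the geometric picture. By Theorem \ref{thmcarriervfmeasure}, $\rho$ is supported on the compact metrizable set $\Wcal_I^\sharp$ (see \eqref{wcalsharpcompact} and Lemma \ref{ucalitildercompact}), and recall that $\Wcal_I^\sharp = \Pi_I\Wcal$, so that $\Pi_I$ restricts to a \emph{continuous surjection} $\Pi_I\colon \Wcal \to \Wcal_I^\sharp$ between compact metric spaces ($\Wcal$ being compact metrizable by Proposition \ref{propwcalinr0}). Moreover $\Pi_I$ is equivariant for the forward translations: for $\tau\geq 0$, $t\in I$ and $\bu\in\Wcal$ one has $(\Pi_I\sigma_\tau\bu)(t) = \bu(t+\tau) = (\sigma_\tau\Pi_I\bu)(t)$, using that $I$ is unbounded on the right, whence $\Pi_I\circ\sigma_\tau = \sigma_\tau\circ\Pi_I$ on $\Wcal$; here $\sigma_\tau\Wcal=\Wcal$ by \eqref{Wcalinvariant}.

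Next I would introduce the set $K$ of all Borel probability measures $\mu$ on $\Wcal$ with $\Pi_I\mu = \rho$, viewed inside the dual $\Ccal(\Wcal)'$ with the weak-$*$ topology. This $K$ is convex and weak-$*$ closed, hence compact, since $\mathcal{P}(\Wcal)$ is weak-$*$ compact ($\Wcal$ compact) and $\mu\mapsto\Pi_I\mu$ is weak-$*$ continuous and affine. It is nonempty: the pushforward $\mu\mapsto\Pi_I\mu$ has compact convex image containing every Dirac mass $\delta_\bw = \Pi_I\delta_\bu$ (with $\bu\in\Pi_I^{-1}\{\bw\}$, nonempty by surjectivity), so its image is the weak-$*$ closed convex hull of the Diracs, namely all of $\mathcal{P}(\Wcal_I^\sharp)$, and thus $\rho$ is attained. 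By the equivariance above together with the invariance $\sigma_\tau\rho=\rho$, each $(\sigma_\tau)_*$ maps $K$ into itself, since $\Pi_I(\sigma_\tau\mu) = \sigma_\tau(\Pi_I\mu) = \sigma_\tau\rho=\rho$. The family $\{(\sigma_\tau)_*\}_{\tau\geq 0}$ then consists of commuting, weak-$*$ continuous, affine self-maps of the nonempty compact convex set $K$, so the Markov--Kakutani fixed point theorem yields a common fixed point $\tilde\rho\in K$. By construction $\tilde\rho$ is a Borel probability measure on $\Wcal$ with $\Pi_I\tilde\rho=\rho$ and $\sigma_\tau\tilde\rho=\tilde\rho$ for all $\tau\geq 0$.

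Finally I would verify that $\tilde\rho$, regarded as a Borel measure on $\Ccal_\loc(\RR,H_\rw)$ supported on $\Wcal$, is an invariant Vishik--Fursikov measure over $\RR$. Since $\Wcal = \Ucal_\RR(R_0)\subset\Ucal_\RR^\sharp$ (Proposition \ref{propwcalinr0}), condition \eqref{rhocarrierforvfm} of Definition \ref{defvfmeasure} holds; the uniform bound $|\bu(t)|_{L^2}\leq R_0$ on $\Wcal$ from \eqref{energyestimateinwcal} makes $t\mapsto\int|\bu(t)|_{L^2}^2\,\rd\tilde\rho$ bounded, giving \eqref{rhomeankineticforvfm}, while \eqref{meancontinuityatinitialtimeforvfm} is vacuous as $\RR$ is unbounded on the left. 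Invariance is the fixed-point property just obtained, so $\tilde\rho$ is an invariant Vishik--Fursikov measure over $\RR$ with $\rho = \Pi_I\tilde\rho$. Regularity and support on the compact set $\Wcal$ then follow either directly (a finite Borel measure on the Polish space $\Wcal$ is regular) or by applying Theorem \ref{thmcarriervfmeasure} to $\tilde\rho$ over $\RR$, using $\Wcal=\Wcal_\RR=\Wcal_\RR^\sharp$. The main obstacle is exactly the non-injectivity of $\Pi_I$: distinct global trajectories in $\Wcal$ may restrict to the same element of $\Wcal_I^\sharp$, so there is no canonical pullback and a chosen measurable section need not be translation-invariant; passing to the whole convex compact set $K$ of lifts and extracting an invariant one via Markov--Kakutani is the device that resolves this, the crux being that $K$ is nonempty, compact, convex, and stable under the commuting translation family.
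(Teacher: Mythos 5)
Your proof is correct, but it takes a genuinely different route from the paper's. The paper constructs $\tilde\rho$ \emph{explicitly}: it introduces the family $\Scal$ of functionals on $\Wcal$ of the form $\Phi(\bv)=\varphi(\bv(t_1),\ldots,\bv(t_n))$, shows via Stone--Weierstrass that $\Scal$ is dense in $\Ccal(\Wcal)$, defines a positive linear functional $\Lambda$ on $\Scal$ by evaluating $\rho$ against the time-shifted function $\bu\mapsto\varphi(\bu(t'),\bu(t'+t_2-t_1),\ldots,\bu(t'+t_n-t_1))$ (the invariance of $\rho$ making this consistent), and then invokes Kakutani--Riesz to represent $\Lambda$ by a measure on $\Wcal$; invariance of $\tilde\rho$ and the identity $\rho=\Pi_I\tilde\rho$ are then checked on $\Scal$ and transferred by density. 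In effect, the paper prescribes the finite-dimensional marginals of $\tilde\rho$ as shifted marginals of $\rho$. You instead consider the whole set $K$ of lifts of $\rho$ through the equivariant continuous surjection $\Pi_I\colon\Wcal\to\Wcal_I^\sharp$ and extract an invariant element by Markov--Kakutani; your verifications that $K$ is nonempty (the pushforward image is compact, convex, and contains all Dirac masses, hence equals the set of all Borel probability measures on $\Wcal_I^\sharp$), compact, convex, and stable under the commuting, affine, weak-star continuous maps induced by $\sigma_\tau$ are all sound. Your approach is shorter and softer --- it is really a general principle for extending invariant measures along equivariant continuous surjections of compact metric spaces --- but it is non-constructive and gives no description of $\tilde\rho$; the paper's construction is longer but explicit and stays within the toolkit (Kakutani--Riesz, Stone--Weierstrass, generalized limits) already deployed elsewhere, e.g.\ in Theorem \ref{stationaryisstationary}. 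Two cosmetic points you should spell out: (i) the fixed-point property gives invariance of $\tilde\rho$ as a measure on $\Wcal$, and to obtain invariance in the sense of Definition \ref{vfinvmeasdef}, i.e.\ for all Borel sets $E$ of $\Ccal_\loc(\RR,H_\rw)$, one needs the one-line remark that $\sigma_\tau^{-1}E\cap\Wcal=(\sigma_\tau|_{\Wcal})^{-1}(E\cap\Wcal)$ because $\sigma_\tau\Wcal=\Wcal$ by \eqref{Wcalinvariant}; (ii) similarly, $\Pi_I\tilde\rho=\rho$ is obtained first as an equality of measures on $\Wcal_I^\sharp$ and extends to all Borel sets of $\Ccal_\loc(I,H_\rw)$ because both measures are carried by $\Wcal_I^\sharp$.
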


\begin{proof} 
  If $I=\RR$, it follows from Theorem \ref{thmcarriervfmeasure} that $\rho$ is carried by $\Wcal_\RR=\Wcal$, and there is nothing else to prove, so the case of interest is when $I$ is bounded on the left. In any case, let $t'\in I$ and fix $R_1>R_0$, where $R_0$ is given by \eqref{defR0}. Consider the subset of $\Ccal(\Wcal)$ given by the functions $\Phi$ of the form $\Phi(\bv)=\varphi(\bv(t_1), \ldots, \bv(t_n))$, for arbitrary $\bv\in \Wcal$, where $n\in \NN$, $t_1<t_2<\ldots <t_n$, and $\varphi: (B_H(R_1)_\rw)^n \rightarrow \RR$ is a (weakly) continuous function.  Denote this subset by $\Scal$. It is not difficult to check that $\Scal$ satisfies the hypotheses of the Stone-Weierstrass Theorem, so that $\Scal$ is dense in $\Ccal(\Wcal)$ (recall, from Proposition \ref{propwcalinr0}, that $\Wcal$ is compact and metrizable). 
  
  Let now $\Phi$ belong to $\Scal$ and let $\varphi$ and $t_1<\ldots < t_n$ be associated with $\Phi$ as in the definition of $\Scal$. To each such $\Phi$, define $\Lambda(\Phi)$ by
  \[ \Lambda(\Phi) = \int_{\Ccal_\loc(I,H_\rw)} \varphi(\bu(t'),\bu(t'+t_2-t_1),\ldots, \bu(t'+t_n-t_1)) \;\rd\rho(\bu).
  \]
  According to Theorem \ref{thmcarriervfmeasure}, the measure $\rho$ is carried by $\Wcal_I$, which is contained in the compact set $\Wcal^\sharp_I$. in $\Ccal_\loc([t_0,\infty),B_H(R_1)_\rw)$. Thus, the integral defining $\Lambda(\Phi)$ above is well-defined. It is also straightforward to check that $\Lambda$ is a positive linear function on $\Scal$.   And since $\rho$ is a probability measure, it follows that $\Lambda$ is a bounded linear functional on $\Scal$. Hence, by the density of $\Scal$ in $\Wcal$, the functional $\Lambda$ can be extended to a continuous and positive linear functional on $\Wcal$, which we still denote by $\Lambda$. 

  By the Kakutani-Riesz Representation Theorem, $\Lambda$ can be represented by a measure $\tilde\rho$ on $\Wcal$. Then, we extend $\tilde\rho$ to all $\Ccal_\loc(\RR,H_\rw)$ (still denoting the extension by $\tilde\rho$) by setting $\tilde\rho(\Ecal)=\tilde\rho(\Ecal\cap \Wcal)$, for all Borel subsets $\Ecal$ of $\Ccal_\loc(\RR,H_\rw)$.

  By taking $\varphi=1$, we have $\Phi=1$, so that 
  \[ \tilde\rho(\Ccal_\loc(\RR,H_\rw))=\tilde\rho(\Wcal)=\Lambda(1) = \rho(\Ccal_\loc(I,H_\rw)) =1,
  \]
  which shows that $\tilde\rho$ is a probability measure. Thus, we conclude that $\tilde\rho$ is a Vishik-Fursikov measure over $\RR$.

  Let us now show that $\tilde\rho$ is invariant. Since $\rho$ is invariant, we see that for any $\Phi(\bv)=\varphi(\bv(t_1),\ldots,\bv(t_n))$ in $\Scal$, 
  \begin{multline*}
    \int_{\Wcal} \Phi(\sigma_\tau\bv)\;\rd\tilde\rho(\bv) = \int_{\Ccal_\loc(I,H_\rw)} \varphi(\sigma_\tau\bu(t'),\ldots,\sigma_\tau\bu(t'+t_n-t_1)) \;\rd\rho(\bu) \\
       = \int_{\Ccal_\loc(I,H_\rw)} \varphi(\bu(t'),\ldots,\bu(t'+t_n-t_1)) \;\rd\rho(\bu)= \int_\Wcal \Phi(\bv)\;\rd\tilde\rho(\bv),
  \end{multline*}
  for any $\tau \geq 0$. Since $\Scal$ is dense in $\Wcal$ and $\tilde\rho$ is carried by $\Wcal$, it follows that $\tilde\rho$ is invariant.   

  It remains to prove that $\rho=\Pi_I\tilde\rho$. Consider the subset $\Scal_I$ of $\Ccal(\Wcal_I^\sharp)$ made of functions $\Phi$ of the form  $\Phi(\bv)=\varphi(\bv(t_1), \ldots, \bv(t_n))$, for arbitrary $\bv\in \Wcal$, where $n\in \NN$, $t_1, \ldots, t_n\in I$, $t_1<t_2<\ldots, t_n$, $t_1\leq t'$, and $\varphi: (B_H(R_1)_\rw)^n \rightarrow \RR$ is a (weakly) continuous function. Using $\Wcal_I^\sharp$ is compact, it follows from the Stone-Weierstrass Theorem that $\Scal_I$ is dense in $\Ccal(\Wcal_I^\sharp)$. Moreover, it is straightforward to see that $\Phi\circ\Pi_I$ belongs to $\Scal$ for $\Phi$ in $\Scal_I$. Then, using the invariance of $\rho$, the condition $t_1 \leq t'$, and the fact that $\rho$ is carried by $\Wcal_I$, which is included in $\Wcal_I^\sharp$, we find that
  \begin{multline*}
    \int_\Wcal \Phi(\Pi_I\bv)\;\rd\tilde\rho(\bv) = \int_{\Wcal_I} \varphi(\bu(t'),\bu(t'+t_2-t_1),\ldots, \bu(t'+t_n-t_1)) \;\rd\rho(\bu) \\
      = \int_{\Wcal_I} \varphi(\sigma_{t'-t_1}\bu(t_1),\sigma_{t'-t_1}\bu(t_2),\ldots,\sigma_{t'-t_1}\bu(t_n)) \;\rd\rho(\bu) \\  
      = \int_{\Wcal_I} \varphi(\bu(t_1),\bu(t_2),\ldots,\bu(t_n)) \;\rd\rho(\bu)    = \int_{\Wcal_I} \Phi(\bu)\;\rd\rho(\bu).
  \end{multline*}
  Since now $\Scal_I$ is dense in $\Wcal_I^\sharp$ it follows that
  \[ \int_\Wcal \Phi(\Pi_I\bv)\;\rd\tilde\rho(\bv) = \int_{\Wcal_I^\sharp} \Phi(\bu)\;\rd\rho(\bu),
  \]
  for all $\Phi$ in $\Wcal_I^\sharp$. Since $\rho$ is carried by $\Wcal_I$, which is contained in $\Wcal_I^\sharp$, and $\tilde\rho$ is carried by $\Wcal$, this means that $\rho=\Pi_I\tilde\rho$.
  
  The property that $\tilde\rho$ is a regular measure supported on the compact set $\Wcal$ follows from Theorem \ref{thmcarriervfmeasure} and the fact that, in this case,, $\Wcal=\Wcal_\RR = \Wcal_\RR^\sharp$.
\end{proof}
\medskip

\begin{rmk}
  \label{rmkergodicityonwcal}
  Theorem \ref{thmregularityofinvariantvfmeasures} allows us to restrict the discussion of invariant Vishik-Fursikov measures to the space $\Wcal$. Note, moreover, that, since $\Wcal$ is compact and metrizable, several results of ergodic theory apply, such as those about the existence of ergodic invariant measures; the decomposition of invariant measures into ergodic parts; and that ergodic invariant measures are extremal points of the set of invariant measures on $\Wcal$ (see e.g.  \cite[Section 9.3]{pollicottyuri} and \cite[Section 6]{walters1982}).   
\end{rmk}

\subsection{Properties of Vishik-Fursikov stationary statistical solutions}

The definition of a Vishik-Fursikov stationary statistical solution $\rho_0$ is that it is the projection $\rho_0=\Pi_t\rho$, at an arbitrary time $t\in I$, of an invariant Vishik-Fursikov measure $\rho$ over a certain interval $I$ which is unbounded on the right. In principle, however, we do not have any control on the interval $I$, i.e. given $\rho_0$, there exists some interval $I$ and some invariant Vishik-Fursikov measure $\rho$ over $I$ for which the relation $\rho_0=\Pi_t\rho$, $\forall t\in I$, holds. Fortunately, thanks to the result in Theorem \ref{thmregularityofinvariantvfmeasures}, we can actually choose the interval $I$ that we would like to work with. We state this result in the following form.

\begin{cor}
  \label{regrho0}
  Let $\rho_0$ be a Vishik-Fursikov stationary statistical solution on $H$. Then, given any interval $I$ unbounded on the right, there exists an invariant Vishik-Fursikov measure $\rho$ over $I$ such that $\rho_0=\Pi_t\rho$, for any $t\in I$. The measure $\rho$ is a regular measure supported on the compact set $\Wcal_I^\sharp$ and carried by $\Wcal_I$.
\end{cor}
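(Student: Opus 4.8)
The plan is to exploit Theorem \ref{thmregularityofinvariantvfmeasures}, which frees us from the particular interval attached to $\rho_0$ by its definition: I would first pass to an invariant measure over all of $\RR$, and only then restrict down to the desired interval $I$.

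First I would unwind Definition \ref{defvfsss}: there is \emph{some} interval $I_0 \subset \RR$ unbounded on the right and \emph{some} invariant Vishik-Fursikov measure $\hat\rho$ over $I_0$ with $\rho_0 = \Pi_t \hat\rho$ for every $t \in I_0$. Applying Theorem \ref{thmregularityofinvariantvfmeasures} to $\hat\rho$ produces an invariant Vishik-Fursikov measure $\tilde\rho$ over $\RR$ that is regular, supported on the compact set $\Wcal$, and satisfies $\hat\rho = \Pi_{I_0}\tilde\rho$. Since for $t \in I_0$ the projection $\Pi_t$ factors through the restriction $\Pi_{I_0}$, this gives $\Pi_t \tilde\rho = \Pi_t \hat\rho = \rho_0$ for all $t \in I_0$. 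I would then upgrade this to all of $\RR$ using invariance: because $\sigma_\tau\tilde\rho = \tilde\rho$ and $\Pi_{s+\tau} = \Pi_s \circ \sigma_\tau$ for $\tau \geq 0$, the projection $\Pi_t\tilde\rho$ is independent of $t \in \RR$, so that $\Pi_t\tilde\rho = \rho_0$ for every $t \in \RR$.

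Next, given an arbitrary interval $I$ unbounded on the right, I would set $\rho = \Pi_I\tilde\rho$ and check that it is an invariant Vishik-Fursikov measure over $I$ with $\Pi_t\rho = \rho_0$ for all $t \in I$. The invariance follows from the intertwining $\Pi_I\sigma_\tau = \sigma_\tau\Pi_I$ (valid since $I$ is unbounded on the right), which yields $\sigma_\tau\rho = \Pi_I\sigma_\tau\tilde\rho = \Pi_I\tilde\rho = \rho$ for $\tau \geq 0$. For the Vishik-Fursikov axioms of Definition \ref{defvfmeasure}, I would note that $\rho$ is carried by $\Pi_I\Wcal = \Wcal_I^\sharp \subset \Ucal_I^\sharp$, so that property (i) holds; moreover, by Proposition \ref{propwcalinr0} every element of $\Wcal_I^\sharp$ is bounded in $H$ by $R_0$, which gives the mean-kinetic-energy bound (ii). For the continuity property (iii), needed only when $I$ is closed on the left, the change-of-variables formula \eqref{changeofvariablesinducedmeasures} together with the invariance of $\tilde\rho$ shows that $t \mapsto \int \psi(|\bu(t)|_{L^2}^2)\,\rd\rho(\bu) = \int \psi(|\bv(t)|_{L^2}^2)\,\rd\tilde\rho(\bv)$ is constant in $t$, hence trivially right-continuous at the left endpoint, with finite value since $|\bv(t)|_{L^2} \leq R_0$. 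Finally, $\Pi_t\rho = \Pi_t\Pi_I\tilde\rho = \Pi_t\tilde\rho = \rho_0$ for $t \in I$.

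With $\rho$ identified as an invariant Vishik-Fursikov measure over $I$, the regularity, the support on $\Wcal_I^\sharp$, and the carrier $\Wcal_I$ all follow at once by invoking Theorem \ref{thmcarriervfmeasure}. I do not expect a genuine obstacle here: the corollary is essentially a bookkeeping consequence of Theorem \ref{thmregularityofinvariantvfmeasures}. The only point requiring care is the verification that the restriction $\Pi_I\tilde\rho$ remains a Vishik-Fursikov measure over $I$, specifically axiom (iii) at a left endpoint, and this is exactly where the constancy in time of the energy functionals, forced by invariance, does the work.
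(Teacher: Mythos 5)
Your proposal is correct and follows exactly the route the paper intends for this corollary: pass from the defining invariant measure over its unspecified interval to an invariant Vishik--Fursikov measure over all of $\RR$ via Theorem \ref{thmregularityofinvariantvfmeasures}, use invariance to see that its time projections equal $\rho_0$ at every time, restrict to the given interval $I$, and invoke Theorem \ref{thmcarriervfmeasure} for the regularity, support, and carrier statements. Your explicit verification that the restriction $\Pi_I\tilde\rho$ satisfies the axioms of Definition \ref{defvfmeasure} (including axiom (iii) at a left endpoint, via constancy in time of the energy functionals) is a detail the paper leaves implicit, and it is carried out correctly.
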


We deduce, moreover, that Vishik-Fursikov stationary statistical solutions are carried by the weak global attractor $\Acal_\rw = \Pi_0\Wcal$ defined in \eqref{awdef}.

\begin{thm}
  \label{carrierofrho0inaw}
  Let $\rho_0$ be a Vishik-Fursikov stationary statistical solution $H$. Then, $\rho_0$ is
  carried by the weak global attractor $\Acal_\rw=\Pi_0\Wcal$, i.e. $\rho_0(\Acal_\rw)=1$. Since $\Acal_\rw$ is compact, it follows that $\supp\rho_0 \subset \Acal_\rw$.
\end{thm}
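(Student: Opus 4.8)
The plan is to reduce everything to Corollary \ref{regrho0}, which lets me choose the underlying interval to be all of $\RR$. First I would invoke that corollary to obtain an invariant Vishik-Fursikov measure $\rho$ over $I=\RR$ such that $\rho_0=\Pi_0\rho$, where $\rho$ is carried by $\Wcal_\RR=\Wcal$, that is, $\rho(\Wcal)=1$. This is the only substantive input; the remaining steps are essentially a bookkeeping argument with induced measures.

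Next I would record that $\Acal_\rw=\Pi_0\Wcal$ is compact in $H_\rw$ (as noted in Section \ref{weakglobalattractor}, since $\Wcal$ is compact by Proposition \ref{propwcalinr0} and $\Pi_0$ is continuous), hence closed and in particular Borel; since the Borel sets of $H$ and $H_\rw$ coincide, $\Acal_\rw$ is a legitimate Borel set to evaluate $\rho_0$ on. Because $\Pi_0$ is continuous, $\Pi_0^{-1}\Acal_\rw$ is Borel in $\Ccal_\loc(\RR,H_\rw)$, so the induced-measure identity $\rho_0(\Acal_\rw)=\rho(\Pi_0^{-1}\Acal_\rw)$ applies.

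The core of the argument is then the elementary set inclusion $\Wcal\subseteq \Pi_0^{-1}(\Pi_0\Wcal)=\Pi_0^{-1}\Acal_\rw$, valid because any set is contained in the preimage of its image. Monotonicity of $\rho$ gives
\[
  \rho_0(\Acal_\rw)=\rho(\Pi_0^{-1}\Acal_\rw)\geq \rho(\Wcal)=1,
\]
and since $\rho_0$ is a probability measure this forces $\rho_0(\Acal_\rw)=1$. Finally, as $\Acal_\rw$ is a closed set of full $\rho_0$-measure, the definition of support yields $\supp\rho_0\subseteq\Acal_\rw$, completing the proof.

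I do not anticipate a genuine obstacle here: the real work was already carried out in Theorem \ref{thmregularityofinvariantvfmeasures} and Corollary \ref{regrho0}, which produce the carrier $\Wcal$ for the extended invariant measure. The only point requiring a little care is the measurability/topology check in the second step, namely confirming that $\Acal_\rw$ is Borel (indeed closed) in $H_\rw$ so that both $\rho_0(\Acal_\rw)$ and the pullback $\rho(\Pi_0^{-1}\Acal_\rw)$ are well defined and the change-of-variables identity for the projected measure is legitimately invoked.
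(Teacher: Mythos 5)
Your proposal is correct and follows essentially the same route as the paper: invoke Corollary \ref{regrho0} to obtain an invariant Vishik-Fursikov measure $\rho$ over $\RR$ carried by $\Wcal$ with $\rho_0=\Pi_0\rho$, then use the inclusion $\Wcal\subseteq\Pi_0^{-1}(\Pi_0\Wcal)$ (the paper phrases this as $\Pi_0^{-1}(\Pi_0\Wcal)\cap\Wcal=\Wcal$) to conclude $\rho_0(\Acal_\rw)=1$. Your additional measurability and compactness checks are sound and merely make explicit what the paper leaves implicit.
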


\begin{proof}
  From Corollary \ref{regrho0}, there exists an invariant Vishik-Fursikov measure
  $\rho$ over $\RR$ such that $\Pi_0\rho=\rho_0$, and with $\rho$ carried
  by $\Wcal$. Since $\Pi_0^{-1}(\Pi_0\Wcal) \cap \Wcal = \Wcal$, it follows that
  $\rho_0(\Pi_0\Wcal)=\rho(\Wcal)=1$, which proves the result.
\end{proof}
\medskip

We conclude this section with estimates related to the support of a Vishik-Fursikov stationary statistical solution $\rho_0$ on $H$. We know already, from Theorem \ref{carrierofrho0inaw}, that $\rho_0$ is carried by $\Pi_0\Wcal$. Since $\Pi_0\Wcal$ in included in the ball of radius $R_0$ in $H$, where $R_0$ is given by \eqref{defR0} (see Proposition \ref{propwcalinr0}), this yields in particular that $\rho_0$ is carried by this ball. 
We obtain further estimates from the uniform time-average bounds for the solutions in $\Wcal$ given in Lemmas \ref{lemwcalh1ave} and \ref{lemwcaldaave}.
From these time-average estimates in the support $\Wcal$ of $\rho_0$ we 
deduce the following estimates for $\rho_0$.
\begin{thm}
  \label{corboundsrho0}
  Let $\rho_0$ be a Vishik-Fursikov stationary statistical solution $H$. Then,
  $\rho_0$ is carried by $D(A)$ and we have the following estimates
  \begin{equation} 
    \label{estimaterho0inh1}
    \int \|\bu\|_{H^1}^2 \;\rd\rho_0(\bu) \leq \lambda_1^{1/2}\nu^2 G^2,
  \end{equation}
  \begin{equation}
    \label{estimaterho0inda}
    \int |A\bu|_{L^2}^{2/3} \;\rd\rho_0(\bu) \leq c_3\lambda_1^{1/2}\nu^{2/3}G^2,
  \end{equation} 
  and
  \begin{equation}
    \label{estimaterho0inlinfty}
    \int |\bu|_{L^\infty} \;\rd\rho_0(\bu) \leq c_1 c_3^{3/4}\lambda_1^{1/2}\nu G^2.
  \end{equation}
\end{thm}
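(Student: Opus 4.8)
The plan is to exploit the time-invariance of the underlying trajectory measure to convert the spatial moments of $\rho_0$ into time averages along individual trajectories, to which the uniform bounds of Lemmas \ref{lemwcalh1ave} and \ref{lemwcaldaave} apply directly. By Corollary \ref{regrho0}, applied with $I=\RR$, there is an invariant Vishik-Fursikov measure $\rho$ over $\RR$, carried by $\Wcal=\Wcal_\RR$, with $\rho_0=\Pi_t\rho$ for every $t\in\RR$. The key observation is that, since $\Pi_t\rho=\rho_0$ is independent of $t$, for any nonnegative Borel function $g$ on $H$ one has $\int_H g\,\rd\rho_0=\int_\Wcal g(\bv(t))\,\rd\rho(\bv)$ for every $t$, and hence, averaging in $t$ over an interval $[t',t]$,
\[
  \int_H g(\bu)\,\rd\rho_0(\bu) = \frac{1}{t-t'}\int_{t'}^t\int_\Wcal g(\bv(s))\,\rd\rho(\bv)\,\rd s .
\]

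First I would establish \eqref{estimaterho0inh1} and \eqref{estimaterho0inda}. Taking $g(\bu)=\|\bu\|_{H^1}^2$ and applying Tonelli's theorem — legitimate since $(s,\bv)\mapsto\|\bv(s)\|_{H^1}^2$ is nonnegative and jointly measurable, being the composition of the continuous evaluation $(s,\bv)\mapsto\bv(s)\in H_\rw$ with the lower semicontinuous functional $\|\cdot\|_{H^1}^2$ — I would interchange the integrations to get
\[
  \int_H \|\bu\|_{H^1}^2\,\rd\rho_0(\bu) = \int_\Wcal\left(\frac{1}{t-t'}\int_{t'}^t\|\bv(s)\|_{H^1}^2\,\rd s\right)\rd\rho(\bv).
\]
Inserting the pointwise bound of Lemma \ref{lemwcalh1ave} — which holds, for each fixed $\bv\in\Wcal$, at almost every $t'$ — and selecting, by a Fubini argument on the product measure $\rd t'\times\rd\rho$, a single $t'$ at which the bound is valid for $\rho$-almost every $\bv$, I obtain the estimate $\int_H \|\bu\|_{H^1}^2\,\rd\rho_0 \leq \lambda_1^{1/2}\nu^2 G^2(1+1/(\nu\lambda_1(t-t')))$. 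Letting $t\to\infty$ gives \eqref{estimaterho0inh1}. Repeating the argument verbatim with $g(\bu)=|A\bu|_{L^2}^{2/3}$ and Lemma \ref{lemwcaldaave} yields \eqref{estimaterho0inda}; in particular, the finiteness of this integral forces $|A\bu|_{L^2}<\infty$ for $\rho_0$-almost every $\bu$, i.e. $\rho_0$ is carried by $D(A)$.

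Finally, for \eqref{estimaterho0inlinfty} I would combine the two moment bounds through Agmon's inequality \eqref{agmonineq}. Since $\rho_0$ is carried by $D(A)$, we have $|\bu|_{L^\infty}\leq c_1\|\bu\|_{H^1}^{1/2}|A\bu|_{L^2}^{1/2}$ for $\rho_0$-almost every $\bu$, and Hölder's inequality with the conjugate exponents $p=4$ and $q=4/3$ gives
\[
  \int|\bu|_{L^\infty}\,\rd\rho_0 \leq c_1\left(\int\|\bu\|_{H^1}^2\,\rd\rho_0\right)^{1/4}\left(\int|A\bu|_{L^2}^{2/3}\,\rd\rho_0\right)^{3/4}.
\]
Substituting \eqref{estimaterho0inh1} and \eqref{estimaterho0inda} and collecting the powers of $\lambda_1$, $\nu$ and $G$ produces exactly the constant $c_1 c_3^{3/4}\lambda_1^{1/2}\nu G^2$, which is \eqref{estimaterho0inlinfty}.

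The main obstacle is not analytic but measure-theoretic: justifying the two interchanges of integration. Specifically, the Tonelli step and the selection of a common $t'$ valid for $\rho$-almost every trajectory, despite the exceptional $t'$-set in Lemmas \ref{lemwcalh1ave} and \ref{lemwcaldaave} depending on $\bv$. Both are resolved by the joint measurability of $(s,\bv)\mapsto\|\bv(s)\|_{H^1}^2$ (and of its $D(A)$-analogue), which follows from the continuity of the evaluation map, together with a Fubini argument on $\rd t'\times\rd\rho$; once these are in place, every remaining step is a direct application of the quoted lemmas, Agmon's inequality, and Hölder's inequality.
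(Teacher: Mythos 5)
Your proof is correct and follows essentially the same strategy as the paper's: convert the moments of $\rho_0$ into time averages along trajectories in $\Wcal$ using the invariance of the underlying Vishik-Fursikov measure together with Fubini, insert the uniform bounds of Lemmas \ref{lemwcalh1ave} and \ref{lemwcaldaave}, let the averaging window tend to infinity, and close with Agmon and H\"older (your exponents $4$ and $4/3$ and the resulting constant $c_1c_3^{3/4}\lambda_1^{1/2}\nu G^2$ check out exactly; the paper leaves this last computation implicit). The one point where you genuinely diverge is the treatment of the exceptional set of initial times $t'$. The paper takes the invariant measure over $I=[0,\infty)$ rather than $\RR$: by Corollary \ref{regrho0} it is then carried by $\Wcal_{[0,\infty)}=\Wcal\cap\Ucal_{[0,\infty)}$, and since every element of $\Ucal_{[0,\infty)}$ is a Leray-Hopf weak solution on a closed left-bounded interval, hence strongly continuous in $H$ from the right at $t=0$ by condition (vi) of Definition \ref{deflerayhopfweaksolution}, the bounds of Lemmas \ref{lemwcalh1ave} and \ref{lemwcaldaave} hold with $t'=0$ for \emph{every} trajectory in the carrier, and no selection argument is needed; the $t$-independence of the inner integral then comes from the $\sigma_t$-invariance of $\rho$. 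Your choice $I=\RR$ forces the extra $\rd t'\times\rd\rho$ Fubini selection of a common good $t'$; this is sound (the bad set is product-measurable, e.g.\ by restricting the quantifier over $t>t'$ to rational $t$, both sides of the inequality being continuous in $t$), but it is precisely the measure-theoretic work that the paper's choice of interval eliminates. Both routes yield the same estimates; the paper's is leaner, while yours makes explicit the joint-measurability considerations that justify the interchanges.
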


\begin{proof}
  From Corollary \ref{regrho0}, there exists an invariant Vishik-Fursikov measure $\rho$ over $[0,\infty)$ such that $\Pi_0\rho=\rho_0$, and with $\rho$ carried by $\Wcal_{[0,\infty)}$. Since $\Wcal_{[0,\infty)}=\Wcal\cap\Ucal_{[0,\infty)}$ the estimate \eqref{ineqlemwcalh1ave} in  Lemma \ref{lemwcalh1ave} holds with $t'=0$. Hence, we can write
  \[ \frac{1}{T} \int_0^T \|\bu(t)\|_{H^1}^2 \;\rd t  \leq \lambda_1^{1/2}\nu^2 G^2\left(1 + \frac{1}{\nu\lambda_1 T}\right), 
  \] 
  for all $T>0$ and all $\bu\in \Wcal_{[0,\infty)}$.  
  Since $\rho$ is carried by $\Wcal_{[0,\infty)}$ we integrate this estimate in 
  $\bu$ to find that
  \[ \int_{\Wcal_{[0,\infty)}} \frac{1}{T} \int_0^T \|\bu(t)\|_{H^1}^2 \;\rd t \;\rd\rho(\bu)
       \leq \lambda_1^{1/2}\nu^2 G^2\left(1 + \frac{1}{\nu\lambda_1 T}\right), 
  \]
  for all $T>0$. Using Fubini we rewrite this estimate as
  \[  \frac{1}{T} \int_0^T \int_{\Wcal_{[0,\infty)}} \|\bu(t)\|_{H^1}^2 \;\rd\rho(\bu) \;\rd t 
       \leq \lambda_1^{1/2}\nu^2 G^2\left(1 + \frac{1}{\nu\lambda_1 T}\right).
  \] 
  This can also be written as
  \[ \frac{1}{T} \int_0^T \int_{\Wcal_{[0,\infty)}} \|\Pi_0\sigma_t\bu\|_{H^1}^2 \;\rd\rho(\bu) \;\rd t 
       \leq \lambda_1^{1/2}\nu^2 G^2\left(1 + \frac{1}{\nu\lambda_1 T}\right).
  \]
  Since $\rho$ is invariant with respect to $\{\sigma_t\}_{t \geq 0}$ the integrand in $t$ is independent of $t$, and we find that
  \[ \int_{\Wcal_{[0,\infty)}} \|\Pi_0\bu\|_{H^1}^2 \;\rd\rho(\bu) 
        \leq \lambda_1^{1/2}\nu^2 G^2\left(1 + \frac{1}{\nu\lambda_1 T}\right).
  \]
  Since $T>0$ is arbitrary, we let $T\rightarrow \infty$ to find that
  \[ \int_{\Wcal_{[0,\infty)}} \|\Pi_0\bu\|_{H^1}^2 \;\rd\rho(\bu) 
        \leq \lambda_1^{1/2}\nu^2 G^2.
  \]
  Since $\rho_0=\Pi_0\rho$, this means that
  \[ \int_H \|\bu\|_{H^1}^2\;\rho_0(\bu) \leq \lambda_1^{1/2}\nu^2 G^2,
  \]
  which proves the inequality \eqref{estimaterho0inh1}. 
  The proof for \eqref{estimaterho0inda} is analogous to this one and follows from the estimates in Lemma \ref{lemwcaldaave}, while \eqref{estimaterho0inlinfty} follows from the two inequalities \eqref{estimaterho0inh1} and \eqref{estimaterho0inda} and the use of Agmon's inequality \eqref{agmonineq}.
\end{proof}

\subsection{Properties of time-average stationary statistical solutions}
\label{timeaveragevishikfursikovsec}

We start this section by showing that the two notions of time-average stationary statistical solution and time-average Vishik-Fursikov stationary statistical solution given in Definitions \ref{deftimeavesss} and \ref{defvftimeaverageVFsss} are in fact equivalent. In particular, this means that all the previous results for Vishik-Fursikov stationary statistical solutions apply to time-average stationary statistical solutions (see Remark \ref{rmktimeaveragesss}). 

\begin{thm}
  \label{thmtimeaveFPareVF}
  Given a time-average stationary statistical solution $\mu$ on $H$
  associated with a generalized
  limit $\Lim$ and a weak solution $\bu=\bu(t)$, $t\geq t_0$, $t_0\in\RR$,
  there exists a time-average invariant Vishik-Fursikov measure $\rho$
  on $\Ccal_\loc([t_0,\infty),H_\rw)$, obtained with the same $\Lim$ and the same weak solution $\bu$,
  for which $\Pi_t\rho=\mu$ for any $t\geq t_0$.
\end{thm}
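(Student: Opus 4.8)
The plan is to build $\rho$ exactly as prescribed by Definition~\ref{deftimeaveVFinvmea}, using the very same generalized limit $\Lim$, the same weak solution $\bu$, and the interval $I=[t_0,\infty)$, and then to verify the projection identity $\Pi_t\rho=\mu$ directly against test functions. First I would fix a single radius $R>R_0$ good for both objects: since $\bu$ is a weak solution on $[t_0,\infty)$, the a~priori estimate \eqref{energyestimate} (valid up to the initial time $t_0$ by strong right-continuity there) gives $|\bu(t)|_{L^2}\leq R$ for all $t\geq t_0$ with, say, $R=\max\{R_0+1,|\bu(t_0)|_{L^2}\}$, so that $\bu\in\Ucal_I(R)\subset\Ucal_I^\sharp(R)$. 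With this $R$, the measure $\mu$ of Definition~\ref{deftimeavesss} is a Borel probability measure carried by the compact metrizable space $B_H(R)_\rw$, while $\rho$ is a probability measure on the compact metrizable space $\Ucal_I^\sharp(R)$ (Lemma~\ref{ucalitildercompact}); hence $\Pi_t\rho$ is likewise carried by $B_H(R)_\rw$. Because $B_H(R)_\rw$ is metrizable, by \eqref{measequivmetric} it suffices to prove that $\int_H\Phi\;\rd(\Pi_t\rho)=\int_H\Phi\;\rd\mu$ for every $\Phi\in\Ccal(B_H(R)_\rw)$, and this for each fixed $t\geq t_0$.

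Second, I would compute the left-hand side. For $\Phi\in\Ccal(B_H(R)_\rw)$ the map $\Phi\circ\Pi_t$ is continuous and bounded on $\Ucal_I^\sharp(R)$, since $\Pi_t$ is continuous from $\Ccal_\loc(I,B_H(R)_\rw)$ into $B_H(R)_\rw$. The change-of-variables formula \eqref{changeofvariablesinducedmeasures} for the continuous map $\Pi_t$ gives $\int_H\Phi\;\rd(\Pi_t\rho)=\int_{\Ucal_I^\sharp(R)}\Phi(\Pi_t\bw)\;\rd\rho(\bw)$. Applying the defining relation \eqref{defvftimeaveragemeasure} of $\rho$ to the test function $\varphi=\Phi\circ\Pi_t$, and using $\Pi_t\sigma_s\bu=(\sigma_s\bu)(t)=\bu(t+s)$, I obtain, for any base point $t'\in I$,
\[
  \int_H\Phi\;\rd(\Pi_t\rho)=\Lim\frac{1}{T}\int_{t'}^{t'+T}\Phi(\bu(t+s))\;\rd s.
\]

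Third, I would match this with $\int_H\Phi\;\rd\mu=\Lim\frac{1}{T}\int_0^T\Phi(\bu(t_0+\tau))\;\rd\tau$ from Definition~\ref{deftimeavesss}. Setting $\phi(\tau)=\Phi(\bu(t_0+\tau))$, a bounded continuous function on $[0,\infty)$, and substituting $\tau=t+s-t_0$ in the previous display (so that $\bu(t+s)=\phi(\tau)$) turns the right-hand side into $\Lim\frac{1}{T}\int_a^{a+T}\phi(\tau)\;\rd\tau$ with $a=t'+t-t_0$; choosing the base point $t'\in I$ large enough (possible since $I$ is unbounded on the right and $t\geq t_0$) we may assume $a\geq0$, which also keeps $\sigma_s$ well defined on the integration window. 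The two windowed averages differ by $\frac{1}{T}\bigl(\int_T^{a+T}\phi(\tau)\;\rd\tau-\int_0^a\phi(\tau)\;\rd\tau\bigr)$, which is $O(1/T)$ and hence annihilated by $\Lim$ (equivalently, one invokes the shift-invariance property \eqref{genliminvariance} of the generalized limit). Therefore $\Lim\frac{1}{T}\int_a^{a+T}\phi(\tau)\;\rd\tau=\Lim\frac{1}{T}\int_0^T\phi(\tau)\;\rd\tau$, so $\int_H\Phi\;\rd(\Pi_t\rho)=\int_H\Phi\;\rd\mu$ for all $\Phi\in\Ccal(B_H(R)_\rw)$, and \eqref{measequivmetric} yields $\Pi_t\rho=\mu$ for every $t\geq t_0$.

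The routine parts are the boundedness bookkeeping and the two applications of the Kakutani--Riesz-based definitions pushed through the continuous maps $\Pi_t$ and $\sigma_s$. The only genuinely delicate point is the manipulation of the generalized limit in the last step: one must ensure that the windowing base point --- both the free parameter $t'$ in the definition of $\rho$ and the shift $t-t_0$ introduced by the projection --- does not affect the value of $\Lim$, which is precisely what the invariance property \eqref{genliminvariance}, or equivalently the fact that $\Lim$ vanishes on functions tending to $0$, guarantees.
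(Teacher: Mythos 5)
Your proposal is correct and takes essentially the same approach as the paper's proof: both construct $\rho$ from Definition \ref{deftimeaveVFinvmea} with the same $\Lim$ and the same $\bu$, evaluate $\int \Phi\circ\Pi_t\,\rd\rho$ for test functions $\Phi\in\Ccal(B_H(R)_\rw)$ via the defining Kakutani--Riesz formula, and use the shift-invariance property \eqref{genliminvariance} of the generalized limit to identify the resulting windowed average with $\int_H\Phi\,\rd\mu$, concluding by \eqref{measequivmetric}. Your extra bookkeeping with the base point $t'$ (keeping the averaging window inside $[0,\infty)$ so that $\sigma_s$ is well defined) only makes explicit a detail the paper glosses over.
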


\begin{proof} 
Let $\rho$ be a time-average Vishik-Fursikov measure 
on $\Ccal_\loc([t_0,\infty),H_\rw)$ associated with $\bu$ and $\Lim$. 
Let $R>0$ be sufficiently large so that the orbit of the solution $\bu$
belongs to $B_H(R)$. 

Given a function $\varphi$ in $\Ccal(B_H(R)_\rw)$, the function
$\bv\mapsto \tilde\varphi(\bv) = \varphi(\Pi_t(\bv))=\varphi(\bv(t))$ 
is continuous on $\Ccal_\loc([t_0,\infty),B_H(R)_\rw)$, for any $t\geq t_0$. For this 
function we find, using the definition
of the time-average Vishik-Fursikov measure $\rho$, property \eqref{genliminvariance} of the 
generalized limit, and the
definition of $\mu$ as a time-average stationary statistical solution on $H$, that
\begin{multline*}
   \int_{\Ccal_\loc([t_0,\infty),B_H(R)_\rw)} \varphi\circ\Pi_t(\bv) \;\rd\rho(\bv)
    = \Lim \frac{1}{T}\int_0^T
      \varphi\circ\Pi_t(\sigma_{\tau}\bu))\;\rd \tau \\
    = \Lim \frac{1}{T}\int_0^T \varphi(\bu(t+\tau)))\;\rd \tau
    = \Lim \frac{1}{T}\int_0^T \varphi(\bu(t_0+\tau)))\;\rd \tau  \\
    = \int_H \varphi(\bv) \dmu(\bv).
\end{multline*}
This proves the claim that $\mu = \Pi_t \rho$, for arbitrary $t\geq t_0$. 
\end{proof}
\medskip

From this result relating time-average stationary statistical solutions
with invariant time-average Vishik-Fursikov measures
and the result on the accretion property for the latter we obtain a simpler
proof of the accretion property for the time-average stationary statistical 
solutions in the phase space (\emph{cf.} \cite{foiastemam75,fmrt2001a}) 
as we will show in the next section.

Concerning the support of a time-average invariant measure
$\rho$ associated with a weak 
solution $\bu$ on an interval $[t_0,\infty)$, we know already from 
Theorem \ref{thmcarriervfmeasure} that such a measure
is carried by $\Wcal_{[t_0,\infty)}$. In fact, one can be more
precise and show that such a measure $\rho$ is carried by 
the $\omega$-limit set of the associated weak solution 
under the translation semigroup $\{\sigma_t\}_{t\geq 0}$, defined in \eqref{sigmatdef}.
This is known in the context of time-average stationary statistical solutions
on $H$, as proved in \cite{foiastemam85}. The $\omega$-limit set in this
case is defined as in \eqref{omegalimittrajectory}
The idea of the proof is the same as in \cite{foiastemam85}, and
for this reason we only state the result here:
\begin{prop}
  Let $\rho$ be the time-average invariant Vishik-Fursikov measure 
  on $\Ccal_\loc([t_0,\infty),H_\rw)$, with $t_0\in \RR$, associated with 
  a generalized limit $\Lim$ and a weak solution $\bu=\bu(t)$ on $t\geq t_0$.
  Then, $\rho(\omega_{\Ccal_\loc([t_0,\infty),H_\rw)}(\bu)) =1$. Since this set is compact, we find that $\supp\rho \subset \omega_{\Ccal_\loc([t_0,\infty),H_\rw)}(\bu)$.
\end{prop}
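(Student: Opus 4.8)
The plan is to show that $\rho$ charges every open neighborhood of the compact $\omega$-limit set with full mass, and then to shrink these neighborhoods down to the $\omega$-limit set itself. Write $\omega(\bu)$ for $\omega_{\Ccal_\loc([t_0,\infty),H_\rw)}(\bu)$, and recall from Definition~\ref{deftimeaveVFinvmea} that $\bu\in\Ucal_{[t_0,\infty)}^\sharp(R)$ for some $R>0$, that the whole forward orbit $\{\sigma_s\bu;\;s\geq 0\}$ lies in the compact metrizable space $\Ucal_{[t_0,\infty)}^\sharp(R)$ (Lemma~\ref{ucalitildercompact}), and that $\rho$ is represented on $\Ccal(\Ucal_{[t_0,\infty)}^\sharp(R))$ by the generalized limit of the time averages of $s\mapsto\sigma_s\bu$ as in \eqref{defvftimeaveragemeasure}. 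Since $\omega(\bu)\subset\Ucal_{[t_0,\infty)}^\sharp(R)$ and is compact by Lemma~\ref{compactomega}, the entire argument can be carried out inside the compact metrizable space $\Ucal_{[t_0,\infty)}^\sharp(R)$, on which $\rho$ is a genuine Borel probability measure.

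The first key step is the classical topological fact that a precompact orbit converges to its $\omega$-limit set: for every relatively open $N$ with $\omega(\bu)\subset N\subset\Ucal_{[t_0,\infty)}^\sharp(R)$ there exists $S\geq 0$ such that $\sigma_s\bu\in N$ for all $s\geq S$. I would prove this by contradiction, extracting from a hypothetical sequence $s_j\to\infty$ with $\sigma_{s_j}\bu\notin N$ a convergent subsequence (using the compactness of $\Ucal_{[t_0,\infty)}^\sharp(R)$) whose limit would lie in $\omega(\bu)\setminus N$, which is impossible.

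Fix such an $N$ and choose, by normality of the compact metric space, a smaller relatively open set $N_1$ with $\omega(\bu)\subset N_1\subset\overline{N_1}\subset N$. By Urysohn's lemma there is a continuous $\varphi:\Ucal_{[t_0,\infty)}^\sharp(R)\to[0,1]$ with $\varphi\equiv 0$ on $N_1$ and $\varphi\equiv 1$ on the closed complement $\Ucal_{[t_0,\infty)}^\sharp(R)\setminus N$. Applying the previous step to $N_1$ furnishes $S_1$ with $\varphi(\sigma_s\bu)=0$ for all $s\geq S_1$, so that for any fixed base point the time average satisfies $\frac{1}{T}\int \varphi(\sigma_s\bu)\;\rd s\leq S_1/T\to 0$; since the generalized limit agrees with the ordinary limit when the latter exists (Section~\ref{genlimsec}), \eqref{defvftimeaveragemeasure} gives $\int\varphi\;\rd\rho=0$. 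As $\varphi\geq\mathbf{1}_{\Ucal_{[t_0,\infty)}^\sharp(R)\setminus N}$, this forces $\rho(\Ucal_{[t_0,\infty)}^\sharp(R)\setminus N)=0$, i.e.\ $\rho(N)=1$ for every such neighborhood $N$.

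Finally I would take $N_k=\{\bv\in\Ucal_{[t_0,\infty)}^\sharp(R);\;\mathrm{dist}(\bv,\omega(\bu))<1/k\}$, a decreasing sequence of open neighborhoods with $\bigcap_k N_k=\omega(\bu)$ (valid because $\omega(\bu)$ is closed and the space is metrizable), and invoke continuity from above of the probability measure $\rho$ to conclude $\rho(\omega(\bu))=\lim_k\rho(N_k)=1$. The support statement is then immediate: $\omega(\bu)$ is closed and of full measure, so $\supp\rho\subset\omega(\bu)$ by the definition of support. I expect the only genuine subtlety to be the first step — the convergence of the precompact orbit to $\omega(\bu)$, together with the bookkeeping that the $\omega$-limit set computed in the ambient space $\Ccal_\loc([t_0,\infty),H_\rw)$ agrees with the one computed inside $\Ucal_{[t_0,\infty)}^\sharp(R)$, and that the washing-out estimate for the initial orbit segment is compatible with the base-point independence in \eqref{defvftimeaveragemeasure}.
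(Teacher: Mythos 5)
Your proof is correct, and it is precisely the argument the paper has in mind: the paper itself omits the proof, saying only that ``the idea of the proof is the same as in'' \cite{foiastemam85}, where the phase-space analogue is proved by exactly this scheme (the precompact orbit eventually enters every neighborhood of its $\omega$-limit set, Urysohn functions force the time averages --- hence the generalized limit --- to vanish off such neighborhoods, and the neighborhoods are then shrunk using continuity from above). Your adaptation to the trajectory space, carried out inside the compact metrizable set $\Ucal_{[t_0,\infty)}^\sharp(R)$, with the observations that $\sigma_s\bu$ remains in this set and that the $\omega$-limit set computed there coincides with the one in $\Ccal_\loc([t_0,\infty),H_\rw)$, is sound, so nothing is missing.
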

\medskip

The corresponding measure $\mu$ on $H$ which is the projection of
$\rho$ at an arbitrary time $t\geq t_0$ is carried
by the projection of this $\omega$-limit set, which coincides with the
$\omega$-limit set of $\bu$ for the
Navier-Stokes equations in the weak topology of $H$, given by
\eqref{omegalimitphase}. In this way we recover the corresponding well-known 
result (see \cite{foiastemam85}) for time-average stationary statistical
solutions on $H$.

\begin{cor}
  \label{timeavecarriedbyomegalimit}
  Let $\mu$ be a time-average stationary statistical solution on $H$
  associated with a weak solution $\bu$. Then $\mu$ is a 
  time-average Vishik-Fursikov statistical solution associated with this same $\bu$ and
  $\mu$ is carried by $\omega_{H_\rw}(\bu)$. Since this set is compact, we find that $\supp\mu \subset \omega_{H_\rw}(\bu)$.
\end{cor}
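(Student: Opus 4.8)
The plan is to push the trajectory-space statement recorded in the Proposition immediately above down to the phase space through the projection $\Pi_{t_0}$. First I would invoke Theorem \ref{thmtimeaveFPareVF}: given the time-average stationary statistical solution $\mu$ on $H$ associated with the generalized limit $\Lim$ and the weak solution $\bu$ on $[t_0,\infty)$, there exists a time-average invariant Vishik-Fursikov measure $\rho$ on $\Ccal_\loc([t_0,\infty),H_\rw)$, built from the same $\Lim$ and the same $\bu$, with $\Pi_t\rho=\mu$ for every $t\geq t_0$. This at once establishes the first assertion, that $\mu$ is a time-average Vishik-Fursikov statistical solution associated with this same $\bu$. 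Then, by the preceding Proposition, $\rho$ is carried by the trajectory $\omega$-limit set, i.e. $\rho(\omega_{\Ccal_\loc([t_0,\infty),H_\rw)}(\bu))=1$.

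The key step is to verify the set-theoretic inclusion
\[ \Pi_{t_0}\,\omega_{\Ccal_\loc([t_0,\infty),H_\rw)}(\bu) \subseteq \omega_{H_\rw}(\bu). \]
To see this I would take $\bw$ in the trajectory $\omega$-limit set, so that $\sigma_{t_j}\bu\to\bw$ in $\Ccal_\loc([t_0,\infty),H_\rw)$ for some $t_j\to\infty$. Since convergence in $\Ccal_\loc([t_0,\infty),H_\rw)$ is uniform weak convergence on compact subintervals, evaluating at $t_0$ yields $\bu(t_0+t_j)=(\sigma_{t_j}\bu)(t_0)\to\bw(t_0)$ in $H_\rw$; as $t_0+t_j\to\infty$, this exhibits $\Pi_{t_0}\bw=\bw(t_0)$ as a point of $\omega_{H_\rw}(\bu)$, which is the claimed inclusion.

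Finally I would assemble the measure-theoretic conclusion. Both $\omega$-limit sets are compact by Lemma \ref{compactomega}, hence Borel, and $\Pi_{t_0}$ is continuous, so $\Pi_{t_0}^{-1}\omega_{H_\rw}(\bu)$ is Borel and, by the inclusion above, contains $\omega_{\Ccal_\loc([t_0,\infty),H_\rw)}(\bu)$. Using $\mu=\Pi_{t_0}\rho$ together with the definition of the induced measure,
\[ \mu(\omega_{H_\rw}(\bu)) = \rho\bigl(\Pi_{t_0}^{-1}\omega_{H_\rw}(\bu)\bigr) \geq \rho\bigl(\omega_{\Ccal_\loc([t_0,\infty),H_\rw)}(\bu)\bigr) = 1, \]
so that $\mu$ is carried by $\omega_{H_\rw}(\bu)$. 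Since this set is closed (being compact), and the support is the smallest closed set of full measure, it follows that $\supp\mu\subseteq\omega_{H_\rw}(\bu)$. The only point requiring any genuine care is the passage, in the displayed inclusion, from uniform-on-compacta weak convergence of trajectories to pointwise weak convergence at the single time $t_0$; everything else is a routine projection of the trajectory-space result, with all measurability issues dissolved by the compactness of the two limit sets.
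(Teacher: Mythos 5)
Your proof is correct and follows essentially the same route as the paper: invoke Theorem \ref{thmtimeaveFPareVF} to realize $\mu$ as $\Pi_{t}\rho$ for the time-average invariant Vishik-Fursikov measure $\rho$, use the preceding Proposition to get $\rho(\omega_{\Ccal_\loc([t_0,\infty),H_\rw)}(\bu))=1$, and push this down through the continuous projection. The only (harmless) difference is that you verify just the inclusion $\Pi_{t_0}\,\omega_{\Ccal_\loc([t_0,\infty),H_\rw)}(\bu)\subseteq\omega_{H_\rw}(\bu)$, which is all the argument needs, whereas the paper asserts that the projection actually coincides with $\omega_{H_\rw}(\bu)$.
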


\begin{rmk}
  \label{rmktimeaveragesss}
  Since a time-average stationary statistical solution is a Vishik-Fursikov stationary statistical solution (Theorem \ref{thmtimeaveFPareVF}), the Corollary \ref{regrho0}, Theorem \ref{carrierofrho0inaw} and the bounds given in Theorem \ref{corboundsrho0} apply to such a solution.
\end{rmk}

\section{Local regularity of carriers}
\label{seclocalreg}
  
In this section we present local regularity results for Vishik-Fursikov stationary statistical solutions and Vishik-Fursikov measures. As we mentioned in the Introduction, this is motivated by the \emph{Prodi invariance conjecture}, which states that the support of a time-average stationary statistical solution should be more regular in some sense, belonging to a space in which the solutions are unique and strong globally in time. It is a kind of asymptotic regularity result in average, for the solutions of the 3D Navier-Stokes equations. Here, we prove a partial result in this direction, namely that any Vishik-Fursikov stationary statistical solution is carried by a set in which the solutions are locally strong solutions (Theorem \ref{vfssscarriedbyaregprime}), with a similar result for a Vishik-Fursikov measure (Theorem \ref{invariantvfcarriedbywcalregprime}). 

\subsection{Vishik-Fursikov stationary statistical solutions are carried by locally regular solutions}
  
From the characterization of $\Acal_\rw\setminus\Acal_\reg'$ with the estimate on the rate
of blow up of the solutions, we obtain the following important result.

\begin{thm}
  \label{vfssscarriedbyaregprime}
    Any Vishik-Fursikov stationary statistical solution $\rho_0$ is carried by $\Acal_\reg'$, i.e. $\rho_0(\Acal_\reg')=1$.
\end{thm}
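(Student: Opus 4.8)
The plan is to transfer the statement to the trajectory space, where the blow-up characterization of the regular part is available, and then exploit the finite mean enstrophy of $\rho_0$ together with the invariance of the lifted measure. First I would invoke Corollary \ref{regrho0} to fix, for the given Vishik-Fursikov stationary statistical solution $\rho_0$, an invariant Vishik-Fursikov measure $\rho$ over the whole line $\RR$ with $\rho_0=\Pi_t\rho$ for every $t$ and with $\rho$ carried by the compact set $\Wcal$. Since $\rho$ lives on $\Wcal$, the relation \eqref{wcalregprimeandacalregprime}, namely $\Wcal_\reg'=\Wcal\cap\Pi_0^{-1}\Acal_\reg'$, gives $\rho_0(\Acal_\reg')=\rho(\Pi_0^{-1}\Acal_\reg')=\rho(\Wcal_\reg')$. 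As $\rho(\Wcal)=1$, the theorem reduces to proving that the irregular set $\Wcal\setminus\Wcal_\reg'$ is $\rho$-null.

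To show $\rho(\Wcal\setminus\Wcal_\reg')=0$ I would combine three ingredients. The enstrophy bound \eqref{estimaterho0inh1} of Theorem \ref{corboundsrho0} controls the mean enstrophy of $\rho_0$; and since $\Pi_t\rho=\Pi_0\sigma_t\rho=\rho_0$ by the invariance of $\rho$, the change-of-variables formula gives that $\int_\Wcal\|\bu(t)\|_{H^1}^2\,\rd\rho(\bu)=\int_H\|\bu\|_{H^1}^2\,\rd\rho_0(\bu)\le\lambda_1^{1/2}\nu^2 G^2$ for every $t\in\RR$, a quantity independent of $t$. On the other hand, the characterization \eqref{charactwregprime} asserts that every $\bu\in\Wcal\setminus\Wcal_\reg'$ satisfies $\|\bu(t)\|_{H^1}^2\ge\Gamma(t)$ for all $t<0$, where $\Gamma(t)$ is the blow-up lower bound \eqref{blowupestimate}. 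Restricting the enstrophy integral to the irregular set then yields, for every $t<0$, the inequality $\Gamma(t)\,\rho(\Wcal\setminus\Wcal_\reg')\le\int_\Wcal\|\bu(t)\|_{H^1}^2\,\rd\rho(\bu)\le\lambda_1^{1/2}\nu^2 G^2$.

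Since $\Gamma(t)\to+\infty$ as $t\to 0^-$, this forces $\rho(\Wcal\setminus\Wcal_\reg')=0$, whence $\rho_0(\Acal_\reg')=\rho(\Wcal_\reg')=1$. I expect the main obstacle to be measure-theoretic bookkeeping rather than analysis: one must check that $\Wcal\setminus\Wcal_\reg'$ is Borel, which follows from $\Wcal_\reg'$ being $\sigma$-compact, and that $\bu\mapsto\|\bu(t)\|_{H^1}^2=\|\Pi_t\bu\|_{H^1}^2$ is $\rho$-measurable, which holds because it is weakly lower semicontinuous. One must also justify the $t$-independence of the mean enstrophy carefully from the invariance $\sigma_t\rho=\rho$ on $\Wcal$ (extended to the full group, since the translations are bijective on $\Wcal$) and confirm finiteness of the integral so that the restriction-and-limit step is legitimate. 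An alternative route, avoiding the explicit rate $\Gamma(t)$, would use Fubini together with the fact that the set of non-interior-regularity times of each individual weak solution has Lebesgue measure zero; but the enstrophy-contradiction argument above is shorter and uses precisely the blow-up estimate the statement alludes to.
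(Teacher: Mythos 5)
Your proposal is correct and follows essentially the same route as the paper's proof: lift $\rho_0$ via Corollary \ref{regrho0} to an invariant Vishik--Fursikov measure $\rho$ carried by $\Wcal$, invoke the blow-up lower bound $\Gamma(t)$ satisfied on the irregular set, and use stationarity ($\Pi_t\rho=\rho_0$ for all $t$) to turn this into a fixed-time estimate that forces the irregular set to be $\rho$-null. The only difference is in the closing step: where you apply Chebyshev's inequality with the quantitative enstrophy bound \eqref{estimaterho0inh1}, the paper uses the qualitative fact that $\rho_0$ is carried by $V$ (see \eqref{ssscarriedbyV}) together with continuity from above of the measure; your quantitative variant is sound, and is in fact the very argument the paper deploys later, in Theorem \ref{thmestimateforcomplementwcalregtauprime}, to obtain a rate for $\rho(\Wcal\setminus\Wcal_{\reg,\tau}')$.
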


\begin{proof}
Since $\rho_0$ is a probability measure supported on $\Acal_\rw$ (Theorem \ref{carrierofrho0inaw}), it suffices to show that $\rho_0(\Acal_\rw\setminus\Acal_\reg')=0$. From Corollary \ref{regrho0} there exists an invariant Vishik-Fursikov measure $\rho$ over $\RR$ such that $\rho_0=\Pi_t\rho$, for any $t\in \RR$, and $\rho$ is carried by $\Wcal$. From the characterization of $\Acal_\rw\setminus\Acal_\reg'$ in \eqref{charactaregminusaregprimeeq}, we have that
  \[ \Pi_0^{-1}(\Acal_\rw\setminus\Acal_\reg'))\cap\Wcal
        \subset \Pi_t^{-1}(H\setminus B_V(\Gamma(t)^{1/2})), \qquad \forall t< 0.
  \]
  Then,
  \begin{multline*}
    \rho_0(\Acal_\rw\setminus\Acal_\reg') = \rho(\Pi_0^{-1}(\Acal_\rw\setminus\Acal_\reg'))
      \leq \rho(\Pi_t^{-1}(H\setminus B_V(\Gamma(t)^{1/2}))) \\ 
      = \rho_0(H\setminus B_V(\Gamma(t)^{1/2}), \qquad \forall t< 0.
  \end{multline*}
  Since $t<0$ is arbitrary and $\Gamma(t) \rightarrow \infty$ as $t\rightarrow 0^-$, if follows from the continuity property of measures and the fact that $\rho_0$ is carried by $V$ (see \eqref{ssscarriedbyV}) that
  \[ \rho_0(\Acal_\rw\setminus \Acal_\reg')
     \leq \rho_0(H\setminus B_V(\Gamma(t)^{1/2})) \rightarrow 
        \rho_0(H\setminus V) = 0,
  \]
  as $t\rightarrow 0^-$. Thus,
  \[  \rho_0(\Acal_\rw\setminus \Acal_\reg') = 0.
  \]
\end{proof}
\medskip

\subsection{Invariant Vishik-Fursikov measures are carried by locally regular solutions}

A similar local regularity result can be given for invariant Vishik-Fursikov measures.
\begin{thm}
  \label{invariantvfcarriedbywcalregprime}
Any Vishik-Furiskov invariant measure $\rho$ on $\RR$ is carried by $\Wcal_\reg'$, i.e. $\rho(\Wcal_\reg')=1$. 
\end{thm}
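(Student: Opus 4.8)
The plan is to reduce the statement to the already-established phase-space carrier result, Theorem \ref{vfssscarriedbyaregprime}, exploiting the connection \eqref{wcalregprimeandacalregprime} between $\Wcal_\reg'$ and $\Acal_\reg'$. First I would record that, for the interval $I=\RR$, Theorem \ref{thmcarriervfmeasure} gives $\rho(\Wcal)=1$, since $\Wcal_\RR=\Wcal=\Wcal_\RR^\sharp$. Thus it suffices to compute $\rho$ on subsets of the full-measure set $\Wcal$.

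Next I would invoke the identity \eqref{wcalregprimeandacalregprime}, namely $\Wcal_\reg'=\Wcal\cap\Pi_0^{-1}\Acal_\reg'$. Because $\rho$ is carried by $\Wcal$, this yields $\rho(\Wcal_\reg')=\rho(\Pi_0^{-1}\Acal_\reg')$. Here I need $\Acal_\reg'$ to be Borel so that $\Pi_0^{-1}\Acal_\reg'$ is a legitimate Borel set of $\Ccal_\loc(\RR,H_\rw)$ and the projection identity applies; this is guaranteed by the fact, recalled earlier in the text, that $\Acal_\reg'$ is $\sigma$-compact in $H_\rw$. Writing $\rho_0=\Pi_0\rho$, the definition of the induced (projected) measure then gives $\rho(\Pi_0^{-1}\Acal_\reg')=\rho_0(\Acal_\reg')$. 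Finally, since $\rho$ is an invariant Vishik-Fursikov measure over $\RR$, its projection $\rho_0=\Pi_0\rho$ is, by Definition \ref{defvfsss}, a Vishik-Fursikov stationary statistical solution, so Theorem \ref{vfssscarriedbyaregprime} yields $\rho_0(\Acal_\reg')=1$. Chaining the equalities gives $\rho(\Wcal_\reg')=1$, as desired.

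I do not anticipate a serious obstacle, since the hard analytic content—controlling the blow-up rate of solutions that leave the locally regular set—has already been absorbed into Theorem \ref{vfssscarriedbyaregprime}. The only points requiring care are the measurability bookkeeping (that $\Acal_\reg'$, hence $\Pi_0^{-1}\Acal_\reg'$, is Borel, and that $\rho$ is genuinely carried by $\Wcal$) and the correct passage between $\rho$ and $\rho_0$ through the induced-measure relation.

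Alternatively, one could give a direct, self-contained argument in the trajectory space that essentially repeats the proof of Theorem \ref{vfssscarriedbyaregprime}. Using the blow-up characterization \eqref{charactwregprime}, for each fixed $t<0$ one has $\Wcal\setminus\Wcal_\reg'\subset\Pi_t^{-1}(H\setminus B_V(\Gamma(t)^{1/2}))$, so the invariance of $\rho$ (which makes $\Pi_t\rho=\rho_0$ independent of $t$) gives $\rho(\Wcal\setminus\Wcal_\reg')\leq\rho_0(H\setminus B_V(\Gamma(t)^{1/2}))$. Letting $t\to 0^-$, using $\Gamma(t)\to\infty$ from \eqref{blowupestimate} together with continuity from above of $\rho_0$ and the fact that $\rho_0$ is carried by $V$ (by \eqref{ssscarriedbyV}, or by Theorem \ref{corboundsrho0}), forces the right-hand side to vanish, whence $\rho(\Wcal\setminus\Wcal_\reg')=0$.
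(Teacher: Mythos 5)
Your proposal is correct and follows essentially the same route as the paper: the paper's proof likewise combines the identity $\Wcal_\reg'=\Wcal\cap\Pi_0^{-1}\Acal_\reg'$ from \eqref{wcalregprimeandacalregprime} with the carrier result of Theorem \ref{thmcarriervfmeasure} and the induced-measure relation $\rho_0=\Pi_0\rho$ to reduce everything to Theorem \ref{vfssscarriedbyaregprime}, merely phrased in terms of the complement $\Wcal\setminus\Wcal_\reg'=\Pi_0^{-1}(\Acal_\rw\setminus\Acal_\reg')\cap\Wcal$. Your alternative direct argument via \eqref{charactwregprime} and the blow-up rate $\Gamma(t)$ is exactly what the paper alludes to when it says the proof is ``similar to that of Theorem \ref{vfssscarriedbyaregprime}.''
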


\begin{proof}
The proof is similar to that of Theorem \ref{vfssscarriedbyaregprime}, or simply use the characterization \eqref{wcalregprimeandacalregprime} of $\Wcal_\reg'$ and the fact that $\Acal_\rw = \Pi_0\Wcal$ to write
\[ \Wcal \setminus \Wcal_\reg' = \Wcal \setminus (\Wcal \cap \Pi_0^{-1}\Acal_\reg') = \Pi_0^{-1}(\Acal_\rw \setminus \Acal_\reg') \cap \Wcal, 
\]
so that, by Theorem \ref{vfssscarriedbyaregprime} and the fact that $\rho$ is carried by $\Wcal$ (Theorem \ref{thmcarriervfmeasure}),
\[ \rho(\Wcal\setminus \Wcal_\reg') = \rho(\Pi_0^{-1}(\Acal_\rw \setminus \Acal_\reg')) = \rho_0(\Acal_\rw\setminus \Acal_\reg') = 0.
\]
\end{proof}

From the relation \eqref{wcalregprimeasunionwcalregshortprime} and Theorem \ref{invariantvfcarriedbywcalregprime}, we see that, for any invariant Vishik-Fursikov measure $\rho$,
\[ 1 = \rho(\Wcal_\reg') = \rho(\bigcup_{\tau>0} \Wcal_{\reg,\tau}') = \rho(\bigcup_{n\in\NN} \Wcal_{\reg,1/n}') = \lim_{n\rightarrow \infty} \rho(\Wcal_{\reg,1/n}') = \lim_{\tau \rightarrow 0} \rho(\Wcal_{\reg,\tau}').
\]
Hence, the measure of $\Wcal_{\reg,\tau}'$ approaches $1$, as $\tau$ goes to zero. This result can actually be made more precise using the characterization \eqref{charactwregtauprimecomplement} of this set, as follows.

%A lower bound for the $\rho$ (invariant VF) measure of
%\[ \Wcal_{\reg,\tau}'=\{\bu\in \Wcal; \;\bu \text{ is a strong solution on } (-\tau,0]\}
%\] 
%Note that the complement is given by
%\[ \Wcal \setminus \Wcal_{\reg,\tau}' 
%   = \{\bu\in \Wcal; \;\exists t\in (-\tau,0]; \;\liminf_{s\rightarrow t^-}\|\bu(t)\|_{H^1}=\infty\}
%\]

\begin{thm} 
  \label{thmestimateforcomplementwcalregtauprime}
  Let $\rho$ be an invariant Vishik-Fursikov measure over $\RR$. Then, 
  \begin{equation}
    \label{estimateforcomplementwcalregtauprime}
    \rho(\Wcal\setminus\Wcal_{\reg,\tau}') 
       \leq \frac{4c_4\lambda_1^{1/2}\nu^{1/2}\tau^{1/2}G}{
          1-2c_4\lambda_1^{1/2}\nu^{1/2}\tau^{1/2}G^{2/3}},
  \end{equation}
  for any $\tau$ such that
  \begin{equation}
    \label{conditionontau}
    0<\tau < \frac{1}{4c_4^2\lambda_1\nu G^{4/3}}.
  \end{equation} 
\end{thm}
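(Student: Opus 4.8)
The plan is to reduce \eqref{estimateforcomplementwcalregtauprime} to a single \emph{self-referential} inequality of the shape
$\rho(E_\tau)\le N+D\,\rho(E_\tau)$, where $E_\tau=\Wcal\setminus\Wcal_{\reg,\tau}'$ and $D=2c_4\lambda_1^{1/2}\nu^{1/2}\tau^{1/2}G^{2/3}$ is exactly the quantity in the denominator; solving for $\rho(E_\tau)$ then gives $N/(1-D)$, and the hypothesis \eqref{conditionontau} is seen to be precisely the condition $D<1$ that legitimizes this rearrangement and keeps the denominator positive. By hypothesis $\rho$ is an invariant Vishik--Fursikov measure over $\RR$, so (Theorem \ref{thmcarriervfmeasure}) it is carried by the compact set $\Wcal$, its projection $\rho_0=\Pi_t\rho$ is independent of $t$, and $\rho_0$ obeys the mean-enstrophy bound \eqref{estimaterho0inh1} of Theorem \ref{corboundsrho0}. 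These three facts, together with the blow-up characterization \eqref{charactwregtauprimecomplement}, are the only ingredients.

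First I would exploit \eqref{charactwregtauprimecomplement}: every $\bu\in E_\tau$ admits some $\beta\in(-\tau,\tau)$ with $\|\bu(t)\|_{H^1}^2\ge\Gamma(t-\beta)$ for all $t<\beta$, with $\Gamma$ as in \eqref{blowupestimate}. The decisive choice is the \emph{length} of the window over which this lower bound is integrated. Since $\beta\in(-\tau,\tau)$, for \emph{any} admissible $\beta$ the interval $(\beta-4\tau,\beta)$ sits inside the fixed interval $(-5\tau,\tau)$, and since $\|\bu(t)\|_{H^1}^2\ge 0$ we obtain, uniformly in $\bu\in E_\tau$,
\[
  \int_{-5\tau}^{\tau}\|\bu(t)\|_{H^1}^2\,\rd t
     \;\ge\;\int_{\beta-4\tau}^{\beta}\Gamma(t-\beta)\,\rd t
     \;=\;\int_0^{4\tau}\Gamma(-w)\,\rd w
     \;=\;\frac{2\nu^{3/2}\tau^{1/2}}{c_4}-4\nu^2\lambda_1^{1/2}G^{2/3}\tau ,
\]
where the last equality integrates the two terms of $\Gamma$ and uses $|\bbf|_{L^2}=\nu^2\lambda_1^{3/4}G$. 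The window length $4\tau$ is dictated precisely by the requirement that the negative constant part of $\Gamma$ integrate to the coefficient producing $D$; this is where the factor $2c_4\lambda_1^{1/2}\nu^{1/2}\tau^{1/2}G^{2/3}$ is born. Crucially this step uses only the \emph{existence} of $\beta$, so no measurable selection of a blow-up time is required.

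Next I would integrate this pointwise (in $\bu$) inequality over $E_\tau$ against $\rho$, bound the left side below by the displayed constant times $\rho(E_\tau)$, and bound the right side above by enlarging $E_\tau$ to $\Wcal$ and applying Tonelli:
\[
  \int_{E_\tau}\!\int_{-5\tau}^{\tau}\|\bu(t)\|_{H^1}^2\,\rd t\,\rd\rho(\bu)
     \;\le\;\int_{-5\tau}^{\tau}\Big(\int_{\Wcal}\|\bu(t)\|_{H^1}^2\,\rd\rho(\bu)\Big)\rd t .
\]
By translation invariance, $\Pi_t\rho=\rho_0$ for every $t$, so the inner integral equals $\int_H\|\bu\|_{H^1}^2\,\rd\rho_0(\bu)\le\lambda_1^{1/2}\nu^2G^2$ by \eqref{estimaterho0inh1}; integrating the constant over the window of length $6\tau$ gives $6\tau\lambda_1^{1/2}\nu^2G^2$. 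Combining the two sides yields exactly
$\big(\frac{2\nu^{3/2}\tau^{1/2}}{c_4}-4\nu^2\lambda_1^{1/2}G^{2/3}\tau\big)\rho(E_\tau)\le 6\tau\lambda_1^{1/2}\nu^2G^2$; moving the negative term to the right as $D\,\rho(E_\tau)$ and dividing by the positive factor $2\nu^{3/2}\tau^{1/2}/c_4$ produces the self-referential inequality, hence a bound of the structure \eqref{estimateforcomplementwcalregtauprime} with the stated denominator $1-D$ and a numerator $\propto\tau^{1/2}$ times the mean-enstrophy bound \eqref{estimaterho0inh1}.

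The step I expect to be the main obstacle is the bookkeeping that makes the two terms of $\Gamma$ land on the exact stated coefficients: one must tune the integration window so that the enstrophy-depleting constant $-\nu^2\lambda_1^{1/2}G^{2/3}$ integrates to $D=2c_4\lambda_1^{1/2}\nu^{1/2}\tau^{1/2}G^{2/3}$, while simultaneously keeping the window abutting $\beta$ from the left so the blow-up lower bound is actually available; the positivity of the left-hand factor, equivalent to $D<1$, then coincides exactly with \eqref{conditionontau}, which is the conceptual heart of why that hypothesis is the natural one. A secondary technical point is justifying $\Pi_t\rho=\rho_0$ for negative $t$ (invariance is stated only for $\sigma_\tau$, $\tau\ge 0$); this is legitimate because $\rho$ is carried by $\Wcal$, on which the time-translations are invertible, so invariance extends to all real shifts.
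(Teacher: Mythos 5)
Your proof is correct and follows the same strategy as the paper's own proof: combine the blow-up characterization \eqref{charactwregtauprimecomplement} and the rate \eqref{blowupestimate} with translation invariance, Fubini, and the mean-enstrophy bound \eqref{estimaterho0inh1}, with \eqref{conditionontau} entering as the condition that legitimizes the final algebra. Three differences are worth recording (write $E_\tau=\Wcal\setminus\Wcal_{\reg,\tau}'$). First, the packaging: the paper bounds $\Gamma(t-\beta)\ge\Gamma(-\tau)$ pointwise on the window $(\beta-\tau,\beta)$ and then divides by $\Gamma(-\tau)>0$, while you integrate $\Gamma$ exactly over a window of length $4\tau$ and solve the self-referential inequality $\rho(E_\tau)\le N+D\,\rho(E_\tau)$; these are equivalent ways of exploiting the same estimate, and \eqref{conditionontau} is precisely $\Gamma(-\tau)>0$ in the paper's version and precisely $D<1$ in yours. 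Second, a genuine improvement: the paper encloses $(\beta-\tau,\beta)$ in $(-2\tau,0)$, which is valid only when the blow-up time satisfies $\beta\le 0$, whereas \eqref{charactwregtauprimecomplement} only guarantees $\beta\in(-\tau,\tau)$; your enclosure $(\beta-4\tau,\beta)\subset(-5\tau,\tau)$ covers every admissible $\beta$, so your argument closes this small gap in the paper's proof (the paper's window is consistent only with the smaller bad set attached to $\Wcal_{\reg,\tau^-}'$ in its subsequent remark). Third, the constants: your computation yields the numerator $3c_4\lambda_1^{1/2}\nu^{1/2}\tau^{1/2}G^2$, while the paper's own computation yields $4c_4\lambda_1^{1/2}\nu^{1/2}\tau^{1/2}G^2$; note that the exponent of $G$ in the stated inequality \eqref{estimateforcomplementwcalregtauprime} is evidently a typo, since $2\lambda_1^{1/2}\nu^2G^2/\Gamma(-\tau)$ equals the displayed fraction with $G^2$, not $G$, in the numerator. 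Thus your bound has the stated structure with the identical denominator and is in fact marginally sharper than what the paper's proof produces. Finally, your justification of $\Pi_t\rho=\rho_0$ for $t<0$ is sound, and can be made even more direct: for $t\le 0$ one has $\Pi_t\circ\sigma_{|t|}=\Pi_0$ on $\Ccal_\loc(\RR,H_\rw)$, hence $\Pi_t\rho=\Pi_t(\sigma_{|t|}\rho)=\Pi_0\rho$ using forward invariance alone.
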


\begin{proof}
Let $\tau$ satisfy \eqref{conditionontau} and suppose $\bu\in \Wcal \setminus \Wcal_{\reg,\tau}'$, so that $\bu$ blows up at some point $\beta$ in the interval $(-\tau,\tau)$. Then, for such $\beta$, using the characterization \eqref{charactwregtauprimecomplement}, we see that
\[\Gamma(t-\beta) \leq \|\bu(t)\|_{H^1}^2, \qquad \forall t<\beta.
\]
For $\beta-\tau \leq t < \beta$, we have
\[ |t-\beta| = -t+\beta \leq \tau = |-\tau|.
\]
Thus,
\[ \Gamma(t-\beta) = \frac{\nu^{3/2}}{2c_4|t-\beta|^{1/2}} - \nu^{2/3}|\bbf|_{L^2}^{2/3}
      \geq \Gamma(-\tau) = \frac{\nu^{3/2}}{2c_4\tau^{1/2}} - \lambda_1^{1/2}\nu^2G^{2/3}.
\]  
From the condition \eqref{conditionontau}, we see that $\Gamma(-\tau)>0$, and we can write
\begin{align*}
  1 & = \frac{\Gamma(-\tau)}{\Gamma(-\tau)} 
        = \frac{1}{\Gamma(-\tau)}\frac{1}{\tau}\int_{\beta-\tau}^\beta \Gamma(-\tau)\;\rd t \\
      & \leq \frac{1}{\Gamma(-\tau)}\frac{1}{\tau}\int_{\beta-\tau}^\beta \Gamma(t-\beta)\;\rd t \\
      & \leq \frac{1}{\Gamma(-\tau)}\frac{1}{\tau}\int_{\beta-\tau}^\beta \|\bu(t)\|_{H^1}^2\;\rd t \\
      & \leq \frac{1}{\Gamma(-\tau)}\frac{1}{\tau}\int_{-2\tau}^0 \|\bu(t)\|_{H^1}^2\;\rd t.
\end{align*}
Thus, we have that
\begin{equation}
  \label{estimateoncomplementwcalregtauprime}
  1 \leq \frac{1}{\Gamma(-\tau)}\frac{1}{\tau}\int_{-2\tau}^0 \|\bu(t)\|_{H^1}^2\;\rd t,
    \quad \forall \bu\in  \Wcal\setminus\Wcal_{\reg,\tau}'.
\end{equation}
Let now $\rho$ be an invariant Vishik-Fursikov measure on $\Wcal$. From \eqref{estimateoncomplementwcalregtauprime} we have that
\[ \rho(\Wcal\setminus\Wcal_{\reg,\tau}') 
        \leq \frac{1}{\Gamma(-\tau)}\frac{1}{\tau}\int_{\Wcal\setminus\Wcal_{\reg,\tau}'}
             \int_{-2\tau}^0 \|\bu(t)\|_{H^1}^2\;\rd t\;\rd\rho(\bu).
\]
Using Fubini's theorem, extending the integral to all $\Wcal$, and using the fact that $\rho$ is invariant on $\Wcal$, we find that
\[ \rho(\Wcal\setminus\Wcal_{\reg,\tau}') 
     \leq \frac{1}{\Gamma(-\tau)}\frac{1}{\tau} \int_{-2\tau}^0 \int_\Wcal 
       \|\bu(t)\|_{H^1}^2\;\rd\rho(\bu)\;\rd t
     \leq \frac{2}{\Gamma(-\tau)} \int_\Wcal \|\bu(0)\|_{H^1}^2\;\rd\rho(\bu).
\] 
Using now the estimate \eqref{estimaterho0inh1} for $\rho_0 = \Pi_0\rho$ we obtain
\begin{equation}
  \rho(\Wcal\setminus\Wcal_{\reg,\tau}') \leq \frac{2}{\Gamma(-\tau)} \lambda_1^{1/2}\nu^2 G^2      = \frac{4c_4\lambda_1^{1/2}\nu^{1/2}\tau^{1/2}G}{1-2c_4\lambda_1^{1/2}\nu^{1/2}\tau^{1/2}G^{2/3}},
\end{equation}
which proves \eqref{estimateforcomplementwcalregtauprime}.
\end{proof}

%Hmm, looking at the fact that we bound an integral over $(\beta-\tau, \beta)$, with length $\tau$, by an integral over $(0,-2\tau)$, with length $2\tau$, I think it should be possible to improve the result of Theorem \ref{thmestimateforcomplementwcalregtauprime}, getting rid of the factors of 2 and 4 in the expressions. This could be done by decomposing the space $Wcal\setminus\Wcal_{\reg,\tau}'$ by spaces localizing the blow up at a subintervals (with length $1/n$, say, giving us a factor of $1+1/n$ or so) and refining this localization (letting $n\rightarrow \infty$).

\begin{rmk}
  Perusing the proof of Theorem \ref{thmestimateforcomplementwcalregtauprime} we see that the same estimate is valid in fact for the complement in $\Wcal$ of the larger set
  \[ \Wcal_{\reg,\tau^-}' = \left\{\bu\in \Wcal; \;\bu \text{ is a strong solution on } (-\tau,0]\right\}.
  \]
\end{rmk}

\begin{rmk}
  \label{wcalreginftyergodic}
At the moment, there is no estimate for the set $\Wcal_{\reg,\infty}'$ of global strong solutions. However, it is clear that both sets $\Wcal_{\reg,\infty}'$ and $\Wcal\setminus\Wcal_{\reg,\infty}'$ are invariant by the translation semigroup $\sigma_\tau$. Therefore, in the case that an invariant Vishik-Fursikov measure $\rho$ is ergodic, we must have either $\rho(\Wcal_{\reg,\infty}')=1$ or $\rho(\Wcal_{\reg,\infty}')=0$.
\end{rmk}

%Now, need to work carefully the sketches of Ciprian, using the estimates above, that, for a Borel set $X$ we would have
%\[ \rho_0(X) \leq \rho_0(\Sigma_\tau X)       \leq \rho(\Wcal\setminus\Wcal_{\reg,\tau}') +\rho_0(X).
%\]
%But I have not been able to prove that. We don't have backward uniqueness in $\Wcal_{\reg,\tau}$ within $\Wcal$.

%But if that is true, then 
%\[ f(s) = \rho_0(\Sigma_{s^2}X),
%\]
%we should have something like
%\[ \frac{\rd f(s)}{\rd s} \leq C,
%\]
%for some constant $C$ to be worked out, or
%\[ \rho_0(\Sigma_{\tau_1}X)-\rho_0(\Sigma_{\tau_2}X)
%   \leq C(\tau_1^{1/2}-\tau_2^{1/2}).
%\]

\section{Accretion properties of statistical solutions}
\label{secaccretion}

For an invariant measure $\mu$ of a well-defined semigroup $\{S(t)\}_{t\geq 0}$ on a given phase space, it is immediate to deduce that $\mu(S(t)E) = \mu(S(t)^{-1}S(t)E) \geq \mu(E)$, for any $t\geq 0$ and any measurable subset $E$. If $\mu$ is an arbitrary measure but still has the property that $\mu(S(t)E) \geq \mu(E)$, for any $t\geq 0$ and any measurable subset $E$, then this measure is called \emph{accretive.} In the case of the three dimensional Navier-Stokes equations, however, a semigroup is not known to be available and such a notion does not make sense in this way. But we may still define a notion of accretion based on the multivalued evolution maps defined in Section \ref{secmultvaluedmap} (see Definition \ref{defaccretive}). We then prove that any Vishik-Fursikov stationary statistical solution is accretive in this sense (see Theorem \ref{accretionvfss}), extending, with a much simpler proof, a result previously known only for time-average stationary statistical solutions \cite{foiastemam75}.

The accretion property allows us to apply to these measures the recurrence results given in Section \ref{sectionrecurrence}.

\subsection{Accretive measures}

Recall from Lemma \ref{sigmaomegauniversal} that if $E$ is Borel then $\Sigma_t E$ is universally measurable. Hence, the following definition makes sense.

\begin{defs}
  \label{defaccretive}
  A Borel probability measure $\mu$ on the phase space $H$ is said
  to be accretive with respect to the family $\{\Sigma_t\}_{t\geq 0}$ if
  \begin{equation}
    \label{eqaccretion}
    \mu(\Sigma_t E) \geq \mu(E), \quad \forall t\geq 0,
  \end{equation}
  for all Borel subsets $E$ of $H$.
\end{defs}

In \cite{foiastemam75,fmrt2001a} it has been proved that any time-average
stationary statistical solution for the three-dimensional Navier-Stokes
equations is accretive in the sense above (see also \cite{foias73}):

\begin{prop}[{\cite[Theorem 3.4]{foiastemam75}}]
  Any time-average stationary statistical solution on $H$ is accretive
  with respect to $\{\Sigma_t\}_{t\geq 0}$.
\end{prop}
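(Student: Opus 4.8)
The plan is to lift the statement to the trajectory space, where the time-average measure is the projection of a genuinely invariant measure, and then to exploit the autonomy of the equation. By Theorem~\ref{thmtimeaveFPareVF}, the given time-average stationary statistical solution $\mu$, associated with a weak solution $\bu$ on $[t_0,\infty)$ and a generalized limit $\Lim$, can be written as $\mu=\Pi_s\rho$ for every $s\geq t_0$, where $\rho$ is the corresponding time-average invariant Vishik-Fursikov measure on $\Ccal_\loc([t_0,\infty),H_\rw)$. Two properties of $\rho$ will drive the argument: first, $\rho$ is invariant under the translation semigroup $\{\sigma_\tau\}_{\tau\geq 0}$ (Definition~\ref{deftimeaveVFinvmea}); second, by Theorem~\ref{vfsscarriedbyus}, $\rho$ is carried by $\Ucal_{[t_0,\infty)}$, that is, by genuine Leray-Hopf weak solutions on $[t_0,\infty)$, each of which is strongly continuous from the right at its initial time $t_0$ by property~\eqref{lhinitialcont}.

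The key step is a pointwise implication valid on the carrier of $\rho$. Fixing $t\geq 0$ and a Borel set $E\subset H$, I would argue that if $\bv\in\Ucal_{[t_0,\infty)}$ satisfies $\bv(t_0)\in E$, then, since the equation is autonomous and $t_0$ is a point of strong continuity from the right, the translate $s\mapsto \bv(t_0+s)$ is again a Leray-Hopf weak solution on $[0,\infty)$ with initial value $\bv(t_0)\in E$; hence $\bv(t_0+t)\in\Sigma_t\{\bv(t_0)\}\subset\Sigma_t E$ by Definition~\ref{defSigma}. Since $\rho(\Ucal_{[t_0,\infty)})=1$, this yields the inclusion
\[
\Pi_{t_0}^{-1}(E)\cap\Ucal_{[t_0,\infty)}\ \subset\ \Pi_{t_0+t}^{-1}(\Sigma_t E),
\]
the right-hand side being $\rho$-measurable (through the completion of $\rho$) because $\Sigma_t E$ is universally measurable by Lemma~\ref{sigmaomegauniversal} and $\Pi_{t_0+t}$ is continuous.

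It then remains to read this inclusion through $\rho$ and to invoke the invariance. Because $\Pi_{t_0+t}=\Pi_{t_0}\circ\sigma_t$ and $\sigma_t\rho=\rho$, the induced-measure relation gives $\Pi_{t_0+t}\rho=\Pi_{t_0}\rho=\mu$, so that $\rho(\Pi_{t_0+t}^{-1}(\Sigma_t E))=\mu(\Sigma_t E)$, while $\rho(\Pi_{t_0}^{-1}(E))=\mu(E)$. Combining these with the displayed inclusion and $\rho(\Ucal_{[t_0,\infty)})=1$ produces $\mu(E)\leq\mu(\Sigma_t E)$ for every $t\geq 0$ and every Borel $E\subset H$, which is exactly the accretion property of Definition~\ref{defaccretive}.

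This route is essentially bookkeeping once the trajectory-space framework is in place; the only points requiring care are the measurability of $\Pi_{t_0+t}^{-1}(\Sigma_t E)$ with respect to the completion of $\rho$, and the clean use of $\Pi_{t_0+t}\rho=\mu$ on a universally measurable target. I would note that the proof anticipates the general accretion statement for Vishik-Fursikov stationary statistical solutions. If instead one insists on a self-contained phase-space argument in the spirit of \cite{foiastemam75}, one compares the fraction of time the orbit of $\bu$ spends in $E$ with that spent in $\Sigma_t E$, using that the strong-continuity-from-the-right points of $\bu$ form a set of full measure and the shift-invariance~\eqref{genliminvariance} of $\Lim$; there the main obstacle is that the time-occupation functional $A\mapsto\Lim\frac{1}{T}\int_0^T\chi_A(\bu(t_0+\tau))\,\rd\tau$ is only finitely additive and coincides with $\mu$ merely on continuity sets, so one must interpose approximations by open and closed sets together with the upper semicontinuity of the multivalued map $\Sigma_t$, which is precisely why the trajectory-space route is preferable.
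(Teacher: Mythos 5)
Your proof is correct and follows essentially the same route as the paper, which establishes this result by showing that time-average stationary statistical solutions are Vishik-Fursikov stationary statistical solutions (Theorem \ref{thmtimeaveFPareVF}) and then proving accretion for Vishik-Fursikov statistical solutions (Theorem \ref{accretionvfss}, specialized in Corollaries \ref{rho0accretive} and \ref{timeaverageaccretiveagain}) via exactly your key inclusion $(\Pi_0^{-1}E)\cap\Ucal_{[0,\infty)}\subset\Pi_t^{-1}(\Sigma_t E)$, the fact that $\rho$ is carried by $\Ucal_I$ (Theorem \ref{vfsscarriedbyus}), and the universal measurability of $\Sigma_t E$ (Lemma \ref{sigmaomegauniversal}). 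The only cosmetic difference is in the measurability bookkeeping: where you pass to the completion of $\rho$ to measure $\Pi_{t_0+t}^{-1}(\Sigma_t E)$, the paper instead invokes outer regularity of $\rho_t$ and approximates $\Sigma_t E$ from outside by open sets, whose preimages are Borel.
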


\begin{rmk}
  There is another important related concept which is that of a \emph{semi-invariant}
  Borel probability measure $\mu$ on $H$, for which $\mu(S_R(t)^{-1}E) \leq \mu(E)$, 
  for all $t\geq 0$ and all Borel sets $E$ in $H$, where $S_R(t)$ is the ``strong-solution'' 
  operator defined on a subset $D_R(t)$ of $V$ of initial conditions $\bu_0$ for which there 
  is a unique strong solution $S_R(\cdot)\bu_0$ on $[0,t]$. 
  (See \cite[pages 28, 33, and 37]{foias73}, 
  where such a measure was called accretive, but it is different from the notion of accretive 
  measure currently used).
\end{rmk}

Thanks to the regularity of a Borel probability measure on $H$, the 
accretion property extends to arbitrary measurable sets:
\begin{lem}
  \label{extendedaccretion}
  Let $\mu$ be an accretive Borel probability measure on $H$
  with respect to the family $\{\Sigma_t\}_{t\geq 0}$. 
  Let $t\geq 0$ and suppose $E$ is a $\mu$-measurable set 
  such that $\Sigma_t E$ is also $\mu$-measurable.
  Then $\mu(\Sigma_t E) \geq \mu(E)$.
\end{lem}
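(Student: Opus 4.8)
The plan is to reduce the $\mu$-measurable set $E$ to a Borel set on which the accretion hypothesis \eqref{eqaccretion} applies directly, and then to transport the resulting inequality across $\Sigma_t$ by monotonicity. First I would invoke the structure of $\mu$-measurable sets recalled in Section \ref{measuretheory}: since $\mu$ denotes the completion of a Borel measure, there exist a Borel set $E_B$ and a subset $E_N$ of a Borel set of $\mu$-measure zero with $E = E_B \cup E_N$. In particular $E_B \subseteq E$, and since $E_N$ is contained in a $\mu$-null set one has $\mu(E) = \mu(E_B)$.

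Next I would use that the multivalued map $\Sigma_t$ is monotone directly from its definition (Definition \ref{defSigma}): every weak solution $\bu\in\Ucal_{[0,\infty)}$ with $\bu(0)\in E_B$ also satisfies $\bu(0)\in E$, so $\Sigma_t E_B \subseteq \Sigma_t E$. The crucial observation is that the argument never requires the equality $\Sigma_t E = \Sigma_t E_B$, which may well fail because the discarded null part $E_N$ can still generate additional points under $\Sigma_t$; only the inclusion is needed, and it points the inequality in the favorable direction.

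Then, because $E_B$ is Borel, the accretion hypothesis \eqref{eqaccretion} yields $\mu(\Sigma_t E_B) \geq \mu(E_B) = \mu(E)$. Here $\Sigma_t E_B$ is universally measurable, hence $\mu$-measurable, by Lemma \ref{sigmaomegauniversal}, so the quantity $\mu(\Sigma_t E_B)$ is well defined. Finally, since $\Sigma_t E_B \subseteq \Sigma_t E$ with both sets $\mu$-measurable (the former by Lemma \ref{sigmaomegauniversal}, the latter by the standing hypothesis on $E$), monotonicity of $\mu$ gives
\[
  \mu(\Sigma_t E) \geq \mu(\Sigma_t E_B) \geq \mu(E_B) = \mu(E),
\]
which is the assertion.

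The step I expect to require the most care is not a genuine obstacle but rather the measurability bookkeeping: one must verify that $\Sigma_t E_B$ is $\mu$-measurable so that $\mu(\Sigma_t E_B)$ makes sense and the monotonicity estimate applies, which is exactly the content of Lemma \ref{sigmaomegauniversal}. This also clarifies why the hypothesis that $\Sigma_t E$ itself be $\mu$-measurable is genuinely needed: $\Sigma_t E$ is in general assembled from the universally measurable set $\Sigma_t E_B$ together with the image of the null set $E_N$, over which we retain no control, so its $\mu$-measurability cannot be taken for granted and must be assumed. An alternative route would replace the completion representation by inner regularity of $\mu$ on the Polish space $H$, approximating $E$ from within by compact (hence Borel) sets $K_n\subseteq E$ with $\mu(K_n)\to\mu(E)$, applying \eqref{eqaccretion} to each $K_n$, and passing to the limit; but the completion argument above is more economical.
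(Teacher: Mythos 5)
Your proof is correct, but it follows a genuinely different route from the paper's. The paper's own argument is precisely the ``alternative route'' you mention and set aside at the end: since $H$ is Polish, the (completed) measure $\mu$ is inner regular, so one approximates $E$ from within by compact sets $K\subset E$, applies the accretion hypothesis \eqref{eqaccretion} to each such $K$ (compact sets being Borel), uses the monotonicity $\Sigma_t K\subset \Sigma_t E$, and takes the supremum over $K$ to get $\mu(\Sigma_t E)\geq \sup_K \mu(K)=\mu(E)$. Your main argument instead exploits the purely measure-theoretic structure of the completion recalled in Section \ref{measuretheory}: writing $E=E_B\cup E_N$ with $E_B$ Borel and $E_N$ inside a Borel null set, so that $\mu(E)=\mu(E_B)$, applying \eqref{eqaccretion} to $E_B$, and transporting the inequality via $\Sigma_t E_B\subset \Sigma_t E$ and monotonicity of $\mu$, with Lemma \ref{sigmaomegauniversal} guaranteeing that $\mu(\Sigma_t E_B)$ is well defined. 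Both proofs are sound, and each step of yours is backed by facts the paper explicitly records. What your route buys is independence from topological regularity of $\mu$: it needs only the completion structure, so it would survive in settings where inner approximation by compact sets is unavailable. What the paper's route buys is uniformity of technique, since the same compact-approximation-and-supremum device reappears later (e.g., in the proof of Theorem \ref{recurrenceforaccretivemeasures}). Your closing observation --- that the hypothesis on $\Sigma_t E$ cannot be dropped because $\Sigma_t E=\Sigma_t E_B\cup\Sigma_t E_N$ and accretion gives no upper control on images of null sets --- is correct and is a point the paper leaves implicit.
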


\begin{proof} 
We approximate the measure of $E$ from below by the measure
of a compact set $K\subset E$. Since $K\subset E$, we 
have $\Sigma_t K\subset \Sigma_t E$. Then, applying also the accretion 
property to $K$, we find
\[ \mu(\Sigma_t E) \geq \mu(\Sigma_t K) \geq \mu(K).
\]
Taking the supremum in $K\subset E$ compact, we obtain
\[ \mu(\Sigma_t E) \geq \sup \{\mu(K); \; K\subset E, \;K \text{ compact in } H \} = \mu(E),
\]
which completes the proof.
\end{proof}
\medskip

Accretive measures satisfy a strengthened accretion property according
to the following result.
\begin{lem}
  \label{lemstrengthenedaccretion}
  Let $\mu$ be an accretive measure on $H$ for $\{\Sigma_t\}_{t\geq 0}$.
  Then,
  \begin{equation}
    \mu(\Sigma_t E) \geq \mu(\Sigma_s E),
    \quad \forall t\geq s\geq 0,
  \end{equation}
  for all Borel subsets $E$ of $H$.
\qed
\end{lem}
\medskip

\begin{proof}
This result is based on Lemma  \ref{measurabilitysigmats}, which 
guarantees that $\Sigma_s E$ and $\Sigma_t\Sigma_s E$
are measurable, and on Lemma \ref{extendedaccretion}, which extends the 
accretion property to such sets. Then, for $t\geq s\geq 0$, we have
$\Sigma_t E=\Sigma_{t-s+s} E\supset \Sigma_{t-s}\Sigma_s E$,
and using the accretion property
starting from $\Sigma_s E$, we find $\mu(\Sigma_t E)\geq
\mu(\Sigma_{t-s}\Sigma_s E)\geq \mu(\Sigma_s E),$
which gives the strengthened form of accretion.
\end{proof}

The result of Lemma \ref{lemstrengthenedaccretion} was announced in \cite{foias74b} and given in \cite{foiasprodi76} (see also \cite{fmrt2001a}). However, the measurability of these sets
was not completely proved. 

\subsection{Accretion property for Vishik-Fursikov statistical solutions}

First, we prove a form of accretion for time-dependent Vishik-Fursikov statistical solutions. A related result was given in \cite{foiasprodi76}, in which it was shown that, given any Borel probability measure $\mu_0$ on $H$ with finite mean kinetic energy, there exists at least one time-dependent statistical solution $\{\mu_t\}_{t\geq 0}$, in the original sense of Foias and Prodi \cite{foias72,foiasprodi76}, that satisfies the accretion property $\mu_t(\Sigma_t E) \geq \mu_0(E)$, for any Borel subset $E\subset H$ and any $t\geq 0$. Here, we show that this property is true for any time-dependent Vishik-Fursikov statistical solution.

\begin{thm}
  \label{accretionvfss}
  Let $\{\rho_t\}_{t\geq 0}$ be a Vishik-Fursikov statistical solution over $[0,\infty)$.
  Then $\{\rho_t\}_{t\geq 0}$ satisfies
  \[ \rho_t(\Sigma_t E) \geq \rho_0(E),
  \]
  for all Borel sets $E$ in $H$ and all $t\geq 0$.
\end{thm}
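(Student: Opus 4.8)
The plan is to lift the inequality to the trajectory space, where it reduces to a one-line set inclusion together with the monotonicity of $\rho$. By definition of a Vishik-Fursikov statistical solution, we have $\rho_t = \Pi_t\rho$ for a single Vishik-Fursikov measure $\rho$ over $[0,\infty)$, so that $\rho_0(E) = \rho(\Pi_0^{-1}(E))$ and, with some care addressed below, $\rho_t(\Sigma_t E) = \rho(\Pi_t^{-1}(\Sigma_t E))$. By Theorem~\ref{vfsscarriedbyus}, $\rho$ is carried by the space $\Ucal_{[0,\infty)}$ of genuine Leray-Hopf weak solutions on $[0,\infty)$, so I may freely intersect with $\Ucal_{[0,\infty)}$ inside any $\rho$-measure without changing its value.

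The crucial observation is the inclusion
\[
  \Ucal_{[0,\infty)} \cap \Pi_0^{-1}(E) \subseteq \Pi_t^{-1}(\Sigma_t E),
\]
which is nothing but a restatement of Definition~\ref{defSigma}: if $\bu$ is a weak solution on $[0,\infty)$ with $\bu(0) = \Pi_0\bu \in E$, then its value $\bu(t) = \Pi_t\bu$ at time $t$ is, by the very definition of $\Sigma_t E$, an element of $\Sigma_t E$, i.e. $\bu \in \Pi_t^{-1}(\Sigma_t E)$. The left-hand set is Borel, since $E$ is Borel, $\Pi_0$ is continuous, and $\Ucal_{[0,\infty)}$ is Borel by Lemma~\ref{Ucalileftborel}. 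Combining this inclusion with monotonicity of $\rho$ and the fact that $\rho$ is carried by $\Ucal_{[0,\infty)}$ then gives
\[
  \rho_0(E) = \rho\bigl(\Ucal_{[0,\infty)} \cap \Pi_0^{-1}(E)\bigr) \leq \rho\bigl(\Pi_t^{-1}(\Sigma_t E)\bigr) = \rho_t(\Sigma_t E),
\]
which is the claim.

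The one point requiring care, and which I expect to be the only real obstacle, is measurability: $\Sigma_t E$ is not Borel but only universally measurable (Lemma~\ref{sigmaomegauniversal}), so both the quantity $\rho_t(\Sigma_t E)$ and the identity $\rho_t(\Sigma_t E) = \rho(\Pi_t^{-1}(\Sigma_t E))$ must be justified with respect to the completion of the pushforward measure $\rho_t = \Pi_t\rho$. I would handle this by writing $\Sigma_t E = A_B \cup A_N$ with $A_B$ Borel and $A_N$ contained in a Borel $\rho_t$-null set $N$, which is possible precisely because universal measurability implies measurability for the completed Borel measure $\rho_t$. Then $\Pi_t^{-1}(A_B)$ is Borel, while $\Pi_t^{-1}(A_N) \subseteq \Pi_t^{-1}(N)$ with $\rho(\Pi_t^{-1}(N)) = \rho_t(N) = 0$, so $\Pi_t^{-1}(\Sigma_t E)$ is $\rho$-measurable and
\[
  \rho\bigl(\Pi_t^{-1}(\Sigma_t E)\bigr) = \rho\bigl(\Pi_t^{-1}(A_B)\bigr) = \rho_t(A_B) = \rho_t(\Sigma_t E),
\]
using the pushforward relation $\rho(\Pi_t^{-1}(\cdot)) = \rho_t(\cdot)$ on Borel sets. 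With this bookkeeping in place, the chain of inequalities above is fully rigorous.
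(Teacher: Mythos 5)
Your proof is correct, and its core coincides with the paper's own argument: both rest on the set inclusion $\Ucal_{[0,\infty)} \cap \Pi_0^{-1}(E) \subseteq \Pi_t^{-1}(\Sigma_t E)$ (recorded in the paper as \eqref{PitSigmatsupsetPi0}), on the carrier property of Theorem \ref{vfsscarriedbyus}, and on the universal measurability of $\Sigma_t E$ from Lemma \ref{sigmaomegauniversal}. Where you diverge is in the device used to give meaning to, and to transfer, the measure of the non-Borel set $\Sigma_t E$. The paper invokes outer regularity of $\rho_t$: it writes $\rho_t(\Sigma_t E)$ as the infimum of $\rho_t(O)$ over open supersets $O \supset \Sigma_t E$, bounds each $\rho_t(O) = \rho\bigl((\Pi_t^{-1}O)\cap\Ucal_{[0,\infty)}\bigr)$ from below by $\rho_0(E)$ via the same inclusion, and then passes to the infimum. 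You instead work directly with the completion, decomposing $\Sigma_t E$ into a Borel set plus a subset of a Borel $\rho_t$-null set and pulling that decomposition back through $\Pi_t$ to obtain the identity $\rho_t(\Sigma_t E) = \rho\bigl(\Pi_t^{-1}(\Sigma_t E)\bigr)$ outright, after which monotonicity finishes the proof. The two devices are interchangeable here: yours is marginally more elementary, needing only the structure of the completion (the characterization recalled in Section \ref{measuretheory}) rather than the regularity of $\rho_t$ on the Polish space $H$; the paper's regularity route, in turn, sidesteps the question of whether preimages of completion-measurable sets are measurable for the completion of $\rho$, which is precisely the bookkeeping you carry out explicitly and correctly. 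Either way the inequality follows, so your argument is a complete proof.
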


\begin{proof} 
  Let $\rho$ be the associated Vishik-Fursikov measure over $[0,\infty)$.
  Since $E$ is Borel, Lemma \ref{sigmaomegauniversal} yields that $\Sigma_t E$
  is universally measurable, hence, $\rho_t$-measurable. 
  
  Since $\rho_t$ is a regular Borel measure on $H$ we have that
  \[ \rho_t(\Sigma_t E)=\inf\left\{ \rho_t(O); \;O\supset \Sigma_t E, \;O \text{ open in } H\right\}.
  \]
  Consider then $O\supset \Sigma_t E$ open in $H$. Such $O$ is a Borel set in $H_\rw$.
  Since $\Pi_t:\Ccal_\loc([0,\infty),H_\rw)\rightarrow H_\rw$ is continuous, for $t\geq 0$, 
  the set $\Pi_t^{-1}O$ is Borel in the space $\Ccal_\loc([0,\infty),H_\rw)$ and $\rho_t(O)=\rho(\Pi_t^{-1}O)$.
  From Theorem \ref{vfsscarriedbyus}, $\rho$ is carried by $\Ucal_{[0,\infty)}$, so that we 
  write
  \[ \rho_t(O) = \rho\left((\Pi_t^{-1}O)\cap \Ucal_{[0,\infty)}\right).
  \]
  Since $O\supset \Sigma_t E$, this means that
  \[ \rho_t(O) \geq \rho(\Pi_t^{-1}\Sigma_tE)\cap\Ucal_{[0,\infty)}).
  \] 
  Notice, now, that
  \begin{equation}
    \label{PitSigmatsupsetPi0}
     (\Pi_t^{-1}\Sigma_tE)\cap\Ucal_{[0,\infty)} \supset (\Pi_0^{-1}E)\cap\Ucal_{[0,\infty)},
  \end{equation}
  which implies that
  \[ \rho_t(O) \geq \rho\left((\Pi_0^{-1}E)\cap\Ucal_{[0,\infty)}\right)=\rho_0(E).
  \]
  Question?:
  \[ (\Pi_t^{-1}\Sigma_tE)\cap\Ucal_{[0,\infty)} \supset (\Pi_s^{-1}\Sigma_sE)\cap\Ucal_{[0,\infty)}, \quad \forall t\geq s\geq 0?
  \]
  Thus, from the regularity property of $\rho_t$, we take the infimum of $\rho_t(O)$ over $O\supset \Sigma_t E$ to deduce that
  \[ \rho_t(\Sigma_t E) \geq \rho_0(E),
  \]
  which proves the desired accretion property for $\{\rho_t\}_{t\geq 0}$.
\end{proof}
\medskip

A simple time-translation of the previous result yields the following corollary.
\begin{cor}
  Let $t_0\in \RR$ and let $\{\rho_t\}_{t\geq t_0}$ be a Vishik-Fursikov statistical solution over 
  $[t_0,\infty)$. Then $\{\rho_t\}_{t\geq t_0}$ satisfies
  \[ \rho_t(\Sigma_{t-t_0} E) \geq \rho_{t_0}(E),
  \]
  for all Borel sets $E$ in $H$ and all $t\geq t_0$.
  \qed
\end{cor}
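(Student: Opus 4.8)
The plan is to deduce the statement directly from Theorem \ref{accretionvfss} by a time-translation, exploiting the fact that, since the forcing term $\bbf$ is time-independent, the functional equation \eqref{nseeqfunctional} is autonomous. Consequently, if $\bu$ is a weak solution on $[t_0,\infty)$, then $s\mapsto \bu(s+t_0)$ is a weak solution on $[0,\infty)$, and the multivalued maps $\Sigma_\tau$ introduced in Definition \ref{defSigma} depend only on the elapsed time $\tau$.

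First I would introduce the time-translation operator
\[ T:\Ccal_\loc([t_0,\infty),H_\rw)\rightarrow \Ccal_\loc([0,\infty),H_\rw), \qquad (T\bu)(s)=\bu(s+t_0), \]
which is a homeomorphism with inverse $\bv\mapsto \bv(\cdot - t_0)$. By the autonomy just noted, $T$ restricts to homeomorphisms of $\Ucal_{[t_0,\infty)}^\sharp$ onto $\Ucal_{[0,\infty)}^\sharp$ and of $\Ucal_{[t_0,\infty)}$ onto $\Ucal_{[0,\infty)}$. Writing $\rho$ for the Vishik-Fursikov measure over $[t_0,\infty)$ with $\rho_t=\Pi_t\rho$, I would then set $\tilde\rho=T\rho$ to be the induced (pushforward) measure on $\Ccal_\loc([0,\infty),H_\rw)$, in the sense of \eqref{changeofvariablesinducedmeasures}.

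Next I would verify that $\tilde\rho$ is a Vishik-Fursikov measure over $[0,\infty)$ by checking the three conditions of Definition \ref{defvfmeasure}. The carrier condition \eqref{rhocarrierforvfm} follows from $T(\Ucal_{[t_0,\infty)}^\sharp)=\Ucal_{[0,\infty)}^\sharp$; the mean--kinetic--energy condition \eqref{rhomeankineticforvfm} and the continuity condition \eqref{meancontinuityatinitialtimeforvfm} at the initial time follow from the change-of-variables formula \eqref{changeofvariablesinducedmeasures} together with the identity $\Pi_s\circ T=\Pi_{s+t_0}$, which reduces each integral for $\tilde\rho$ to the corresponding one for $\rho$ evaluated at the shifted time $s+t_0$. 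In particular this identity gives $\Pi_s\tilde\rho=\Pi_{s+t_0}\rho=\rho_{s+t_0}$ for all $s\geq 0$, so the family $\{\tilde\rho_s\}_{s\geq 0}$ defined by $\tilde\rho_s=\rho_{s+t_0}$ is a Vishik-Fursikov statistical solution over $[0,\infty)$.

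Finally I would apply Theorem \ref{accretionvfss} to $\{\tilde\rho_s\}_{s\geq 0}$ to obtain $\tilde\rho_s(\Sigma_s E)\geq \tilde\rho_0(E)$ for all Borel sets $E$ and all $s\geq 0$, and then substitute $s=t-t_0$ with $t\geq t_0$, using $\tilde\rho_{t-t_0}=\rho_t$ and $\tilde\rho_0=\rho_{t_0}$, to recover $\rho_t(\Sigma_{t-t_0}E)\geq \rho_{t_0}(E)$. There is no substantive obstacle here: the only point requiring attention is confirming that the pushforward $\tilde\rho$ genuinely satisfies Definition \ref{defvfmeasure}, but each of those conditions transfers mechanically through $T$ precisely because the equations are autonomous, so that $T$ is a measure-theoretic isomorphism respecting both the solution structure and the projections.
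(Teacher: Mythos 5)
Your proposal is correct and follows essentially the same route as the paper: the paper's entire justification for this corollary is that it results from Theorem \ref{accretionvfss} by ``a simple time-translation,'' and your argument fills in exactly that translation, checking that the pushforward of the underlying Vishik-Fursikov measure under $(T\bu)(s)=\bu(s+t_0)$ is again a Vishik-Fursikov measure over $[0,\infty)$ (using that $\bbf$ is time-independent, so the trajectory spaces are mapped homeomorphically onto one another) with projections $\Pi_s T\rho=\rho_{s+t_0}$. Nothing further is needed.
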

\medskip

The case of a Vishik-Fursikov stationary statistical solution is a particular
case of the previous result and yields the desired accretion result.

\begin{cor}
  \label{rho0accretive}
  Let $\rho_0$ be a Vishik-Fursikov stationary statistical solution on $H$. Then
  $\rho_0$ is accretive for $\{\Sigma_t\}_{t\geq 0}$.
\end{cor}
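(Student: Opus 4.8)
The plan is to realize $\rho_0$ as the constant family of time-projections of an invariant Vishik-Fursikov measure and then invoke Theorem \ref{accretionvfss} directly. First, I would apply Corollary \ref{regrho0} with the specific choice $I=[0,\infty)$: this furnishes an invariant Vishik-Fursikov measure $\rho$ over $[0,\infty)$ for which $\rho_0=\Pi_t\rho$ for every $t\geq 0$. Because $\rho$ is invariant under the translation semigroup $\{\sigma_\tau\}_{\tau\geq 0}$, its time-projections do not depend on $t$, so that the family $\{\rho_t\}_{t\geq 0}$ given by $\rho_t=\Pi_t\rho$ is in fact the constant family $\rho_t\equiv\rho_0$.

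Next, since $\rho$ is a Vishik-Fursikov measure over $[0,\infty)$, this family $\{\rho_t\}_{t\geq 0}$ is, by definition, a Vishik-Fursikov statistical solution over $[0,\infty)$. Theorem \ref{accretionvfss} then applies and yields $\rho_t(\Sigma_t E)\geq \rho_0(E)$ for all Borel sets $E$ in $H$ and all $t\geq 0$. Substituting $\rho_t=\rho_0$ gives $\rho_0(\Sigma_t E)\geq \rho_0(E)$, which is precisely the accretion property of Definition \ref{defaccretive}.

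There is no genuine obstacle in this argument; the substantive work is entirely carried by Theorem \ref{accretionvfss}, and the corollary is a clean specialization. The only point requiring a moment's care is that the definition of a Vishik-Fursikov stationary statistical solution guarantees only an invariant measure over \emph{some} interval unbounded on the right, which need not be $[0,\infty)$; this is exactly what Corollary \ref{regrho0} removes, letting us select the interval $[0,\infty)$ on which the preceding accretion theorem is phrased. Once that reduction is in place, the stationarity of $\rho_0$ collapses the conclusion $\rho_t(\Sigma_t E)\geq \rho_0(E)$ of the theorem into the self-referential inequality $\rho_0(\Sigma_t E)\geq \rho_0(E)$ that defines accretion.
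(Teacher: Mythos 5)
Your proof is correct and takes essentially the same route as the paper: the paper also deduces the corollary by viewing the stationary statistical solution as the constant family of projections of an invariant Vishik-Fursikov measure and specializing Theorem \ref{accretionvfss}. Your explicit use of Corollary \ref{regrho0} to normalize the interval to $[0,\infty)$ is exactly the detail the paper leaves implicit, and it is the right way to fill it in.
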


Since we have seen that any time-average stationary statistical solution
is a Vishik-Fursikov stationary solution the above result gives a simplified
proof of the accretion for such solutions. We state this result as follows.

\begin{cor}
  \label{timeaverageaccretiveagain}
  Any time-average stationary statistical solution on $H$ in accretive for 
  $\{\Sigma_t\}_{t\geq 0}$
\end{cor}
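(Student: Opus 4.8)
The plan is to deduce this corollary immediately from the two results just established, namely Theorem \ref{thmtimeaveFPareVF} and Corollary \ref{rho0accretive}, so that essentially no new estimate is needed and the whole content reduces to tracking the nested definitions correctly.

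First I would invoke Theorem \ref{thmtimeaveFPareVF}: given a time-average stationary statistical solution $\mu$ on $H$, associated with a generalized limit $\Lim$ and a weak solution $\bu$ on $[t_0,\infty)$, there is a time-average invariant Vishik-Fursikov measure $\rho$ on $\Ccal_\loc([t_0,\infty),H_\rw)$, obtained from the same $\Lim$ and the same $\bu$, satisfying $\Pi_t\rho=\mu$ for every $t\geq t_0$. Next I would observe that this $\rho$ is in fact an invariant Vishik-Fursikov measure in the sense of Definition \ref{vfinvmeasdef}: by its construction in Definition \ref{deftimeaveVFinvmea} it is a Vishik-Fursikov measure carried by $\Ucal_I^\sharp(R)$, and the translation invariance $\sigma_\tau\rho=\rho$ holds because of property \eqref{genliminvariance} of the generalized limit, as noted in the discussion preceding that definition. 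Consequently $\mu=\Pi_t\rho$ is, by Definition \ref{defvfsss}, a Vishik-Fursikov stationary statistical solution on $H$.

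Finally, Corollary \ref{rho0accretive} asserts that any Vishik-Fursikov stationary statistical solution is accretive for $\{\Sigma_t\}_{t\geq 0}$; applying it to $\mu$ closes the argument. I expect no substantive obstacle here, since the entire difficulty lies in recognizing that Theorem \ref{thmtimeaveFPareVF} places every time-average stationary statistical solution inside the class to which Corollary \ref{rho0accretive} already applies. The only point demanding genuine care is the bookkeeping among the several notions of stationary solution, in particular verifying that the time-average invariant measure really satisfies the invariance required by Definition \ref{vfinvmeasdef}; this is precisely what the generalized-limit invariance \eqref{genliminvariance} supplies, so the verification is routine.
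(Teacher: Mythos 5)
Your proposal is correct and follows exactly the paper's own route: the paper derives this corollary precisely by noting that Theorem \ref{thmtimeaveFPareVF} identifies every time-average stationary statistical solution as a Vishik-Fursikov stationary statistical solution, and then applying Corollary \ref{rho0accretive}. Your additional check that the time-average invariant Vishik-Fursikov measure is genuinely invariant in the sense of Definition \ref{vfinvmeasdef} (via property \eqref{genliminvariance} of the generalized limit) is the same point the paper makes in the discussion surrounding Definition \ref{deftimeaveVFinvmea}, so nothing is missing.
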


\begin{rmk}
  In the proof of Theorem \ref{accretionvfss}, if, besides \eqref{PitSigmatsupsetPi0}, we had the more general relation $(\Pi_t^{-1}\Sigma_tE)\cap\Ucal_{[0,\infty)} \supset (\Pi_s^{-1}\Sigma_sE)\cap\Ucal_{[0,\infty)}$, for any $t\geq s\geq 0$, then we would find that $\rho_t(\Sigma_t E) \geq \rho_s(\Sigma_s E)$, showing that $\rho_t(\Sigma_t E)$ is monotonic increasing in $t$. However, we do not know whether the inclusion just mentioned is true for $s>0$. In fact, one cannot rule out the following situation: for some Borel set $E\subset H$ and some $s>0$, there is a solution $\bu\in \Ucal_{[0,\infty)}$ with $\bu(s)\in \Sigma_s E$ and $\bu(t) \notin \Sigma_t E$, for any $t\neq s$.  Then, if $\rho$ is the Dirac delta measure carried by $\bu$, which is a Vishik-Fursikov measure, then $\rho_t(\Sigma_t E)=0$, for $t\neq s$, and $\rho_s(\Sigma_s E)=1$, so that $\rho_t(\Sigma_t E)$ is not monotone. Notice that, in this case, $\bu$ cannot be strongly continuous from the right at $t=s$, otherwise, using Lemma \ref{pastinglemma}, we could paste the restriction of $\bu$ over $[s,\infty)$ with the restriction, over $[0,s]$, of a  solution $\bv$ such that $\bv(0)\in E$ and $\bv(s)=\bu(s)$, which exists since $\bu(s)\in \Sigma_sE$. Then, we would find a solution which starts at $E$ and is equal to $\bu$ for all $t\geq s$, so that $\bu(t)$ would belong to $\Sigma_tE$, for $t\geq s$, which would be a contradiction.
\end{rmk}

\begin{rmk}
  \label{rmkbackwardaccretion}
  One can extend Corollary \ref{rho0accretive} to obtain the accretion property both forwards and
  backwards in time. First of all, one can define the multi-valued 
  evolution map backwards in time by introducing, for any given $t<0$ and any set $E$ in 
  $H$, the set $\Sigma_t E$ of all points $\bw\in H$ such that $\bw=\bu(t)$ and $\bu$ 
  is a weak solution in $\Ucal_{[t,\infty)}$ with the condition $\bu(0)\in E$.
  For $t<0$, we may also define the map
  $\Sigma_t^\sharp E$ given by all the points $\bw\in H$ such 
  that $\bw=\bu(t)$ and $\bu$ is a weak solution in $\Ucal_{[t,\infty)}^\sharp$ with the 
  condition that $\bu(0)\in E$.  As in the forward case, $\Sigma_t E$ and 
  $\Sigma_t^\sharp E$ are universally measurable sets in $H$ for any Borel subset $E$ 
  of $H$. Then, if $I$ is an interval of the form 
  $I=[t_0,\infty)$ or $I=(t_0,\infty)$, with $t_0<0$, or $I=\RR$, and $\{\rho_t\}_{t\in I}$ 
  is a Vishik-Fursikov statistical solution over $I$, it follows that $\{\rho_t\}_{t\in I}$ is 
  backwards accretive with respect to $\{\Sigma_t^\sharp\}_{t\in I, \;t\geq 0}$ in
  the sense that
  \[ \rho_t(\Sigma_t^\sharp E) \geq \rho_0(E),
  \]
  for all Borel sets $E$ in $H$ and all $t\in I$ with $t\leq 0$. 
  Moreover, at the particular time $t=t_0<0$ we have that
  \[ \rho_{t_0}(\Sigma_{t_0} E)\geq \rho_0(R),
  \]
  for all Borel sets $E$ in $H$.
  The proofs follow the same lines as that for the proof of Theorem \ref{accretionvfss}.
  One difference is that now we work with $\Pi_t$ defined on $\Ccal_\loc(I,H_\rw)$. 
  The most significant change is that the inclusions now become
  \[ (\Pi_t^{-1}\Sigma_t^\sharp E)\cap\Ucal_I 
         \supset (\Pi_0^{-1}E)\cap\Ucal_I,
  \] 
  in the result for $\Sigma_t^\sharp$, and
  \[ (\Pi_{t_0}^{-1}\Sigma_{t_0} E)\cap\Ucal_{[t_0,\infty)} 
         \supset (\Pi_0^{-1}E)\cap\Ucal_{[t_0,\infty)},
  \] 
  in the result for $\Sigma_{t_0}$.
  In the particular case that $\rho_0$ is a 
  Vishik-Fursikov stationary statistical solution on $H$, then the second result above 
  together with Corollary \ref{regrho0} yields that
  $\rho_0$ is backward and forward accretive for $\{\Sigma_t\}_{t\in \RR}$ in the sense 
  that 
  \[ \rho_0(\Sigma_t E)\geq \rho_0(E), \quad \forall t\in \RR,
  \]
  for all Borel sets $E$ in $H$.
\end{rmk}

\section{Recurrence results}
\label{sectionrecurrence}

In this section, we address the property of recurrence for accretive measures and Vishik-Fursikov stationary statistical solutions. In the classical theory, given an invariant measure for a dynamical system and a measurable subset $E$ of the phase space, almost all initial conditions in $E$ are recurrent in the sense that their trajectories return to $E$ infinitely often. In Section \ref{secpoincareforaccretive}, we prove a version of this result valid for arbitrary accretive measures (accretive in the sense of Definition \ref{defaccretive}). In Section \ref{secrecurrencevishikfursikovmeasures} we apply the classical Poincar\'e Recurrence Theorem (Theorem \ref{poincarerecurrence}) to invariant Vishik-Fursikov measures in the trajectory space and investigate its consequence to the Vishik-Fursikov stationary statistical solutions, which are the projections of invariant Vishik-Fursikov measures.

As mentioned in the Introduction, other ergodic-type results can be obtained. See, for instance, Remarks \ref{rmkergodicityonwcal} and \ref{wcalreginftyergodic}, and our recent work \cite{frttimeaverage}.

\subsection{Poincar\'e recurrence for accretive measures}
\label{secpoincareforaccretive}

We first give the following result, which is used in the proof of the main result in this section and which is also interesting on its own.

\begin{lem}
  \label{invariantmeasureoforbits}
  Let $E$ be a Borel set in $H$ and $t\geq 0$.  
  Then $\Sigma_t\gamma(E)\subset \gamma(E)$.
  If, moreover, $\mu$ is an accretive Borel measure in $H$ for $\{\Sigma_t\}_{t\geq 0}$,
  then $\mu(\Sigma_t\gamma(E))=\mu(\gamma(E))$.
\end{lem}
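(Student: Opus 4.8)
The statement has two parts. The plan is to prove the inclusion $\Sigma_t\gamma(E)\subset\gamma(E)$ first, since it is purely set-theoretic and feeds directly into the measure-theoretic second part.

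For the inclusion, I would argue pointwise. Take $\bw\in\Sigma_t\gamma(E)$. By Definition \ref{defSigma} there is a weak solution $\bu^{(2)}\in\Ucal_{[0,\infty)}$ with $\bu^{(2)}(0)\in\gamma(E)$ and $\bu^{(2)}(t)=\bw$. Since $\bu^{(2)}(0)\in\gamma(E)=\bigcup_{s\geq 0}\Sigma_s E$, there is some $s\geq 0$ with $\bu^{(2)}(0)\in\Sigma_s E$, i.e. there is a weak solution $\bu^{(1)}\in\Ucal_{[0,\infty)}$ with $\bu^{(1)}(0)\in E$ and $\bu^{(1)}(s)=\bu^{(2)}(0)$. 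Now I paste: by Lemma \ref{pastinglemma} (noting $\bu^{(1)}$ restricted to $(0,s]$ and $\bu^{(2)}(\cdot-s)$ restricted to $[s,\infty)$ match at $s$, using that weak solutions are strongly continuous from the right at their initial time so the concatenation is again a weak solution), the concatenated solution $\tilde\bu$ is a weak solution on $[0,\infty)$ with $\tilde\bu(0)\in E$ and $\tilde\bu(t+s)=\bu^{(2)}(t)=\bw$. Hence $\bw\in\Sigma_{t+s}E\subset\gamma(E)$. This is exactly the composition argument of Lemma \ref{Sigmaproperty}, applied with $\gamma(E)$ in place of a single $\Sigma_s E$; indeed $\Sigma_t\gamma(E)=\bigcup_{s\geq 0}\Sigma_t\Sigma_s E\subset\bigcup_{s\geq 0}\Sigma_{t+s}E\subset\gamma(E)$, so I may simply cite Lemma \ref{Sigmaproperty} together with monotonicity of $\Sigma_t$ in its set argument.

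For the measure equality, first note that both sets are $\mu$-measurable: $\gamma(E)$ is universally measurable by Lemma \ref{orbituniversal}, and $\Sigma_t\gamma(E)$ is universally measurable by Lemma \ref{sigmagammameas}, so both are $\mu$-measurable for the Borel probability measure $\mu$ on $H$. The inclusion just proved gives $\mu(\Sigma_t\gamma(E))\leq\mu(\gamma(E))$. For the reverse inequality I invoke accretion: since $\gamma(E)$ is universally measurable and $\Sigma_t\gamma(E)$ is universally measurable, the extended accretion property of Lemma \ref{extendedaccretion} (which upgrades accretion from Borel sets to $\mu$-measurable sets whose image is also $\mu$-measurable) yields $\mu(\Sigma_t\gamma(E))\geq\mu(\gamma(E))$. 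Combining the two inequalities gives $\mu(\Sigma_t\gamma(E))=\mu(\gamma(E))$.

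The main obstacle is the measurability bookkeeping rather than any substantive estimate: I must make sure that $\gamma(E)$ and $\Sigma_t\gamma(E)$ both lie in the completed $\sigma$-algebra $\Bcal_\mu(H_\rw)=\Bcal_\mu(H)$ so that Lemma \ref{extendedaccretion} is applicable, and that the accretion hypothesis is being used in the correct (extended) form. The set-theoretic inclusion is routine once Lemma \ref{Sigmaproperty} is cited, but I should take care that $\gamma(E)$ is itself built from Borel $E$ so that the cited measurability lemmas apply verbatim.
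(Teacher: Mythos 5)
Your proof is correct and follows essentially the same route as the paper's: the inclusion via Lemma \ref{Sigmaproperty} applied under the union $\gamma(E)=\bigcup_{s\geq 0}\Sigma_s E$, and the measure equality by playing that inclusion against accretion, with measurability supplied by Lemmas \ref{orbituniversal} and \ref{sigmagammameas}. You are in fact slightly more careful than the paper, which invokes ``the accretion property'' directly on $\gamma(E)$ even though $\gamma(E)$ is only universally measurable rather than Borel; your explicit appeal to Lemma \ref{extendedaccretion} is precisely the justification that step requires.
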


\begin{proof}
From the definition of the orbit as 
$\gamma(E)=\bigcup_{s\geq 0} \Sigma_s E$ and
using the fact that $\Sigma_t\Sigma_s E \subset \Sigma_{t+s} E$,
it follows immediately that $\Sigma_t \gamma(E)\subset \gamma(E)$. 

Consider now an accretive measure $\mu$ on $H$ for $\{\Sigma_t\}_{t\geq 0}$.
From Lemma \ref{sigmagammameas}, $\Sigma_t\gamma(E)$ is measurable. 
Then, from the accretion property, $\mu(\Sigma_t \gamma(E)) \geq \mu(\gamma(E))$,
while from the inclusion $\Sigma_t \gamma(E)\subset \gamma(E)$, we have
that $\mu(\Sigma_t \gamma(E)) \leq \mu(\gamma(E))$.
Therefore, $\mu(\Sigma_t \gamma(E)) = \mu(\gamma(E))$.
\end{proof}
\medskip

We now present the main result of this section, concerning the recurrence for accretive measures.

\begin{thm}
  \label{recurrenceforaccretivemeasures}
  Let $\mu$ be an accretive Borel probability measure on $H$ for 
  $\{\Sigma_t\}_{t\geq 0}$
  and let $E$ be a $\mu$-measurable set.
  Then, for $\mu$-almost every $\bu_0\in E$, there exists
  a sequence of positive times $t_n\rightarrow \infty$ 
  such that $(\Sigma_{t_n} \bu_0)\cap E \ne \emptyset$ for all $n\in \NN$.
\end{thm}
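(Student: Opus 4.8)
The plan is to follow the classical Poincar\'e scheme --- show that the set of non-recurrent points of $E$ is null --- but to substitute the orbit-invariance of Lemma \ref{invariantmeasureoforbits} for the measure-preservation used in the single-valued case, and to circumvent the multivaluedness through inner regularity. First I would reduce to $E$ Borel: writing a $\mu$-measurable $E$ as $E_B\cup E_N$ with $E_B$ Borel and $E_N$ contained in a Borel null set, any return to $E_B$ is a return to $E$ and $\mu(E\setminus E_B)=0$, so it suffices to prove the statement for $E_B$. For $s>0$ I set
\[ R_s(E)=\{\bu_0\in H;\ \exists\,t\geq s,\ (\Sigma_t\bu_0)\cap E\neq\emptyset\}, \qquad W_s=E\setminus R_s(E). \]
Then $\bu_0\in W_s$ exactly when $\bu_0\in E$ and $(\Sigma_t\bu_0)\cap E=\emptyset$ for all $t\geq s$, and the set of points of $E$ whose return times $\{t;\,(\Sigma_t\bu_0)\cap E\neq\emptyset\}$ are bounded is precisely $\bigcup_{n\in\NN}W_n$. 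Hence the theorem reduces to showing $\mu(W_s)=0$ for every $s>0$.

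Next I would verify that $W_s$ is $\mu$-measurable. Mimicking the proof of Lemma \ref{orbituniversal}, I write $R_s(E)=\bigcup_k\Pi_0\big(\Ucal_{[0,\infty)}(R_k)\cap\mathcal{H}_s\big)$, where $\mathcal{H}_s$ is the set of trajectories meeting $E$ at some time $\geq s$; using that the evaluation map $(t,\bu)\mapsto\bu(t)=\Pi_0\sigma(t,\bu)$ is continuous and that the projection of a Borel subset of the Polish space $[s,\infty)\times\Ccal_\loc([0,\infty),B_H(R_k)_\rw)$ onto the trajectory factor is analytic, the set $R_s(E)$ is a countable union of analytic sets, hence universally measurable (Facts \ref{analyticcouniversallymeasurable} and \ref{universallymeasurablesigmaalgebra}). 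Therefore $W_s=E\cap(H\setminus R_s(E))$ is universally measurable, hence $\mu$-measurable.

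The crux is $\mu(W_s)=0$, and the main obstacle is that the orbit $\gamma(W_s)$ need not be universally measurable, since continuous images of the coanalytic set $W_s$ are uncontrolled, so Lemma \ref{invariantmeasureoforbits} cannot be applied to $W_s$ directly. I would bypass this by inner regularity: as a finite Borel measure on the Polish space $H$, $\mu$ is regular and so is its completion, whence $\mu(W_s)=\sup\{\mu(K);\ K\subset W_s\text{ compact}\}$, and it suffices to prove $\mu(K)=0$ for every compact $K\subset W_s$. Such a $K$ is Borel, so Lemmas \ref{orbituniversal}, \ref{sigmagammameas} and \ref{invariantmeasureoforbits} apply to it: $\gamma(K)$ and $\Sigma_s\gamma(K)$ are $\mu$-measurable, $\Sigma_s\gamma(K)\subset\gamma(K)$, and $\mu(\Sigma_s\gamma(K))=\mu(\gamma(K))$, so that $\mu(\gamma(K)\setminus\Sigma_s\gamma(K))=0$. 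Finally, since $K\subset W_s$ forces $(\Sigma_t K)\cap E=\emptyset$ for all $t\geq s$, Lemma \ref{Sigmaproperty} gives $\Sigma_s\gamma(K)\subset\bigcup_{t\geq s}\Sigma_t K$, which is disjoint from $E\supset K$; hence $K\subset\gamma(K)\setminus\Sigma_s\gamma(K)$ and $\mu(K)=0$. Summing $\mu(W_n)=0$ over $n\in\NN$ shows the non-recurrent set is null, which completes the proof.
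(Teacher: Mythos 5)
Your proof is correct and follows essentially the same route as the paper's: reduce to a Borel set, decompose the non-recurrent set by bounded return times, establish $\mu$-measurability via analytic-set arguments in the Polish spaces $[0,\infty)\times\Ccal_\loc([0,\infty),B_H(R_k)_\rw)$ using $\Pi_0\circ Q$ and $\sigma^{-1}\Pi_0^{-1}E$, and annihilate each compact subset $K$ of the bad set through the inclusion $K\subset\gamma(K)\setminus\Sigma_s\gamma(K)$, Lemmas \ref{Sigmaproperty}, \ref{orbituniversal}, \ref{sigmagammameas} and \ref{invariantmeasureoforbits}, and inner regularity. The only difference is bookkeeping: the paper indexes its bad sets $E_{n,k}^\rb$ by both a return-time bound and a ball $B_H(kR_0)$ (exploiting the invariance \eqref{invarianceBR} of those balls), whereas you index only by the time bound and let the balls enter through the decomposition $\Ucal_{[0,\infty)}=\bigcup_k\Ucal_{[0,\infty)}(R_k)$ inside the measurability step, which is an equivalent arrangement.
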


\begin{proof} 
Since $\mu$ is a Borel measure, any $\mu$-measurable set 
is of the form $E=E_0\cup E_N$, where $E_0$ is Borel and $E_N$ is of
null measure. Hence, $E_0\subset E$ in $H$ and $\mu(E)=\mu(E_0)$,
so it suffices to show the result for the Borel set $E_0$.

Consider the ``good'' set of recurrent points of $E_0$:
\[ E_0^\rg = \left\{ \bu_0\in E_0; \;\exists (t_n)_{n\in \NN}, \text{ such that } t_n\geq 0, \;t_n\rightarrow \infty, 
    \text{ and } \Sigma_{t_n}\bu_0 \cap E_0 \ne \emptyset, \forall n\in \NN \right\}.
\] 
We need to show that $E_0^\rg$ has full measure in $E_0$, i.e. 
$\mu(E_0\setminus E_0^\rg)=0$. Note, using in particular the
a~priori estimate \eqref{energyestimate}, that the ``bad'' set $E_0\setminus E_0^\rg$ can be decomposed into
\begin{equation}
  \label{badsetdecomposition}
  E_0\setminus E_0^\rg = \bigcup_{n,k\in \NN} E_{n,k}^\rb,
\end{equation}
where the smaller ``bad'' sets $E_{n,k}^\rb$ are given by
\[ E_{n,k}^\rb = \left\{ \bu_0\in E_0\cap B_H(kR_0); 
    \;\Sigma_t\bu_0 \cap E_0 \cap B_H(kR_0)= \emptyset, 
    \;\forall \;t\geq \frac{n}{\nu\lambda_1}\right\}.
\]
Hence, it suffices to show that $\mu(E_{n,k}^\rb)=0$ for each $n$.

Let us first check that $E_{n,k}^\rb$ is a co-analytic subset of the Polish space $B_H(kR_0)_\rw$, i.e. its complement in $B_H(kR_0)_\rw$ is analytic (see Section \ref{measuretheory}). For that purpose, note that
\begin{equation}
  \label{expressionforcomplementEbnk}
  B_H(kR_0)\setminus E_{n,k}^\rb = (B_H(kR_0)\setminus E_0) \bigcup 
     \left(E_0\cap B_H(kR_0)\right) \setminus E_{n,k}^\rb.
\end{equation}
Since $E_0$ is Borel, we just need to check that $E_{n,k}^\rb$ is co-analytic in $E_0\cap B_H(kR_0)$. Thus, we need to show that $\left(E_0\cap B_H(kR_0)\right) \setminus E_{n,k}^\rb$ is analytic. With that in mind, we use that this set can be written as
\begin{multline}
  \label{expressionforcomplementEbnkpartial}
  \left(E_0\cap B_H(kR_0)\right) \setminus E_{n,k}^\rb \\
    = E_0\cap\left(
      \Pi_0 \left(Q \left(\left([n/\nu\lambda_1,\infty)\times \Ucal_{[0,\infty)}(kR_0)\right) 
                  \cap \sigma^{-1}\left(\Pi_0^{-1} E_0\right)\right)\right)\right),
\end{multline}
where $Q$ is the projection that takes an element $(t,\bu)\in [0,\infty)\times \Ccal_\loc([0,\infty),H_\rw)$
to the element $\bu$, in $\Ccal_\loc([0,\infty,H_\rw)$. Indeed, this is a consequence of the following chain of equivalences
\begin{align*}
  \bu_0\in & (E_0\cap B_H(kR_0)) \setminus E_{n,k}^\rb \\   
    & \Leftrightarrow \bu_0\in E_0\cap B_H(kR_0), \;\exists t\geq n/\nu\lambda_1, 
           \;\exists \bv_0\in \Sigma_t\bu_0 \cap E_0\cap B_H(kR_0) \\
    &  \Leftrightarrow \bu_0\in E_0\cap B_H(kR_0), 
           \;\exists t\geq n/\nu\lambda_1, \;\exists \bv\in \Ucal_{[0,\infty)}(kR_0), 
           \;\bv(0)=\bu_0, \;\bv(t)\in E_0 \\
    & \Leftrightarrow \bu_0\in E_0\cap B_H(kR_0), \;\exists t\geq n/\nu\lambda_1, \;\exists 
           \bv\in \Ucal_{[0,\infty)}(kR_0)\cap \Pi_t^{-1}E_0, \;\bv(0)=\bu_0 \\
    & \Leftrightarrow \bu_0\in E_0, \;\exists t\geq n/\nu\lambda_1, 
           \;\bu_0\in \Pi_0 \left(\Ucal_{[0,\infty)}(kR_0)\cap \Pi_t^{-1}E_0\right) \\
    & \Leftrightarrow \bu_0\in E_0, \;\bu_0\in \Pi_0 
           \left(\Ucal_{[0,\infty)}(kR_0)\cap \left(\cup_{t\geq n/\nu\lambda_1}\Pi_t^{-1}E_0\right)\right)   \\
    &  \Leftrightarrow \bu_0\in E_0, \;\bu_0\in \Pi_0 
           \left(Q \left(\left([n/\nu\lambda_1,\infty)\times \Ucal_{[0,\infty)}(kR_0)\right) 
                  \cap \sigma^{-1}\left(\Pi_0^{-1} E_0\right)\right)\right) \\
    & \Leftrightarrow \bu_0\in E_0\cap \left(\Pi_0 \left(Q \left(\left([n/\nu\lambda_1,\infty)\times 
         \Ucal_{[0,\infty)}(kR_0)\right) \cap \sigma^{-1}\left(\Pi_0^{-1} E_0\right)\right)\right)\right).
\end{align*}
This proves the representation \eqref{expressionforcomplementEbnkpartial}. Now we need to show that the right hand side of \eqref{expressionforcomplementEbnkpartial} is analytic. Since $E_0$ is Borel, this amounts to showing that the projection 
\begin{equation}
  \label{futurensetprojected}
  (\Pi_0 \circ Q)\left(\left([n/\nu\lambda_1,\infty)\times \Ucal_{[0,\infty)}(kR_0)\right)\cap \sigma^{-1}\left(\Pi_0^{-1} E_0\right)\right)
\end{equation}
is analytic. Since $E_0$ is Borel in $H$ and
$\sigma$ and $\Pi_0$ are continuous, the set $\sigma^{-1}\left(\Pi_0^{-1} E_0\right)$
is a Borel set in $[0,\infty)\times \Ccal_\loc([0,\infty),H_\rw)$. On the other hand, since $\Ucal_{[0,\infty)}(kR_0)$
is Borel, the set 
\begin{equation}
  \label{futurenset}
  \left([n/\nu\lambda_1,\infty)\times \Ucal_{[0,\infty)}(kR_0)\right) 
     \cap \sigma^{-1}\left(\Pi_0^{-1} E_0\right) 
\end{equation}
is a Borel subset of the space $[0,\infty)\times \Ucal^\sharp_{[0,\infty)}(kR_0)$. According to Lemma \ref{Ucalspaceslem}, the space $\Ucal^\sharp_{[0,\infty)}(kR_0)$ is Polish, and hence so is $[0,\infty)\times \Ucal^\sharp_{[0,\infty)}(kR_0)$. Thus, the set in \eqref{futurenset} is a Borel subset of the Polish space $[0,\infty)\times \Ucal^\sharp_{[0,\infty)}(kR_0)$. Hence, the set in \eqref{futurensetprojected} is the continuous projection, through the projector $\Pi_0\circ Q$ and into the Polish space $B_H(kR_0)_\rw$, of a Borel subset of a Polish space. Since continuous functions between Polish spaces take Borel sets into analytic sets, (see Fact \eqref{continuousimageofanalyticsets} at the end of Section \ref{measuretheory}), it follows that the set in \eqref{futurensetprojected} is analytic in $B_H(kR_0)_\rw$. Hence, we conclude from \eqref{expressionforcomplementEbnkpartial} and \eqref{expressionforcomplementEbnk} that $E_{n,k}^\rb$ is co-analytic.

Since any co-analytic set is universally measurable (see Fact \eqref{analyticcouniversallymeasurable} at the end of Section \ref{measuretheory}), it follows, in particular, that $E_{n,k}^\rb$ is $\mu$-measurable. We now want to show that $E_{n,k}^\rb$ is a null set with respect to the measure $\mu$.

Notice that $\Sigma_t E_{n,k}^\rb \cap E_{n,k}^\rb=\emptyset$, 
for $t\geq n/\nu\lambda_1$, otherwise we would find $\bu_0\in E_{n,k}^\rb\subset E_0$ such that 
$\Sigma_t\bu_0\in E_{n,k}^\rb \subset E_0$, which contradicts the fact that 
$\bu_0\in  E_{n,k}^\rb$. Then, we have
\[ E_{n,k}^\rb \bigcap \left(\bigcup_{t\geq n/\nu\lambda_1} \Sigma_t E_{n,k}^\rb\right) 
     = \emptyset.
\]

Having in mind the lower regularity property of $\mu$ in $H$, we consider an arbitrary strongly compact set $K$ included in $E_{n,k}^\rb$. From this inclusion, it is clear that we also have
\[ K \bigcap \left(\bigcup_{t\geq n/\nu\lambda_1} \Sigma_t K\right) = \emptyset.
\]

Using property \eqref{Sigmaproperty} of $\{\Sigma_t\}_{t\geq 0}$, we find
$\Sigma_{n/\nu\lambda_1}\Sigma_t K \subset \Sigma_{n/\nu\lambda_1+t}K$, so that
\[ \bigcup_{t\geq n/\nu\lambda_1} \Sigma_t K
     = \bigcup_{t\geq 0} \Sigma_{n/\nu\lambda_1+t} K
     \supset \bigcup_{t\geq 0} \Sigma_{n/\nu\lambda_1} \Sigma_t K
     = \Sigma_{n/\nu\lambda_1} \left(\bigcup_{t\geq 0} \Sigma_t K\right)
     =  \Sigma_{n/\nu\lambda_1} \gamma(K).
\]
Hence,
\[ K \subset \left(\bigcup_{t\geq 0} \Sigma_t K\right)
     \setminus \left(\bigcup_{t\geq {n/\nu\lambda_1}} \Sigma_t K\right)
     \subset \gamma(K) \setminus \Sigma_{n/\nu\lambda_1} \gamma(K).
\]
Using Lemma \ref{invariantmeasureoforbits}, we deduce that
\[ \mu(K) 
      \leq \mu(\gamma(K) 
          \setminus \Sigma_{n/\nu\lambda_1} \gamma(K))
      \leq \mu(\gamma(K)) - \mu(\Sigma_{n/\nu\lambda_1} \gamma(K))
      = 0.
\]
Since $\mu$ is a regular measure and $\mu(K)=0$ holds for any strongly compact set $K$ included in $E_{n,k}^\rb$, we obtain that
\[ \mu(E_{n,k}^\rb) = \sup \{\mu(K); \;K\subset E_{n,k}^\rb, \; K \text{ compact in } H \} = 0.
\]
Then, as mentioned in the beginning of the proof, we conclude, using the decomposition \eqref{badsetdecomposition} of the bad set, that $\mu(E_0\setminus E_0^\rg)=0$, so that the ``good'' recurrent set $E_0^\rg$ is of full measure in $E_0$.
\end{proof}
\medskip

\begin{rmk}
  \label{rmknonwandering}
  Theorem \ref{recurrenceforaccretivemeasures} can be interpreted in the following way: 
  Given a $\mu$-measurable set $E$ with $\mu(E)>0$, we have that for $\mu$-almost every
  $\bu_0$ in $E$, there exists a sequence of positive real numbers $t_n\rightarrow \infty$
  and a sequence of weak solutions $\bu_n$ on $[0,\infty)$ with $\bu_n(0)=\bu_0$
  such that $\bu_n(t_n)\in E$.  In particular, we find that the support in $H$ of an arbitrary accretive measure $\mu$ for the 3D Navier-Stokes equations is made only of points which are nonwandering in $H$ with respect to the Leray-Hopf weak solutions. By that we mean that for any $\bu_0$ in $\supp\mu$, for any $\varepsilon>0$, and for an arbitrarily large time $T$, there exists an initial condition $\bv_0$ within a distance $\varepsilon$ of $\bu_0$ in $H$ and a weak solution $\bv=\bv(t)$ defined for all $t\geq 0$ with $\bv(0)=\bv_0$ such that for some $t\geq T$, the point $\bv(t)$ lies within $\varepsilon$ of $\bu_0$ in $H$.
\end{rmk}
\medskip

\begin{rmk}
  We have seen that a stationary statistical solution $\mu$ is carried by $B_H(R_0)$.
  We have also seen that time-average stationary statistical solutions are accretive
  for $\{\Sigma_t\}_{t\geq 0}$. But not necessarily every accretive measure is a
  stationary statistical solution. Theorem \ref{recurrenceforaccretivemeasures} implies 
  that any accretive measure is carried by $B_H(R_0)$. This follows from the
  energy estimate \eqref{energyestimate}, which implies that all the recurrent points must 
  be included in $B_H(R_0)$.
\end{rmk}

\begin{rmk}
  The set of recurrent points defined in the proof of Theorem \ref{recurrenceforaccretivemeasures} 
  can be written as 
  \[ E_0^g = \bigcap_n \bigcup_k \left( E_0 \bigcap \left(\Pi_0 Q 
    \left(\left([n/\nu\lambda_1,\infty)\times \Ucal_{[0,\infty)}(kR_0)\right)
       \bigcap \sigma^{-1}(\Pi_0^{-1}E_0)\right)\right)\right)
  \]
  This is another way of seeing that $E_0^g$ is universally measurable.
\end{rmk}

\subsection{Poincar\'e recurrence for Vishik-Fursikov stationary statistical solutions}
\label{secrecurrencevishikfursikovmeasures}

The previous Theorem \ref{recurrenceforaccretivemeasures} is an adaptation of the Poincar\'e recurrence theorem to a measure which is not necessarily invariant. It applies to any accretive measure. In particular, it applies to Vishik-Fursikov stationary statistical solutions since they are accretive according to Corollary \ref{rho0accretive}. However, since the Vishik-Fursikov stationary statistical solutions are projections of invariant Vishik-Fursikov measures, we can use the dynamic structure in the trajectory space
and obtain a stronger result from a direct application of
the classical Poincar\'e Recurrence Theorem (Theorem \ref{poincarerecurrence}).

We start by writing the recurrence property for invariant Vishik-Fursikov measures, which is an immediate consequence of Theorem \ref{poincarerecurrence} and the fact that $\rho$ is an invariant measure for the translation semigroup $\{\sigma_\tau\}_{\tau \geq 0}$.
\begin{cor}
  \label{recurrenceforvfmeasure}
  Consider an interval $I\subset \RR$ unbounded on the right. Let $\rho$ be an invariant Vishik-Fursikov measure over $I$, and let $\Ecal\subset \Ccal_\loc(I,H_\rw)$ be a Borel set. Then, for $\rho$-almost all trajectories $\bu$ in $\Ecal$, there exists a sequence of positive times $t_n\rightarrow \infty$ such that $\sigma_{t_n}\bu\in \Ecal$.
\end{cor}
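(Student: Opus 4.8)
The plan is to obtain this corollary as a direct application of the continuous Poincar\'e Recurrence Theorem (Theorem \ref{poincarerecurrence}), so that the entire task reduces to checking that its hypotheses are met in the present setting. I would take as the ambient probability space $(\Ccal_\loc(I,H_\rw),\Bcal(\Ccal_\loc(I,H_\rw)),\rho)$ --- recall that, by Definition \ref{vfinvmeasdef}, an invariant Vishik-Fursikov measure is in particular a Borel probability measure on $\Ccal_\loc(I,H_\rw)$ --- and as the measure-preserving semigroup the family of translations $\{\sigma_\tau\}_{\tau\geq 0}$ introduced in \eqref{sigmatdef}, which maps $\Ccal_\loc(I,H_\rw)$ into itself precisely because $I$ is unbounded on the right. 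The measurable set to which Theorem \ref{poincarerecurrence} is applied is the given Borel set $\Ecal$.

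The verification of the hypotheses proceeds in three short steps. First, $\{\sigma_\tau\}_{\tau\geq 0}$ is a semigroup with $\sigma_0=\Id$ and $\sigma_{\tau+s}=\sigma_\tau\sigma_s$, as recorded in Section \ref{timedependentfuncionspacessec}. Second, each $\sigma_\tau$ is continuous on $\Ccal_\loc(I,H_\rw)$, hence Borel measurable, so that $\sigma_\tau^{-1}\Ecal'\in\Bcal(\Ccal_\loc(I,H_\rw))$ for every Borel set $\Ecal'$; this is exactly the measurability condition demanded of a measurable semigroup. Third, the measure-preserving property $\rho(\sigma_\tau^{-1}\Ecal')=\rho(\Ecal')$, for every Borel $\Ecal'$ and every $\tau\geq 0$, is nothing but the defining property of invariance in Definition \ref{vfinvmeasdef}. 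With these three facts in hand, Theorem \ref{poincarerecurrence} applies verbatim and yields, for $\rho$-almost every $\bu\in\Ecal$, a sequence of nonnegative times $t_n\to\infty$ with $\sigma_{t_n}\bu\in\Ecal$; since $t_n\to\infty$, only finitely many $t_n$ can vanish, and deleting these produces the desired sequence of strictly positive return times.

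I do not expect a genuine obstacle here: the whole content of the corollary is carried by the classical recurrence theorem, and the only thing to confirm is that $(\rho,\{\sigma_\tau\}_{\tau\geq 0})$ is a measure-preserving system, which follows at once from continuity of the translations and the definition of invariance. The one technical point a careful reader might raise is that $\Ccal_\loc(I,H_\rw)$ is not Polish; this is immaterial for Theorem \ref{poincarerecurrence}, but if desired one may instead work on the compact metrizable space $\Wcal_I^\sharp$ on which $\rho$ is supported by Theorem \ref{thmcarriervfmeasure}, using that $\sigma_\tau\Wcal_I^\sharp\subset\Wcal_I^\sharp$ (a consequence of $\Wcal_I^\sharp=\Pi_I\Wcal$ and \eqref{Wcalinvariant}) so that $\rho$ restricts there to an invariant probability measure. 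This restriction is purely a matter of convenience and does not affect the argument.
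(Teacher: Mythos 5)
Your proposal is correct and follows exactly the route the paper takes: the paper presents this corollary as an immediate consequence of Theorem \ref{poincarerecurrence} applied to the translation semigroup $\{\sigma_\tau\}_{\tau\geq 0}$ on the trajectory space with the invariant measure $\rho$, which is precisely your argument. Your verification of the hypotheses (semigroup property, Borel measurability of $\sigma_\tau^{-1}$ via continuity, and measure preservation from Definition \ref{vfinvmeasdef}), together with the observation that the nonnegative return times from the theorem can be pruned to positive ones since $t_n\to\infty$, fills in the routine details the paper leaves implicit.
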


When interpreting results such as this in phase space, the following result is useful.
\begin{lem}
  \label{fromtrajectorytophasespacealmosteverywhere}
  Let $\rho$ be a Borel probability measure on $\Ccal_\loc([0,\infty),H_\rw)$ which is carried by $\Wcal_{[0,\infty)}^\sharp$ and let $\rho_0=\Pi_0\rho$. Let $E$ be a Borel set in $H$ and set $\Ecal = \Pi_0^{-1}E_0$. Suppose that $\Ncal$ is a $\rho$-measurable subset of $\Ecal$ with $\rho(\Ncal)=0$. Then, $N=E \setminus (\Pi_0(\Ecal\setminus \Ncal))$ is a $\rho_0$-measurable subset of $E$ with the properties that
  \[ E \setminus N = \Pi_0 (\Ecal \setminus \Ncal), \qquad \Pi_0^{-1}N \subset \Ncal,
  \]
  and
  \[ \rho_0(N) = 0.
  \]
  In particular, for any $\bu_0\in E\setminus N$, the set $\Pi_0^{-1}\{\bu_0\}\cap (\Ecal\setminus \Ncal)$ is nonempty and of full measure in $\Pi_0^{-1}\{\bu_0\} \cap \Ecal$.
\end{lem}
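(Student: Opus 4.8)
The plan is to establish the set-theoretic identities first, then the measurability and nullity of $N$, and finally the fiberwise ``in particular'' clause. First I would record the elementary identities. Since $\Pi_0$ is surjective and $\Ecal=\Pi_0^{-1}(E)$, one has $\Pi_0(\Ecal\setminus\Ncal)\subseteq\Pi_0(\Ecal)=E$, so $E\setminus N=E\cap\Pi_0(\Ecal\setminus\Ncal)=\Pi_0(\Ecal\setminus\Ncal)$, which is the first identity. For the inclusion $\Pi_0^{-1}N\subseteq\Ncal$: any $\bv\in\Pi_0^{-1}N$ has $\Pi_0\bv\in N\subseteq E$, hence $\bv\in\Ecal$; were $\bv\notin\Ncal$, then $\bv\in\Ecal\setminus\Ncal$ and $\Pi_0\bv\in\Pi_0(\Ecal\setminus\Ncal)$, contradicting $\Pi_0\bv\in N$. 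The same fiber argument shows, for every $y\in N$, that $\Pi_0^{-1}\{y\}\subseteq\Ncal$ (the fiber lies in $\Ecal$ because $y\in E$, and it misses $\Ecal\setminus\Ncal$ because $y\notin\Pi_0(\Ecal\setminus\Ncal)$); I would reuse this below. These steps are pure set theory.

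The technical heart is to show that $N$ is $\rho_0$-measurable with $\rho_0(N)=0$. The obstacle is that $N$ is built from the continuous image $\Pi_0(\Ecal\setminus\Ncal)$, which need not be Borel, so one cannot simply invoke the pushforward relation $\rho_0(M)=\rho(\Pi_0^{-1}M)$ (valid only for Borel $M$); moreover $\Ncal$ is only $\rho$-measurable and the ambient space $\Ccal_\loc([0,\infty),H_\rw)$ is not Polish. I would circumvent this as follows. Enlarge $\Ncal$ to a Borel set $Z$ with $\Ncal\subseteq Z$ and $\rho(Z)=0$ (possible since $\Ncal$ is $\rho$-null). Because $\rho$ is carried by the compact metrizable, hence Polish, space $W=\Wcal_{[0,\infty)}^\sharp$ (see \eqref{wcalsharpcompact}) and $\Pi_0(W)=\Acal_\rw\subseteq B_H(R_0)_\rw$ by \eqref{awdef}, the restriction $\Pi_0|_W\colon W\to B_H(R_0)_\rw$ is continuous between Polish spaces; hence $\Pi_0(W\setminus Z)$ is analytic, thus universally and so $\rho_0$-measurable (Facts \eqref{continuousimageofanalyticsets} and \eqref{analyticcouniversallymeasurable}). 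Put $B=H\setminus\Pi_0(W\setminus Z)$. For $y\in N$ we have $\Pi_0^{-1}\{y\}\subseteq\Ncal\subseteq Z$, hence $\Pi_0^{-1}\{y\}\cap(W\setminus Z)=\emptyset$ and $y\notin\Pi_0(W\setminus Z)$; therefore $N\subseteq B$. It remains to prove $\rho_0(B)=0$, which I would obtain from lower regularity \eqref{lowerregmeas} of $\rho_0$ (a finite Borel measure on the Polish space $B_H(R_0)_\rw$ carrying $\rho_0$): for every compact $K\subseteq B$ one has $K\cap\Pi_0(W\setminus Z)=\emptyset$, whence $\Pi_0^{-1}K\cap W\subseteq Z$ and so $\rho_0(K)=\rho(\Pi_0^{-1}K)=\rho(\Pi_0^{-1}K\cap W)\le\rho(Z)=0$; taking the supremum over compact $K\subseteq B$ gives $\rho_0(B)=0$. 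Completeness of $\rho_0$ then yields that $N$ is $\rho_0$-measurable with $\rho_0(N)=0$.

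Finally I would treat the last assertion. Nonemptiness of $\Pi_0^{-1}\{\bu_0\}\cap(\Ecal\setminus\Ncal)$ for $\bu_0\in E\setminus N$ is immediate from the first identity, since $E\setminus N=\Pi_0(\Ecal\setminus\Ncal)$ exhibits a trajectory in $\Ecal\setminus\Ncal$ projecting to $\bu_0$; note also $\Pi_0^{-1}\{\bu_0\}\cap\Ecal=\Pi_0^{-1}\{\bu_0\}$ because $\bu_0\in E$. For the full-measure part I would disintegrate $\rho$ over $\rho_0$: since $W$ is Polish and $\Pi_0\colon W\to\Acal_\rw$ is Borel with $\Pi_0\rho=\rho_0$, there is a $\rho_0$-a.e.\ defined family of conditional probability measures $\{\rho_{\bu_0}\}$ concentrated on the fibers $\Pi_0^{-1}\{\bu_0\}$ with $\rho=\int\rho_{\bu_0}\,\rd\rho_0(\bu_0)$. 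Then $0=\rho(\Ncal)=\int\rho_{\bu_0}(\Ncal)\,\rd\rho_0(\bu_0)$ forces $\rho_{\bu_0}(\Ncal)=0$ for $\rho_0$-almost every $\bu_0$, so the good part of the fiber carries full conditional measure. The one subtlety is that this holds for $\rho_0$-a.e.\ $\bu_0$ rather than literally every $\bu_0\in E\setminus N$; since the disintegration exceptional set is $\rho_0$-null it can be absorbed into $N$, which is itself $\rho_0$-null, reconciling the statement. I expect the measurability and nullity step of the second paragraph to be the main obstacle, the rest being routine set theory together with a standard disintegration.
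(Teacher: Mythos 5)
Your set-theoretic identities are correct and agree with the paper's, and your treatment of the measurability and nullity of $N$ is correct but follows a genuinely different, heavier route. The paper's own argument is three lines: $\Pi_0^{-1}N\subset\Ncal$ and $\rho(\Ncal)=0$, so by completeness of $\rho$ the set $\Pi_0^{-1}N$ is $\rho$-measurable and $\rho$-null, and then, by the definition of the induced measure $\rho_0=\Pi_0\rho$ (a set is $\rho_0$-measurable as soon as its preimage is $\rho$-measurable, with $\rho_0$ of the set equal to $\rho$ of the preimage), $N$ is $\rho_0$-measurable with $\rho_0(N)=\rho(\Pi_0^{-1}N)=0$. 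Your worry that the pushforward identity holds ``only for Borel sets'' leads you instead to build the co-analytic null superset $B=H\setminus\Pi_0(W\setminus Z)$ via analytic images between Polish spaces and inner regularity. That argument is sound (and could even be shortened: any Borel subset $K\subseteq B$, not just compact ones, satisfies $\Pi_0^{-1}K\cap W\subseteq Z$, so no regularity is needed once $B$ is known to be universally measurable). What it buys is rigor on a point the paper passes over ``by definition'': your construction shows $N$ lies in the completion $\Bcal_{\rho_0}(H)$ of the Borel pushforward, i.e.\ under a Borel $\rho_0$-null set, rather than merely in the pullback $\sigma$-algebra $\{M:\Pi_0^{-1}M \text{ is } \rho\text{-measurable}\}$. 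Under the paper's conventions the two readings lead to the same conclusion, so the extra machinery is dispensable, but it is not wrong.

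The genuine flaw is in your last paragraph. In this paper ``full measure'' means that the relative complement is $\rho$-null (see the definition of carrier in Section \ref{measuretheory}); no conditional measures on fibers are ever introduced. With that reading the claim is immediate and holds for \emph{every} $\bu_0\in E\setminus N$ (indeed every $\bu_0\in E$): the relative complement is $(\Pi_0^{-1}\{\bu_0\}\cap\Ecal)\setminus(\Pi_0^{-1}\{\bu_0\}\cap(\Ecal\setminus\Ncal))=\Pi_0^{-1}\{\bu_0\}\cap\Ncal\subset\Ncal$, which is $\rho$-null. This is exactly how the lemma is applied in the proof of Theorem \ref{recurrenceforvfmeasurephasespace}, where one bounds $\rho(\Pi_0^{-1}\{\bu_0\}\setminus\Rcal_E(\bu_0))\le\rho(\Ncal)=0$. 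Your disintegration argument proves a different statement (full \emph{conditional} mass of the good fibers, for $\rho_0$-almost every $\bu_0$), and your proposed repair --- absorbing the disintegration exceptional set into $N$ --- does not reconcile it with the lemma: $N$ is defined explicitly as $E\setminus\Pi_0(\Ecal\setminus\Ncal)$, and enlarging it destroys the identity $E\setminus N=\Pi_0(\Ecal\setminus\Ncal)$ and the inclusion $\Pi_0^{-1}N\subset\Ncal$ that you proved in your first paragraph, both of which are part of the conclusion. The fix is simply to delete the disintegration and use the one-line observation above.
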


\begin{proof}  
  Since $\Pi_0\Ecal = \Pi_0\Pi_0^{-1} E = E$, we find that
  \[ E\setminus N = E \setminus (E \setminus (\Pi_0(\Ecal\setminus \Ncal))) = E \cap \Pi_0(\Ecal\setminus \Ncal) = \Pi_0\Ecal \cap \Pi_0 (\Ecal\setminus \Ncal) = \Pi_0(\Ecal\setminus \Ncal), 
  \]
  which proves the first claim. We also have that
  \[  \Pi_0^{-1} N = \Pi_0^{-1} ( E \setminus (\Pi_0(\Ecal\setminus \Ncal))) = (\Pi_0^{-1}E) \setminus (\Pi_0^{-1}\Pi_0(\Ecal\setminus \Ncal)) \subset \Ecal \setminus (\Ecal\setminus \Ncal).
  \]
  Then, taking into consideration that $\Ncal\subset \Ecal$, we see that $\Ecal \setminus (\Ecal\setminus \Ncal) = \Ncal$, so that
  \[ \Pi_0^{-1} N \subset \Ncal,
  \]
  which proves the second claim. 
  
  Now, using that $\Pi_0^{-1} N \subset \Ncal$, we see that $\Pi_0^{-1}N$ is a subset of a $\rho$-measurable set of null $\rho$-measure, so that $\Pi_0^{-1}N$ is also $\rho$-measurable with null $\rho$-measure. Hence, by definition, $N$ is a $\rho_0$-measurable set with
  \[ \rho_0(N) = \rho(\Pi_0^{-1} N) \leq \rho(\Ncal) = 0.
  \]

  The claims concerning $\bu_0\in E\setminus N$ follow directly from the construction of $E\setminus N$.
\end{proof}

Now, using Corollary \ref{recurrenceforvfmeasure} and Lemma \ref{fromtrajectorytophasespacealmosteverywhere} we prove the following recurrence result in the phase space $H$.

\begin{thm}
  \label{recurrenceforvfmeasurephasespace}
  Let $\rho_0$ be a Vishik-Fursikov stationary statistical solution and let $E\subset H$ be a Borel set. Suppose that $\rho_0(E)>0$, for the sake of interest. Then, for $\rho$-almost every solution $\bu$ in $\Ecal =\Pi_0^{-1} E$, there exists a sequence of positive times $t_n\rightarrow \infty$ such that $\bu(t_n)\in E$. In particular, for $\rho_0$-almost all $\bu_0$ in $E$, there exists at least one global weak solution $\bu\in \Wcal$ with $\bu(0)=\bu_0$ and a sequence of positive times $t_n\rightarrow \infty$ such that $\bu(t_n)\in E$, for all $n\in \NN$. In other words, for any $\bu_0$ in $E$, the set 
  \[ \Rcal_{E}(\bu_0) = \{ \bu \in \Pi_0^{-1}\{\bu_0\}; \; \exists (t_n)_{n\in \NN}, \;t_n\geq 0, \;t_n\rightarrow \infty, \; \bu(t_n)\in E\}
  \]
  is nonempty for $\rho_0$-almost all $\bu_0$ in $E$, and, moreover, $\Rcal_{E}(\bu_0)$ is of full measure in $\Pi_0^{-1}\{\bu_0\}$.
\end{thm}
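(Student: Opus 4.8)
The plan is to reduce the phase-space recurrence statement to the trajectory-space recurrence already recorded in Corollary \ref{recurrenceforvfmeasure}, and then to transfer the conclusion fiberwise to $H$ by means of Lemma \ref{fromtrajectorytophasespacealmosteverywhere}. First I would invoke Corollary \ref{regrho0} to realize $\rho_0$ as the projection $\rho_0 = \Pi_t\rho$ of an invariant Vishik-Fursikov measure $\rho$ over the interval $I = [0,\infty)$, with $\rho$ carried by $\Wcal_{[0,\infty)}\subset\Wcal_{[0,\infty)}^\sharp$; this is precisely the standing hypothesis of Lemma \ref{fromtrajectorytophasespacealmosteverywhere}. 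Setting $\Ecal = \Pi_0^{-1}E$, which is Borel in $\Ccal_\loc([0,\infty),H_\rw)$ since $\Pi_0$ is continuous and $E$ is Borel, Corollary \ref{recurrenceforvfmeasure} applies and yields, for $\rho$-almost every $\bu\in\Ecal$, a sequence $t_n\rightarrow\infty$ with $\sigma_{t_n}\bu\in\Ecal$.

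The linchpin of the argument is the elementary equivalence
\[ \sigma_{t_n}\bu \in \Ecal = \Pi_0^{-1}E \quad\Longleftrightarrow\quad \Pi_0(\sigma_{t_n}\bu) = (\sigma_{t_n}\bu)(0) = \bu(t_n) \in E, \]
which converts the trajectory-translation recurrence $\sigma_{t_n}\bu\in\Ecal$ into the phase-space recurrence $\bu(t_n)\in E$ and thereby settles the first assertion of the theorem immediately.

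For the fiberwise statement I would let $\Ecal^\rg$ denote the set of ``good'' trajectories in $\Ecal$ for which such a recurrence sequence exists, and put $\Ncal = \Ecal\setminus\Ecal^\rg$; Corollary \ref{recurrenceforvfmeasure} says exactly that $\Ncal$ is $\rho$-null, hence $\rho$-measurable by completeness of $\rho$. Feeding this $\Ncal$ into Lemma \ref{fromtrajectorytophasespacealmosteverywhere} produces a $\rho_0$-null set $N = E\setminus\Pi_0(\Ecal\setminus\Ncal)$ such that, for every $\bu_0\in E\setminus N$, the fiber set $\Pi_0^{-1}\{\bu_0\}\cap(\Ecal\setminus\Ncal) = \Pi_0^{-1}\{\bu_0\}\cap\Ecal^\rg$ is nonempty and of full measure in $\Pi_0^{-1}\{\bu_0\}\cap\Ecal$. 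Because $\bu_0\in E$ forces $\Pi_0^{-1}\{\bu_0\}\subset\Pi_0^{-1}E=\Ecal$, one has $\Pi_0^{-1}\{\bu_0\}\cap\Ecal = \Pi_0^{-1}\{\bu_0\}$, and by the equivalence above $\Pi_0^{-1}\{\bu_0\}\cap\Ecal^\rg$ is exactly $\Rcal_{E}(\bu_0)$. This simultaneously gives the nonemptiness of $\Rcal_{E}(\bu_0)$ for $\rho_0$-almost every $\bu_0\in E$ and its fullness in $\Pi_0^{-1}\{\bu_0\}$; a global representative $\bu\in\Wcal$ is recovered by noting that, by the very definition \eqref{wcalidef} of $\Wcal_{[0,\infty)}$, every fiber trajectory is the restriction of some global $\bw\in\Wcal$.

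The only genuinely delicate point I expect is measurability, namely ensuring that $\Ecal^\rg$ (equivalently the bad set $\Ncal$) is $\rho$-measurable so that Lemma \ref{fromtrajectorytophasespacealmosteverywhere} can legitimately be invoked. This is handled either by completeness, since $\Ncal$ is contained in the null bad set provided by Corollary \ref{recurrenceforvfmeasure}, or, more robustly, by the same analytic-set representation used for $E_0^\rg$ in the proof of Theorem \ref{recurrenceforaccretivemeasures}: writing $\Ecal^\rg$ as a countable intersection of unions of continuous images, through $\Pi_0\circ\sigma$, of Borel subsets of the Polish spaces $[0,\infty)\times\Ucal_{[0,\infty)}^\sharp(R)$ exhibits it as universally measurable. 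Everything else is a routine assembly of the two cited results.
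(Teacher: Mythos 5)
Your proof is correct and follows essentially the same route as the paper: realize $\rho_0$ via Corollary \ref{regrho0} as the projection of an invariant Vishik-Fursikov measure, apply the trajectory-space recurrence of Corollary \ref{recurrenceforvfmeasure} to $\Ecal=\Pi_0^{-1}E$, and transfer to the phase space fiberwise through Lemma \ref{fromtrajectorytophasespacealmosteverywhere}, using $\Pi_0\sigma_{t_n}\bu=\bu(t_n)$ to identify the recurrent trajectories with $\Rcal_E(\bu_0)$. The only (harmless) deviations are that you work over $I=[0,\infty)$ and then extend fiber trajectories to $\Wcal$ via \eqref{wcalidef} -- which in fact matches the stated hypotheses of Lemma \ref{fromtrajectorytophasespacealmosteverywhere} more literally than the paper's own use of $\rho$ on $\Wcal$ -- and that you take $\Ncal$ to be the exact bad set rather than the null set furnished by the corollary, a choice your completeness remark adequately justifies.
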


\begin{proof}
  Using Corollary \ref{regrho0}, we assume that there exists an invariant Vishik-Fursikov measure $\rho$ on $\Wcal$ such that $\Pi_0\rho = \rho_0$. We apply Corollary \ref{recurrenceforvfmeasure} to this measure $\rho$ and with $\Ecal=\Pi_0^{-1}E$, whose measure by definition is $\rho(\Ecal) = \rho(\Pi_0^{-1}E) = \rho_0(E)>0$. In doing so, we find that there exists a set $\Ncal\subset \Ecal$ such that $\rho(\Ncal) = 0$ and such that for any $\bu\in \Ecal\setminus \Ncal$, there exists a time sequence $t_n\rightarrow \infty$ such that $\sigma_{t_n}\bu \in \Ecal$, for all $n\in \NN$. In this case, $\bu(t_n) = \Pi_0\sigma_{t_n}\bu \in \Pi_0 \Ecal = \Pi_0\Pi_0^{-1} E = E$. In other words, for any $\bu\in \Ecal \setminus \Ncal$, there exists such a sequence $t_n$ such that $\bu(t_n)\in E$, for all $n\in \NN$. 
  
  Set now $N = E \setminus (\Pi_0(\Ecal\setminus \Ncal))$ as in Lemma \ref{fromtrajectorytophasespacealmosteverywhere}. Then, $N$ is of $\rho_0$-null measure and for every $\bu_0\in E\setminus N$, there exists $\bu\in \Ecal\setminus \Ncal$ such that $\bu_0=\Pi_0\bu$, which means that for $\rho_0$-almost every $\bu_0$ in $E$, there exists a trajectory $\bu$ with $\bu(0)=\bu_0$ and a sequence of positive times $t_n\rightarrow \infty$ such that $\bu(t_n)\in E$, for all $n\in \NN$. This also means that, for any $\bu_0\in E\setminus N$, the set $\Rcal_E(\bu_0)$ is nonempty, i.e. it is nonempty $\rho_0$-almost everywhere. 
  
  Finally, since $\Rcal_E(\bu_0)$ clearly contains $\Pi_0^{-1}\{\bu_0\} \setminus \Ncal$, then 
  \begin{multline*} 
    \rho(\Pi_0^{-1}\{\bu_0\}\setminus \Rcal_{E}(\bu_0)) \leq \rho(\Pi_0^{-1}\{\bu_0\} \setminus (\Pi_0^{-1}\{\bu_0\} \setminus \Ncal)) \\ 
    = \rho(\Pi_0^{-1}\{\bu_0\} \cap \Ncal) \leq \rho(\Ncal) = 0,
  \end{multline*}
which shows that $\Rcal_{E}(\bu_0)$ is of full measure in $\Pi_0^{-1}\{\bu_0\}$.
\end{proof}

Notice that this last result is stronger than the one in Theorem \ref{recurrenceforaccretivemeasures} since the solution which returns to $E$ infinitely often is the same, but this result applies only to Vishik-Fursikov stationary statistical solutions. 

Notice also that Theorem \ref{recurrenceforvfmeasurephasespace} holds trivially in the case that $\rho_0(E)=0$, but the result is vacuous.

\end{document}